\begin{document} \theoremstyle{plain} 
\newtheorem{thm}{Theorem}[section] \newtheorem*{thm1}{Theorem 1} 
\newtheorem*{thm2}{Theorem 2} \newtheorem{lemma}[thm]{Lemma} 
\newtheorem{lem}[thm]{Lemma} \newtheorem{cor}[thm]{Corollary} 
\newtheorem{prop}[thm]{Proposition} \newtheorem{propose}[thm]{Proposition} 
\newtheorem{variant}[thm]{Variant} \theoremstyle{definition} 
\newtheorem{notations}[thm]{Notations} \newtheorem{rem}[thm]{Remark} 
\newtheorem{rmk}[thm]{Remark} \newtheorem{rmks}[thm]{Remarks} 
\newtheorem{defn}[thm]{Definition} \newtheorem{ex}[thm]{Example} 
\newtheorem{claim}[thm]{Claim} \newtheorem{ass}[thm]{Assumption} 
\numberwithin{equation}{section} \newcounter{elno} 
\def\points{\list {\hss\llap{\upshape{(\roman{elno})}}}{\usecounter{elno}}} 
\let\endpoints=\endlist


\catcode`\@=11 
\def\opn#1#2{\def#1{\mathop{\kern0pt\fam0#2}\nolimits}} 
\def\bold#1{{\bf #1}}
\def\underrightarrow@#1#2{\vtop{\ialign{$##$\cr
 \hfil#1#2\hfil\cr\noalign{\nointerlineskip}%
 #1{-}\mkern-6mu\cleaders\hbox{$#1\mkern-2mu{-}\mkern-2mu$}\hfill
 \mkern-6mu{\to}\cr}}} \let\underarrow\underrightarrow 
\def\underleftarrow{\mathpalette\underleftarrow@} 
\def\underleftarrow@#1#2{\vtop{\ialign{$##$\cr
 \hfil#1#2\hfil\cr\noalign{\nointerlineskip}#1{\leftarrow}\mkern-6mu
 \cleaders\hbox{$#1\mkern-2mu{-}\mkern-2mu$}\hfill
 \mkern-6mu{-}\cr}}}
%
%

%
\def\:{\colon}
\let\oldtilde=\tilde
\def\tilde#1{\mathchoice{\widetilde{#1}}{\widetilde{#1}}%
{\indextil{#1}}{\oldtilde{#1}}}
\def\indextil#1{\lower2pt\hbox{$\textstyle{\oldtilde{\raise2pt%
\hbox{$\scriptstyle{#1}$}}}$}}
\def\pnt{{\raise1.1pt\hbox{$\textstyle.$}}}
%

%
\let\amp@rs@nd@\relax
\newdimen\ex@\ex@.2326ex
\newdimen\bigaw@
\newdimen\minaw@
\minaw@16.08739\ex@
\newdimen\minCDaw@
\minCDaw@2.5pc
\newif\ifCD@
\def\minCDarrowwidth#1{\minCDaw@#1}
\newenvironment{CD}{\@CD}{\@endCD}
\def\@CD{\def\A##1A##2A{\llap{$\vcenter{\hbox
 {$\scriptstyle##1$}}$}\Big\uparrow\rlap{$\vcenter{\hbox{%
$\scriptstyle##2$}}$}&&}%
\def\V##1V##2V{\llap{$\vcenter{\hbox
 {$\scriptstyle##1$}}$}\Big\downarrow\rlap{$\vcenter{\hbox{%
$\scriptstyle##2$}}$}&&}%
\def\={&\hskip.5em\mathrel
 {\vbox{\hrule width\minCDaw@\vskip3\ex@\hrule width
 \minCDaw@}}\hskip.5em&}%
\def\verteq{\Big\Vert&&}%
\def\noarr{&&}%
\def\vspace##1{\noalign{\vskip##1\relax}}\relax\let\amp@rs@nd@&\iffalse}\fi
 \CD@true\vcenter\bgroup\relax\let\\=\cr\iffalse}\fi\tabskip\z@skip\baselineskip20\ex@
 \lineskip3\ex@\lineskiplimit3\ex@\halign\bgroup
 &\hfill$\m@th##$\hfill\cr}
\def\@endCD{\cr\egroup\egroup}
%
\def\>#1>#2>{\amp@rs@nd@\setbox\z@\hbox{$\scriptstyle
 \;{#1}\;\;$}\setbox\@ne\hbox{$\scriptstyle\;{#2}\;\;$}\setbox\tw@
 \hbox{$#2$}\ifCD@
 \global\bigaw@\minCDaw@\else\global\bigaw@\minaw@\fi
 \ifdim\wd\z@>\bigaw@\global\bigaw@\wd\z@\fi
 \ifdim\wd\@ne>\bigaw@\global\bigaw@\wd\@ne\fi
 \ifCD@\hskip.5em\fi
 \ifdim\wd\tw@>\z@
 \mathrel{\mathop{\hbox to\bigaw@{\rightarrowfill}}\limits^{#1}_{#2}}\else
 \mathrel{\mathop{\hbox to\bigaw@{\rightarrowfill}}\limits^{#1}}\fi
 \ifCD@\hskip.5em\fi\amp@rs@nd@}
\def\<#1<#2<{\amp@rs@nd@\setbox\z@\hbox{$\scriptstyle
 \;\;{#1}\;$}\setbox\@ne\hbox{$\scriptstyle\;\;{#2}\;$}\setbox\tw@
 \hbox{$#2$}\ifCD@
 \global\bigaw@\minCDaw@\else\global\bigaw@\minaw@\fi
 \ifdim\wd\z@>\bigaw@\global\bigaw@\wd\z@\fi
 \ifdim\wd\@ne>\bigaw@\global\bigaw@\wd\@ne\fi
 \ifCD@\hskip.5em\fi
 \ifdim\wd\tw@>\z@
 \mathrel{\mathop{\hbox to\bigaw@{\leftarrowfill}}\limits^{#1}_{#2}}\else
 \mathrel{\mathop{\hbox to\bigaw@{\leftarrowfill}}\limits^{#1}}\fi
 \ifCD@\hskip.5em\fi\amp@rs@nd@}
%
%
\newenvironment{CDS}{\@CDS}{\@endCDS}
\def\@CDS{\def\A##1A##2A{\llap{$\vcenter{\hbox
 {$\scriptstyle##1$}}$}\Big\uparrow\rlap{$\vcenter{\hbox{%
$\scriptstyle##2$}}$}&}%
\def\V##1V##2V{\llap{$\vcenter{\hbox
 {$\scriptstyle##1$}}$}\Big\downarrow\rlap{$\vcenter{\hbox{%
$\scriptstyle##2$}}$}&}%
\def\={&\hskip.5em\mathrel
 {\vbox{\hrule width\minCDaw@\vskip3\ex@\hrule width
 \minCDaw@}}\hskip.5em&}
\def\verteq{\Big\Vert&}
\def\novarr{&}
\def\noharr{&&}
\def\SE##1E##2E{\slantedarrow(0,18)(4,-3){##1}{##2}&}
\def\SW##1W##2W{\slantedarrow(24,18)(-4,-3){##1}{##2}&}
\def\NE##1E##2E{\slantedarrow(0,0)(4,3){##1}{##2}&}
\def\NW##1W##2W{\slantedarrow(24,0)(-4,3){##1}{##2}&}
\def\slantedarrow(##1)(##2)##3##4{%
\thinlines\unitlength1pt\lower 6.5pt\hbox{\begin{picture}(24,18)%
\put(##1){\vector(##2){24}}%
\put(0,8){$\scriptstyle##3$}%
\put(20,8){$\scriptstyle##4$}%
\end{picture}}}
\def\vspace##1{\noalign{\vskip##1\relax}}\relax\let\amp@rs@nd@&\iffalse}\fi
 \CD@true\vcenter\bgroup\relax\let\\=\cr\iffalse}\fi\tabskip\z@skip\baselineskip20\ex@
 \lineskip3\ex@\lineskiplimit3\ex@\halign\bgroup
 &\hfill$\m@th##$\hfill\cr}
\def\@endCDS{\cr\egroup\egroup}
%
\newdimen\TriCDarrw@
\newif\ifTriV@
\newenvironment{TriCDV}{\@TriCDV}{\@endTriCD}
\newenvironment{TriCDA}{\@TriCDA}{\@endTriCD}
\def\@TriCDV{\TriV@true\def\TriCDpos@{6}\@TriCD}
\def\@TriCDA{\TriV@false\def\TriCDpos@{10}\@TriCD}
\def\@TriCD#1#2#3#4#5#6{%
\setbox0\hbox{$\ifTriV@#6\else#1\fi$}
\TriCDarrw@=\wd0 \advance\TriCDarrw@ 24pt
\advance\TriCDarrw@ -1em
\def\SE##1E##2E{\slantedarrow(0,18)(2,-3){##1}{##2}&}
\def\SW##1W##2W{\slantedarrow(12,18)(-2,-3){##1}{##2}&}
\def\NE##1E##2E{\slantedarrow(0,0)(2,3){##1}{##2}&}
\def\NW##1W##2W{\slantedarrow(12,0)(-2,3){##1}{##2}&}

\def\slantedarrow(##1)(##2)##3##4{\thinlines\unitlength1pt
\lower 6.5pt\hbox{\begin{picture}(12,18)%
\put(##1){\vector(##2){12}}%
\put(-4,\TriCDpos@){$\scriptstyle##3$}%
\put(12,\TriCDpos@){$\scriptstyle##4$}%
\end{picture}}}
\def\={\mathrel {\vbox{\hrule
   width\TriCDarrw@\vskip3\ex@\hrule width
   \TriCDarrw@}}}
\def\>##1>>{\setbox\z@\hbox{$\scriptstyle
 \;{##1}\;\;$}\global\bigaw@\TriCDarrw@
 \ifdim\wd\z@>\bigaw@\global\bigaw@\wd\z@\fi
 \hskip.5em
 \mathrel{\mathop{\hbox to \TriCDarrw@
{\rightarrowfill}}\limits^{##1}}
 \hskip.5em}
\def\<##1<<{\setbox\z@\hbox{$\scriptstyle
 \;{##1}\;\;$}\global\bigaw@\TriCDarrw@
 \ifdim\wd\z@>\bigaw@\global\bigaw@\wd\z@\fi
 \mathrel{\mathop{\hbox to\bigaw@{\leftarrowfill}}\limits^{##1}}
 }
 \CD@true\vcenter\bgroup\relax\let\\=\cr\iffalse}\fi
 \tabskip\z@skip\baselineskip20\ex@
 \lineskip3\ex@\lineskiplimit3\ex@
 \ifTriV@
 \halign\bgroup
 &\hfill$\m@th##$\hfill\cr
#1&\multispan3\hfill$#2$\hfill&#3\\
&#4&#5\\
&&#6\cr\egroup%
\else
 \halign\bgroup
 &\hfill$\m@th##$\hfill\cr
&&#1\\%
&#2&#3\\
#4&\multispan3\hfill$#5$\hfill&#6\cr\egroup
\fi}
\def\@endTriCD{\egroup} 
\newcommand{\mc}{\mathcal} 
\newcommand{\mb}{\mathbb} 
\newcommand{\surj}{\twoheadrightarrow} 
\newcommand{\inj}{\hookrightarrow} \newcommand{\zar}{{\rm zar}} 
\newcommand{\an}{{\rm an}} \newcommand{\red}{{\rm red}} 
\newcommand{\Rank}{{\rm rk}} \newcommand{\codim}{{\rm codim}} 
\newcommand{\rank}{{\rm rank}} \newcommand{\Ker}{{\rm Ker \ }} 
\newcommand{\Pic}{{\rm Pic}} \newcommand{\Div}{{\rm Div}} 
\newcommand{\Hom}{{\rm Hom}} \newcommand{\im}{{\rm im}} 
\newcommand{\Spec}{{\rm Spec \,}} \newcommand{\Sing}{{\rm Sing}} 
\newcommand{\sing}{{\rm sing}} \newcommand{\reg}{{\rm reg}} 
\newcommand{\Char}{{\rm char}} \newcommand{\Tr}{{\rm Tr}} 
\newcommand{\Gal}{{\rm Gal}} \newcommand{\Min}{{\rm Min \ }} 
\newcommand{\Max}{{\rm Max \ }} \newcommand{\Alb}{{\rm Alb}\,} 
\newcommand{\GL}{{\rm GL}\,} 
\newcommand{\ie}{{\it i.e.\/},\ } \newcommand{\niso}{\not\cong} 
\newcommand{\nin}{\not\in} 
\newcommand{\soplus}[1]{\stackrel{#1}{\oplus}} 
\newcommand{\by}[1]{\stackrel{#1}{\rightarrow}} 
\newcommand{\longby}[1]{\stackrel{#1}{\longrightarrow}} 
\newcommand{\vlongby}[1]{\stackrel{#1}{\mbox{\large{$\longrightarrow$}}}} 
\newcommand{\ldownarrow}{\mbox{\Large{\Large{$\downarrow$}}}} 
\newcommand{\lsearrow}{\mbox{\Large{$\searrow$}}} 
\renewcommand{\d}{\stackrel{\mbox{\scriptsize{$\bullet$}}}{}} 
\newcommand{\dlog}{{\rm dlog}\,} 
\newcommand{\longto}{\longrightarrow} 
\newcommand{\vlongto}{\mbox{{\Large{$\longto$}}}} 
\newcommand{\limdir}[1]{{\displaystyle{\mathop{\rm lim}_{\buildrel\longrightarrow\over{#1}}}}\,} 
\newcommand{\liminv}[1]{{\displaystyle{\mathop{\rm lim}_{\buildrel\longleftarrow\over{#1}}}}\,} 
\newcommand{\norm}[1]{\mbox{$\parallel{#1}\parallel$}} 
\newcommand{\boxtensor}{{\Box\kern-9.03pt\raise1.42pt\hbox{$\times$}}} 
\newcommand{\into}{\hookrightarrow} \newcommand{\image}{{\rm image}\,} 
\newcommand{\Lie}{{\rm Lie}\,} 
\newcommand{\CM}{\rm CM}
\newcommand{\sext}{\mbox{${\mathcal E}xt\,$}} 
\newcommand{\shom}{\mbox{${\mathcal H}om\,$}} 
\newcommand{\coker}{{\rm coker}\,} 
\newcommand{\sm}{{\rm sm}} 
\newcommand{\tensor}{\otimes} 
\renewcommand{\iff}{\mbox{ $\Longleftrightarrow$ }} 
\newcommand{\supp}{{\rm supp}\,} 
\newcommand{\ext}[1]{\stackrel{#1}{\wedge}} 
\newcommand{\onto}{\mbox{$\,\>>>\hspace{-.5cm}\to\hspace{.15cm}$}} 
\newcommand{\propsubset} {\mbox{$\textstyle{ 
\subseteq_{\kern-5pt\raise-1pt\hbox{\mbox{\tiny{$/$}}}}}$}} 
\newcommand{\sA}{{\mathcal A}} 
\newcommand{\sB}{{\mathcal B}} \newcommand{\sC}{{\mathcal C}} 
\newcommand{\sD}{{\mathcal D}} \newcommand{\sE}{{\mathcal E}} 
\newcommand{\sF}{{\mathcal F}} \newcommand{\sG}{{\mathcal G}} 
\newcommand{\sH}{{\mathcal H}} \newcommand{\sI}{{\mathcal I}} 
\newcommand{\sJ}{{\mathcal J}} \newcommand{\sK}{{\mathcal K}} 
\newcommand{\sL}{{\mathcal L}} \newcommand{\sM}{{\mathcal M}} 
\newcommand{\sN}{{\mathcal N}} \newcommand{\sO}{{\mathcal O}} 
\newcommand{\sP}{{\mathcal P}} \newcommand{\sQ}{{\mathcal Q}} 
\newcommand{\sR}{{\mathcal R}} \newcommand{\sS}{{\mathcal S}} 
\newcommand{\sT}{{\mathcal T}} \newcommand{\sU}{{\mathcal U}} 
\newcommand{\sV}{{\mathcal V}} \newcommand{\sW}{{\mathcal W}} 
\newcommand{\sX}{{\mathcal X}} \newcommand{\sY}{{\mathcal Y}} 
\newcommand{\sZ}{{\mathcal Z}} \newcommand{\ccL}{\sL} 
 \newcommand{\A}{{\mathbb A}} \newcommand{\B}{{\mathbb B}} 
\newcommand{\C}{{\mathbb C}} \newcommand{\D}{{\mathbb D}} 
\newcommand{\E}{{\mathbb E}} \newcommand{\F}{{\mathbb F}} 
\newcommand{\G}{{\mathbb G}} \newcommand{\HH}{{\mathbb H}} 
\newcommand{\I}{{\mathbb I}} \newcommand{\J}{{\mathbb J}} 
\newcommand{\M}{{\mathbb M}} \newcommand{\N}{{\mathbb N}} 
\renewcommand{\P}{{\mathbb P}} \newcommand{\Q}{{\mathbb Q}} 
\newcommand{\nnu}{{\boldsymbol \nu}}
\newcommand{\mmu}{{\boldsymbol \mu}}

\newcommand{\eell}{{\boldsymbol r}}

\newcommand{\R}{{\mathbb R}} \newcommand{\T}{{\mathbb T}} 
\newcommand{\U}{{\mathbb U}} \newcommand{\V}{{\mathbb V}} 
\newcommand{\W}{{\mathbb W}} \newcommand{\X}{{\mathbb X}} 
\newcommand{\Y}{{\mathbb Y}} \newcommand{\Z}{{\mathbb Z}} 

\title{The Hilbert-Kunz density functions of
 quadric hypersurfaces}

\author{Vijaylaxmi Trivedi} 
\date{} \address{School of Mathematics, Tata Institute of Fundamental 
Research, Homi Bhabha Road, Mumbai-40005, India } \email{vija@math.tifr.res.in}

\keywords{Hilbert-Kunz multiplicity, spinor bundles, smooth quadric, ACM bundles}

\thanks{}
\begin{abstract}
We show that 
  the Hilbert-Kunz density function of a quadric hypersurface of Krull 
dimension  $n+1$ is a piecewise polynomial on a subset of $[0, n]$, whose 
complement  in $[0, n]$ has measure zero. Our explicit description of 
the Hilbert-Kunz density function confirms a 
 conjecture of Watanabe-Yoshida on the lower 
bound of the Hilbert-Kunz  multiplicity of the quadric of dimension $n+1$, 
provided the characteristic is at least $n-1$. We also show that
the Hilbert-Kunz multiplicity of a quadric of fixed dimension is 
an eventually strictly decreasing  function of the characteristic 
confirming a conjecture of Yoshida. 

The main input comes from the 
classification of Arithmetically Cohen-Macaulay  bundles on the 
projective variety  defined by the quadric via matrix factorizations. 
\end{abstract} \maketitle 
\section{Introduction}

Let $R$ be a Noetherian ring containing a field of characteristic $p>0$ and
 $I$ be an ideal of finite colength in $R$. For such a pair P. Monsky (in [M]) had 
introduced a characteristic $p$ invariant known as the Hilbert-Kunz (HK) 
multiplicity $e_{HK}(R, I)$. This is a positive real number given by 
$$e_{HK}(R, I) = \lim_{n\to \infty}\frac{\ell(R/I^{[q]})}{q^{\dim R}}\geq 1.$$

If $(R, {\bf m} , k)$ is a formally unmixed Noetherian local ring then it was 
proved by Watanabe-Yoshida (Theorem~1.5 in [WY1]) that $e_{HK}(R,{\bf m}) = 1$ 
if and only if $R$ is regular.  For the next best class of rings, namely 
the quadric hypersurfaces,  
 they made the following conjecture in 2005:

\vspace{5pt}

\noindent{\bf Conjecture}~(1)~~(Conjecture~4.2 in [WY2]).\quad{\it Let $p>2$ be 
prime and $K = {\bf {\bar {F}_p}}$ and let 
$$ R_{p, n+1} = K[x_0, \ldots, x_{n+1}]/ (x_0^2+\cdots + x_{n+1}^2)$$
denote  the quadric 
 hypersurface of dimension $n+1$. Then for any
formally 
unmixed non regular local ring
$(A, {\bf m}_A, K)$  of dimension $n+1$
we have 
$$e_{HK}(A, {\bf m}_A) 
\geq e_{HK}(R_{p,n+1}, {\bf m}) \geq 1+m_{n+1},$$

where $m_{n+1}$ are the constants occurring as the coefficients of 
the following expression 
$$\mbox{sec}(x)+\mbox{tan}(x) = 1+\sum_{n=0}^{\infty}m_{n+1}x^{n+1}, 
\quad\mbox{for}\quad |x|<\pi/2.$$}

In the same paper they showed that the conjecture holds for $n\leq 3$. 
The second inequality of the conjecture for $n\leq 5$ was proved by Yoshida in 
[Y]. Later the conjecture up to $n\leq 5$ was proved by Aberbach-Enescu in 
[AE2].

Enescu and Shimomoto in [ES] have proved the first inequality $e_{HK}(A) \geq 
e_{HK}(R_{p, n+1})$, where $A$ belongs to the class of
 complete intersection local rings.

On the other hand, around 2010,  Gessel-Monsky in [GM],  
proved  the following result:
$$\lim_{p\to \infty}e_{HK}(R_{p, n+1}, {\bf m}) = 1+m_{n+1}.$$

\vspace{5pt}

Progresses on Conjecture~(1) is discussed in Huneke's survey article [Hu].

Various people ([AE1], [AE2], 
Celikbas-Dao-Huneke-Zhang in [CDHZ]) have given a lower bound of the form 
 $e_{HK}(R, {\bf m}) \geq C(d)$, where $C(d)$ is an explicit constant and   
$(R, {\bf m})$ is an arbitrary 
 formally unmixed nonregular  local ring    
of dimension $d$.

However the above result of Gessel-Monsky implies that
 the lower bounds such as $C(d)$ are weaker than the bound given in the above 
conjecture: 
Recall so far the best constant  $C(d) = 1+\frac{1}{d!d^d}$ 
(see Section~8, Theorem~13 in [Hu]), whereas, 
as noted in [AE2],
$$1+m_3 = 1+\frac{1}{3!},\quad 1+m_4 = 1+\frac{5}{4!}, \quad 1+m_5 = 1+
\frac{16}{5!},
\quad 1+m_6  = 1+\frac{61}{6!}\quad\mbox{etc}.$$

Regarding the HK multiplicities of the quadric hypersurfaces,
  another conjecture was made by Yoshida ([Y]):

\vspace{5pt}

\noindent{{\bf Conjecture}~(2)}.\quad{\em $e_{HK}(R_{p, n+1}, {\bf m})$ is a 
decreasing function in $p$ for a fixed $n$.}

\vspace{5pt}
Note that it is not true in general that $e_{HK}(R_p)$ is a monotonic 
function of $p$. The first example which is  given by Han-Monsky ([HM]) is the ring 
$R_p = k[x,y,z]/(x^4+y^4+z^4)$, where $k$ is  a field of characteristic $p>2$
and  $e_{HK}(R_p, (x,y,z)))$ is a function of the  congruence class of 
$p~~(\mbox{mod}~~8)$
and not monotonic.

\vspace{5pt}

Conjecture~(1) and Conjecture~(2)
have been revisited in the recent paper [JNSWY]. 

\vspace{5pt}

Hilbert-Kunz density functions, which were  introduced in [T1], generalize
Hilbert-Kunz multiplicities in the graded setting. 
In this paper, our key new idea is to
describe the Hilbert-Kunz density functions of quadric hypersurfaces.

\vspace{5pt}
As applications of our description in Section~4, we obtain 
upper and lower  bounds of the values of the respective 
HK multiplicities of the quadric hypersurfaces.

In particular we prove the second inequality of the Conjecture~(1) 
by showing 

\vspace{5pt}

\noindent{\bf Theorem}~(A)~~(Theorem~{\ref{t1}}).\quad {\em Let $p\neq 2$ and  let $p > n-2$. Then, 
for $n\geq 3$, 
$$1+m_{n+1} + \tfrac{2n-4}{p} 
\geq e_{HK}(R_{p, n+1}, {\bf m})\geq   1+m_{n+1}.$$

 In fact the second inequality is strict if $p\geq 3n-4$.}

\vspace{5pt}

In Section~8, we obtain a more refined description of the 
Hilbert-Kunz density function than in Section~4. As an application, 
we show that

\vspace{5pt}

\noindent{\bf Theorem}~(B)~~(Theorem~\ref{ehkexpr} and 
Theorem~\ref{eehkexpr}).\quad{\em Given $n\geq 4$, there exist 
polynomials $p_n(t), q_n(t)\in  \Q[t]$ of degrees $\leq (n+1)^{n+3}$, 
 such that for all $p > 2^{\lfloor n/2\rfloor}(n-2)$
$$e_{HK}(R_{p, n+1}) = 1+m_{n+1}+ 
\frac{p_n(t)}{q_n(t)}\Bigr\rvert_{t={1/p}}.$$
The same assertion holds if $n=3$ and $p\geq 5$.}

Given $n$, the polynomials $p_n(t)$ and  $q_n(t)$  can be computed 
explicitly by 
Theorem~\ref{ehkexpr} and Theorem~\ref{eehkexpr}. For a prime $p>2$ and 
$n = 3, 4, 5$, the respective  Hilbert-Kunz 
multiplicity is also computed in [WY2] and [Y].

As an application of our computation, we provide the following confirmation
of  Conjecture~(2).

\vspace{5pt}

\noindent{\bf Theorem}~(C)~~(Theorem~\ref{pinfty}).\quad{\em Given any $n\geq 3$, there exists $\epsilon  
>0$ such that 
 $e_{HK}(R_{p, n+1})$ is a strictly decreasing function of $p$ for $p\geq 1/\epsilon$.}

\vspace{5pt}

A value of $\epsilon $ in Theorem~(C) is explicitly determined 
in Section~8; see Theorem~\ref{pinfty}.

\vspace{5pt}

Now we outline our methods of obtaining the Hilbert-Kunz density function 
and provide descriptions of the Hilbert-Kunz density functions. 

Recall  that for a pair $(R, I)$, where $I$ is a finite colength 
homogeneous ideal in a standard graded ring of dimension at least two, the 
 HK density 
function $f_{R,I}:\R\longto [0, \infty)$ is a
 compactly supported continuous function given by 
$$f_{R, I}(x) = \lim_{s\to \infty}\frac{\ell(R/I^{[q]})_{\lfloor xq \rfloor}}{q^{\dim R-1}}, 
\quad\mbox{where}\quad q=p^s.$$
The HK density function recovers the HK multiplicity since 
$$e_{HK}(R,I) = \int_0^{\infty} f_{R,I}(x)dx.$$ 

For further details we refer to Section~2.

The theory of Hilbert-Kunz density function was initiated in [T1] and 
subsequently developed in [TW2] (see Theorem~1.1 and the preceding definition  
in [T1] and [TW2]), and [T2] (see Proposition~2.1).  

The HK density function of $(R, I)$ tracks the length of the 
graded pieces of $R/I^{[p^n]}$ separately. This very feature of the 
HK density functions allows us to use the machinery of sheaf cohomology and
other algebro-geometric gadgets on $\mbox{Proj}(R)$ to 
describe the HK density functions; and thus in turn the HK multiplicity.
These advantages of using HK density functions 
 over the mere HK multiplicities 
are subsequently apparent.

Before going further  we remark that 
  for $n=1$ and $n=2$, 
 the ring $R_{p, n+1}$ is the homogeneous coordinate ring
of $\P^1_k$ and $\P_k^1\times \P_k^1$ respectively. In both the cases the invariants
  $e_{HK}(R_{p, n+1})$ and 
$f_{R_{p, n+1}, {\bf m}}$ are independent of the characteristic (see 
Eto-Yoshida [EY] and [T1]). 

Therefore in the rest of the paper we consider 
$$R_{p, n+1} = K[x_0, \ldots, x_{n+1}]/ (x_0^2+\cdots + x_{n+1}^2),\quad\mbox{where}\quad n\geq 3,$$
where $K$ is a perfect field of characteristic $p>0$; $R_{p, n+1}$ is 
equipped with the standard graded structure. Set $Q_n = 
\mbox{Proj}(R_{p, n+1})$.

One key input that facilitates calculation in the quadric 
hypersurface case is the complete classification of arithmetically 
Cohen-Macaulay bundles (abbreviated as ACM bundles) on $Q_n$ as a 
 direct sum of line bundles and twisted spinor bundles.

This classification follows from the classification of 
maximal Cohen-Macaulay modules on $R_{p, n+1}$ due to 
Knorrer [K] and Buchweitz-Eisenbud-Herzog [BEH]; we recall the 
relevant details in Section~2.
Since  $F^{s}_*(\sO(a))$ is an  ACM bundle on $Q_n$, for every
 $s^{th}$ iterated Frobenius map $F^s:Q_n\longto Q_n$ we have
\begin{equation}\label{*1}F_*^s(\sO(a)) = \oplus _{t\in \Z}\sO(t)^{\nu^s_t(a)} \oplus \oplus_{t\in \Z}
\sS(t)^{\mu^s_t(a)},\end{equation}
where $\sO(t) = \sO_{Q_n}(t)$ and $\sS(t)$ is a twisted spinor bundle if $n$ is odd, and 
a twisted direct sum of the two spinor bundles if $n$ is even.

Later Achinger  in [A] showed that 
 the multiplicities  of the bundles $\sO(t)$ and $\sS(t)$ occurring in 
(\ref{*1}) are related  to the lengths of 
graded components of the ring $R_{p, n+1}/{\bf m}^{[q]}$, with $q = p^s$,
 by the formula
\begin{equation}\label{*}\ell (R_{p, n+1}/{\bf m}^{[q]})_a = 
\nu^{s}_0(a) + 2\lambda_0 \mu^s_1(a),\end{equation}
where 
${\bf m} = (x_0, \ldots, x_{n+1})$ and $\lambda_0 =  2^{\lfloor n/2\rfloor}$.

Here  we crucially use this,  and  another result, namely Theorem~2 from [A] which
 determines the  integers $t_1$ and $t_2$,  in terms of $q=p^s$ and  $a$, for which 
 the bundles
 $\sO(t_1)$ and $\sS(t_2)$ occur with non-zero multiplicity  in  the decomposition 
of $F^s_*(\sO(a))$ (or of  $F^s_*(\sS(a))$). However [A] 
does not determine this multiplicity.

Using (\ref{*})  and the theory of HK density functions we show that 
 the functions 
$\nnu_{i}^{(p)}:\R\longto \R$, $\mmu_{i-1}^{(p)}:\R\longto \R$
given by
\begin{equation}\label{contifun}
\nnu_{i}^{(p)}(x) =  \lim_{s\to \infty}\frac{\nu^s_{-i}(\lfloor xq\rfloor)}{q^n}, 
\quad 
\mmu_{i-1}^{(p)}(x) = \lim_{s\to \infty}\frac{2\lambda_0\mu^s_{-i+1}(\lfloor xq\rfloor)}{q^n}\end{equation}
are   well defined functions continuous functions  and for any nonnegative
integer $i$
 $$f_{R_{p, n+1}}(x+i) = \nnu_{i}^{(p)}(x)+\mmu_{i-1}^{(p)}(x),\quad\mbox{for}~~ x\in [0, 1).$$

Next we  analyze the functions
$\nnu_{i}^{(p)}$, $\mmu_{i-1}^{(p)}$.

We show that there exists a subset of $[0, 1]$  
which we call the {\it difficult range} in the interval $[0, 1)$, provided  $p>n-2$,  
given as follows:
\begin{enumerate}
\item  $[\frac{1}{2}-\frac{n-2}{2p}, \quad \frac{1}{2}+\frac{n-2}{2p})$,\quad
if $n\geq 3$ is an odd integer and 
\item  $[0, ~~~\frac{n-2}{2p})\cup [1-\frac{n-2}{2p},~~~1)$,\quad
if $n\geq 4$ is an even  integer.
\end{enumerate}

We show (see Proposition~\ref{lp}) that the rank
 functions $\nnu_i^{(p)}$, $\mmu_{j}^{(p)}$, when restricted to  the 
complement of the difficult range in $[0, 1]$, are
{\it piecewise polynomials}. Moreover  we see that 
these polynomials  are independent of the 
characteristic $p$.
Such a  description  springs 
from the fact that 
 the bundle $F^s_*(\sO(\lfloor xq\rfloor))$ 
has atmost one twist of spinor bundle(s) if  $x$ is outside the difficult 
range.

The dependence of the density function of $R_{p, n+1}$ on the characteristic 
$p>0$ is only via  its description on the difficult range.
We note that 
 the length of the difficult range goes to zero  as $p\to \infty$. 
and in this range 
characteristic $p$ shows up in  the guise of functions $\mmu^{(p)}_{i}$, 
which is a bounded function 
(see Lemma~\ref{cont}).
More precisely we have:

\subsection*{The first formulation of $f_{R_{p, n+1}}$}
 Here we 
express $f_{R_{p, n+1}}$, piecewise, as a sum of 
a polynomial in  $\Q[x]$ (and therefore characteristic free) and
another nonnegative  continuous function which is exactly the ``pure characteristic 
$p$'' contribution.  We prove Proposition~\ref{l4} and Corollary~\ref{t2} 
in Section~4.

\begin{propose}\label{l4}Let $p\neq 2$ be a prime such that $p > n-2$ 
and $n_0 = \lceil {n}/{2}\rceil -1$. There exists an explicit  set 
$\{{\bf Z}_0(x), \ldots, {\bf Z}_{n_0+1}(x), 
{\bf Y}_{n_0+2}(x), \ldots, {\bf Y}_{n-1}(x)\}$
of   degree $n$ polynomials in 
$\Q[x]$ which are independent of the characteristic $p$, and a set 
$\{\mmu^{(p)}_{n_0-1}, \mmu^{(p)}_{n_0}, \mmu^{(p)}_{n_0+1}\}$  of 
bounded continuous  functions from $[0, 1) \longto \R_{\geq 0}$ such that 
 
\begin{enumerate}
\item if $n\geq 4$ is  an even number then

$$f_{R_{p, n+1}}(x) =
 \begin{cases}  {\bf Z}_{i}(x), \quad\mbox{if}\quad 
 i\leq x < i+1 \quad\mbox{and}\quad 0\leq i \leq  n_0\\\\
{\bf  Z}_{n_0+1}(x),\quad\mbox{if}\quad  
(n_0+1) \leq x < (n_0+2) -\frac{n-2}{2p}\\\\
 {\bf  Z}_{n_0+1}(x)+
{\mmu}^{(p)}_{n_0-1}(x-n_0-1),
\quad\mbox{if}\quad  1-\tfrac{n-2}{2p}\leq x -(n_0+1)< 1 \\\\
{\bf  Y}_{n_0+2}(x)+ 
{\mmu}^{(p)}_{n_0+1}(x-n_0-2),
\quad\mbox{if\quad  $0\leq x -(n_0+2) < \small{\frac{n-2}{2p}}$}\\\\
 {\bf Y}_{n_0+2}(x),\quad\mbox{if}\quad  
(n_0+2)+\frac{n-2}{2p} \leq x < (n_0+3)\\\\
  {\bf Y}_{i}(x),\quad\mbox{if}\quad 
 i\leq x < i+1 \quad\mbox{and}\quad n_0+3\leq i < n
\end{cases}$$
and $f_{R_p, n+1}(x) = 0$ elsewhere.

\vspace{10pt}

\item If $n\geq 3$ is  an odd number  then

$$f_{R_p, n+1}(x)  = 
\begin{cases} {\bf Z}_{i}(x) 
 \quad\mbox{if}\quad 
 i\leq x < i+1 \quad\mbox{and}\quad 0\leq i \leq  n_0\\\\
 {\bf  Z}_{n_0+1}(x), \quad\mbox{if}\quad  
(n_0+1) \leq x < (n_0+\frac{3}{2}) -\frac{n-2}{2p}\\\\
{\bf  Y}_{n_0+1}(x)+ \mmu^{(p)}_{n_0}(x-n_0-1),\quad
\mbox{if}\quad \tfrac{1}{2}-\tfrac{n-2}{2p}\leq x -(n_0+1) < 
\tfrac{1}{2}+\tfrac{n-2}{2p}\\\\
{\bf Y}_{n_0+1}(x),\quad\mbox{if}\quad  
(n_0+1)+\frac{1}{2}+\frac{n-2}{2p} \leq x < (n_0+2)\\\\
{\bf Y}_i(x),\quad\mbox{if}\quad 
 i \leq x < i+1 \quad\mbox{and}\quad n_0+2\leq i <  n
\end{cases}$$
and $f_{R_p, n+1}(x) = 0$ elsewhere.
Here we can also write 
$$f_{R_p, n+1}(x)  = 
{\bf  Z}_{n_0+1}(x)+ {\mmu^{(p)}_{n_0-1}}(x-n_0-1),\quad
\mbox{if}\quad \tfrac{1}{2}-\tfrac{n-2}{2p}\leq x -(n_0+1) < 
\tfrac{1}{2}.$$
\end{enumerate}
Moreover in both the cases  the function $f_{R_p, n+1}$ is partially symmetric 
and the symmetry is given by 
$$f_{R_p, n+1}(x) = f_{R_p, n+1}(n-x)\quad \mbox{for}\quad 0\leq x \leq 
\tfrac{n-2}{2}\left(1-\tfrac{1}{p}\right).$$ 
\end{propose}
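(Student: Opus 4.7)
The plan is to combine the identity (\ref{*}) with Achinger's Theorem~2, which pins down precisely which line-bundle and spinor-bundle twists appear in the ACM decomposition (\ref{*1}) of $F^s_*(\sO(a))$, and then to extract the piecewise formula by tracking multiplicities on the ranges where the decomposition is essentially rigid.

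First I would recast the statement in terms of the auxiliary functions of (\ref{contifun}): since $f_{R_{p,n+1}}(x+i)=\nnu_i^{(p)}(x)+\mmu_{i-1}^{(p)}(x)$ on $[0,1)$, it suffices to compute $\nnu_i^{(p)}$ and $\mmu_{i-1}^{(p)}$ separately for each relevant $i$. Achinger's Theorem~2 tells us that for $a=\lfloor xq\rfloor$ the admissible twists $t$ with $\nu^s_t(a)\neq 0$ or $\mu^s_t(a)\neq 0$ form short intervals whose endpoints are explicit affine functions of $a/q$. Outside the difficult range precisely one spinor twist and at most two consecutive line-bundle twists contribute, whereas inside the difficult range two spinor twists appear; this dichotomy is exactly what produces the two types of pieces in the statement and already explains why the $p$-dependence of $f_{R_{p,n+1}}$ can only come from bounded corrections localized on intervals whose total length shrinks like $1/p$.

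With the admissible $(t_1,t_2)$ fixed, I would determine the multiplicities from a linear system: the rank identity $\sum_t \nu^s_t(a)+\lambda_0\cdot(\rank \sS)\sum_t\mu^s_t(a)=q^n$ together with $\chi$-computations for $F^s_*(\sO(a))\otimes\sO(j)$ for a few small values of $j$, using the Hilbert polynomial of $Q_n$ and the known Chern character of the spinor bundle. Outside the difficult range this system has a unique solution; dividing by $q^n$ and letting $s\to\infty$ produces polynomials in $x$ of degree $n$ with rational coefficients that do not depend on $p$, namely the ${\bf Z}_i$ and ${\bf Y}_i$. Inside the difficult range the same equations leave one degree of freedom, which is absorbed into the bounded continuous corrections $\mmu^{(p)}_{n_0-1},\mmu^{(p)}_{n_0},\mmu^{(p)}_{n_0+1}$; their boundedness is immediate from non-negativity of the multiplicities together with the fact that their weighted sum is polynomial in $a/q$.

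Finally, the partial symmetry $f_{R_{p,n+1}}(x)=f_{R_{p,n+1}}(n-x)$ on $[0,\tfrac{n-2}{2}(1-1/p)]$ would follow from Serre duality on $Q_n$ applied to the ACM decomposition: duality sends $F^s_*\sO(a)$ to $F^s_*\sO(a')$ for the complementary value of $a$ (via the canonical twist $\omega_{Q_n}=\sO(-n)$), exchanging $\nu^s_t$ with $\nu^s_{-n-t}$ and acting compatibly on the spinor multiplicities, which normalizes into the reflection $x\mapsto n-x$ of the density function on any range sitting outside the difficult one. The main obstacle will be the third step: one has to verify that the $p$-independent piece of the linear system genuinely yields polynomials of degree $n$ in $x$, and that, after splicing together the contributions coming from the different admissible $(t_1,t_2)$ and from the parity of $n$, the resulting formulas match the intervals and expressions listed in the proposition case by case.
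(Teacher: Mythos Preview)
Your overall plan---reduce to $\nnu_i^{(p)}+\mmu_{i-1}^{(p)}$ via Lemma~\ref{rk2}, use Achinger's theorem to pin down the admissible twists, then extract the multiplicities from a linear system of cohomological origin---matches the paper's strategy. Two points, however, separate your sketch from the actual proof.

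First, the claim that ``at most two consecutive line-bundle twists contribute'' is wrong: for $0\le a<q$, Achinger's condition $0\le a-tq\le n(q-1)$ forces $t\in\{0,-1,\dots,-(n-1)\}$, so all $n$ line-bundle twists appear and there are $n+2$ (for $n$ odd) or $n+3$ (for $n$ even) unknowns. The paper does not use Euler characteristics. Instead it computes $h^0\bigl(F_*^s\sO(a)\otimes\sO(i)\bigr)$ for $i=0,\dots,n_0+1$ and $h^n\bigl(F_*^s\sO(a)\otimes\sO(-j)\bigr)$ for $j=1,\dots,n-n_0-1$; because $h^0(\sO(m))=0$ for $m<0$ and $h^n(\sO(m))=0$ for $m>-n$, each batch is a \emph{triangular} system that solves recursively and yields the explicit integers $Z_{-i}(a,q)$ and $Y_{-i}(a,q)$ of Notations~\ref{n2} (see Lemma~\ref{l2}). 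Passing to the limit $q\to\infty$ then produces the polynomials ${\bf Z}_i,{\bf Y}_i$ directly, and Proposition~\ref{rc} assembles these into the piecewise description. Your $\chi$-approach carries equivalent information but loses this triangular structure; you would still need to invert the system and identify the answer with the listed polynomials.

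Second, the partial symmetry is \emph{not} obtained in the paper by dualizing $F_*^s\sO(a)$. The paper verifies computationally (in the proof of Theorem~\ref{f0}) that ${\bf Z}_j(x)={\bf Y}_{n-1-j}(n-x)$ directly from the recursions defining $Z_{-i}$ and $Y_{-i}$, using only that $\lim_{q}L_{a+iq}/q^n$ depends on $a/q+i$. A Grothendieck-duality argument of the kind you propose would require producing an isomorphism $F_*^s\sO(a)^\vee\cong F_*^s\sO(a')\otimes\sO(c)$ and tracking the spinor summands through $\sS^\vee\cong\sS(1)$; this may work, but it is more delicate than the two-line inductive check the paper actually gives.
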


We can look at this result as follows

\begin{cor}\label{t2}If $p > n-2$ and $p\neq 2$ then 
$$\begin{array}{lcll}
 f_{R_{p, n+1}, {\bf m}}(x) & = &  f_{R^{\infty}_{n+1}}(x) & 
x\in [0,\quad \frac{n+2}{2}-\frac{n-2}{2p}]\\\\

& = & f_{R^{\infty}_{n+1}}(x)  + \mmu^{(p)}_{n_0-1}(x-n_0-1)  &
x\in [\frac{n+2}{2}-\frac{n-2}{2p},\quad \frac{n+2}{2})\\\\

& = &  f_{R^{\infty}_{n+1}}(x) + \mmu^{(p)}_{i}(x-i-1)  & 
x\in [\frac{n+2}{2},\quad \frac{n+2}{2}+\frac{n-2}{2p}]\\\\
& = &  f_{R^{\infty}_{n+1}}(x) & 
x\in [\frac{n+2}{2}+\frac{n-2}{2p},\quad \infty),
\end{array}$$
where $i = n_0+1$ if $n$ is even and $i=n_0$ if $n$ is odd. Moreover 
the function $f_{R^{\infty}_{n+1}}:[0, \infty)
\longto [0, \infty)$ given by 
$$f_{R^{\infty}_{n+1}}(x) :=\lim_{p\to \infty}f_{R_{p, n+1}, {\bf m}}(x)$$ is a 
well defined continuous function.
\end{cor}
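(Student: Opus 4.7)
The plan is to extract Corollary~\ref{t2} directly from Proposition~\ref{l4} by repackaging the piecewise description there as one characteristic-free polynomial function plus a $\mmu^{(p)}$-correction supported only on the difficult range. Concretely, I would define $f_{R^\infty_{n+1}}\colon [0,\infty)\to[0,\infty)$ piecewise by the polynomials ${\bf Z}_0,\dots,{\bf Z}_{n_0+1},{\bf Y}_{n_0+2},\dots,{\bf Y}_{n-1}$ supplied by Proposition~\ref{l4}, extending each across the small neighbourhood of the difficult range where Proposition~\ref{l4} only asserts an equality modulo a $\mmu^{(p)}$ term. By construction this definition uses no characteristic-$p$ data, and reading off the cases in Proposition~\ref{l4} gives at once the four-case identity in the statement of Corollary~\ref{t2}, the correction being $0$ outside the difficult range and equal to the appropriate $\mmu^{(p)}_{\bullet}$ inside it.

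To keep the bookkeeping uniform on both sides of the graph I would invoke the partial symmetry $f_{R_{p,n+1}}(x)=f_{R_{p,n+1}}(n-x)$ on $[0,\tfrac{n-2}{2}(1-\tfrac{1}{p})]$ that closes out Proposition~\ref{l4}: it lets us treat the ${\bf Y}_i$-portion of the graph as the reflection of the ${\bf Z}_i$-portion, so the only interface to worry about is the single point $x_0=(n+2)/2$ where the difficult range degenerates as $p\to\infty$. In particular, for any fixed closed subinterval $J\subset[0,\infty)$ not containing $x_0$, one has, for all $p$ large enough, $J\cap\bigl[\tfrac{n+2}{2}-\tfrac{n-2}{2p},\tfrac{n+2}{2}+\tfrac{n-2}{2p}\bigr]=\emptyset$, and therefore $f_{R^\infty_{n+1}}\!\mid_J$ agrees with $f_{R_{p,n+1},{\bf m}}\!\mid_J$; since the latter is continuous on $J$ by the general theory of HK density functions recalled in Section~2, the former is continuous on $J$ as well.

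The main obstacle, and the only place where a little work is required, is to verify continuity of $f_{R^\infty_{n+1}}$ at the matching point $x_0=(n+2)/2$. Here I would evaluate the two adjacent polynomial pieces, namely ${\bf Z}_{n_0+1}$ and ${\bf Y}_{n_0+2}$ (respectively ${\bf Z}_{n_0+1}$ and ${\bf Y}_{n_0+1}$ in the odd case), at $x_0$ and show that their values agree. The cleanest route is not a direct check on the explicit formulas but an indirect argument: by Lemma~\ref{cont} the functions $\mmu^{(p)}_{\bullet}$ are uniformly bounded, and by Proposition~\ref{l4} the difficult range shrinks to the point $x_0$ as $p\to\infty$, so the continuity of each $f_{R_{p,n+1},{\bf m}}$ at $x_0$ forces ${\bf Z}_{n_0+1}(x_0)={\bf Y}_{n_0+2}(x_0)$ (respectively ${\bf Y}_{n_0+1}(x_0)$) after passing to the $p\to\infty$ limit along $x$ avoiding the difficult range on both sides. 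This equality is also manifest from the partial symmetry identity, which already says ${\bf Z}_i(x)={\bf Y}_{n-i}(n-x)$ on the common range of validity; specialising at $x=x_0$ produces the desired matching.

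Once continuity is in place, the final assertion $f_{R^\infty_{n+1}}(x)=\lim_{p\to\infty}f_{R_{p,n+1},{\bf m}}(x)$ is immediate: for $x\neq x_0$ the equality holds for all $p$ large enough (no limit needed), while at $x=x_0$ it follows from the boundedness of $\mmu^{(p)}_\bullet$ combined with the fact that $x_0$ eventually falls outside the shrinking difficult range on either side, so one may approach $x_0$ through points at which $f_{R^\infty_{n+1}}=f_{R_{p,n+1},{\bf m}}$ and use continuity of both sides.
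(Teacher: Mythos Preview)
Your approach is essentially the paper's: the paper packages the piecewise-polynomial limit description as a separate Theorem~\ref{f0} and then deduces Corollary~\ref{t2} from Proposition~\ref{l4} together with Theorem~\ref{f0}, whereas you inline that description. The substance is the same.

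Two details at the single point $x_0=(n+2)/2$ need tightening. First, the symmetry you quote is ${\bf Z}_j(x)={\bf Y}_{n-1-j}(n-x)$ (not ${\bf Y}_{n-j}$), and in the paper it is proved only for $j\le n_0-1$; this range does not include the index $n_0+1$ (respectively $n_0+2$) needed to match the two pieces at $x_0$, so the symmetry alone does not give continuity there. Second, your final paragraph contains a slip: $x_0$ is the centre of the difficult range for every $p$ and never falls outside it, so the sentence ``$x_0$ eventually falls outside the shrinking difficult range'' is simply false, and the subsequent ``use continuity of both sides'' is circular since continuity of $f^\infty_{R_{n+1}}$ at $x_0$ is what is at issue. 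A clean fix in the odd case is to use the last displayed identity of Proposition~\ref{l4}(2): on the left half of the difficult range one has simultaneously $f_{R_{p,n+1}}(x)={\bf Z}_{n_0+1}(x)+\mmu^{(p)}_{n_0-1}(\cdot)={\bf Y}_{n_0+1}(x)+\mmu^{(p)}_{n_0}(\cdot)$; evaluating at the left endpoint (where $\mmu^{(p)}_{n_0-1}$ vanishes by continuity) and letting $p\to\infty$ gives ${\bf Z}_{n_0+1}(x_0)\ge{\bf Y}_{n_0+1}(x_0)$, and the symmetric argument at the right endpoint gives the reverse inequality. These are minor issues at a single point and do not affect the integral applications.
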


Theorem~(A) follows from Proposition~\ref{l4} and Corollary~\ref{t2} 
as detailed in Theorem~\ref{t1}.

\vspace{5pt}

Further, this  explicit description of support of the
$f_{R_p, n+1}$ along  with the result of [TW1] gives 
 the following result (in Corollary~\ref{c1})

\vspace{10pt}

\noindent{\bf Corollary}~\quad {\em Let $p>2$ be a prime number 
such that $p > n-2$ then the $F$-threshold of the ring
$R_{p ,n+1}$ is $c^{\bf m}({\bf m}) = n$.}

\subsection*{The second formulation of $f_{R_{p, n+1}}$}

In Section~7, we further analyse  the rank functions
$\nnu_i^{(p)}$, $\mmu^{(p)}_i$ on  the 
difficult range.

\begin{itemize}
\item We find a countable collection of mutually disjoint semi open intervals, 
{\em i.e.}, interval of the form $[a, b)$, which are contained in 
the difficult range. 
The complement of the union of these intervals in 
the difficult range has measure zero. Our indexing of the members 
in this countable collection is independent of the underlying characteristic.
See Lemma~\ref{emeasure}, Lemma~\ref{l1c}.
\item In Proposition~\ref{pex} and Proposition~\ref{epex}, we show 
that there is a finite set of matrices with entries in  $\Q[t]$,  $\Q[x]$
such that on a given semi open subinterval the rank functions 
$\nnu_i^{(p)}(x)$, $\mmu^{(p)}_i(x)$
are products of matrices from the above collection, evaluated at $t=1/{p}$.
Given the semi open subinterval the choices of matrices in the product 
only depends on the chosen indexing of the interval.
\end{itemize}

The description of  $\mmu_i^{(p)}$ in Proposition~\ref{pex} and 
Proposition~\ref{epex} gives us 
 a ground where we can compare the  integrals 
of $\mmu_i^{(p)}$ as $p$ varies; the characteristic free indexing 
of the semi open intervals facilitates the comparison.
 Further, using  detailed 
analysis, we choose one subinterval
on which the integral of 
$\mmu_i^{(p)}$ is a strictly decreasing function of $p$ and 
on all other subintervals it is a decreasing function of $p$.

Moreover the integrals  of $\mmu_i^{(p)}$ have nice features over such subintervals:
For example, for given integer $l\geq 1$,   $\int \mmu_i^{(p)}$ over 
the union of subintervals 
which are indexed by tuples of length $l$,  will correspond 
to  $\A^l$ where $\A$ is the  sum of the above mentioned  finite set of 
matrices in $\Q[t]$.
 In particular $\int \mmu_i^{(p)}$  over the difficult range involves a 
power series expression of the matrix. Now arguing that $\A$ has no eigenvalue $\geq 1$ 
   leads to the polynomials as given in Theorem~(B).

\vspace{10pt}

Using Theorem~(B) and the result of [ES] one can 
 answer the Conjecture~(1) for the class of complete local rings in the 
following way:

\vspace{5pt}

{\it Let  $n\geq 4$ and   let $p > 2^{\lfloor n/2\rfloor}(n-2)$.
Let $(A, {\bf m}_A, K)$ be  a complete intersection but nonregular local ring of 
dimension $n+1$. 
Then  
$$e_{HK}(A, {\bf m}_A) \geq e_{HK}(R_{p,n+1}, {\bf m}) = 
1+m_{n+1}+\frac{p_n(t)}{q_n(t)}\bigr\rvert_{t=1/p} > 1+m_{n+1},$$
where 
for given $n$, the polynomials $p_n(t)$ and  $q_n(t)$  can be computed explicitly by 
Theorem~\ref{ehkexpr} and Theorem~\ref{eehkexpr}.}

In particular we have a sharp lower bound on the class of complete intersection rings.

\vspace{5pt}

In the end  we explicitly write down the HK density function for $n=3$ case in 
Theorem~\ref{t3}.
Note that $n=3$ is the first $n$ where 
the $e_{HK}(R_{p,n+1})$ involves the characteristic $p$. Here 
the HK density function, which is a compactly supported 
continuous function, is non smooth at infinitely many points, however, 
intriguingly, it is a $\sC^2(\R)$ function. 
 This is different (see Remark~\ref{future}) from the 
 previous known cases such as  two dimensional graded rings 
(see Example~3.3 in [T1]) or projective toric varieties 
(see [MT] Theorem~3.4 and the proof of Theorem~1.1, or 
Theorem~4.1.18 of [Mo]) 
  where the set of non smooth points is a  finite set.

\vspace{5pt}

\noindent{\bf Question~1}:\quad{\em If $R$ is a standard graded ring over
a perfect
field and is
of dimension $d\geq2$ and $I\subset R$ is a graded ideal of finite colength,
 then when does the HK density function $f_{R,I}$  belong to  $\sC^{d-2}(\R)$?}

An approach to address this question is mentioned in [Muk], where it
is claimed that an affirmative answer to one of the questions in
Question~5.6 shows that the HK density function is in $\sC^{d-2}(\R)$.

\vspace{5pt}

Based on the examples of quadric hypersurfaces 
and also other above mentioned examples
we pose a variant (suggested by the referee) of a question due to K.I. 
Watanabe (private communication).

\vspace{5pt}

\noindent{\bf Question~2}:\quad{\em Let  $(R, I)$ be  a standard graded 
pair of  dimension $d\geq 2$. Let $[0, \alpha]$ be the support of 
the  the HK density function $f_{R, I}$ of $(R, I)$. Does there exist a 
subset $S$ of $[0, \alpha]$ of Lebesgue measure zero such that 
\begin{enumerate}
\item There is a countable collection of mutually disjoint semi open intervals $\{[a_n, b_n)\}_{n\in \N}$ such that 
$$[0, \alpha]\setminus S = \cup_{n\in \N}[a_n, b_n);$$
\item and for each $n$, there is a polynomial $P_n\in \R[x]$ such that 
on $[a_n, b_n)$ the density function is given by $P_n$?
\end{enumerate}
Moreover, can $S$ be taken to  a countable set?}

\vspace{5pt}

Looking further, the methods used in this paper suggest possible computations for the HK density and related 
invariants in other situations, where we have information on ACM bundles 
using matrix factorizations.

\vspace{5pt}

We would like to thank the referee for a careful and thorough 
reading of the paper, and for detailed suggestions to improve 
the exposition.

\vspace{5pt}

\begin{notations} 
In the rest of the paper 
$$R_{p, n+1} = \frac{k[x_0, \ldots, x_{n+1}]}{(x_0^2+\cdots+ x_{n+1}^2]}
\quad\mbox{and}\quad Q_n = \mbox{Proj}~R_{p, n+1},$$
where $n\geq 3$ and $k$ is a perfect field of characteristic $p> 2$.
\end{notations}

\section{preliminaries}

In this section  we recall the relevant results which are known in the literature.

\vspace{5pt}

First we  recall the following 
 notion 
of HK density function for $(R, I)$, where 
$R$ is a standard graded ring  and $I$ is a homogeneous ideal in $R$ 
of finite colength, which  was introduced  by the author ([T1]) for 
standard graded rings and 
later this notion was generalized by the author and Watanabe ([TW2]) for 
$\N$-graded rings.

\begin{defn}\label{dhkd} Let $R$ be a Noetherian standard graded ring of 
dimension 
$d\geq 2$ over a perfect field of characteristic $p>0$, and let 
$I\subset R$ be a homogeneous ideal such that 
$\ell(R/I) < \infty$. For $s\in \N$ and $q = p^s$, let 
$f_s:[0, \infty)\longto [0, \infty)$ be defined as 
$$f_s(R, I)(x) = \frac{\ell(R/I^{[q]})_{\lfloor xq \rfloor}}{q^{\dim R-1}}, 
\quad\mbox{where}\quad q=p^s.$$

\vspace{5pt}

\noindent {\bf Theorem}~ (Theorem~1.1 of [T1]).\quad {\em The sequence 
$\{f_s(R, I)\}_{s\in\N}$ converges 
uniformly to a compactly supported continuous function 
$f_{R, I}:[0, \infty)\longto [0, \infty)$, where 
$$f_{R, I}(x) = \lim_{s\to \infty}f_s(R, I)(x). \quad\mbox{Moreover}\quad
e_{HK}(R, I) = \int_0^\infty f_{R, I}(x)dx.$$}

We call $f_{R, I}$ to be the HK density function of $R$ with respect to the ideal $I$.
\end{defn}

\begin{defn} A vector bundle $E$ on a smooth $n$-dimensional hypersurface 
$X = \mbox{Proj}~S/(f)$, where $S= k[x_0, \ldots, x_{n+1}]$ 
is called arithmetically Cohen-Macaulay (ACM) if 
$H^i(X, E(m))= 0$, for $0 < i < n$ and for all $m$. 

It is easy to check that 
a vector bundle $E$ on $X$  is ACM if and only if the corresponding graded 
$S/(f)$-module 
$\oplus_{j\in \N}H^0(X, E(j))$
is maximal Cohen-Macaulay (MCM).

\end{defn}

Let $Q_n = \mbox{Proj}~S/(f)$ be the quadric given by the hypersurface 
 $f = x_0^2+\cdots +x_{n+1}^2 = 0$ in $\P^{n+1}_k = \mbox{Proj}~S$, where $n\geq 3$. 
Let $k$ be an 
algebraically closed field. Henceforth we assume $n>2$.

By B-E-H classification ([BEH]) of indecomposable graded 
MCM modules over
quadric we have: Other than free modules on $S/(f)$, there is (up to shift) only 
one  indecomposable module $M$ (which corresponds to  the single spinor bundle $\Sigma$ on $Q_n$)
 if $n$ is odd and there are only two of them 
$M_+$ and $M_-$ (which correspond to the 
two spinor bundles $\Sigma_+$ and $\Sigma_{-}$  on $Q_n$) if $n$ is even.

Moreover an MCM module over $S/(f)$ corresponds to  a {\it matrix factorization} of the 
polynomial $f$  (such an equivalence is given by Eisenbud in [E], for more general 
hypersurfaces $(f)$), which is a pair $(\phi, \psi)$ of square matrices of polynomials, 
of the  same size, 
such that 
$\phi\cdot\psi = f\cdot id = \psi\cdot \phi$ and the MCM module is the 
cokernel of $\phi$.

Now  the matrix factorization $(\phi_n, \psi_n)$ for indecomposable bundles on 
$Q_n$ (see Langer [L], Section~2.2)
gives an exact sequence  of locally free sheaves  on $\P_k^{n+1}$, where 
$i:Q_n\longrightarrow \P_k^{n+1}$ is the natural inclusion.
  
\begin{equation}\label{a1}0\longto \sO_{\P_k^{n+1}}(-2)^{2^{\lfloor n/2\rfloor 
+1}}\longby{\Phi_n}
\sO_{\P_k^{n+1}}(-1)^{2^{\lfloor n/2\rfloor +1}}\longto i_*\sS\longto 0,
\end{equation}
$\sS = \Sigma $ and $\Phi_n = \phi_n = \psi_n$  for $n$ odd and 
$\sS = \Sigma_{+}\oplus \Sigma_{-}$ and $\Phi_n = \phi_n\oplus \psi_n$  
for $n$ even. 
Moreover 
we have  (see (2.4), (2.5) and (2.6) in [L])  the short exact 
sequences of vector bundles on $Q_n$:
If $n$ is odd
then 
$$ 0\longto \sS \longto \sO_{Q_n}^{2^{\lfloor n/2\rfloor+1}}\longto \sS(1)\longto 0.$$
If $n$ is even then 
$$ 0\longto \Sigma_{-} \longto \sO_{Q_n}^{2^{\lfloor n/2\rfloor}}\longto \Sigma_{+}(1)
\longto 0$$
and 
$$0\longto \Sigma_{+} \longto \sO_{Q_n}^{2^{\lfloor n/2\rfloor}}\longto \Sigma_{-}(1)
\longto 0.$$

We also have the natural  exact sequence 

\begin{equation}\label{a2}0\longto \sO_{\P_k^{n+1}}(-2)\longto 
\sO_{\P_k^{n+1}}\longto \sO_{Q_n}\longto 0.\end{equation}

We denote  $\sO_{Q_n}(m) = \sO(m)$ and 
$$R_{p, n+1} = \frac{k[x_0, \ldots, x_{n+1}]}{(x_0^2+\cdots+ x_{n+1}^2]} =  
\oplus_{m\geq 0}H^0(Q_n, \sO(m))\quad\mbox{and}\quad n\geq 3,$$ 
where  $k$ is a field of characteristic $p> 2$.
In particular  the $m^{th}$ graded component of $R_{p, n+1}$ is 
$H^0(Q_n, \sO(m))$. We will be using the following set of equalities 
in our forthcoming computations.

\begin{equation}\label{ee} \left\{
\begin{matrix} h^0(Q_n, \sO(m))  = h^0(\P_k^{n+1}, \sO_{\P_k^{n+1}}(m))-
h^0(\P_k^{n+1}, \sO_{\P_k^{n+1}}(m-2))\\\
h^0(Q_n, \sS(m))  =  2\lambda_0[h^0(\P_k^{n+1}, \sO_{\P_k^{n+1}}(m-1))-
h^0(\P_k^{n+1}, \sO_{\P_k^{n+1}}(m-2))],\end{matrix}\right.\end{equation}

where $2\lambda_0 = 2^{\lfloor n/2\rfloor +1}$.

We note that  $\omega_{Q_n} = \sO(-n)$ and $\sS^{\vee} = \sS(1)$, where 
 $\omega_{Q_n}$ denotes the canonical bundle of $Q_n$. Therefore 
 by Serre duality
\begin{equation}\label{sd}\left\{
\begin{matrix} h^n(Q_n, \sO(m)) = h^0(Q_n, \sO(-m-n))\\\
 h^n(Q_n, \sS(m)) = h^0(Q_n, \sS(1-m-n)).\end{matrix}\right.\end{equation}

The rank of $Q_n$-bundle $\sS = \lambda_0 = 2^{\lfloor n/2\rfloor }$.

\vspace{5pt}

The statement and the proof of the following lemma is contained in [A].

\begin{lemma}\label{rk1}
For given integer $a$ and $q=p^s$, if the nonnegative 
integers $\nu^s_{t}(a)$, $\mu^s_{t}(a)$ are the integers occurring in the 
decomposition  
$$F_*^s(\sO(a))
 = \oplus _{t\in \Z}\sO(t)^{{\nu}^s_t(a)} \oplus \oplus_{t\in \Z}
\sS(t)^{{\mu}^s_t(a)}$$
then
$$\ell(R_{p, n+1}/{\bf m}^{[q]})_a 
= \nu^{s}_0(a) + 2\lambda_0 \mu^s_1(a),\quad\mbox{where}\quad 
{\bf m} = (x_0, \ldots, x_{n+1}).$$
\end{lemma}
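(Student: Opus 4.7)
The plan is to interpret $\ell(R_{p,n+1}/{\bf m}^{[q]})_a$ as an $H^1$ on $Q_n$ and then exploit the direct sum decomposition of $F^s_*\sO(a)$ together with the Euler sequence.

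First, $(R_{p,n+1}/{\bf m}^{[q]})_a$ is the cokernel of the multiplication map $R_{a-q}^{n+2}\longby{(x_0^q,\ldots,x_{n+1}^q)} R_a$. Sheafifying on $Q_n$ produces the short exact sequence
\[
0\longto K \longto \sO_{Q_n}(a-q)^{n+2}\longto \sO_{Q_n}(a)\longto 0,
\]
where surjectivity holds because $x_0^q,\ldots,x_{n+1}^q$ have no common zero on $Q_n$. Pulling back the Euler sequence $0\to \Omega^1_{\P^{n+1}}\to \sO_{\P^{n+1}}(-1)^{n+2}\to \sO_{\P^{n+1}}\to 0$ along the $s$-th iterated Frobenius, restricting to $Q_n$, and twisting by $\sO(a)$ identifies $K \cong F^{s*}\Omega^1_{\P^{n+1}}|_{Q_n}\otimes\sO(a)$. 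The long exact cohomology sequence, combined with $H^1(Q_n,\sO(m))=0$ for all $m$ (since $\sO_{Q_n}$ is ACM and $n\geq 3$), yields
\[
(R_{p,n+1}/{\bf m}^{[q]})_a \;=\; H^1\bigl(Q_n,\; F^{s*}\Omega^1_{\P^{n+1}}|_{Q_n}\otimes \sO(a)\bigr).
\]

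Next, since $F^s$ is finite, the projection formula rewrites this as $H^1\bigl(Q_n,\,\Omega^1_{\P^{n+1}}|_{Q_n} \otimes F^s_*\sO(a)\bigr)$. Substituting the decomposition $F^s_*\sO(a) = \oplus_t\sO(t)^{\nu^s_t(a)} \oplus \oplus_t\sS(t)^{\mu^s_t(a)}$ reduces the problem to evaluating $h^1(Q_n,\Omega^1_{\P^{n+1}}|_{Q_n}(t))$ and $h^1(Q_n,\Omega^1_{\P^{n+1}}|_{Q_n}\otimes \sS(t))$ for each $t$. Tensoring the restricted Euler sequence with $\sO(t)$ (resp.\ $\sS(t)$) and using $H^1(\sO(m))=0$ (resp.\ $H^1(\sS(m))=0$ by the ACM property of the spinor bundles), both cohomology groups become cokernels of Euler-type multiplication maps on degree-$t$ global sections.

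For the line bundle term, the cokernel is $R_t/{\bf m}R_{t-1}$, which equals $k$ when $t=0$ and vanishes otherwise, since $R_{p,n+1}$ is generated in degree $1$. For the spinor term, the crucial input is that the graded MCM module $M=\oplus_m H^0(Q_n,\sS(m))$ is generated in degree $1$: this follows because the entries of $\Phi_n$ in (\ref{a1}) are linear forms, so $M$ admits a minimal $S$-resolution of the form $\cdots\to S(-2)^{2\lambda_0}\to S(-1)^{2\lambda_0}\to M\to 0$. Consequently the map $H^0(\sS(t-1))^{n+2}\to H^0(\sS(t))$ is surjective for $t\geq 2$, both sides vanish for $t\leq 0$, and for $t=1$ the cokernel has dimension $h^0(\sS(1))=2\lambda_0$ by (\ref{ee}). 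Summing over $t$ gives $\ell(R_{p,n+1}/{\bf m}^{[q]})_a = \nu^s_0(a) + 2\lambda_0\,\mu^s_1(a)$. The main obstacle is the spinor computation: one must simultaneously invoke the ACM vanishing for $\sS$ and trace through the matrix factorization (\ref{a1}) to ensure that the unique non-vanishing contribution occurs at the shift $t=1$ pairing exactly with $\mu^s_1(a)$.
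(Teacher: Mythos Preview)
Your proof is correct and follows essentially the same route as the paper: interpret the graded piece as $h^1$ of $F^{s*}\Omega^1_{\P^{n+1}}|_{Q_n}$ twisted appropriately, pass via the projection formula to $h^1(\Omega^1_{\P^{n+1}}|_{Q_n}\otimes F^s_*\sO(a))$, and then evaluate summand by summand. The only difference is that where the paper invokes Lemma~1.2 of [A] for the values $h^1(\Omega^1_{\P^{n+1}}(t)|_{Q_n})=\delta_{t,0}$ and $h^1(\sS\otimes\Omega^1_{\P^{n+1}}(t)|_{Q_n})=2\lambda_0\,\delta_{t,1}$, you supply a direct argument via the restricted Euler sequence together with the ACM property and the fact (read off from the matrix factorization~(\ref{a1})) that $\oplus_m H^0(\sS(m))$ is generated in degree~$1$; this makes your version slightly more self-contained.
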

\begin{proof} 
Since $\sO(a)$ and $\sS(a)$ are ACM bundles (follows from (\ref{a1}) and 
(\ref{a2})), the  projection formula implies that  
$F_*^s(\sO(a))$ is an ACM bundle on $Q_n$. 
Thus for $q=p^s$ and  $a\in \Z$, we indeed have  a decomposition of the form

\begin{equation}\label{dc}F_*^s(\sO(a))
 = \oplus _{t\in \Z}\sO(t)^{\nu^s_t(a)} \oplus \oplus_{t\in \Z}
\sS(t)^{\mu^s_t(a)}.\end{equation}

Restricting the Euler sequence in
$\P_k^{n+1}$ to $Q_n$ we get
the short exact sequence
$$0\longto \Omega^1_{\P^{n+1}_k}(1)\mid_{Q_n}\longto 
\oplus^{n+2}\sO\longto \sO(1)\longto 0$$
of sheaves of $\sO$-modules,
where the second map is given by $(a_0, \ldots, a_{n+1})\to \sum a_ix_i$.
This gives the long exact sequence 

\begin{multline*}0\longto H^0(Q_n, F^{s*}\Omega^1_{\P^{n+1}_k}(1)\mid_{Q_n}
\tensor \sO(a)) 
\longto H^0(Q_n, \oplus^{n+2} F^{s*}\sO(a))\longby{\Psi_{a+q}} 
H^0(Q_n, \sO(a+q))\\
\longto H^1(Q_n, F^{s*}\Omega^1_{\P^{n+1}_k}(1)\mid_{Q_n}\tensor \sO(a)) 
\longto 0.
\end{multline*}

Therefore
\begin{multline*}
\ell\Bigl(\frac{R_{p, n+1}}{{\bf m}^{[q]}}\Bigr)_{a+q} = 
\ell(\coker~\Psi_{a+q})\\ = h^1\bigl(Q_n, 
F^{s*}\Omega^1_{\P^{n+1}_k}(1)\mid_{Q_n}\tensor \sO(a)\bigr)
 = h^1\bigl(Q_n, \Omega^1_{\P^{n+1}_k}\mid_{Q_n}\tensor F^s_*\sO(a+q)\bigr).\end{multline*}

 Now by Lemma~1.2 in [A] we have 
$$h^1\bigl(Q_n, \Omega^1_{\P^{n+1}_k}(t)\mid_{Q_n}\bigr) = \delta_{t, 0}\quad and \quad 
h^1\bigl(Q_n, \sS\tensor \Omega^1_{\P^{n+1}_k}(t)\mid_{Q_n}\bigr) = 2^{\lfloor n/2\rfloor +1} \delta_{t, 1}.$$ 
Hence 
$$\ell(R_{p, n+1}/{\bf m}^{[q]})_a = \coker~{\Psi_{a}}
= \nu^{s}_0(a) + 2\lambda_0 \mu^s_1(a).$$
\end{proof}

\begin{lemma}\label{rk2}Let $i\geq 0$ be an integer and  $x\in [i, i+1)$
then 
$$\nu^s_0(\lfloor xq \rfloor)  = \nu^s_{-i}(\lfloor xq \rfloor-iq)\quad
\mbox{and}\quad 
\mu^{s}_1(\lfloor xq \rfloor)  = \mu^{s}_{-i+1}(\lfloor xq \rfloor-iq).$$
In particular  
$$f_{R_{p, n+1}, {\bf m}}(x) = \nnu_i^{(p)}(x-i)+\mmu_{i-1}^{(p)}(x-i)
\quad\mbox{for}\quad x\in [i, i+1),$$
where $\nnu_i^{(p)}:\R\longto \R$ and $\mmu_i^{(p)}:\R\longto \R$ are 
the functions as defined in 
(\ref{contifun}).
\end{lemma}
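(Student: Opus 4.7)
The plan is to combine the projection formula for the iterated Frobenius with the Krull--Schmidt uniqueness of the ACM decomposition (\ref{dc}) on $Q_n$, and then pass to the limit using Lemma~\ref{rk1}.

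For the first identities, I would start from the equality $F^{s*}\sO_{Q_n}(-i) = \sO_{Q_n}(-iq)$ and apply the projection formula to obtain
\[
F^s_*(\sO(a-iq)) \;=\; F^s_*\bigl(\sO(a)\otimes F^{s*}\sO(-i)\bigr) \;=\; F^s_*(\sO(a))\otimes \sO(-i).
\]
Substituting the decomposition (\ref{dc}) on the right and using $\sO(t)\otimes \sO(-i)=\sO(t-i)$ and $\sS(t)\otimes \sO(-i)=\sS(t-i)$ yields
\[
F^s_*(\sO(a-iq)) \;=\; \bigoplus_{t\in\Z}\sO(t-i)^{\nu^s_t(a)}\oplus \bigoplus_{t\in\Z}\sS(t-i)^{\mu^s_t(a)}.
\]
Comparing this with (\ref{dc}) applied directly to $a-iq$ and invoking the Krull--Schmidt theorem — legitimate because the $\sO(t)$'s and the spinor summands comprising $\sS(t)$ are pairwise non-isomorphic indecomposable ACM bundles on $Q_n$ by the B-E-H classification — one reads off $\nu^s_{t-i}(a-iq)=\nu^s_t(a)$ and $\mu^s_{t-i}(a-iq)=\mu^s_t(a)$ for every $t\in\Z$. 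Specializing to $(t,a)=(0,\lfloor xq\rfloor)$ and $(1,\lfloor xq\rfloor)$ gives the two stated identities.

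For the ``in particular'' part, Lemma~\ref{rk1} applied with $a=\lfloor xq\rfloor$ gives
\[
\frac{\ell(R_{p,n+1}/{\bf m}^{[q]})_{\lfloor xq\rfloor}}{q^n} \;=\; \frac{\nu^s_0(\lfloor xq\rfloor)}{q^n} + \frac{2\lambda_0\,\mu^s_1(\lfloor xq\rfloor)}{q^n}.
\]
Using the first identities together with the elementary fact $\lfloor xq\rfloor-iq = \lfloor (x-i)q\rfloor$ (since $iq\in\Z$), the right side becomes $\nu^s_{-i}(\lfloor (x-i)q\rfloor)/q^n + 2\lambda_0\,\mu^s_{-i+1}(\lfloor (x-i)q\rfloor)/q^n$. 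Letting $s\to\infty$, the left side converges to $f_{R_{p,n+1},{\bf m}}(x)$ by Theorem~1.1 of [T1] recorded in Definition~\ref{dhkd}, while by (\ref{contifun}) the right side is $\nnu_i^{(p)}(x-i)+\mmu_{i-1}^{(p)}(x-i)$.

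The only genuine subtlety is ensuring that the two sequences on the right converge separately, not merely as a sum. I would address this by appealing to Achinger's Theorem~2 in [A], which pins down the precise range of indices $t$ for which $\nu^s_t(\lfloor xq\rfloor)$ and $\mu^s_t(\lfloor xq\rfloor)$ can be non-zero; combined with the exact sequences following (\ref{a1}) and the cohomological formulas (\ref{ee}), this gives uniform bounds on the individual multiplicities and forces each of $\nu^s_{-i}(\lfloor (x-i)q\rfloor)/q^n$ and $\mu^s_{-i+1}(\lfloor (x-i)q\rfloor)/q^n$ to have a well-defined limit as $s\to\infty$. Apart from this essentially bookkeeping step, there is no real obstacle; the core of the argument is the one-line projection-formula manipulation together with Krull--Schmidt.
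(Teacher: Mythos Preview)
Your proposal is correct and follows essentially the same route as the paper: both arguments apply the projection formula to identify $F^s_*(\sO(m))$ with $F^s_*(\sO(m-iq))\otimes\sO(i)$, compare the resulting ACM decompositions to read off the shift identities $\nu^s_{t-i}(a-iq)=\nu^s_t(a)$ and $\mu^s_{t-i}(a-iq)=\mu^s_t(a)$, and then invoke Lemma~\ref{rk1} and pass to the limit. The only caveat is your final paragraph: uniform bounds alone do not force separate convergence of the two sequences, and the paper in fact defers this point to a later lemma (Lemma~\ref{cont}), where each rank function is expressed as a difference of $f_{R_{p,n+1}}$ and an explicit polynomial; since the statement of Lemma~\ref{rk2} already takes the functions (\ref{contifun}) as given, no separate-convergence argument is needed here.
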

\begin{proof} Note that for any integer $m\geq 0$, there is an integer 
$i\geq 0$ such that 
$iq\leq m <(i+1)q$.
Hence by the  projection formula 
\begin{equation}\label{ap}F_*^s(\sO(m)) \simeq 
F^s_*\bigl(\sO(m-iq)\tensor F^{s*}(\sO(i))\bigr) \simeq
F^s_*\bigl(\sO(m-iq)\bigr)\tensor \sO(i).\end{equation}

 In particular  
\begin{equation}\label{e1}\ell (R_{p, n+1}/{\bf m}^{[q]})_m 
= \nu^{s}_0(m) + 2\lambda_0 \mu^s_1(m) = 
\nu^s_{-i}(m-iq) +2\lambda_0 \mu^s_{-i+1}(m-iq).\end{equation}

Therefore for $x\in [i, i+1)$ 
\begin{multline*}
f_{R_{p, n+1}, {\bf m}}(x)  
 =  \lim_{s\to \infty}\frac{\ell(R/{\bf m}^{[q]})_{\lfloor xq \rfloor}}{q^n}\\
  = \lim_{s\to \infty}
\frac{\nu^s_{-i}(\lfloor xq \rfloor-iq) +2\lambda_0 \mu^s_{-i+1}(\lfloor xq 
\rfloor-iq)}{q^n} =  (\nnu_i^{(p)}+\mmu_{i-1}^{(p)})(x-i).\end{multline*}
\end{proof}

We also use the following   
 result of Achinger (Theorem~2 in [A]) which  determines, in terms of $s$, $a$ and $n$,
 when the numbers
$\nu^s_i(a)$ and $\mu^s_i(a)$ are nonzero in the decomposition of $F^s_*(\sO(a))$. 
Langer in [L] has given such formula for the occurance of line bundles in 
 the  Frobenius direct image. 

\vspace{5pt}

\noindent{\bf Theorem}\quad[A].\quad  {\it Let $p\neq 2$, $s\geq 1$ and $n\geq 3$ and 
$$F_*^s(\sO(a)) = \oplus _{t\in \Z}\sO(t)^{\nu^s_t(a)} \oplus \oplus_{t\in \Z}
\sS(t)^{\mu^s_t(a)}.$$
$$F_*^s(\sS(a)) = \oplus _{t\in \Z}\sO(t)^{{\tilde {\nu}}^s_t(a)} \oplus \oplus_{t\in \Z}
\sS(t)^{{{\tilde \mu}^s}_t(a)}.$$

Then
\begin{enumerate}
\item $F_*^s(\sO(a))$ contains $\sO(t)$ if and only if $0\leq a-tq \leq n(q-1)$.
\item $F_*^s(\sO(a))$ contains $\sS(t)$ if and only if 
$$\left(\tfrac{(n-2)(p-1)}{2}\right)\tfrac{q}{p} \leq a-tq \leq
\left(\tfrac{(n-2)(p-1)}{2} + n-2 +p\right)\tfrac{q}{p} -n.$$
\item $F_*^s(\sS(a))$ contains $\sO(t)$ if and only if $1\leq a-tq \leq n(q-1)$.
\item $F_*^s(\sS(a))$ contains $\sS(t)$ if and only if
$$\left(\tfrac{(n-2)(p-1)}{2}\right)\tfrac{q}{p} +1 -\delta_{s,1}\leq a-tq \leq
\left(\tfrac{(n-2)(p-1)}{2} + n-2 +p\right)\tfrac{q}{p} -n +\delta_{s,1}.$$
\end{enumerate}}

\vspace{5pt}

\section{Formula for  the rank functions $\nnu_i^{(p)}$ and  $\mmu_{i-1}^{(p)}$}

Let the rank functions 
$\nnu_i^{(p)}$  $\mmu_{i-1}^{(p)}$ be as given in (\ref{contifun}). 
As noted earlier
to analyze the rank functions it is enough to study on the interval $[0, 1)$.

One of the goal of this section is to  determine, for a given rank function
 a   
 subinterval in $[0, 1)$ such that the function 
   is {\em polynomial}, that is there is a polynomial $p(x) \in \Q[x]$ such that 
the rank function at $x$ is equal to $p(x)$ for $x$ in that subinterval.
We give the description of such subintervals in  Proposition~\ref{rc}.

Since this section is technical,  the reader can choose to skip the proof of 
Lemmas~\ref{l1}, \ref{l2}, \ref{l3} and \ref{cont} 
 for the moment and move to the next section.

\begin{notations}\label{n1} \begin{enumerate}
\item For given integer $a$ and $q=p^s$, the nonnegative 
integers $\nu^s_{t}(a)$, $\mu^s_{t}(a)$ are the integers occurring in the 
decomposition  
$$F_*^s(\sO(a))
 = \oplus _{t\in \Z}\sO(t)^{{\nu}^s_t(a)} \oplus \oplus_{t\in \Z}
\sS(t)^{{\mu}^s_t(a)}.$$

\item For the sake of abbreviation henceforth we will denote
 \begin{enumerate}
\item $\sO_{Q_n}(m)) = \sO(m)$,   
\item $h^0(Q_n, \sO(m)) = L_m$ and 
\item $2\lambda_0 = 2^{\lfloor n/2\rfloor +1}$.
\end{enumerate}
\item Let $n_0$ and $\Delta$ be  given as  
 $$n_0 = \lceil \tfrac{(n-2)(p-1)}{2p}\rceil\quad\mbox{and}\quad
 n_0 - \Delta = \tfrac{(n-2)(p-1)}{2p}.$$
Hence for $p>n-2 \implies n_0 = \lceil n/2\rceil -1$
\item
A  bundle is {\em a spinor bundle of type} $t$ if it is isomorphic to $\sS(t)$, where 
$\sS = \Sigma$ if $n$ is odd and $\sS = \Sigma_++\Sigma_-$ if $n$ is even.
We say two spinor bundles $\sS(t)$ and $\sS(t')$ are of {\it the same type} if $t= t'$.
\end{enumerate}
\end{notations}

\vspace{5pt}

First we make the observation that 
 there can occur at most $\lceil (n-2)/p\rceil +2$ twists
of spinor bundles in the decomposition of 
$F^s_*(\sO(a))$, hence for $p > n-2$ the number reduces to three. 
The next result shows for $p>n-2$ and any $s$ and $a$, there can be 
at most two twists of Spinor bundles appearing in the decomposition of 
$F_*^s(\sO(a))$.

\begin{lemma}\label{l1}If $1-n \leq a < q =p^s$ and $p >2$ then 
$$F^s_*(\sO(a))  =  
\oplus_{t=0}^{n-1}\sO(-t)^{{\nu}^s_{-t}(a)}\oplus
\oplus_{i=-1}^{\lceil \tfrac{n-2}{p}\rceil}
 \sS(-n_0-i)^{\mu^s_{-n_0-i}(a)}.$$

\begin{enumerate}
\item If $n\geq 4$ is an even number and  $n-2 < p$  then 
 $n_0 = ({n-2})/{2}$
and 
 \begin{equation}\label{dce}F_*^s(\sO(a)) = \oplus_{t=0}^{n-1}\sO(-t)^{{\nu}^s_{-t}(a)} \oplus
\oplus_{i=n_0-1}^{n_0+1}
 \sS(-i)^{\mu^s_{-i}(a)}.\end{equation}
Moreover 
\begin{enumerate}
\item $\mu^s_{-n_0+1}(a)\neq 0 \implies a/q\in [ 1-\frac{n-2}{2p},~~ 1)$
\item $\mu^s_{-n_0}(a)\neq 0 \implies  a/q \in [0,~~ 1)$
\item $\mu^s_{-n_0-1}(a)\neq 0  \implies a/q \in [0,~~ \frac{n-2}{2p})$.
\end{enumerate}

\item If  $n\geq 3$ is an odd number and $n-2 < p$ then  
$n_0 = ({n-1})/{2}$  and 
  \begin{equation}\label{dco} F_*^s(\sO(a)) = \oplus _{t=0}^{n-1}\sO(-t)^{{\nu}^s_{-t}(a)} \oplus
 \sS(-n_0+1)^{{\mu}^s_{-n_0+1}(a)}\oplus \sS(-n_0)^{{\mu}^s_{-n_0}(a)}.\end{equation}
Moreover 
\begin{enumerate}
\item $\mu^s_{-n_0+1}(a)\neq 0 \implies a/q\in [\frac{1}{2} - \frac{n-2}{2p},~~ 1)$
\item $\mu^s_{-n_0}(a)\neq 0 \implies a/q \in [0,~~ \frac{1}{2}+\frac{n-2}{2p})$.
\end{enumerate}
\end{enumerate}
\end{lemma}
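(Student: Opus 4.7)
The plan is a careful application of Achinger's theorem (stated at the end of Section~2) together with the hypothesis $p > n-2$ to control the relevant ceilings.

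First I would observe that since $\sO(a)$ is a line bundle on the quadric $Q_n$ it is automatically ACM, and because the Frobenius $F^s\:Q_n\longto Q_n$ is finite and flat, the projection formula gives that $F^s_*(\sO(a))$ is also ACM. The classification of Kn\"orrer and Buchweitz--Eisenbud--Herzog recalled in Section~2 then forces a decomposition whose only possible summands are twists of $\sO$ and of $\sS$, so the task reduces to pinning down which $t$ can contribute. For the line bundle part I would apply part (1) of Achinger's theorem: $\sO(t)$ occurs if and only if $0 \leq a - tq \leq n(q-1)$. Under $1 - n \leq a < q$ this forces $t \leq 0$ and $t \geq 1/q - n > -n$, hence $t \in \{-(n-1), \ldots, 0\}$, which accounts for the first sum in the stated decomposition.

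For the spinor part I would set $\alpha = (n-2)(p-1)/(2p) = n_0 - \Delta$ and reparametrize $t = -n_0 - i$. The inequalities in part (2) of Achinger's theorem then become
\[
-\tfrac{a}{q} - \Delta \;\leq\; i \;\leq\; -\tfrac{a}{q} + \tfrac{n-2}{p} + 1 - \tfrac{n}{q} - \Delta.
\]
Reading these with $0 \leq a/q < 1$ and $\Delta \in [0, 1)$, the lower inequality forces $i \geq -1$, while the upper inequality combined with $p > n-2$ (so $(n-2)/p < 1$) forces $i \leq \lceil (n-2)/p \rceil = 1$. This produces the general formula in the statement.

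The parity-dependent refinements then follow by bookkeeping on $\Delta$. For even $n \geq 4$, $(n-2)/2$ is an integer so $n_0 = (n-2)/2$ and $\Delta = (n-2)/(2p)$; substituting $i = -1$ into the lower inequality yields (a), the case $i = 0$ imposes no further constraint beyond $a/q \in [0, 1)$ giving (b), and $i = 1$ in the upper inequality yields (c). For odd $n \geq 3$, $(n-2)/2$ is a half-integer and the ceiling jumps by $\tfrac{1}{2}$, giving $n_0 = (n-1)/2$ and $\Delta = \tfrac{1}{2} + (n-2)/(2p)$; the extra $\tfrac{1}{2}$ in $\Delta$ reduces the upper bound on $i$ to strictly less than $1$, eliminating the $i = 1$ case, and reading off the lower bound at $i = -1$ and the upper bound at $i = 0$ then produces the odd-$n$ conclusions (a) and (b). The main obstacle is purely bookkeeping: keeping strict versus non-strict inequalities straight through the $t = -n_0 - i$ substitution, and confirming that $p > n - 2$ both forces $\lceil (n-2)/p \rceil = 1$ and pins down the claimed values of $n_0$ in each parity.
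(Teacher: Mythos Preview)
Your proposal is correct and follows essentially the same route as the paper: both invoke parts (1) and (2) of Achinger's theorem, rewrite the spinor inequality via the substitution $i=-t-n_0$ (the paper uses $-t-n_0$ directly), compute $n_0$ and $\Delta$ in each parity, and then read off the three cases from the resulting inequality $0\le a/q+\Delta+i\le (n-2)/p+1-n/q$. One small point worth tightening: the first displayed decomposition is asserted for all $p>2$, so you should obtain the bound $i\le\lceil (n-2)/p\rceil$ from the upper inequality before invoking $p>n-2$, rather than jumping straight to $i\le 1$.
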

\begin{proof}The formula for the decomposition for $F^s_*(\sO(a))$
for $1-n\leq a <q$ follows  from the assertion~(1) of Theorem~[A].

By the  assertion~(2) of [A], if $\sS(t)$ occurs in the decomposition of 
$F^s_*(\sO(a))$ then 
$$ (n_0- \Delta)q \leq a -tq \leq  (n_0- \Delta)q + \tfrac{(n-2)q}{p} + q -n$$
$$ \implies (n_0- \Delta) \leq \tfrac{a}{q} -t \leq  (n_0- \Delta) + \tfrac{(n-2)}{p} + 1 
- \tfrac{n}{q}.$$

Hence 
\begin{equation}\label{os}0\leq \tfrac{a}{q} + \Delta -t-n_0 
\leq \tfrac{n-2}{p} + 1 -\tfrac{n}{q}.\end{equation}

We note that $0\leq a/q +\Delta <2$ and $t$ and $n_0$ are integers. Therefore
$$n_0\leq a/q +\Delta -t <2-t \implies n_0-1 \leq -t$$
and 
$$  -t-n_0 -1  \leq a/q+\Delta -t -n_0-1\leq 
 \tfrac{n-2}{p} -\tfrac{n}{q} < \lceil\tfrac{n-2}{p}\rceil.$$

This proves $-n_0-t\in \{-1, 0, \cdots, \lceil {(n-2)}/{p}\rceil\}$.

In particular  if   $n-2 < p$ then 
  $-n_0-t\in \{-1, 0, 1\}$. 

\vspace{5pt}
\begin{enumerate}
\item[] Let $n$ be even and $n-2 <p$ then $n=2m$ for some integer $m\geq 2$. 
Then  $$
n_0  = \lceil \tfrac{(n-2)(p-1)}{2p}\rceil  = 
\lceil \tfrac{(m-1)(p-1)}{p}\rceil = \lceil (m-1) -\tfrac{m-1}{p}\rceil = m-1 = 
\tfrac{n-2}{2}.$$
This implies $\Delta = (n-2)/2p$.

\item[]
Let $n$ be odd and $n-2 <p$  then $n=2m+1$ for some integer $m\geq 1$.  Now
$$n_0  = 
\lceil \tfrac{(2m-1)(p-1)}{2p}\rceil = 
\lceil \tfrac{2m(p-1)}{2p}- \tfrac{p-1}{2p}\rceil
 = \lceil m -\tfrac{2m+p-1}{2p}\rceil = m = 
\tfrac{n-1}{2},$$
where the second last equality follows as $0 < (2m+p-1)/2p <1$.
This implies $\Delta = (n-2)/2p+1/2$.
\end{enumerate}

Putting the three possible values of $t$ in   (\ref{os})
we get the following three 
inequalities

\begin{enumerate}
\item If $-t = n_0-1$ then we 
have $0\leq \tfrac{a}{q} +\Delta -1 \leq 1 + \frac{n-2}{p}-\frac{n}{q} < 1 + \frac{n-2}{p}$.\\
\item If $-t = n_0$ then we 
have $0\leq \tfrac{a}{q} +\Delta  \leq 1 + \frac{n-2}{p}-\frac{n}{q} < 1 + \frac{n-2}{p} $.\\
\item If $-t = n_0+1$ then we 
have $0\leq \tfrac{a}{q} +\Delta  \leq  \frac{n-2}{p}-\frac{n}{q} < \frac{n-2}{p}$.
\end{enumerate}
From these three equalities it is easy to obtain  
 the assertions  (1)~(a), (1)~(b), (1)~(c), (2)~(a) and (2)~(b).
\end{proof}

\begin{notations}\label{n2} Fix an integer $n\geq 3$. Then for an integer
$a\geq  1-n$
$$L_a = (2a+n)\frac{(a+n-1)\cdots (a+1)}{n!} 
= \frac{2a^n}{n!} + O(a^{n-1}).$$
We note that $L_a = 0$, for $1-n\leq a \leq -1$.
Let $q=p^s$, where $s\geq 1$.
  For $0\leq i \leq n_0+1$, we define iteratively the integers
$Z_{-i}(a, q)$ as follows:

$$ Z_0(a,q) = Z_0(a) = L_{a},\quad  
\quad Z_{-1}(a, q) = Z_0(a+q)-L_1Z_0(a)$$
 and in general 
$$Z_{-i}(a, q) = Z_0(a+iq) - \left[L_1Z_{-i+1}(a, q)+L_2Z_{-i+2}(a, q)
+\cdots +L_i Z_0(a)\right].$$
 
Similarly,  for $n_0+1\leq i \leq n-1$, 
we define iteratively another set of integers  $Y_{-i}(a, q)$
 as follows:
$$Y_{-n+1}(a, q) = L_{q-a-n} $$
and for  $n_0+1\leq i < n-1$
 $$ Y_{-i}(a, q) = L_{(n-i)q-a-n}-\left[L_{n-i-1}Y_{-n+1}(a, q)+
\cdots + L_1Y_{-i-1}(a, q)\right].$$ 
\end{notations}

\begin{rmk}\label{r3} By construction it follows that
there exists unique rational numbers $\{r_{ij}, s_{ik}\}_{j, k}$ such that 
for all $a\in \Z$ and $q = p^s$ we have  
$Z_0(a) = L_{a}$ and for $0\leq i\leq n_0+1$

 $$Z_{-i}(a, q) = r_{i0}L_{a} + r_{i1}L_{a+q}+ 
\cdots  + r_{i(i-1)}L_{a+(i-1)q}+L_{a+iq}$$
and 
$$Y_{-i}(a, q) = s_{i0}L_{q-a-n} + s_{i1}L_{2q-a-n}+ \cdots  + 
s_{i(n-i-2)}L_{(n-i-1)q-a-n}+L_{(n-i)q-a-n}.$$

Now let  $\{{\bf Z}_i(x)\}_{0\leq i \leq n_0+1}$ and 
$\{{\bf Y}_i(x)\}_{n_0+1\leq i <n}$ denote  the two sets of  polynomials, where  

$${\bf Z}_{i}(x) = 
\frac{2}{n!}\left[r_{i0}(x-i)^n+r_{i1}(x-i+1)^n+\cdots +
r_{i,i-1}(x-1)^n + (x)^n\right]$$
and 
$${\bf Y}_{i}(x) = 
\frac{2}{n!}\left[s_{i0}(i+1-x)^n+s_{i1}(i+2-x)^n+\cdots +s_{i,n-i-2}(n-1-x)^n
+(n-x)^n\right].$$

Since the set $\Z[1/p]$ is  a dense subset of $\R$ 
we have
$ \lim_{q\to \infty} {\lfloor xq \rfloor}/{q} = x$ for $x\geq 0$. 
This implies 
$${\bf Z}_i(x+i) = \lim_{q\to \infty}\frac{Z_{-i}(\lfloor xq\rfloor, q)}{q^n} 
\quad\mbox{and}\quad
{\bf Y}_i(x+i) = \lim_{q\to \infty}\frac{Y_{-i}(\lfloor xq\rfloor, q)}{q^n}$$ 
for $x\in [0, 1)$.
\end{rmk}

\begin{lemma}\label{l2} Let $p$ be an odd prime such that 
$p > n-2$. Then for given  integer $1-n\leq a <q = p^s$  
\begin{enumerate}

\item $\nu^s_{-i}(a)+2\lambda_0\mu^s_{-i+1}(a) = Z_{-i}(a, p^s)$, if  $0\leq i \leq n_0$.\\\

\item $\nu^s_{-n_0-1}(a)+2\lambda_0\mu^s_{-n_0}(a) = 
Z_{-n_0-1}(a, p^s) + 2\lambda_0\mu^s_{-n_0+1}(a)$.\\\

\item $\nu^s_{-i}(a)+2\lambda_0\mu^s_{-i+1}(a) = Y_{-i}(a,p^s)$, if  $n_0+3\leq i\leq n-1$.\\\

\item $\nu^s_{-n_0-2}(a) = Y_{-n_0-2}(a,p^s)$.\\\

\item $\nu^s_{-n_0-1}(a) = Y_{-n_0-1}(a,p^s) - 2\lambda_0\mu^s_{-n_0-1}(a)$.\\\

\item $\nu^s_{-i}(a) = 0$, for $i\geq n$ and $\mu^s_{-j}(a) =0$ if 
$j\not\in \{n_0+1, n_0, n_0-1\}$.
\end{enumerate}
\end{lemma}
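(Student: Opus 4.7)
The plan is to take $H^{0}$ and (via Serre duality) $H^{n}$ of the decomposition of $F^{s}_{*}\sO(a)$ from Lemma~\ref{l1}, tensored with various line-bundle twists $\sO(l)$. By the projection formula together with $F^{s*}\sO(l)=\sO(lq)$ and the affineness of Frobenius, for every integer $l$ and $i\in\{0,n\}$,
\begin{equation*}
h^{i}(Q_{n},\sO(a+lq))\;=\;\sum_{j=0}^{n-1}\nu^{s}_{-j}(a)\,h^{i}(Q_{n},\sO(l-j))+\sum_{k\in\{n_{0}-1,n_{0},n_{0}+1\}}\mu^{s}_{-k}(a)\,h^{i}(Q_{n},\sS(l-k)).
\end{equation*}
The restrictions on the summation indices (which give part (6)) are exactly the content of Lemma~\ref{l1}. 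By (\ref{ee}) we have $h^{0}(Q_{n},\sO(m))=L_{m}$, vanishing for $-n+1\leq m\leq -1$, while $h^{0}(Q_{n},\sS(m))=0$ for $m\leq 0$, $h^{0}(Q_{n},\sS(1))=2\lambda_{0}$ and $h^{0}(Q_{n},\sS(2))=2\lambda_{0}(n+1)$. The $h^{n}$-values are obtained from these by Serre duality (\ref{sd}), with the same kind of vanishing conditions.

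For parts (1) and (2) I would specialize the identity above to $i=0$ and $l=0,1,\ldots,n_{0}+1$. Since $h^{0}(Q_{n},\sS(l-k))=0$ whenever $l\leq k$, no spinor term enters the equation as long as $l\leq n_{0}-1$, and the resulting lower-triangular system in $\nu^{s}_{0}(a),\ldots,\nu^{s}_{-n_{0}+1}(a)$ unfolds, by induction on $l$, into exactly the recursion defining $Z_{-l}(a,q)$. At $l=n_{0}$ only the $k=n_{0}-1$ spinor term survives (weight $2\lambda_{0}$), producing $\nu^{s}_{-n_{0}}(a)+2\lambda_{0}\mu^{s}_{-n_{0}+1}(a)=Z_{-n_{0}}(a,q)$. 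At $l=n_{0}+1$ both $k=n_{0}-1$ (weight $2\lambda_{0}(n+1)$) and $k=n_{0}$ (weight $2\lambda_{0}$) contribute; substituting the already established identifications $Z_{-j}=\nu^{s}_{-j}$ for $j\leq n_{0}-1$ and $Z_{-n_{0}}=\nu^{s}_{-n_{0}}+2\lambda_{0}\mu^{s}_{-n_{0}+1}$ into the defining recursion for $Z_{-n_{0}-1}(a,q)$ and comparing with the cohomology equation at $l=n_{0}+1$, the two occurrences of $\mu^{s}_{-n_{0}+1}$ combine via the identity $2\lambda_{0}L_{1}-2\lambda_{0}(n+1)=2\lambda_{0}$, which is part (2).

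For parts (3), (4) and (5) I would instead take $i=n$ and $l=-(n-i')$ for $i'=n-1,n-2,\ldots,n_{0}+1$. By Serre duality the equation becomes
\begin{equation*}
L_{(n-i')q-a-n}\;=\;\sum_{j\geq i'}\nu^{s}_{-j}(a)\,L_{j-i'}+\sum_{k}\mu^{s}_{-k}(a)\,h^{0}(Q_{n},\sS(1+k-i')),
\end{equation*}
and the spinor term at index $k$ vanishes unless $k\geq i'$. For $i'\geq n_{0}+2$ the admissible range $k\leq n_{0}+1<i'$ kills all spinor contributions, and the resulting triangular system, solved downward from the base case $\nu^{s}_{-(n-1)}(a)=L_{q-a-n}=Y_{-(n-1)}(a,q)$, matches the defining recursion for $Y_{-i'}$ and gives $\nu^{s}_{-i'}(a)=Y_{-i'}(a,q)$; this yields (3) and (4). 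At $i'=n_{0}+1$ only the $k=n_{0}+1$ term remains (weight $2\lambda_{0}$) and the equation rearranges to $\nu^{s}_{-(n_{0}+1)}(a)+2\lambda_{0}\mu^{s}_{-(n_{0}+1)}(a)=Y_{-(n_{0}+1)}(a,q)$, which is (5).

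The main technical point will be the single algebraic cancellation needed in part (2), namely $2\lambda_{0}L_{1}-h^{0}(Q_{n},\sS(2))=2\lambda_{0}\bigl((n+2)-(n+1)\bigr)=2\lambda_{0}$; this identity records the compatibility between the matrix factorization sequence (\ref{a1}) and the Euler-type sequence (\ref{a2}) and is precisely what forces the discrepancy $2\lambda_{0}\mu^{s}_{-n_{0}+1}$ on the right-hand side of (2). Once it is verified, everything else is a routine downward/upward triangular unfolding running exactly in parallel with the recursions defining $Z_{-i}$ and $Y_{-i}$.
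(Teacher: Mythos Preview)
Your proposal is correct and follows essentially the same route as the paper's proof: tensor the decomposition of $F^{s}_{*}\sO(a)$ from Lemma~\ref{l1} by $\sO(l)$, take $H^{0}$ for $l=0,\ldots,n_{0}+1$ to obtain (1)--(2), and take $H^{n}$ (via Serre duality) for the dual range to obtain (3)--(5), with (6) read off directly from Lemma~\ref{l1}. The one algebraic cancellation you isolate, $2\lambda_{0}L_{1}-h^{0}(Q_{n},\sS(2))=2\lambda_{0}$, is exactly the identity $h^{0}(Q_{n},\sS(2))=2\lambda_{0}(L_{1}-L_{0})$ used in the paper at the step $i=n_{0}+1$; your attribution of it to a ``compatibility'' between (\ref{a1}) and (\ref{a2}) is a slight overstatement---it is just the formula (\ref{ee}) specialised to $m=2$---but the computation itself is right.
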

\begin{proof} We fix $1-n\leq a<q = p^s$. 

Note that the assertion~(6) is just the assertions~(1) and (2) of 
Lemma~\ref{l1}.

Now, by Lemma~\ref{l1}
\begin{multline*}
F^s_*(\sO(a))  =  \sO(-n+1)^{\nu^s_{-n+1}(a)}\oplus\cdots\oplus
\sO(-1)^{\nu^s_{-1}(a)}\oplus \sO^{\nu^s_0(a)}\\
 \oplus \sS(-n_0+1)^{\mu^s_{-n_0+1}(a)}\oplus 
\sS(-n_0)^{\mu^s_{-n_0}(a)}\oplus \sS(-n_0-1)^{\mu^s_{-n_0-1}(a)}.\end{multline*}

Tensoring the above equation by $\sO(i)$ and by the projection formula, we get

\begin{multline*}
F^s_*(\sO(a+iq))  =  \sO(i-n+1)^{\nu^s_{-n+1}(a)}\oplus\cdots\oplus
\sO(i-1)^{\nu^s_{-1}(a)}\oplus \sO(i)^{\nu^s_{0}(a)}\\\
 \oplus \sS(i-n_0+1)^{\mu^s_{-n_0+1}(a)}\oplus
\sS(i-n_0)^{\mu^s_{-n_0}(a)}\oplus 
\sS(i-n_0-1)^{\mu^s_{-n_0-1}(a)}.\end{multline*}

By (\ref{ee}), $h^0(Q, \sO(m))= 0$ for $m \leq -1$ and $h^0(Q, \sS(m)) = 0$ for 
$m\leq 0$.
Also $h^0(Q, \sO(m)) = L_m$ for all $m\geq 1-n$. 
 Hence applying  the functor $H^0(Q, -)$ to the above decomposition  we get
\begin{equation}\label{**}
\begin{split}
 \nu^s_0(a)  = &  L_a = Z_0(a,p^s)\quad\mbox{and}\\
\nu^s_{-1}(a) = &   L_{a+q} - L_1L_a = Z_{-1}(a,p^s).\\
\end{split}
\end{equation}
In general, for $i\leq n_0-1$, 
$$L_{a+iq} = L_0\nu^s_{-i}(a) + L_1\nu^s_{-i+1}(a) + \cdots + L_i\nu^s_0(a)
$$
which implies
$$\nu^s_{-i}(a) =  \nu^s_{-i}(a)+2\lambda_0\mu^s_{-i+1}(a)  = Z_{-i}(a,p^s). $$

Let $i= n_0$, Since $h^0(Q, \sS(1)) = 2\lambda_0$ we have
$$\begin{array}{lcl}
L_{a+n_0q} & = & L_0\nu^s_{-n_0}(a) + L_1\nu^s_{-n_0+1}(a) + \cdots + L_{n_0}\nu^s_0(a)
+ 2\lambda_0\mu^s_{-n_0+1}(a)\\\\
&  = &  \nu^s_{-n_0}(a) + L_1Z_{-n_0+1}(a,p^s) + \cdots + L_{n_0}Z_0(a,p^s)
+ 2\lambda_0\mu^s_{-n_0+1}(a)\end{array}$$
which implies 

\begin{equation}\label{n_0}
\nu^s_{-n_0}(a)+2\lambda_0\mu^s_{-n_0+1}(a) = Z_{-n_0}(a,p^s).\end{equation}
This proves assertion~(1).

Let $i= n_0+1$. Since $h^0(Q, \sS(2)) = 2\lambda_0(L_1-L_0)$, we have

\begin{multline*} 
L_{a+(n_0+1)q} = 
L_0\nu^s_{-n_0-1}(a) + L_1\nu^s_{-n_0}(a) + \cdots + L_{n_0+1}\nu^s_0(a)\\
+2\lambda_0\left[(L_1-L_0)\mu^s_{-n_0+1}(a)+ \mu^s_{-n_0}(a)\right]\\
= (\nu^s_{-n_0-1}(a)+2\lambda_0\mu^s_{-n_0}(a))
 + \left[L_1(\nu^s_{-n_0}(a)+2\lambda_0\mu^s_{-n_0-1}(a))\right.\\
\left.+ L_2\nu^s_{-n_0+1}(a)+ \cdots +
L_{n_0+1}\nu^s_0(a)\right]
-2\lambda_0\mu^s_{-n_0+1}(a)
\end{multline*}
which implies, by assertion~(1) of the lemma,

\begin{equation}\label{n_0+1}\nu^s_{-n_0-1}(a)+2\lambda_0\mu^s_{-n_0}(a) = Z_{-n_0-1}(a,p^s) + 
2\lambda_0\mu^s_{-n_0+1}. \end{equation}
This proves assertion~(2).

Now if we  tensor the decomposition of $F^s_*(\sO(a))$ by $\sO(-j)$, 
where $1\leq j \leq n-n_0-1$, then we get
\begin{multline*}
F^s_*(\sO(a-jq))  =  \sO(-j-n+1)^{\nu^s_{-n+1}(a)}\oplus\cdots\oplus
\sO(-j-1)^{\nu^s_{-1}(a)}\oplus \sO(-j)^{\nu^s_0(a)}\\
 \oplus \sS(-j-n_0+1)^{\mu^s_{-n_0+1}(a)}\oplus 
\sS(-j-n_0)^{\mu^s_{-n_0}(a)}\oplus \sS(-j-n_0-1)^{\mu^s_{-n_0-1}(a)}.\end{multline*}

Now applying  the functor $H^n(Q_n, -)$ and then using the Serre duality 
$$h^n(Q, \sO_Q(m)) = h^0(Q, \sO_{Q}(-m-n)) = L_{-m-n}$$
and $h^n(Q, \sS(m)) = h^0(Q, \sS(1-m-n))$ (here $jq-a-n \geq 1-n$) we get

\begin{multline*}L_{jq-a-n}   =   L_{j-1}\nu_{-n+1}^s(a)+ L_{j-2}\nu_{-n+2}^s(a)+
\cdots +
L_0{\nu^s}_{-n+j}(a)\\
+\mu^s_{-n_0+1}(a)h^0(Q_n, S(n_0+j-n))
  +\mu^s_{-n_0}(a)h^0(Q_n, S(n_0+j+1-n))\\+
\mu^s_{-n_0-1}(a)h^0(Q_n, S(n_0+j+2-n)).\end{multline*}

Hence   $1\leq j \leq n-n_0-2 $, we get 
$$L_{jq-a-n} = L_{j-1}\nu_{-n+1}^s(a)+ L_{j-2}\nu_{-n+2}^s(a)+\cdots +
L_0\nu_{-n+j}^s(a).$$

Therefore
\begin{multline*}
\nu^s_{-n+j}(a) = L_{jq-a-n} -\left[L_{j-1}\nu^s_{-n+1}(a) + \cdots +
L_2\nu^s_{-n+j-2}(a)+L_1\nu^s_{-n+j-1}(a)\right]\\ 
= Y_{-n+j}(a,p^s).\end{multline*}

\begin{equation}
\begin{split}
\mbox{For}\quad  j = 1,\quad \nu^s_{-n+1}(a)  = & L_{q-a-n},\\
\mbox{For}\quad  j = 2,\quad \nu^s_{-n+2}(a)  = & L_{2q-a-n} - L_1\nu^s_{-n+1}(a).
\end{split}\end{equation}
In general

$$\begin{array}{lcl}
\nu^s_{-i}(a)  & = & \nu^s_{-i}(a)+2\lambda_0\mu^s_{-i+1}(a) =  Y_{-i}(a,p^s),\quad{for}\quad  
n_0+3 \leq i \leq n-1\\\
\nu^s_{-n_0-2}(a) & = &  Y_{-n_0-2}(a,p^s).\end{array}$$
This proves assertions~(3) and (4).

\vspace{5pt}

For $j = n-n_0-1$ we get
$$L_{(n-n_0-1)q-a-n} = L_{n-n_0-2}\nu^s_{-n+1}(a)+ \cdots  + L_0\nu^s_{-n_0-1}(a) + 
2\lambda_0\mu^s_{-n_0-1}(a).$$

\begin{equation}\label{n_0+1}
\nu^s_{-n_0-1}(a) = Y_{-n_0-1}(a,p^s) - 2\lambda_0\mu^s_{-n_0-1}(a).\end{equation}
This proves assertion~(5).
\end{proof}

\begin{rmk}\label{r2} By Lemma~\ref{l2}  it follows that 
if  there is an interval $I_1\subset [0, 1)$ with the property that   $a/p^s \in I_1$
implies there is at the most
one type of spinor bundle in the decomposition of $F^s_*(\sO(a))$, then 
all the functions $\nu^s_{-i}(a)$ and $\mu^s_{-i+1}(a)$ have  polynomial 
expressions for $a/q\in I_1$. More precisely we have the following.
\end{rmk}

\begin{lemma}\label{l3}Let $0\leq a < q=p^s$, where $p\neq 2$ is a prime
such that $p>n-2$ and   $n\geq 3$ is an   integer. Let $n_0 = \lceil \tfrac{n}{2}\rceil -1$. Then
we have 
$$F_*^s(\sO(a)) = \bigoplus_{t=0}^{n-1}\sO(-t)^{\nu^s_{-t}(a)}\oplus 
\bigoplus_{t = n_0-1}^{n_0+1}\sS(-t)^{\mu^s_{-t}(a)}.$$

\vspace{5pt}

\noindent {\underline{Case}}~(1).\quad Let $n\geq 3$ be odd  then   

\begin{enumerate}\item[]
$$\begin{array}{lcl}
0\leq a/q < 1 & \implies &
\nu^s_{-i}(a)  = \begin{cases} Z_{-i}(a, p^s) & 
   \quad{for}\quad 0\leq i \leq  n_0-1\\\
  Y_{-i}(a, p^s) &  \quad{for}\quad n_0+1\leq i < n \end{cases}\\\
& & \mu^s_{-i}(a) = 0\quad{for}\quad i \neq n_0-1,  n_0,\end{array}$$

\item[]
$$\begin{array}{lcl}
 a/q\in [0, \frac{1}{2}-\frac{n-2}{2p}]\implies \begin{cases}\nu^s_{-n_0}(a) = Z_{-n_0}(a, p^s)\\
\mu^s_{-n_0+1}(a) = 0\\
\mu^s_{-n_0}(a) = \frac{1}{2\lambda_0}\left[Z_{-n_0-1}(a, p^s) - Y_{-n_0-1}(a, p^s)
\right],\end{cases}\end{array}$$

 \item[] 
$$\begin{array}{lcl}
a/q\in [\frac{1}{2}+\frac{n-2}{2p}, 1) & \implies & 
\begin{cases}
\nu^s_{-n_0}(a) = Z_{-n_0}(a, p^s)
-Y_{-n_0-1}(a, p^s) + Z_{-n_0-1}(a, p^s)\\
\mu^s_{-n_0+1}(a) = \frac{1}{2\lambda_0}\left[Y_{-n_0-1}(a, p^s) - Z_{-n_0-1}(a, p^s)\right]\\
\mu^s_{-n_0}(a)  = 0.
\end{cases}\end{array}$$\end{enumerate}

\noindent Case~(2).\quad If $n\geq 4$ is even  
then

\begin{enumerate}\item[]
$$\begin{array}{lcl}
0\leq a/q < 1 & \implies &
\nu^s_{-i}(a)  = \begin{cases} Z_{-i}(a, p^s) & 
   \quad{for}\quad 0\leq i \leq  n_0-1\\\
  Y_{-i}(a, p^s) &  \quad{for}\quad n_0+2\leq i < n \end{cases}\\\
& & \mu^s_{-i}(a) = 0\quad{for}\quad i \neq n_0-1,  n_0, n_0+1,\end{array}$$

\item[]
$$\begin{array}{lcl}
 a/q\in [\tfrac{n-2}{p},\quad  1-\tfrac{n-2}{p})
\implies \begin{cases}\nu^s_{-n_0}(a) = Z_{-n_0}(a, p^s)\\
\nu^s_{-n_0-1}(a) = Y_{-n_0-1}(a, p^s)\\
\mu^s_{-n_0+1}(a) = 0\\
\mu^s_{-n_0}(a) = \frac{1}{2\lambda_0}\bigl[Z_{-n_0-1}(a, p^s) - Y_{-n_0-1}(a, p^s)\bigr],\end{cases}\end{array}$$

\item[]
$$ a/q\in [\tfrac{n-2}{p},\quad  1)
\implies \mu^s_{-n_0-1}(a) = 0.$$
\end{enumerate}
\end{lemma}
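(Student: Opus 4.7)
The plan is to read off the multiplicities $\nu^s_{-i}(a)$ and $\mu^s_{-i}(a)$ from Lemma~\ref{l2} after imposing, on each subinterval of $[0,1)$, the additional vanishings of the $\mu$'s furnished by Lemma~\ref{l1}. The overall decomposition of $F_*^s(\sO(a))$ asserted in the statement is just Lemma~\ref{l1} specialized to $p>n-2$, where $n_0=\lceil n/2\rceil-1$; assertion~(6) of Lemma~\ref{l2} already confines the possible spinor indices to $\{-n_0+1,-n_0,-n_0-1\}$, with $-n_0-1$ absent in the odd case.

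First I would dispatch the $\nu^s_{-i}$'s lying outside this spinor window. For $0\leq i\leq n_0-1$ the shifted index $-i+1\geq -n_0+2$ lies outside $\{-n_0+1,-n_0,-n_0-1\}$, so $\mu^s_{-i+1}(a)=0$ and Lemma~\ref{l2}(1) collapses to $\nu^s_{-i}(a)=Z_{-i}(a,p^s)$. For $n_0+3\leq i\leq n-1$ the same vanishing combined with Lemma~\ref{l2}(3) gives $\nu^s_{-i}(a)=Y_{-i}(a,p^s)$, while the boundary case $i=n_0+2$ is covered directly by Lemma~\ref{l2}(4). When $n$ is odd we have $\mu^s_{-n_0-1}(a)=0$ automatically, so Lemma~\ref{l2}(5) reduces to $\nu^s_{-n_0-1}(a)=Y_{-n_0-1}(a,p^s)$, settling the $i=n_0+1$ case in the odd decomposition.

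Next I would invoke the subinterval support statements of Lemma~\ref{l1}. In the odd case on $a/q\in[0,\tfrac12-\tfrac{n-2}{2p}]$, Lemma~\ref{l1}(2)(a) forces $\mu^s_{-n_0+1}(a)=0$, so Lemma~\ref{l2}(1) with $i=n_0$ yields $\nu^s_{-n_0}(a)=Z_{-n_0}(a,p^s)$, and Lemma~\ref{l2}(2) together with the already computed $\nu^s_{-n_0-1}(a)=Y_{-n_0-1}(a,p^s)$ determines $\mu^s_{-n_0}(a)$. On $a/q\in[\tfrac12+\tfrac{n-2}{2p},1)$, Lemma~\ref{l1}(2)(b) instead gives $\mu^s_{-n_0}(a)=0$; Lemma~\ref{l2}(2) then produces $\mu^s_{-n_0+1}(a)=\tfrac{1}{2\lambda_0}[Y_{-n_0-1}(a,p^s)-Z_{-n_0-1}(a,p^s)]$, which fed back into Lemma~\ref{l2}(1) at $i=n_0$ yields the stated $\nu^s_{-n_0}(a)$. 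In the even case on $a/q\in[\tfrac{n-2}{p},1-\tfrac{n-2}{p})$, both support conditions of Lemma~\ref{l1}(1)(a) and (1)(c) fail, killing $\mu^s_{-n_0+1}(a)$ and $\mu^s_{-n_0-1}(a)$ simultaneously; the surviving $\mu^s_{-n_0}(a)$ is then extracted from Lemma~\ref{l2}(2) combined with $\nu^s_{-n_0-1}(a)=Y_{-n_0-1}(a,p^s)$ obtained from Lemma~\ref{l2}(5). The final clause on $a/q\in[\tfrac{n-2}{p},1)$ is simply Lemma~\ref{l1}(1)(c) restated, since $\tfrac{n-2}{p}>\tfrac{n-2}{2p}$.

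The main obstacle is not mathematical depth but clean bookkeeping: one must correctly correlate the parity of $n$ with which spinor twists are allowed, and on each subinterval identify the precise $\mu$'s that Lemma~\ref{l1} forces to vanish before inverting the small linear system coming from Lemma~\ref{l2}(1), (2), (4), and (5). No additional cohomological input beyond what is already packaged in Lemmas~\ref{l1} and~\ref{l2} is needed; the subinterval decomposition of $[0,1)$ appearing in the statement is dictated entirely by the support intervals in Lemma~\ref{l1}.
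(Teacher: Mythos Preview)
Your proposal is correct and follows essentially the same approach as the paper: both proofs invoke Lemma~\ref{l1} to kill the appropriate $\mu^s_{-j}(a)$'s on each subinterval and then read off the remaining multiplicities by solving the small linear system encoded in Lemma~\ref{l2}(1), (2), (4), (5). Your citations of Lemma~\ref{l1}(2)(a) and (2)(b) in the odd case are in fact the correct ones; the paper's proof appears to have them swapped.
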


\begin{proof}We prove the lemma for the case when  $n$ is odd, as the proof is 
similar when $n$ is even. 

Now by Lemma~\ref{l1}~(2)  we have $\mu^s_{-j}(a) = 0$ if $j\not\in 
\{n_0-1, n_0\}$. 
Therefore by Lemma~\ref{l2}~(1)  for $0\leq i \leq n_0-1$ 
$$\nu^s_{-i}(a) = \nu^s_{-i}(a)+2\lambda_0\mu^s_{-i+1}(a) = Z_{-i}(a,p^s).$$
and for  $n_0+2\leq i < n$, by Lemma~\ref{l2}~(3) and (4), 

$$\nu^s_{-i}(a)  = Y_{-i}(a,p^s).$$
If $i = n_0+1$ then by Lemma~\ref{l2}~(5)
$$\nu^s_{-n_0-1}(a) = Y_{-n_0-1}(a, p^s)-2\lambda_0\mu^s_{-n_0-1}(a) = Y_{-n_0-1}(a,p^s).$$

\vspace{5pt}

\noindent{\underline {Assertion}}~(1).\quad
If $a/q\in [0, \tfrac{1}{2}-\tfrac{n-2}{2p}]$ then 
$\mu_{-n_0+1}^s(a) = 0$, by Lemma~\ref{l1}~(2)(b).

This gives, by Lemma~\ref{l2}~(1)
$$\nu^s_{-n_0}(a) = \nu^s_{-n_0}(a)+\mu^s_{-n_0+1}(a) = Z_{-n_0}(a,p^s)$$
and  by Lemma~\ref{l2}~(2)
$$ 2\lambda_0\mu^s_{-n_0}(a) = Z_{-n_0-1}(a,p^s)- \nu^s_{-n_0-1}(a),$$
whereas by Lemma~\ref{l2}~(5),  $\nu^s_{-n_0-1}(a) = Y_{-n_0-1}(a, p^s)$.

\vspace{5pt}

\noindent{\underline {Assertion}}~(2).\quad
If  $a/q\in [\tfrac{1}{2}+\tfrac{n-2}{2p},~~ 1]$ then  $\mu^s_{-n_0}(a) = 0$, by Lemma~\ref{l1}~(2)(a).
By Lemma~\ref{l2}~(2) and (5), 
$$Y_{-n_0-1}(a, p^s) = 
\nu^s_{-n_0-1}(a) = 
Z_{-n_0-1}(a, p^s)+2\lambda_0\mu^s_{-n_0+1}(a)$$
which implies 
$\mu^s_{-n_0+1}(a)  =  \frac{1}{2\lambda_0}\left[L^s_{-n_0-1}(a) 
- Z^s_{-n_0-1}(a)\right]$
and
$$\begin{array}{lcl}
\nu^s_{-n_0}(a) & = & Z_{-n_0}(a,p^s)-2\lambda_0\mu^s_{-n_0+1}(a)\\\
& = & Z_{-n_0}(a, p^s) - Y_{-n_0-1}(a, p^s) + Z_{-n_0-1}(a, p^s).\end{array}$$
\end{proof}

Now using the fact that the HK density function $f_{R_{p, n+1}}$ is 
continuous and is the  limit function of the sequence of functions 
$\{f_s(R, I)\}_{s\in \N}$, we deduce that the rank functions are well defined 
and continuous.

\begin{lemma}\label{cont} 
The functions
 $\nnu_{i}^{(p)}:\R \longto \R$
and $\mmu_{i}^{(p)}:\R\longto \R$ 
given by 
$$\nnu_i^{(p)}(x) = \lim_{s\to \infty}
\frac{\nu_{-i}^s(\lfloor xq\rfloor)}{q^n}\quad\mbox{and}\quad
\mmu_i^{(p)}(x) = \lim_{s\to \infty}2\lambda_0\frac{\mu_{-i}^s(\lfloor 
xq\rfloor)}{q^n}$$

are  well defined continuous functions.

Moreover, for each $i$ we have  $0\leq \nnu_i^{(p)}(x)\leq 1$ and 
$0\leq \mmu_i^{(p)}(x)\leq  2$.

\end{lemma}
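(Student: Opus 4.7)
The plan is to first use the projection formula to reduce the convergence question to $y \in [0,1)$, then split $[0,1)$ into the ``easy'' range (the complement of the difficult range) and the difficult range, handling the former by the explicit formulas of Lemma~\ref{l3} and the latter by a bootstrap that starts from a $\nnu_i^{(p)}$ which is still polynomial and uses continuity of the HK density function to peel off the unknown $\mmu_i^{(p)}$'s one at a time.

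\emph{Step 1 (easy range).} For $x = i + y$ with $i \in \Z_{\geq 0}$ and $y \in [0,1)$, the projection formula~(\ref{ap}) gives $\nu^s_t(\lfloor xq\rfloor) = \nu^s_{t-i}(\lfloor yq\rfloor)$ and similarly for $\mu$, so we may restrict attention to $y \in [0,1)$. By Lemma~\ref{l1}, $\mu^s_{-j}(\lfloor yq\rfloor)$ vanishes for $j \notin \{n_0-1, n_0, n_0+1\}$, so the corresponding $\mmu_j^{(p)} \equiv 0$. On the complement of the difficult range, Lemma~\ref{l3} expresses every surviving $\nu^s_{-i}(\lfloor yq\rfloor)$ and $2\lambda_0\mu^s_{-i}(\lfloor yq\rfloor)$ as an explicit integer combination of $Z_{-k}(\lfloor yq\rfloor,p^s)$'s and $Y_{-k}(\lfloor yq\rfloor,p^s)$'s; Remark~\ref{r3} then shows that after division by $q^n$ these converge uniformly to polynomials in $y$. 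This gives existence, continuity, and in fact polynomiality of the rank functions on the easy range.

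\emph{Step 2 (difficult range, the main obstacle).} Here Lemma~\ref{l3} no longer isolates the individual $\mu$'s, so a single-step polynomial limit argument fails. The strategy is to identify one rank function $\nnu_i^{(p)}$ that is still polynomial, then use the projection-formula identity~(\ref{e1})
\[
\frac{\nu^s_{-i}(\lfloor yq\rfloor)}{q^n} + 2\lambda_0 \frac{\mu^s_{-i+1}(\lfloor yq\rfloor)}{q^n} = \frac{\ell(R_{p,n+1}/\bold m^{[q]})_{\lfloor yq\rfloor + iq}}{q^n},
\]
whose right-hand side converges to the continuous HK density function $f_{R_{p,n+1},\bold m}(y+i)$ by Definition~\ref{dhkd}, to conclude that $\mmu_{i-1}^{(p)}$ exists and is continuous. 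One then descends to the remaining unknowns via the identities of Lemma~\ref{l2}. Concretely, for odd $n$: Lemma~\ref{l2}(5) together with $\mu^s_{-n_0-1} \equiv 0$ (Lemma~\ref{l1}(2)) gives $\nnu_{n_0+1}^{(p)}(y) = \bold Y_{n_0+1}(y+n_0+1)$; the above identity at $i = n_0+1$ then gives $\mmu_{n_0}^{(p)}$; Lemma~\ref{l2}(2) expresses $2\lambda_0\mu^s_{-n_0+1}$ in terms of quantities already controlled, giving $\mmu_{n_0-1}^{(p)}$; and Lemma~\ref{l2}(1) at $i = n_0$ gives $\nnu_{n_0}^{(p)} = \bold Z_{n_0}(\cdot+n_0) - \mmu_{n_0-1}^{(p)}$. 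For even $n$ the bootstrap is analogous: on $[0,\tfrac{n-2}{2p})$ we start from $\nnu_{n_0+2}^{(p)} = \bold Y_{n_0+2}$ (Lemma~\ref{l2}(4)) and descend through Lemma~\ref{l2}(5), (2), (1); on $[1-\tfrac{n-2}{2p},1)$ we instead start from $\nnu_{n_0+1}^{(p)} = \bold Y_{n_0+1}$ (using $\mu^s_{-n_0-1} \equiv 0$ there) and descend similarly.

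\emph{Step 3 (bounds).} Since the $s$-th iterated Frobenius $F^s$ is finite flat on the smooth $n$-dimensional variety $Q_n$, $F^s_*\sO(a)$ is locally free of rank $q^n$; comparing with the decomposition~(\ref{dc}) yields an identity $\sum_t \nu^s_t(a) + r\sum_t \mu^s_t(a) = q^n$ where $r \in \{\lambda_0, 2\lambda_0\}$ is the total rank contribution of the spinor summand. Dividing by $q^n$ and passing to the limit gives an identity of the form $\sum_i \nnu_i^{(p)}(x) + c \sum_i \mmu_i^{(p)}(x) = 1$ with $c \in \{1/2, 1\}$; non-negativity of each summand (multiplicities are $\geq 0$) immediately yields $0 \leq \nnu_i^{(p)}(x) \leq 1$ and $0 \leq \mmu_i^{(p)}(x) \leq 2$. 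The crucial and only non-routine point in this program is the bootstrap in Step~2, whose success hinges on choosing the order in which the unknown limits are extracted so as to avoid any circular dependence on the convergence we are trying to establish.
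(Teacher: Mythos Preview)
Your proof is correct and follows essentially the same strategy as the paper: reduce to $[0,1)$ via the projection formula, then bootstrap using the identities of Lemma~\ref{l2} together with the already-known continuity of the HK density function $f_{R_{p,n+1}}$ to peel off each $\nnu_i^{(p)}$ and $\mmu_i^{(p)}$ as a difference of continuous functions. The paper's version is slightly more streamlined in that it runs a single bootstrap starting from $\nnu_{n_0+2}^{(p)}=\mathbf{Y}_{n_0+2}$ (Lemma~\ref{l2}(4)), valid on all of $[0,1)$ and for both parities of $n$, rather than splitting into easy/difficult range and into separate cases for odd and even $n$; but your case-by-case bootstrap is equally valid and the rank-equation argument for the bounds is the same.
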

\begin{proof} By (\ref{ap}), it is enough to prove the assertion for 
all $\nnu_i^{(p)}$ and $\mmu_i^{(p)}$ restricted to the interval $[0, 1)$.

By Theorem~1.1 of [T1] (see Definition~\ref{dhkd}) 
 the sequence $\{f_s(R_{p, n+1}, {\bf m})\}_{s\in \N}$ 
converges uniformly to the HK density function $f_{R_{p, n+1}}$.
Therefore 
$$f_{R_{p, n+1}}(x) = \lim_{s\to \infty}f_s(R_{p, n+1}, {\bf m})(x)
 =  \lim_{s\to \infty}
\ell(R_{p, n+1}/{\bf m}^{[q]})_{\lfloor xq\rfloor}/q^n.$$

On the other hand 
if $x\in [0, 1)$ then  by Lemma~\ref{rk1} and Lemma~\ref{rk2}

\begin{equation}\label{conte}
\ell(R_{p, n+1}/{\bf m}^{[q]})_{\lfloor (x+i)q\rfloor}/q^n = 
\frac{1}{q^n}\left(\nu^s_{-i}
(\lfloor xq\rfloor) +
2\lambda_0 \mu^s_{-i+1}(\lfloor xq\rfloor)\right).\end{equation}

By Lemma~\ref{l3}, $\mu^s_{-i+1}(\lfloor xq\rfloor) = 0$ 
if $i\not\in \{n_0, n_0+1, n_0+2\}$. Hence 
$$ \nnu_i^{(p)}(x) = 
 \lim_{s\to \infty}\frac{\nu^s_{-i}
(\lfloor xq\rfloor)}{q^n} = f_{R_{p, n+1}}(x+i)$$ 
is a well defined continuous function.
We can argue for other rank functions as follows.
By Lemma~\ref{l2}~(4)
$$\nnu_{n_0+2}^{(p)}(x) = \lim_{s\to \infty}
\frac{\nu_{-n_0-2}^s(\lfloor xq\rfloor)}{q^n} = \lim_{s\to \infty}
\frac{Y_{-n_0-2}(\lfloor xq\rfloor, q)}{q^n}
={\bf Y}_{n_0+2}(x+n_0+2).$$
By (\ref{conte}) 
$$\frac{\mu^s_{-n_0-1}(\lfloor xq\rfloor)}{q^n} = 
\frac{\ell(R_{p, n+1}/{\bf m}^{[q]})_{\lfloor (x+n_0)q\rfloor}}{q^n} -
\frac{\nu^s_{-n_0-2}(\lfloor xq\rfloor)}{q^n}.$$
Since both the terms on the right hand side has limit as $s\to \infty$
we get 

\begin{equation}\label{n_0+2}
 \mmu_{n_0+1}^{(p)}(x) 
  =  f_{R_{p, n+1}}(x+n_0+2) - {\bf Y}_{n_0+2}(x+n_0+2)\end{equation}
is a well defined and continuous function.

Using 
similar arguments and Lemma~\ref{l2}, we can show that the functions 
$\nnu_{n_0+1}^{(p)}$, $\mmu_{n_0}^{(p)}$, $\mmu_{n_0-1}^{(p)}$ then 
$\nnu_{n_0}^{(p)}$ are well defined and continuous:

$$\nnu_{n_0+1}^{(p)}(x) = {\bf Y}_{n_0+1}(x+n_0+1) - 
\mmu_{n_0+1}^{(p)}(x),\quad\mbox{by Lemma~\ref{l2}~(5)},$$

$$\mmu_{n_0}^{(p)}(x) = f_{R_{p, n+1}}(x+n_0+1) -\nnu_{n_0+1}^{(p)}(x),
\quad\mbox{by}~(\ref{conte}) $$

\begin{equation}\label{n_0+1}{\begin{array}{lcl}
\mmu_{n_0-1}^{(p)}(x) & = & 
\nnu_{n_0+1}^{(p)}(x)+\mmu_{n_0}^{(p)}(x)-
{\bf Z}_{n_0+1}(x+n_0+1),\quad \mbox{by Lemma~\ref{l2}~(2)}\\
 & = & f_{R_{p, n+1}}(x+n_0+1) - {\bf Z}_{n_0+1}(x+n_0+1).\end{array}}\end{equation}
and 
$$\nnu_{n_0}^{(p)}(x) = f_{R_{p, n+1}}(x+n_0) - \mmu_{n_0-1}^{(p)}(x).$$ 

Hence all the ranks functions are well defined continuous functions.

To prove the second assertion of the lemma. we 
compute the ranks of the bundles on both the sides of (\ref{dce}) and (\ref{dco}), and 
use the fact that the spinor bundle $\sS$ has rank $\lambda_0$. This gives

$$p^{sn} = \nu_0^s(a) + \cdots + \nu_{-n+1}^s(a)+ \lambda_0\bigl[\mu_{-n_0+1}^s(a)
+\mu_{-n_0}^s(a)+\mu_{-n_0-1}^s(a)\bigr].$$

In particular, we have $0\leq  \nu^s_{-i}(a),~~ \lambda_0\mu^s_{-i}(a)  
\leq q^n = p^{sn}$. Therefore
$${\mmu}_{i}^{(p)}(x)= 2\lambda_0\lim_{s\to \infty}
{\mu^s_{-i}(\lfloor xq\rfloor)}/{q^n} \leq 2\quad\mbox{and}\quad
{\nnu}_{i}^{(p)}(x)= \lim_{s\to \infty}{\nu^s_{-i}(\lfloor 
xq\rfloor)}/{q^n} \leq 1.$$
\end{proof}

\begin{defn}\label{poly}We call $\nnu_i^{(p)}$ and $\mmu_i^{(p)}$ 
the {\em rank functions} of $Q_n$ or of $R_{p, n+1}$.
 
We say $\nnu_i^{(p)}$ 
 is {\it polynomial in the interval} 
$I_1\subset [0, 1)$ if there exists  
a polynomial 
$F_i(X)\in \Q[x]$  such that
$\nnu_i^{(p)}(x) = F_i(x)$,
for all $x\in I_1$. We can define the similar notion  for $\mmu^{(p)}_{i}$.
\end{defn}

In the following Proposition, which is one of the key steps in the paper, we 
describe the range where these 
functions are in fact  polynomial functions and hence are independent of $p$
though the range itself might depend on $p$.

\begin{propose}\label{rc} Let $p>2$ such that $p>n-2$. 
If $n\geq 4$ is even then for  $x\in [0, 1)$ and for $i\nin \{n_0-1, n_0, n_0+1\}$ 
the function  $\mmu_{i}^{(p)}(x)= 0$, and   
\begin{enumerate}
\item $\nnu_{i}^{(p)}(x)+\mmu_{i-1}^{(p)}(x) = {\bf Z}_i(x+i)$, if $x\in [0, 1)$ 
and $0\leq i \leq n_0$.
\item $\nnu_{n_0+1}^{(p)}(x)+\mmu_{n_0}^{(p)}(x) = {\bf Z}_{n_0+1}(x+n_0+1)$, 
if $x\in [0, 1-\tfrac{n-2}{2p})$.
\item $\nnu^{(p)}_{n_0+2}(x) = {\bf Y}_{n_0+2}(x+n_0+2)$, if $x\in [0, 1)$.
\item  $\nnu_{n_0+2}^{(p)}(x)+\mmu_{n_0+1}^{(p)}(x) = {\bf Y}_{n_0+2}(x+n_0+2)$, 
if $x\in [\tfrac{n-2}{2p}, 1))$.
\item $\nnu_{i}^{(p)}(x)+\mmu_{i-1}^{(p)}(x) = 
\nnu_{i}^{(p)}(x) = 
{\bf Y}_i(x+i)$, if $x\in [0, 1)$ and 
$n_0+3\leq i < n$.
\end{enumerate}

If $n\geq 3$ is odd then  for  $x\in [0, 1)$ and for $i\nin \{n_0-1, n_0\}$ 
the function  $\mmu_{i}^{(p)}(x)= 0$, and
\begin{enumerate}
\item $\nnu_{i}^{(p)}(x)+\mmu_{i-1}^{(p)}(x) = {\bf Z}_i(x+i)$, if $x\in [0, 1)$ and $0\leq i \leq  
n_0$.
\item $\nnu_{n_0+1}^{(p)}(x)+\mmu_{n_0}^{(p)}(x) = {\bf Z}_{n_0+1}(x+n_0+1)$, 
if $x\in [0, \frac{1}{2}-\tfrac{n-2}{2p})$.
\item $\nnu_{n_0+1}^{(p)}(x)+\mmu_{n_0}^{(p)}(x) = {\bf Y}_{n_0+1}(x+n_0+1)$,
if $x\in [\tfrac{1}{2}+\tfrac{n-2}{2p}, 1))$.
\item $\nnu^{(p)}_{n_0+1}(x) = {\bf Y}_{n_0+1}(x+n_0+1)$, if  $x\in [0, 1)$.
\item $\nnu^{(p)}_{n_0+2}(x) = {\bf Y}_{n_0+2}(x+n_0+2)$, if  $x\in [0, 1)$.
\item $\nnu_{i}^{(p)}(x)+\mmu_{i-1}^{(p)}(x) = \nnu_{i}^{(p)}(x)
= {\bf Y}_i(x+i)$, if $x\in [0, 1)$ and 
$n_0+2\leq i < n$.
\end{enumerate}

Moreover for both even and odd  $n$,
$\nnu_{i}^{(p)}(x) =\mmu_{i-1}^{(p)}(x) = 0$, for all $x\in [0, 1)$ and $i\geq n$. 
\end{propose}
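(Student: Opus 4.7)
The plan is to observe that this proposition is obtained by specializing $a=\lfloor xq\rfloor$ in Lemma~\ref{l2} and Lemma~\ref{l3}, dividing by $q^n$, and letting $s\to\infty$ using the polynomial limits of Remark~\ref{r3}, namely $\lim_s Z_{-i}(\lfloor xq\rfloor, q)/q^n={\bf Z}_i(x+i)$ and $\lim_s Y_{-i}(\lfloor xq\rfloor, q)/q^n={\bf Y}_i(x+i)$. Since Lemma~\ref{cont} already guarantees that the rank functions are well-defined continuous limits, taking limits termwise in the identities below is harmless.

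First I would handle the vanishing statements. Under the hypothesis $p>n-2$, Lemma~\ref{l1} restricts the spinor twists occurring in the decomposition of $F^s_*(\sO(\lfloor xq\rfloor))$ to indices $-n_0+1,-n_0,-n_0-1$ (for even $n$) or $-n_0+1,-n_0$ (for odd $n$). Consequently $\mu^s_{-j}(\lfloor xq\rfloor)=0$ for all $s$ and all $j$ outside these index sets, hence $\mmu^{(p)}_j(x)=0$ after passing to the limit. The vanishing $\nnu^{(p)}_i=\mmu^{(p)}_{i-1}=0$ for $i\geq n$ is Lemma~\ref{l2}~(6) combined with the previous observation.

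Next, for the identities valid on the entire interval $[0,1)$: Lemma~\ref{l2}~(1) gives $\nu^s_{-i}(\lfloor xq\rfloor)+2\lambda_0\mu^s_{-i+1}(\lfloor xq\rfloor)=Z_{-i}(\lfloor xq\rfloor,p^s)$ for $0\leq i\leq n_0$, which yields item~(1) in both parities after dividing by $q^n$ and applying Remark~\ref{r3}. Lemma~\ref{l2}~(3), (4) together with the vanishing of the relevant $\mmu$'s account for items~(3), (5) in the even case and items~(4), (5), (6) in the odd case; for the odd case one additionally uses that $\mu^s_{-n_0-1}\equiv 0$ (by Lemma~\ref{l1}~(2)) to simplify Lemma~\ref{l2}~(5) to $\nu^s_{-n_0-1}(a)=Y_{-n_0-1}(a,p^s)$, giving the standalone identity $\nnu^{(p)}_{n_0+1}(x)={\bf Y}_{n_0+1}(x+n_0+1)$.

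The essential step treats the subintervals that avoid the ``difficult range''. For items~(2) in both parities I would use that on the specified lower subinterval $\mu^s_{-n_0+1}(\lfloor xq\rfloor)=0$ (by Lemma~\ref{l1}~(1)(a) in the even case, or (2)(a) in the odd case), so Lemma~\ref{l2}~(2) collapses to $\nu^s_{-n_0-1}+2\lambda_0\mu^s_{-n_0}=Z_{-n_0-1}(\lfloor xq\rfloor,p^s)$, and the limit yields the ${\bf Z}_{n_0+1}$-identity. Symmetrically, for item~(4) in the even case I would use $\mu^s_{-n_0-1}=0$ on $x\in[\tfrac{n-2}{2p},1)$ via Lemma~\ref{l1}~(1)(c), combined with the standalone $\nu^s_{-n_0-2}=Y_{-n_0-2}$ from Lemma~\ref{l2}~(4); for item~(3) in the odd case I would use $\mu^s_{-n_0}=0$ on $x\in[\tfrac12+\tfrac{n-2}{2p},1)$ by Lemma~\ref{l1}~(2)(b), so the identity reduces to the standalone ${\bf Y}_{n_0+1}$ assertion. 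The only minor bookkeeping is to verify that the half-open characteristic-dependent subintervals for $\lfloor xq\rfloor/q$ pass correctly to $x$ in the limit, which is immediate since $\lfloor xq\rfloor/q\to x$ and the subintervals have matching rational endpoints; continuity from Lemma~\ref{cont} handles any boundary concerns. Thus no machinery beyond Lemmas~\ref{l1}--\ref{l3} and Remark~\ref{r3} is required, and the only real ``obstacle'' is the careful case-by-case matching of the index regimes to the right identity of Lemma~\ref{l2}.
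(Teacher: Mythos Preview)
Your proposal is correct and follows essentially the same approach as the paper: both derive the proposition by passing to the limit in the identities of Lemma~\ref{l2} (using Remark~\ref{r3} for the polynomial limits and Lemma~\ref{cont} for well-definedness), with the subinterval restrictions coming from the vanishing of the appropriate $\mu^s_{-j}$ supplied by Lemma~\ref{l1}/Lemma~\ref{l3}. The paper's proof is terser---it points to equations (\ref{n_0}) and (\ref{n_0+1}) and then simply says ``the rest follows from Lemma~\ref{l3}, Lemma~\ref{l2} along with Remark~\ref{r3}''---but your more explicit item-by-item bookkeeping is exactly the unpacking of that sentence.
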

\begin{proof} For both parities of $n$ and for $i=n_0$
 the assertion~(1) follows 
from 
(\ref{n_0}).  
Lemma~\ref{l3} gives the vanishing of $\mu^s_{-n_0+1}(a)$ for $a$ 
in the certain range. This applied to  (\ref{n_0+1}) gives assertion~(2).
The rest of the assertions  follow from Lemma~\ref{l3}, Lemma~\ref{l2} along 
with Remark~\ref{r3}.\end{proof}

\vspace{10pt}

\section{The HK density function $f_{R_{p, n+1}}$ and $f_{R^{\infty}_{n+1}}$}

Now we are ready to give the first formulation of the HK density 
function $f_{R_{p, n+1}}$ provided $p > n-2$.

We follow the Notations~\ref{n1}. By Lemma~\ref{l1}, for 
 $q=p^s$ and an integer $0\leq a <q $,  there exist 
integers  $\nu^s_t(a)$ and $\mu^s_t(a)$  
 given by   the decomposition 
$$ F_*^s(\sO(a)) = \oplus _{t=0}^{n-1}\sO(-t)^{\nu^s_{-t}(a)} \oplus 
\oplus_{t=-n_0+1}^{-n_0-1}
\sS(t)^{\mu^s_t(a)}.$$

\vspace{5pt}

\noindent{\underline{\large{Proof of Proposition}}~\ref{l4}}.\quad 
If $x\in [i, i+1)$ then  by Lemma~\ref{rk2}
$$f_{R_{p, n+1}}(x)  = 
\nnu_{i}^{(p)}(x-i)+\mmu_{i-1}^{(p)}(x-i).$$

\vspace{5pt}

\noindent{\underline{Case}}~(A).\quad
Suppose $n\geq 4$ is an even number.

\vspace{5pt}
\noindent{\underline{Case}}~(a).\quad Let $i\in \{0, 1, \ldots, n_0\}$. Then 
by Proposition~\ref{rc}
$$f_{R_{p, n+1}}(x) = 
\nnu_{i}^{(p)}(x-i)+\mmu_{i-1}^{(p)}(x-i) = {\bf Z}_{i}(x).$$

\vspace{5pt}

\noindent{\underline{Case}}~(b).\quad Let $i = n_0+1$ and 

\noindent{\underline {Sub case}}~(b)~(1).\quad
If  $x\in [n_0+1, n_0+2-\tfrac{n-2}{2p})$, then 
by Proposition~\ref{rc}~(2)
$$f_{R_{p, n+1}}(x) = \nnu_{n_0+1}^{(p)}(x-n_0-1)+\mmu_{n_0}^{(p)}(x-n_0-1) = 
{\bf Z}_{n_0+1}(x).$$

\vspace{5pt}

\noindent{\underline {Sub case}}~(b)~(2).\quad If $x\in 
[n_0+2-\tfrac{n-2}{2p}, n_0+2)$, 
then, by (\ref{n_0+1}),  
$$f_{R_{p, n+1}}(x) = {\bf Z}_{n_0+1}(x)+\mmu_{n_0-1}^{(p)}(x-n_0-1).$$
\vspace{5pt}

\noindent{\underline{Case}}~(c).\quad Let $i = n_0+2$ then by (\ref{n_0+2})
 $$f_{R_{p, n+1}}(x) = {\bf Y}_{n_0+2}(x)+ \mmu_{n_0+1}^{(p)}(x-n_0-2).$$

\vspace{5pt}

If in addition $x\in [n_0+2+\tfrac{n-2}{2p}, n_0+3)$
then the last assertion of Lemma~\ref{l3}
implies $\mmu_{n_0+1}^{(p)}(x-n_0-2) = 0$.

\noindent{\underline{Case}}~(d).\quad Let $i\in \{n_0+3, \cdots, n-1\}$. Then 
by Proposition~\ref{rc} 
$$f_{R_{p, n+1}}(x) = \nnu_{i}^{(p)}(x-i)+\mmu_{i-1}^{(p)}(x-i) = 
{\bf Y}_{i}(x) \quad\mbox{for}\quad x\in [i, i+1).$$

\vspace{5pt}

\noindent{\underline{Case}}~(e).\quad Let $i \geq n$. Then by Lemma~\ref{l2}~(6), 
$$f_{R_{p, n+1}}(x)  =   \nnu_{i}^{(p)}(x-i)+\mmu_{i-1}^{(p)}(x-i) = 0.$$
This completes the proof of the theorem when $n$ is an even number.

\vspace{5pt}

Assume $n\geq 3$ is odd. Let $x\in [i, i+1)$
Then by 
Proposition~\ref{rc}~(1) and (6)
$$\begin{array}{lcl}
f_{R_{p,n+1}}(x)  & = & 
{\bf Z}_{i}(x),\quad \mbox{if}\quad i\leq n_0\\
& = & 
{\bf Y}_{i}(x),\quad \mbox{if}\quad i\geq n_0+2.
\end{array}$$

If $i = n_0+1$ then 
$$f_{R_{p,n+1}}(x)  =   \nnu^{(p)}_{n_0+1}(x-n_0-1)+
\mmu^{(p)}_{n_0}(x-n_0-1),\quad \mbox{if}\quad x-n_0-1\in [0, 1).$$
Now applying 
Proposition~\ref{rc}~(2), (4)  and (3) we get
$$\begin{array}{lcl}
f_{R_{p,n+1}}(x)  
& = & {\bf Z}_{n_0+1}(x),\quad \mbox{if}\quad x-n_0-1\in [0, 
\tfrac{1}{2}-\tfrac{n-2}{2p})\\
& = & {\bf Y}_{n_0+1}(x)+\mmu^{(p)}_{n_0}(x-n_0-1),\quad \mbox{if}\quad 
x-n_0-1\in [\tfrac{1}{2}-\tfrac{n-2}{2p},\quad \tfrac{1}{2}+\tfrac{n-2}{2p})\\
& = & {\bf Y}_{n_0+1}(x),\quad \mbox{if}\quad 
x-n_0-1\in [\tfrac{1}{2}+\tfrac{n-2}{2p},\quad 1).
\end{array}$$
This completes the proof of the proposition.$\Box$

\vspace{10pt}

\begin{thm}\label{f0} The function 
  $f^{\infty}_{R_{n+1}}:[0, \infty)\longto [0,\infty)$ 
given by 
$$f^{\infty}_{R_{n+1}}(x) := \lim_{p\to \infty}f_{R_{p, n+1}}(x)$$
is partially symmetric continuous function, that is 
$$f^{\infty}_{R_{n+1}}(x) =  f^{\infty}_{R_{n+1}}(n-x),\quad\mbox{for}\quad 
0\leq x\leq (n-2)/2$$
and 
is described as follows:

\begin{enumerate}
\item If $n\geq 4$ is even then 
  $$f^{\infty}_{R_{n+1}}(x)  = \begin{cases}  {\bf Z}_{i}(x)& \quad\mbox{if}\quad 
 i\leq x < i+1 \quad\mbox{and}\quad 0\leq i \leq  n_0+1\\\\
  {\bf Y}_{i}(x) &\quad\mbox{if}\quad  
i\leq x < i+1 \quad\mbox{and}\quad n_0+2\leq i <n
\end{cases}$$
and $f^{\infty}_{R_{n+1}}(x) = 0$ otherwise.

\item If $n\geq 3$ is  an odd number then 
$$f^{\infty}_{R_{n+1}}(x)  = \begin{cases} 
  {\bf Z}_{i}(x)& \quad\mbox{if}\quad 
 i\leq x < i+1 \quad\mbox{and}\quad 0\leq i \leq  n_0\\\\
 {\bf  Z}_{n_0+1}(x) &\quad\mbox{if}\quad  
(n_0+1) \leq x < (n_0+\frac{3}{2})\\\\
{\bf Y}_{n_0+1}(x) & \quad\mbox{if}\quad  
(n_0+\frac{3}{2})\leq x < (n_0+2)\\\\
 {\bf Y}_{i}(x) &\quad\mbox{if}\quad  
i\leq x < i+1 \quad\mbox{and}\quad n_0+2\leq i <n
\end{cases}$$
and $f^{\infty}_{R_{n+1}}(x) = 0$ otherwise.
\end{enumerate}
\end{thm}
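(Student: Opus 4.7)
The plan is to pass to the limit $p\to\infty$ in Proposition~\ref{l4}, using Corollary~\ref{t2} to establish continuity of the limiting function. The crucial observation is that the \emph{difficult range} where the $\mmu^{(p)}$ corrections appear shrinks to a set of measure zero (its width is $O(1/p)$), while the corrections are uniformly bounded by $2$ by Lemma~\ref{cont}. Thus, outside shrinking neighborhoods of finitely many transition points, Proposition~\ref{l4} gives $f_{R_{p,n+1}}(x)$ as a $p$-independent polynomial ($\mathbf{Z}_i$ or $\mathbf{Y}_j$), and the limit is immediate.

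First I would note that existence and continuity of $f^{\infty}_{R_{n+1}}$ are already guaranteed by Corollary~\ref{t2}; what remains is to identify the limit as the explicit piecewise polynomial. Fix $x \in [0,n)$ distinct from the finitely many limiting transition points (the integer $n_0+2$ when $n$ is even, the half-integer $n_0+3/2$ when $n$ is odd, and the integer boundaries $n_0+3,\ldots,n-1$ in both parities). For every prime $p > n-2$ sufficiently large (depending on $x$), the point $x$ lies outside the difficult range of width $O(1/p)$, so Proposition~\ref{l4} yields $f_{R_{p,n+1}}(x) = \mathbf{Z}_i(x)$ or $\mathbf{Y}_j(x)$, with a $p$-independent right-hand side. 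Taking $p\to\infty$ gives the stated formula. The formula extends to the transition points by continuity of $f^{\infty}_{R_{n+1}}$.

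The continuity of $f^{\infty}_{R_{n+1}}$ also forces agreement of the adjacent polynomial pieces at each transition. For instance, in the odd case, the pieces $\mathbf{Z}_{n_0+1}$ on $[n_0+1, n_0+3/2)$ and $\mathbf{Y}_{n_0+1}$ on $[n_0+3/2, n_0+2)$ must agree at $n_0+3/2$. A sharper argument in fact shows they coincide identically as polynomials: for each large prime $p$, continuity of $f_{R_{p,n+1}}$ at the left endpoint of its own difficult range, combined with the vanishing of $\mmu^{(p)}_{n_0}$ there (from Lemma~\ref{l1}~(2)(b) and Lemma~\ref{cont}), yields $\mathbf{Z}_{n_0+1}(x_p) = \mathbf{Y}_{n_0+1}(x_p)$ at $x_p = n_0+3/2-(n-2)/(2p)$, hence at infinitely many values of the variable, forcing $\mathbf{Z}_{n_0+1} \equiv \mathbf{Y}_{n_0+1}$. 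An analogous compatibility forces $\mathbf{Z}_{n_0+1}(n_0+2) = \mathbf{Y}_{n_0+2}(n_0+2)$ in the even case.

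Finally, the partial symmetry $f^{\infty}_{R_{n+1}}(x) = f^{\infty}_{R_{n+1}}(n-x)$ for $0 \leq x \leq (n-2)/2$ is obtained by passing to the limit in the symmetry of $f_{R_{p,n+1}}$ valid on $[0, (n-2)(1-1/p)/2]$, stated at the end of Proposition~\ref{l4}; the domain extends to $[0, (n-2)/2]$ as $p\to\infty$. The main obstacle is essentially book-keeping: systematically matching each piecewise sub-interval of Proposition~\ref{l4} against the intervals prescribed in the theorem, separately in the two parity cases, and verifying that the transition-point values agree. No deeper analytic difficulty arises, because uniform boundedness of $\mmu^{(p)}$ together with the shrinking of the difficult range, and continuity already secured by Corollary~\ref{t2}, handle the passage to the limit cleanly.
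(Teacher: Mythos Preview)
Your derivation of the piecewise description is correct and matches the paper: take $p\to\infty$ in Proposition~\ref{l4}, noting that the $\mmu^{(p)}$-corrections are bounded (Lemma~\ref{cont}) and supported on intervals of width $(n-2)/p\to 0$.

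However, two of your citations are circular in the paper's logical order. First, you invoke Corollary~\ref{t2} for the existence and continuity of $f^{\infty}_{R_{n+1}}$, but in the paper Corollary~\ref{t2} is proved \emph{after} Theorem~\ref{f0} and explicitly uses it. Second, you obtain the partial symmetry by passing to the limit in the symmetry of $f_{R_{p,n+1}}$ stated at the end of Proposition~\ref{l4}; but that finite-$p$ symmetry is not proved in the proof of Proposition~\ref{l4} --- it is justified only by the Remark immediately following Theorem~\ref{f0}, which says ``the same argument as above'' (namely the Claim in Theorem~\ref{f0}'s proof) works. So in the paper's structure, both the finite-$p$ symmetry and Corollary~\ref{t2} are downstream of the very theorem you are proving.

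The paper closes this gap by proving the symmetry directly at the level of the polynomials: it establishes by induction on $j$ (using the recursive definitions in Notations~\ref{n2}) that
\[
\lim_{q\to\infty}\frac{Z_{-j}(a,q)}{q^{n}}=\lim_{q\to\infty}\frac{Y_{-(n-1-j)}(q-a,q)}{q^{n}},
\]
hence $\mathbf{Z}_j(x)=\mathbf{Y}_{n-1-j}(n-x)$ for $j\le x<j+1$ and $0\le j\le n_0-1$. The partial symmetry of $f^{\infty}_{R_{n+1}}$ then follows immediately from the piecewise description. Your observation that $\mathbf{Z}_{n_0+1}\equiv\mathbf{Y}_{n_0+1}$ in the odd case is correct and in the same spirit, but one needs the full family of identities $\mathbf{Z}_j\leftrightarrow\mathbf{Y}_{n-1-j}$ to get symmetry on all of $[0,(n-2)/2]$; the single identity at index $n_0+1$ does not reach that range. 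To repair your argument you should either supply this inductive polynomial identity yourself, or give an independent proof of the finite-$p$ symmetry that does not rely on Theorem~\ref{f0}.
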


\begin{proof}The description of the function $f^{\infty}_{R_{n+1}}:[0, \infty)\longto 
[0, \infty)$ follows from  Proposition~\ref{l4}.
To prove the symmetry, we consider the 
${\bf Z}_{j}:[0, \infty)\longto [0, \infty)$ and 
${\bf Y}_{j}:[0, \infty)\longto [0\, \infty)$ 
and 
\vspace{5pt} 

\noindent{\bf Claim}.\quad ${\bf Z}_{j}(x) = {\bf Y}_{n-1-j}(n-x)$, if  
$j\leq x < j+1$, where $j \leq n_0-1$.

\vspace{5pt}

\noindent{\underline {Proof of the claim}}:\quad  By induction on $j\geq 0$, first 
we prove the assertion  that 
 $$\lim_{q\to \infty}Z_{-j}(a,q)/q^n = 
\lim_{q\to \infty}Y_{-(n-1-j,q)}(q-a,q)/q^n\quad\mbox{for}\quad 0\leq a <q.$$

If $j=0$ then 
$$\lim_{q\to \infty}Z_{0}(a, q)/q^n = \lim_{q\to \infty}L_a/q^n = 
\lim_{q\to \infty}L_{a+n}/q^n = 
\lim_{q\to \infty}Y_{-(n-1)}(q-a, q)/q^n.$$ 

Assume that the assertion holds for all $j$ where  $0\leq j <i \leq n_0-1$.
Now 
\begin{multline*}
\lim_{q\to \infty} \frac{Z_{-i}(a,q)}{q^n}  =  \lim_{q\to \infty}{L_{a}}/{q^n}-
{\left[L_1Z_{-i+1}(a,q)+\cdots+L_iZ_0(a,q)\right]}/q^n\\\
 = \lim_{q\to \infty}{L_{a+n}}/{q^n}-
\left[L_1Y_{-(n-i)}(q-a,q)+\cdots+L_iY_{-(n-1)}(q-a,q)\right]/q^n\\\
 =  \lim_{q\to \infty} {Y_{-(n-1-i)}(q-a,q)}/{q^n},\end{multline*}
where the second equality follows from the induction hypothesis.

Now to prove the claim, it is enough to prove for $x = m/q$, where $m\in \Z_{\geq 0}$.
If  $j\leq x < j+1$ then $m = a+jq$, where $0\leq a<q$. Now 
\begin{multline*}
{\bf Z}_{j}(x)  =  \lim_{q\to \infty}Z_{-j}(m -jq,q)/q^n
 =  \lim_{q\to \infty}Y_{-(n-1-j)}((j+1)q-m),q)/q^n\\\
 =  \lim_{q\to \infty}Y_{-(n-1-j)}((n-m)q-(nq-q-jq),q)/q^n
 =  {\bf Y}_{n-1-j}(n-x).\end{multline*}
This proves the claim.

\vspace{5pt}

\noindent{If} $n$ is even then $n_0 = (n/2)-1$.
Let  $0\leq x < (n-2)/2= n_0$ then $i \leq x <(i+1)$ for some $0\leq i < n_0 $.
Now 
$$f^{\infty}_{R_{n+1}}(x) = {\bf Z}_{i}(x) = {\bf Y}_{n-1-i}(n-x) = 
f^{\infty}_{R_{n+1}}(n-x).$$
 where the second equality follows as  
$n-(i+1) < n-x\leq n-i$.

\vspace{5pt} 

\noindent{If} $n$ is odd then $n_0 = (n-1)/2$.
Let $0\leq x <(n-2)/2= n_0-(1/2)$.

If $i\leq x <(i+1)$, where $i<n_0$ then 
$$f^{\infty}_{R_{n+1}}(x) = {\bf Z}_{i}(x) = {\bf Y}_{n-1-i}(n-x) = 
f^{\infty}_{R_{n+1}}(n-x).$$
 If $(n_0-1)\leq x < n_0-1/2$ then again 
$$f^{\infty}_{R_{n+1}}(x) = {\bf Z}_{n_0-1}(x) = {\bf Y}_{n_0+1}(n-x) = 
f^{\infty}_{R_{n+1}}(n-x).$$
\end{proof}

\begin{rmk}The same argument as above proves that $f_{R_{p, n+1}}$ is partially 
symmetric and the symmetry is given by 
$$f_{R_{p, n+1}}(x) = f_{R_{p, n+1}}(n-x)\quad \mbox{for}\quad 0\leq x \leq 
\tfrac{n-2}{2}\left(1-\tfrac{1}{p}\right).$$ 
\end{rmk}

\vspace{5pt}

\noindent{\underline {Proof of Corollary~\ref{t2}}}.\quad
If $n$ is even then $n_0 = \frac{n-2}{2}$ and if $n$ is odd then $n_0 = \frac{n-1}{2}$.
Therefore we can express the intervals as follows

$$n\quad\mbox{is even}\implies \mbox{$\left[n_0+2 -\frac{n-2}{2p},~~ n_0+2 +\frac{n-2}{2p}\right) = 
\left[\frac{n+2}{2}-\frac{n-2}{2p},~~\frac{n+2}{2}+\frac{n-2}{2p}\right)$}$$
$$n\quad\mbox{is odd}\implies
\mbox{$\left[n_0+\frac{3}{2} -\frac{n-2}{2p},~~ n_0+\frac{3}{2} 
+\frac{n-2}{2p}\right) = 
\left[\frac{n+2}{2}-\frac{n-2}{2p},~~\frac{n+2}{2}+\frac{n-2}{2p}\right)$}.$$

Now the corollary follows from 
 Proposition~\ref{l4} and Theorem~\ref{f0}. $\Box$

\vspace{10pt}

\begin{thm}\label{t1}Let $p\neq 2$ and  let $p > n-2$. Then 
$$1+m_{n+1} + \tfrac{2n-4}{p} 
\geq e_{HK}(R_{p, n+1}, {\bf m})\geq   1+m_{n+1}.$$

In fact, 
\begin{enumerate}\item if $n\geq 3$ odd and  $p\geq 3n-4$ or 
\item if $n\geq 4$ even and $p\geq (3n-4)/2$ 
\end{enumerate}
then $e_{HK}(R_{p, n+1}, {\bf m}) > 1+m_{n+1}$.
\end{thm}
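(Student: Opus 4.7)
The plan is to integrate the decomposition of $f_{R_{p,n+1}}$ provided by Corollary~\ref{t2} and compare with the limit HK density function.

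First I would write $e_{HK}(R_{p,n+1},{\bf m}) = \int_0^\infty f_{R_{p,n+1}}(x)\,dx$ and, using Corollary~\ref{t2}, split
$$f_{R_{p,n+1}}(x) \;=\; f^\infty_{R_{n+1}}(x) \;+\; \psi_p(x),$$
where $\psi_p$ is the ``difficult range'' correction: piecewise equal to $\mmu^{(p)}_{n_0-1}(x-n_0-1)$ on $\bigl[\tfrac{n+2}{2}-\tfrac{n-2}{2p},\tfrac{n+2}{2}\bigr)$ and to $\mmu^{(p)}_{i}(x-i-1)$ on $\bigl[\tfrac{n+2}{2},\tfrac{n+2}{2}+\tfrac{n-2}{2p}\bigr]$ (with $i=n_0+1$ for even $n$ and $i=n_0$ for odd $n$), and zero outside this range, whose total length is $(n-2)/p$. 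By Lemma~\ref{cont} each $\mmu^{(p)}_j$ is nonnegative and bounded above by $2$, so $0\le \psi_p \le 2$ pointwise.

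Next I would identify $\int_0^\infty f^\infty_{R_{n+1}}(x)\,dx = 1+m_{n+1}$. Both $f_{R_{p,n+1}}$ and $f^\infty_{R_{n+1}}$ are supported on $[0,n]$ and uniformly bounded (the polynomials ${\bf Z}_i,{\bf Y}_i$ from Proposition~\ref{l4} are characteristic-independent and the correction is bounded by $2$). Since $f_{R_{p,n+1}}\to f^\infty_{R_{n+1}}$ pointwise as $p\to\infty$ by Corollary~\ref{t2}, the dominated convergence theorem gives
$$\int_0^\infty f^\infty_{R_{n+1}}(x)\,dx \;=\; \lim_{p\to\infty}\int_0^\infty f_{R_{p,n+1}}(x)\,dx \;=\; \lim_{p\to\infty}e_{HK}(R_{p,n+1},{\bf m}) \;=\; 1+m_{n+1},$$
where the last equality is the Gessel--Monsky theorem [GM]. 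Combining these two steps,
$$e_{HK}(R_{p,n+1},{\bf m}) \;=\; (1+m_{n+1}) + \int \psi_p.$$
The lower bound $e_{HK}(R_{p,n+1},{\bf m})\ge 1+m_{n+1}$ is immediate from $\psi_p\ge 0$; the upper bound follows from $\psi_p\le 2$ on a set of length $(n-2)/p$, giving $\int \psi_p \le (2n-4)/p$.

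The main obstacle is the strict inequality under the explicit threshold on $p$. The strategy is to show $\int\psi_p>0$ by exhibiting a subinterval of the difficult range on which one of $\mmu^{(p)}_{n_0-1}$, $\mmu^{(p)}_{n_0}$, or $\mmu^{(p)}_{n_0+1}$ is strictly positive. Using the continuity of the rank functions (Lemma~\ref{cont}) together with the explicit polynomial formulas on the neighbouring polynomial regime (Lemma~\ref{l3}), evaluating $\psi_p$ at the boundary of the difficult range forces it to equal a specific difference of the polynomials ${\bf Z}_{n_0+1}$ and ${\bf Y}_{n_0+1}$ there. Invoking the partial symmetry of $f^\infty_{R_{n+1}}$ about $x=n/2$ from Theorem~\ref{f0}, this difference at the boundary point admits an explicit evaluation as a rational expression in $1/p$ whose leading behaviour is strictly positive precisely when $p$ exceeds the stated thresholds ($3n-4$ for odd $n$, $(3n-4)/2$ for even $n$). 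Continuity then propagates strict positivity to a subinterval of positive measure, forcing $\int\psi_p>0$.
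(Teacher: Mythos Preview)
Your treatment of the two-sided inequality is essentially the paper's own argument: write $e_{HK}$ as $\int f_{R_{p,n+1}}$, use Corollary~\ref{t2} to split off $f^\infty_{R_{n+1}}$, bound the remainder $\psi_p$ by $0$ and $2$ via Lemma~\ref{cont} on an interval of length $(n-2)/p$, and identify $\int f^\infty_{R_{n+1}}=1+m_{n+1}$ by dominated convergence together with Gessel--Monsky. That part is fine.

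The gap is in your argument for the strict inequality. Your plan is to evaluate $\psi_p$ at the boundary of the difficult range and find a nonzero value, but this cannot work: since $f_{R_{p,n+1}}=f^\infty_{R_{n+1}}$ outside the difficult range and both functions are continuous, $\psi_p$ vanishes at both endpoints. On the left half (for odd $n$) one has $\psi_p(x)=\mmu^{(p)}_{n_0-1}(x-n_0-1)$, and Lemma~\ref{l1} forces $\mmu^{(p)}_{n_0-1}$ to vanish at the left endpoint; symmetrically $\mmu^{(p)}_{n_0}$ vanishes at the right endpoint. So no boundary evaluation, nor the partial symmetry of $f^\infty_{R_{n+1}}$ (which in Theorem~\ref{f0} is only asserted for $0\le x\le (n-2)/2$, away from the difficult range), produces a positive value of $\psi_p$. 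Moreover the specific thresholds $p\ge 3n-4$ (odd $n$) and $p\ge (3n-4)/2$ (even $n$) are not the zero of a leading coefficient of some polynomial difference; they are the structural constraints under which the single-step Frobenius matrices $\B^{(p)}_{n_i}$, $\C^{(p)}_{n_i}$ acquire polynomial entries (Definition~\ref{polye}, Lemmas~\ref{entries} and \ref{eentries}) and under which $0\le j_i-t<p$ in the proof of Proposition~\ref{pex}.

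The paper obtains strictness by an entirely different mechanism: it decomposes the difficult range into the subintervals of Section~6, expresses $\int_{I^{(p)}_{(n_1)}}\mmu^{(p)}_{n_0}$ (resp.\ $\int_{\tilde I^{(p)}_{(n_1)}}\mmu^{(p)}_{n_0-1}$) via Proposition~\ref{pex} (resp.\ \ref{epex}) as a sum of products $tH^{(n_1)}_{i,j}(t)F_i(t)$ of nonnegative terms, and then exhibits one such term that is strictly positive using the explicit single-Frobenius multiplicities from Lemma~\ref{*c1} (resp.\ \ref{e*c1}); this is exactly Remark~\ref{strict}. To complete your proof you would need to invoke that machinery rather than a boundary evaluation.
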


\begin{proof}We prove the theorem for the case when $n$ is odd, as the proof for the case when 
$n$ is even  is very similar.

By Proposition~\ref{l4} and Theorem~\ref{f0}

$$0  \leq \int_0^\infty f_{R_{p, n+1}}(x)dx - \int_0^\infty 
f_{R^{\infty}_ {n+1}}(x)dx =  
\int_{\tfrac{1}{2}-\tfrac{n-2}{2p}}^{\tfrac{1}{2}+
\tfrac{n-2}{2p}}\mmu_{n_0}^{(p)}(x))dx   \leq \frac{2n-4}{p},$$
where the last inequality follows from Lemma~\ref{cont}.
On the other hand by Theorem~1.1 of [T1] we have 
$$ e_{HK}({R_{p, n+1}}, {\bf m}) =   \int_0^{\infty} f_{{R_{p, n+1}}, {\bf m}}(x)dx$$
and by
 the result of Gessel-Monsky [GM],
$\lim_{p\to \infty}e_{HK}({R_{p, n+1}}, {\bf m}) = 1+m_{n+1}$.
Hence by dominated convergence theorem
$$ \int_0^{\infty} f_{R^{\infty}_{n+1}, {\bf m}}(x)dx =   
\lim_{p\to \infty}\int_0^{\infty} f_{{R_{p, n+1}}, {\bf m}}(x)dx = 1+m_{n+1}.$$

The strict inequality assertion follows from Remark~\ref{strict}.
\end{proof}

\begin{cor}\label{c1}Let $p>2$ be a prime number such that  $p > n-2$
then the $F$-threshold of the ring
$R_{p ,n+1}$ is $c^{\bf m}({\bf m}) = n$.
\end{cor}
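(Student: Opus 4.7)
The plan is to combine Proposition~\ref{l4} with the main result of [TW1], which identifies the $F$-threshold $c^{\mathbf{m}}(\mathbf{m})$ of a standard graded pair with the right endpoint of the support of the HK density function $f_{R_{p,n+1},\mathbf{m}}$. So it suffices to show that the support of $f_{R_{p,n+1},\mathbf{m}}$ is exactly $[0,n]$.

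The containment $\mathrm{supp}(f_{R_{p,n+1}}) \subseteq [0,n]$ is immediate from Proposition~\ref{l4}: the explicit piecewise description shows $f_{R_{p,n+1}}(x) = 0$ for $x \geq n$, independent of the parity of $n$. For the reverse containment, I would focus on the top piece, $x \in [n-1,n)$, where Proposition~\ref{l4} gives $f_{R_{p,n+1}}(x) = \mathbf{Y}_{n-1}(x)$ in both the even and odd case. By the construction in Notations~\ref{n2} and Remark~\ref{r3}, $Y_{-n+1}(a,q) = L_{q-a-n}$, so $\mathbf{Y}_{n-1}(x) = \frac{2}{n!}(n-x)^n$. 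This is strictly positive on $[n-1,n)$ and tends to $0$ as $x \to n^-$, which shows that $n$ is precisely the right endpoint of the support.

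The only step that requires a bit of care is citing [TW1] in the correct form. The statement I need is that for a standard graded Noetherian ring $R$ of dimension $\geq 2$ over a perfect field of positive characteristic, and a homogeneous $\mathbf{m}$-primary ideal $I$, the $F$-threshold $c^I(\mathbf{m})$ equals the supremum of the support of $f_{R,I}$. Once this is invoked with $I = \mathbf{m}$, the corollary follows immediately from the two observations above.

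Thus the proof reduces to two short lines: the support of $f_{R_{p,n+1}}$ is $[0,n]$ by Proposition~\ref{l4} together with the explicit form of $\mathbf{Y}_{n-1}$, and then $c^{\mathbf{m}}(\mathbf{m}) = n$ by [TW1]. The main (and only) obstacle is locating and correctly referencing the precise statement in [TW1]; the density function computation itself has already been done.
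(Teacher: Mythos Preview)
Your proposal is correct and follows essentially the same route as the paper's own proof: invoke Theorem~E of [TW1] to identify $c^{\mathbf m}(\mathbf m)$ with $\max\{x\mid f_{R_{p,n+1}}(x)\neq 0\}$, use Proposition~\ref{l4} to see the vanishing for $x\geq n$, and compute $\mathbf{Y}_{n-1}(x)=\tfrac{2}{n!}(n-x)^n$ from $Y_{-n+1}(a,q)=L_{q-a-n}$ to see non-vanishing on $[n-1,n)$. The only thing the paper adds is the precise reference (Theorem~E of [TW1]), which is exactly the statement you flagged as needing to be located.
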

\begin{proof}  By Theorem~E of [TW1],  
the $F$-threshold 
$c^{\bf m}({\bf m}) = 
\mbox{max}~\{x\mid f_{R_{p, n+1}}(x)\neq 0\}$.

Now, by Proposition~\ref{l4}, $f_{R_{p, n+1}}(x) = 0$, for $x\geq n$ and for 
$n-1\leq x < n$, 
$$f_{R_{p, n+1}}(x) = {\bf Y}_{n-1}(x) = 
\lim_{q\to \infty}\frac{Y_{-n+1}(\lfloor xq\rfloor-(n-1)q,q)}{q^n} = 
\frac{2(n-x)^n}{n!}\neq 0,$$
where the last equality follows as $Y_{-n+1}(a,p^s) = L_{q-a-n}$. This proves the corollary.
\end{proof}

\section{Generating  polynomials for $f_{R_{p, n+1}}$}
In this and the next section we setup the ground to give second formulation of the HK 
density function $f_{R_{p, n+1}}$.

Throughout the section we denote
 $n_0 = \lceil\tfrac{n}{2}\rceil -1$ and, unless stated otherwise, assume that 
the characteristic $p\geq 3n-4$ if $n$ is odd and $p\geq (3n-4)/2$ if $n$ is even.

By  Proposition~\ref{rc} we know  that the every function  
$\nnu_i^{(p)}+\mmu_{i-1}^{(p)}:[0, 1)\longto [0, \infty)$  is a polynomial outside the 
difficult range which is 
\begin{enumerate}
\item  $[\frac{1}{2}-\frac{n-2}{2p}, \quad \frac{1}{2}+\frac{n-2}{2p})$, 
if $n\geq 3$ is an odd integer
and 
\item  $[0, \quad \frac{n-2}{2p})\cup [1-\frac{n-2}{2p},~~~1)$,~~~ 
if $n\geq 4$ is an even  integer.
\end{enumerate}

In 
Proposition~\ref{lp} we consider the same property for
$\nnu_i^{(p)}$ and $\mmu_{i-1}^{(p)}$ instead of 
$\nnu_i^{(p)}+\mmu_{i-1}^{(p)}$.

The formulation of $f_{R_{p, n+1}}$ in Proposition~\ref{l4} suggest 
that we only need to analyze  the rank function $\mmu_{n_0}^{(p)}$ when $n$ is odd, and
the rank functions $\mmu_{n_0-1}^{(p)}$ and $\mmu_{n_0+1}^{(p)}$
when $n$ is even. But the statements look less technical if we consider all 
the rank functions together in a tuple.

\subsection{Polynomials for the rank functions  $\nnu^{(p)}_{i}$ 
and $\mmu^{(p)}_{j}$}

\begin{notations}\label{nl}For the sake of uniformity in the indexing, henceforth  
we would write the decomposition of $F_*^s(\sO(a))$ as follows: 
For $q =p^s$  
$$F_*^s(\sO(a)) = \bigoplus_{i=0}^{n-1}
\sO(-i)^{l_{i}(a, q)} 
\oplus 
\sS(-n_0+1)^{l_n(a,q)}\oplus \sS(-n_0)^{l_{n+1}(a,q)}
\oplus \sS(-n_0-1)^{l_{n+2}(a,q)}.$$

We call the tuple $(l_0(a, q), \ldots, l_{n+2}(a, q))$
{\em the rank tuple} of $F_*^s(\sO(a))$ and 
now onwards
we relabel the rank functions
 $\nnu_0^{(p)}$, $\ldots$, $\nnu_{n-1}^{(p)}$, $\mmu_{n_0-1}^{(p)}$,
$\mmu_{n_0}^{(p)}$, $\mmu_{n_0+1}^{(p)}$ as
the functions $\eell_0^{(p)}$, $\ldots, \eell_{n-1}^{(p)}$,
$\eell_n^{(p)}$,$\eell_{n+1}^{(p)}$,
$\eell_{n+2}^{(p)}$ respectively.
\end{notations}

Following set $\{{\bf l}_i(x)\}_i\cup \{{\bf r}_i(x)\}_i \subset \Q[x]$ of polynomials 
is going to be the first 
 generating set of polynomials for the function $f_{R_{p, n+1}}$ when $n$ is odd. When 
$n$ is even the set will be $\{{\bf m}_i(x)\}_i\subset \Q[x]$.
The sets $\{F_i(t)\}_i, \{G_i(t)\}_i, \{{\tilde F}_i(t)\}_i\subset \Q[t]$ come from 
integrating these polynomials
which will be used for the formulation of $e_{HK}(R_{p, n+1})$, but will list them here.

\begin{defn}\label{5.4}Let ${\bf Z}_{i}(x)$ and ${\bf Y}_{i}(x)$ be as in Remark~\ref{r3}.
We recall that $n_0 = \tfrac{n-1}{2}$ if $n$ is odd, and 
$n_0 = \tfrac{n-2}{2}$ if $n$ is even.
\begin{enumerate}
\item Let  $n\geq 3$ be an odd integer. Then we  define 
$${\bf l}_{i}(x) = \begin{cases}
{\bf Z}_{i}(x+i) &\mbox{if}\quad 0\leq i \leq n_0\\
{\bf Y}_{i}(x+i) &\mbox{if}\quad n_0+1\leq i <n\\
0 &\mbox{if}\quad i = n\\
\frac{1}{2\lambda_0}\left[{\bf Z}_{n_0+1}(x+n_0+1)-{\bf Y}_{n_0+1}
(x+n_0+1)\right] & \mbox{if}\quad i = n+1\end{cases}.$$

and 
$${\bf r}_{i}(x) = \begin{cases}
{\bf Z}_{i}(x+i) &\mbox{if}\quad 0\leq i <n_0\\
{\bf Z}_{n_0}(x+n_0) -
\left[{\bf Y}_{n_0+1}(x+n_0+1)-{\bf Z}_{n_0+1}(x+n_0+1)\right] & \mbox{if}\quad i = n_0\\
{\bf Y}_{i}(x+i) &\mbox{if}\quad n_0+1\leq i <n\\
\frac{1}{2\lambda_0}\left[{\bf Y}_{n_0+1}(x+n_0+1)-{\bf Z}_{n_0+1}(x+n_0+1)\right] & \mbox{if}
\quad i = n\\
0 &\mbox{if}\quad i = n+1. \end{cases}$$

Moreover we consider   another  two sets of polynomials
\begin{equation}\label{gf}\{F_0(t), F_1(t), \ldots, F_{n+1}(t)\}\cup
\{G_0(t), G_1(t), \ldots, G_{n+1}(t)\}\subset \Q[t]\end{equation}
given by  

$$\int_0^{\tfrac{1}{2}-\tfrac{n-2}{2}t}{\bf l}_{i}(x)dx = 
F_i(t), \quad\mbox{and}\quad 
\int_{\tfrac{1}{2}+\tfrac{n-2}{2}t}^1{\bf r}_{i}(x)dx = 
G_i(t).$$

\item Let  
$n\geq 4$ be an even integer, we define  polynomials in $\Q[x]$ as follows.
 
$${\bf m}_i(x) = \begin{cases}
{\bf Z}_{i}(x+i) &\mbox{if}\quad 0\leq i \leq n_0\\
{\bf Y}_{i}(x+i) &\mbox{if}\quad n_0+1\leq i <n\\
0 &\mbox{if}\quad i = n\\
\frac{1}{2\lambda_0}\left[{\bf Z}_{n_0+1}(x+n_0+1)-{\bf Y}_{n_0+1}(x+n_0+1)\right] & \mbox{if}
\quad i = n+1\\
0 &\mbox{if}\quad i = n+2.
\end{cases}$$

Also another   set of polynomials
\begin{equation}\label{gfe}
\{{\tilde F}_0(t), {\tilde F}_1(t), \ldots, {\tilde F}_{n+2}(t)\}\subset \Q[t]\end{equation}
given by

$$\int_{\tfrac{n-2}{2}t}^{\tfrac{1}{2}-\tfrac{n-2}{2}t}{{\bf m}}_{i}(x)dx = 
{\tilde F}_i(t).$$
\end{enumerate}

Note that when $n$ is odd then ${\bf r}^{(p)}_{n+2} \equiv 0$ as 
$\mu_{-n_0-1}^{(p)}(a) = 0$ for $0\leq a <q$.
\end{defn}

The proof of the following proposition is just the corollary of 
Lemma~\ref{l3}  and Lemma~\ref{cont}.

\begin{propose}\label{lp}Let $p\neq 2$ such that 
$p > n-2$. 
\begin{enumerate}
\item Let $n\geq 3$ be an  odd integer
then each rank function ${\bf r}_i^{(p)}:[0, 1]\longto [0, 2)$ is a polynomial 
function on the 
interval $[0, \tfrac{1}{2}-\tfrac{n-2}{2p})\cup [\tfrac{1}{2}+\tfrac{n-2}{2p}, 1)$ 
and is given 
by 
$$\eell_i^{(p)}(x) = \begin{cases}{\bf l}_{i}(x),\quad\mbox{ 
for all}\quad x\in [0, \tfrac{1}{2}-\tfrac{n-2}{2p})\\
{\bf r}_{i}(x),\quad\mbox{ for all}\quad 
x\in [\tfrac{1}{2}+\tfrac{n-2}{2p},~~~~1).\end{cases}$$

Moreover  
$$\int_0^{\tfrac{1}{2}-\tfrac{n-2}{2}t}\eell^{(p)}_{i}(x)dx = 
F_i(t)\quad\mbox{and}\quad
\int_{\tfrac{1}{2}+\tfrac{n-2}{2}t}^1{\eell}^{(p)}_{i}(x)dx = 
G_i(t).$$

In particular all  $F_i(1/{p})$ and $G_i(1/{p})$ are nonnegative for $p>n-2$.

\item Let $n\geq 4$ be even integer then 
each rank function $\eell_i^{(p)}:[0, 1)\longto [0, 2)$ is a polynomial function on the 
interval $[\tfrac{n-2}{2p}, \quad 1-\tfrac{n-2}{2p})$ and is given by  
$${\eell}_i^{(p)}(x) = {{\bf m}}_i(x),\quad\mbox{ for all}
\quad x\in [\tfrac{n-2}{2p}, \quad 1-\tfrac{n-2}{2p}).$$

Moreover  
$$ \int_{\tfrac{n-2}{2}t}^{1-\tfrac{n-2}{2}t}{\eell}^{(p)}_i(x)dx = 
{\tilde F}_i(t)\geq 0.$$
\end{enumerate}
\end{propose}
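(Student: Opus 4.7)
The plan is to read off explicit formulas for $\nu^s_{-i}(a)$ and $\mu^s_{-j}(a)$ from Lemma~\ref{l3} on the non-difficult subintervals, divide by $q^n$, and pass to the limit $q = p^s \to \infty$ using Remark~\ref{r3} together with Lemma~\ref{cont} to identify the limits with the polynomials ${\bf l}_i$, ${\bf r}_i$ and ${\bf m}_i$ of Definition~\ref{5.4}.

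For odd $n$ and $x \in [0,\tfrac{1}{2} - \tfrac{n-2}{2p})$, setting $a = \lfloor xq \rfloor$ places $a/q$ in the range governed by the first half of Case~(1) of Lemma~\ref{l3}. That lemma supplies closed-form identities: $\nu^s_{-i}(a) = Z_{-i}(a,q)$ for $0 \leq i \leq n_0$, $\nu^s_{-i}(a) = Y_{-i}(a,q)$ for $n_0+1 \leq i < n$, $\mu^s_{-n_0+1}(a) = 0$, and $2\lambda_0 \mu^s_{-n_0}(a) = Z_{-n_0-1}(a,q) - Y_{-n_0-1}(a,q)$. Dividing each equation by $q^n$, Lemma~\ref{cont} guarantees that the pointwise limits defining $\eell_i^{(p)}(x)$ exist and are continuous, while Remark~\ref{r3} converts the right-hand sides into the appropriate ${\bf Z}_j(x+j)$ and ${\bf Y}_j(x+j)$. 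Comparison with Definition~\ref{5.4} shows the resulting identity is exactly $\eell_i^{(p)}(x) = {\bf l}_i(x)$ for every $i \in \{0, \ldots, n+2\}$. The symmetric half of the argument, applied on $[\tfrac{1}{2} + \tfrac{n-2}{2p}, 1)$ using the companion part of Case~(1) of Lemma~\ref{l3}, yields $\eell_i^{(p)}(x) = {\bf r}_i(x)$. The even-$n$ case is strictly analogous: on $[\tfrac{n-2}{2p}, 1 - \tfrac{n-2}{2p})$, Lemma~\ref{l1}~(1) forces both $\mu^s_{-n_0+1}(a) = 0$ and $\mu^s_{-n_0-1}(a) = 0$, so only $\sS(-n_0)$ survives among the spinor summands, and the formulas of Case~(2) of Lemma~\ref{l3} pass to the limit to give $\eell_i^{(p)}(x) = {\bf m}_i(x)$.

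For the integral statements, the key observation is that on each of the prescribed subintervals the rank function $\eell_i^{(p)}$ has just been identified with a polynomial in $x$ that is independent of $p$, while the endpoints of integration are affine in $t = 1/p$. Integrating a universal polynomial against such endpoints manifestly produces an element of $\Q[t]$, which is the content of the formulas defining $F_i, G_i, \tilde F_i$. Non-negativity of $F_i(1/p), G_i(1/p), \tilde F_i(1/p)$ is automatic, because the integrand $\eell_i^{(p)}$ is non-negative by Lemma~\ref{cont}.

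There is no serious obstacle: Lemma~\ref{l3} has already done the main work of pinning down exactly when at most one spinor twist survives in the decomposition of $F^s_*(\sO(a))$, and Lemma~\ref{cont} supplies both the existence of the limit rank functions and their non-negativity. The only bookkeeping care needed is to track the factor $2\lambda_0$ relating $\mmu_i^{(p)}$ to $\mu^s_{-i}$ under the relabeling of Notations~\ref{nl} when matching the spinor-indexed entries of the rank tuple against the ${\bf l}_{n+1}, {\bf r}_n$ and ${\bf m}_{n+1}$ entries of Definition~\ref{5.4}.
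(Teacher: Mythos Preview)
Your proposal is correct and follows essentially the same approach as the paper: the paper's proof is the single sentence ``The proof of the following proposition is just the corollary of Lemma~\ref{l3} and Lemma~\ref{cont},'' and you have faithfully unpacked that corollary by reading off the closed-form expressions from Lemma~\ref{l3}, passing to the limit via Remark~\ref{r3} and Lemma~\ref{cont}, and matching against Definition~\ref{5.4}. One very minor slip: for odd $n$ the relevant index range is $i \in \{0,\ldots,n+1\}$ rather than $\{0,\ldots,n+2\}$ (there is no ${\bf l}_{n+2}$ in Definition~\ref{5.4} for odd $n$, and $\mmu_{n_0+1}^{(p)}\equiv 0$ anyway by Lemma~\ref{l1}(2)).
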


\subsection{Polynomial matrices associated  to   $\nu^1_{-i}(a)$, $\mu^1_{-i}(a)$, 
${\tilde \nu}^1_{-i}(a)$ and ${\tilde \mu}^1_{-i}(a)$}

\vspace{5pt}

Next we will consider a set of $n-2$ matrices with entries in $\Q[t]$. This 
set along with the set given in Definition~\ref{5.4} will be the generating 
set of polynomials for the function $f_{R_{p, n+1}}$. 

We recall  
   two sets of matrices as given in Notations~\ref{Bmat} from the Appendix, 
\begin{enumerate}
\item $\{{\B}_{0}^{(p)}, {\B}_{1}^{(p)}, \ldots, {\B}_{n-3}^{(p)}\} \subset 
M_{n+2}(\Z_{\geq 0})$ when 
$n\geq 3$ is odd and
\item $\{{\C}_{0}^{(p)}, {\C}_{1}^{(p)}, \ldots, {\C}_{n-3}^{(p)}\} \subset 
M_{n+2}(\Z_{\geq 0})$, when 
$n\geq 4$ is even
\end{enumerate} 

We write 
$${\B}_{n_i}^{(p)} = {[b_{k_1, k_2}^{(n_i)}(p)]}_{0\leq k_1, k_2\leq n+1}, \quad 
C_{n_i}^{(p)} = \bigl[c_{k_1, k_2}^{(n_i)}(p)\bigr]_{0\leq k_1, k_2\leq n+2}.$$

\begin{defn}\label{polye}
 Lemma~\ref{entries} and Lemma~\ref{eentries} imply that  
each $b_{k_1, k_2}^{(n_i)}(p)$ and  $c_{k_1, k_2}^{(n_i)}(p)$ has 
a polynomial expression, that is, 
there exist  polynomials 
$$\{H_{k_1, k_2}^{(n_i)}(t)\in \Q[t]\mid 0\leq n_i\leq n-3\quad\mbox{and}\quad 
0\leq k_1, k_2 \leq n+1\}$$
and 
$$\{{\tilde H}_{k_1, k_2}^{(n_i)}(t)\in \Q[t]\mid 
0\leq n_i\leq n-3\quad\mbox{and}\quad 
0\leq k_1, k_2 \leq n+2\}$$
 such that 
 $${b^{(n_i)}_{k_1, k_2}(p)}/{p^n} =  
H_{k_1, k_2}^{(n_i)}(t)\vert_{t= 1/p},
 \quad\mbox{for all}\quad p \geq 3n-4$$
and
$${c^{(n_i)}_{k_1, k_2}(p)}/{p^n} =  
{\tilde H}_{k_1, k_2}^{(n_i)}(t)\bigr\rvert_{t =1/p},
\quad\mbox{for all}\quad p \geq (3n-4)/2.$$

We denote 
\begin{enumerate}\item if $n\geq 3$ is odd, then 
$${\HH}^{(n_i)}(t) = \left[H_{k_1, k_2}^{(n_i)}(t)\right]_{0\leq k_1, k_2\leq n+1}$$ 
\item $n\geq 4$ is even, then 
$${\tilde {\HH}}^{(n_i)}(t) = 
\left[{\tilde H}_{k_1, k_2}^{(n_i)}(t)\right]_{0\leq k_1, k_2\leq n+2}.$$ 
\end{enumerate}
\end{defn}

\section{Decomposition of the difficult range}

We showed in the previous section that  the rank functions 
${\bf r}^{(p)}_i:[0, 1)\longto [0, \infty)$ 
are  piecewise polynomial functions
in the complement of  the difficult range, which is 
(1) the 
 intervals $[0, \frac{1}{2}-\frac{n-2}{2p})$ and 
$[\frac{1}{2}+\frac{n-2}{2p}, 1)$, when $n\geq 3$ is odd,  
and (2)   the interval $[\tfrac{n-2}{2p},~~~1-\tfrac{n-2}{2p}]$ 
when $n\geq 4$ is even.
Here with the help of matrices $\{{\HH}^{(n_i)}(t)\}_i$ and 
$\{{\tilde {\HH}}^{(n_i)}(t)\}_i$ as given in Definition~\ref{polye} we will be able to 
 describe these functions in the difficult range too.

To do this 
 we almost cover the difficult range  by countably infinitely many intervals which are 
indexed  combinatorially  such that each
${\bf r}_i^{(p)}$ and hence  the HK density function, 
when restricted to one such 
interval,  has
 neat expression in terms of the finite set of  polynomials as given 
in Definition~\ref{5.4} and Definition~\ref{polye}.

Let $\sM= \{0, 1. \ldots, n-3\}$. For the sake of abbreviation we will denote a tuple 
$(n_1, \ldots, n_l)\in\sM^l$ by ${\underline n}$, provided there is no confusion.
 If $n\geq 3$ is odd then
we construct a set of semi open intervals 
$$\{I^{(p)}_{{\underline n}}, J^{(p)}_{{\underline n}}\mid 
{\underline n} = (n_1, \ldots, n_l)\in  \bigcup_{l\geq 1}\sM^l\}$$ such that 

\begin{enumerate}
\item  the indexing set is $\sM^l$ which is independent of $p$,
 although  each interval 
$I^{(p)}_{\underline n}$ and $J^{(p)}_{\underline n}$ will depend on $p$. 
\item These semi open intervals $I^{(p)}_{\underline n}, J^{(p)}_{\underline n}$
{\em almost cover} ({\em i.e.}, the uncovered part is of measure 
$0$) the difficult range
$[\tfrac{1}{2}-\tfrac{(n-2)}{2p},~~ \tfrac{1}{2}+\tfrac{(n-2)}{2p})$.
 \item These semi open intervals are disjoint from each other.
\end{enumerate}

Similarly, for even $n\geq 4$, we construct such a set of 
disjoint semi open intervals 
$\{{\tilde I}^{(p)}_{\underline n}\mid 
{\underline n}\in {{\tilde \sM}}_0\cup {{\tilde \sM}}_1 = \cup_{l\in \N}\sM^l \}$ 
which almost cover the difficult range, namely, the interval 
$[0,\quad \tfrac{n-2}{2p})\cup [1-\tfrac{n-2}{2p}, \quad 1)$.
Now we start with a formal set up

\begin{defn}\label{soint}Let $n\geq 3$ be an integer. 
Let ${{\sA}} \subseteq \{0, 1, \ldots, p-1\}$ 
be a set of $n-2$ elements indexed by the set $\sM = \{0, 1, \ldots, n-3\}$,
that is  every $j_i\in {{\sA}}$ is indexed by unique $n_i\in \sM$.
Let 
$\sB = \{b_0, b_0+1, \ldots, b_0+t_0-1\}$ $\subset \{0, 1, \ldots, p-1\}$ be a fixed 
 set of consecutive $t_0$ integers
such that ${\sA}\cap \sB = \phi$.
Then, for $l\geq 1$,  we define 
 $$I^{\sB(p)}_{(n_1, \ldots, n_l)} = 
\bigl[\sum_{i=1}^l\tfrac{j_i}{p^i}+ \tfrac{b_0}{p^{l+1}},\quad  
\tfrac{b_0+t_0}{p^{l+1}}+ \sum_{i=1}^l\tfrac{j_i}{p^{i}}\bigr).$$
Similarly if
 $\sD = \{d_0, d_0+1, \ldots, d_0+t_1-1\}$ 
is another such set of consecutive $t_1$ integers in $\{0, \ldots, p-1\}$ 
such that ${\sA}\cap \sD = \phi$ then we define 
 $$I^{\sD(p)}_{(n_1, \ldots, n_l)} = 
[\sum_{i=1}^l\tfrac{j_i}{p^i}+ \tfrac{d_0}{p^{l+1}},\quad  
\tfrac{d_0+t_1}{p^{l+1}}+ \sum_{i=1}^l\tfrac{j_i}{p^{i}}).$$
\end{defn}

\begin{lemma}\label{disjoint}Let $n\geq 3$ be an integer.
If the sets ${\sA}$, $\sD$ and $\sB$, as given in Definition~\ref{soint}, 
 are mutually disjoint sets  then 
 the set of semi open intervals
$$\{I^{\sB(p)}_{(n_{i_1}, \ldots, n_{i_l})}, \quad I^{\sD(p)}_{(n_{k_1}, 
\ldots, n_{k_{l_1}})}\mid
(n_{i_1}, \ldots, n_{i_l})\in \sM^{l},~~~(n_{k_1}, 
\ldots, n_{k_{l_1}})\in \sM^{l_1}
 \quad l, l_1\geq 1 \}$$
is a set of mutually disjoint intervals.
\end{lemma}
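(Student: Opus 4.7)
The key observation is that an interval of the form $I^{\sB(p)}_{(n_1,\ldots,n_l)}$ is precisely the set of $x\in [0,1)$ whose first $l+1$ digits in the base-$p$ expansion of $x$ are prescribed: positions $1,\ldots,l$ carry the digits $j_1,\ldots,j_l \in \sA$ (where $j_i$ is the element of $\sA$ indexed by $n_i$), and position $l+1$ carries a digit in $\sB$. Once this dictionary is established, the lemma becomes an elementary comparison of digits, using only $\sA\cap\sB=\sA\cap\sD=\sB\cap\sD=\emptyset$.

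\textbf{Step 1: digit characterization.} The first step is to write, for $x\in I^{\sB(p)}_{(n_1,\ldots,n_l)}$,
$$x = \sum_{i=1}^l \frac{j_i}{p^i} + \frac{b_0+u}{p^{l+1}}, \qquad u\in[0,t_0),$$
and to verify that $\lfloor x\,p^i\rfloor \bmod p = j_i$ for $1\le i\le l$, while $\lfloor x\,p^{l+1}\rfloor \bmod p \in \sB$. This amounts to showing $0\le x\,p^i - \sum_{k=1}^i j_k p^{i-k} < 1$ for each $i\le l$, which follows from the geometric series estimate $\sum_{k=i+1}^l (p-1)/p^{k-i} = 1-1/p^{l-i}$ together with the bound $b_0+t_0\le p$ (which holds since $\sB\subseteq\{0,\ldots,p-1\}$ is a block of $t_0$ consecutive integers). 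The analogous statement for $I^{\sD(p)}_{(n_1,\ldots,n_l)}$ follows identically.

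\textbf{Step 2: digit comparison.} Suppose two intervals intersect, say at a point $x$. There are three cases to analyse:
\begin{itemize}
\item[(i)] Both intervals carry the same auxiliary block, e.g.\ $I^{\sB(p)}_{(n_1,\ldots,n_l)}\cap I^{\sB(p)}_{(n'_1,\ldots,n'_{l'})}\ne\emptyset$. If $l=l'$, reading off digits at positions $1,\ldots,l$ forces $j_i=j'_i$; since $\sM\to\sA$ is a bijection, the tuples of indices coincide and the two intervals are identical. If $l<l'$, the digit at position $l+1$ is simultaneously in $\sB$ (first interval) and equal to $j'_{l+1}\in\sA$ (second interval), contradicting $\sA\cap\sB=\emptyset$.
\item[(ii)] The intervals carry different auxiliary blocks with the same length, e.g.\ $I^{\sB(p)}_{(n_1,\ldots,n_l)}\cap I^{\sD(p)}_{(n'_1,\ldots,n'_l)}\ne\emptyset$. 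As in (i), the digits at positions $1,\ldots,l$ force $n_i=n'_i$, while the digit at position $l+1$ lies in both $\sB$ and $\sD$, contradicting $\sB\cap\sD=\emptyset$.
\item[(iii)] Different auxiliary blocks with $l<l'$: the digit of $x$ at position $l+1$ lies in $\sB$ and equals $j'_{l+1}\in\sA$, contradicting $\sA\cap\sB=\emptyset$. The symmetric case $l>l'$ is identical.
\end{itemize}

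\textbf{Main difficulty.} The only subtle point is making Step~1 rigorous: one must check that the fractional contribution from positions $\ge l+2$ never causes a carry that perturbs the digits at positions $1,\ldots,l+1$. This is exactly where the hypothesis $b_0+t_0\le p$ is used; with it, the standard base-$p$ estimate $\sum_{k=i+1}^l (p-1)/p^{k-i} + p/p^{l+1-i} = 1$ shows that $x\,p^i$ lands strictly below $\sum_{k=1}^i j_k p^{i-k}+1$. Once this is checked, Step~2 is pure combinatorics and closes the proof.
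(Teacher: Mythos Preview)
Your proposal is correct and follows essentially the same route as the paper's proof: both arguments identify membership in $I^{\sB(p)}_{(n_1,\ldots,n_l)}$ with a prescription of the first $l+1$ base-$p$ digits of $x$ (digits $j_1,\ldots,j_l\in\sA$ followed by a digit in $\sB$), and then derive disjointness by comparing digits and invoking the pairwise disjointness of $\sA,\sB,\sD$. The paper phrases this via the $p$-adic expansion of $\lfloor xq\rfloor$ for $q=p^s\ge p^{l+1}$, while you extract digits one at a time via $\lfloor xp^i\rfloor\bmod p$; these are equivalent, and your treatment of the ``no carry'' issue in Step~1 is in fact more explicit than the paper's, which simply asserts the form of the expansion.
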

\begin{proof} By construction,  each of these semi open intervals is a subset  of  $[0, 1)$.
Therefore, for any $x$ in one of such intervals and for any $q=p^s$, 
we have the $p$-adic expansion
$\lfloor xq\rfloor = a_0+a_1p+\cdots + a_{s-1}p^{s-1}$.
On the other hand we can check that 
if $a$ is  a nonnegative integer with the 
$p$-adic expansion $a = a_0+a_1p+\cdots +a_{s-1}p^{s-1}$  
and $l_0$ is an integer then 
\begin{enumerate}
\item $a/p^s < l_0/p \iff a_{s-1} < l_0$ and 
 \item  $a/p^s \geq  l_0/p \iff a_{s-1} \geq l_0$.
 \end{enumerate}

Let us fix an element  
$x\in I^{\sB(p)}_{(n_{i_1}, \ldots, n_{i_l})}$.

 Then 
for $q\geq p^{l+1}$, the $p$-adic expansion of $\lfloor xq\rfloor $ is given by  
$$\lfloor xq\rfloor = a_0+a_1p+\cdots +
 a_{s-l-1}p^{s-l-1}+ j_{i_l}p^{s-l}+ \cdots +j_{i_1}p^{s-1},$$
where  $a_{s-l-1}\in \sB$ as $b_0 \leq a_{s-l-1} < b_0+t_0$.

Now suppose $x\in I^{\sB(p)}_{(n_{k_1}, \ldots, n_{k_{l_1}})}$. 
Without loss of generality 
we assume $l_1\geq l$, then $j_{i_t} = j_{k_t}$ for $t\leq l$, If $l_1> l$ then 
$j_{k_{l+1}} = a_{s-l-1} \in {\sA}$, which contradicts the assumption that 
${\sA}\cap \sB = \phi$. Therefore $l_1 = l$ and  $(n_{k_1}, \ldots, n_{k_{l_1}}) = 
(n_{i_1}, \ldots, n_{i_{l_1}})$.

If $x\in I^{\sD(p)}_{(n_{k_1}, \ldots, n_{k_{l_1}})}$, where without loss of generality 
we assume $l_1\geq l$ then $j_{i_t} = j_{k_t}$ for $t\leq l$.
Now if $l_1=l$ then 
$ d_0 \leq a_{s-l-1} <d_0+t_1$, which implies $a_{s-l-1}\in \sD$,  otherwise 
$a_{s-l-1} \in {\sA}$. Both outcome contradict the mutual  disjointness property of 
the sets ${\sA}$, $\sB$ and $\sD$. Hence the lemma.
\end{proof}

\begin{defn}\label{cover}Let $S_1\subset S_2$ be two Lebesgue measurable 
set in $\R$. We say $S_1$ {\em almost covers} $S_2$ if the Lebesgue 
measure of the set $S_2\setminus S_1$ is  $0$ in $\R$.
\end{defn}

We use  Lemma~\ref{disjoint} to construct a set of  semi open intervals which  
almost cover  the difficult range.
We first construct such sets when $n\geq 4$ is an even integer.

\vspace{5pt}

\subsection{Decomposition of the difficult range when $n$  is even}

\begin {defn}\label{deven} Let $n\geq 4$ be an even integer. Let $p > n-2$ be a prime. 
Let 
 $${\sA}_e =  \{0, 1, \ldots, \tfrac{n}{2}-2\}\cup 
\{p+1-\tfrac{n}{2}, \ldots, p-1\}$$

and let $\sB = \{\tfrac{n}{2}-1, \ldots, p-\tfrac{n}{2}\}$ be the set of 
consecutive integers. Then $\sA_e$ and $\sB$ are disjoint.

We indexed the elements of the set ${\sA}_e$ by the set  $\sM = \{0,1, 
\ldots, n-3\}$ as follows:
An element $j_i\in {\sA}_e$ is indexed by $n_i\in \sM$ if  
$$j_i = \left\{ \begin{matrix} n_i & \mbox{if}\quad n_i\leq 
\left(\tfrac{n}{2}-2\right)\\\
                p-(n_i-\tfrac{n}{2}-2) & \mbox{if}\quad n_i > 
\left(\tfrac{n}{2}-2\right).\end{matrix}\right.$$

Henceforth in this subsection we  write  the 
set $I^{\sB(p)}_{(n_1, \ldots, n_l)}$
as the set ${\tilde I}^{(p)}_{(n_1, \ldots, n_l)}$.
Following the Definition~\ref{soint}, 
for $(n_1, \ldots, n_l)\in \sM^l$, where $l\geq 1$, we define

$${\tilde I}^{(p)}_{(n_1, \ldots, n_l)} =  
[\sum_{i=1}^l\tfrac{j_i}{p^i}+
\tfrac{n-2}{2p^{l+1}},\quad   \sum_{i=1}^{l}\tfrac{j_i}{p^i}+
\tfrac{1}{p^l} - \tfrac{n-2}{2p^{l+1}})$$

Let $${\tilde \sM}_0 = \{(n_1, \ldots, n_l)\in \sM^l\mid  n_1 \leq \tfrac{n}{2}-2\}_{l\in \N}\quad
\mbox{and}\quad 
{\tilde \sM}_1 = \{(n_1, \ldots, n_l)\in \sM^l \mid  n_1 \geq \tfrac{n}{2}-1\}_{l\in \N}.$$ 

For the sake of abbreviation, provided there is no confusion,  
we will denote an element $(n_1, \ldots, n_l) \in \sM^l$ by 
${\underline n}$

It is easy to check that
$$\bigcup_{{\underline n}\in {\tilde \sM}_0}{\tilde I}^{(p)}_{\underline n} \subset 
[0, \quad \tfrac{n-2}{2p})\quad\quad\mbox{and}\quad\quad
 \bigcup_{{\underline n}\in {\tilde \sM}_1}{\tilde I}^{(p)}_{\underline n} \subset 
[1-\tfrac{n-2}{2p},\quad  1).$$
\end{defn}

 \begin{lemma}\label{emeasure} 
\begin{enumerate}\item The set~~~ $\bigcup_{{\underline n}\in {\tilde \sM}_0} {\tilde I}^{(p)}_{\underline n}$ 
almost covers 
the set $[0 \quad \tfrac{n-2}{2p})$.
\item The set~~~ 
 $\bigcup_{{\underline n}\in {\tilde \sM}_1} {\tilde I}^{(p)}_{\underline n}$
almost covers the set 
$[1-\tfrac{n-2}{2p}, 1)$.
\end{enumerate}
\end{lemma}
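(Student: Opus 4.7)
My strategy is to reduce the lemma to a disjointness statement plus a geometric-series computation of Lebesgue measures. The construction is self-similar in base $p$: at level $l$ we keep intervals whose first $l$ base-$p$ digits are restricted to lie in $\sA_e$, and the ``empty middle'' of length $(p-n+2)/p^{l+1}$ survives to the next level.

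First, I would verify the containments claimed right after Definition~\ref{deven}. For ${\underline n}=(n_1,\ldots,n_l) \in {\tilde \sM}_0$ the first digit obeys $j_1 \leq \tfrac{n}{2}-2$, and summing the geometric tail $\sum_{i \geq 2}(p-1)/p^i = 1/p - 1/p^l$ gives the upper bound
$$\sum_{i=1}^l \frac{j_i}{p^i} + \frac{1}{p^l} - \frac{n-2}{2p^{l+1}} \ \leq\ \frac{n/2-1}{p} - \frac{n-2}{2p^{l+1}} \ <\ \frac{n-2}{2p}$$
for the right endpoint, while the left endpoint is trivially at least $(n-2)/(2p^{l+1}) > 0$. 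The mirror estimate using $j_1 \geq p+1-\tfrac{n}{2}$ places every ${\tilde I}^{(p)}_{\underline n}$ with ${\underline n} \in {\tilde \sM}_1$ inside $[1-\tfrac{n-2}{2p},\, 1)$.

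Next, since $\sA_e$ and $\sB = \{\tfrac{n}{2}-1, \ldots, p-\tfrac{n}{2}\}$ are disjoint by construction, Lemma~\ref{disjoint} (applied with no $\sD$) shows that the full family $\{{\tilde I}^{(p)}_{\underline n}\}_{{\underline n} \in \bigcup_l \sM^l}$ is pairwise disjoint; in particular its subfamilies indexed by ${\tilde \sM}_0$ and by ${\tilde \sM}_1$ are each disjoint unions.

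Finally, a length computation: at each level $l \geq 1$ the set ${\tilde \sM}_0 \cap \sM^l$ has cardinality $(\tfrac{n}{2}-1)(n-2)^{l-1}$ (there are $\tfrac{n}{2}-1$ admissible values for $n_1$ and $n-2$ for each of $n_2, \ldots, n_l$), and each corresponding interval has length $\tfrac{1}{p^l}-\tfrac{n-2}{p^{l+1}} = \tfrac{p-n+2}{p^{l+1}}$. Since $p > n-2$ the geometric series converges and
$$\sum_{l \geq 1} \frac{(\tfrac{n}{2}-1)(n-2)^{l-1}(p-n+2)}{p^{l+1}} \ =\ \frac{(\tfrac{n}{2}-1)(p-n+2)}{p^{2}}\cdot \frac{p}{p-n+2} \ =\ \frac{n-2}{2p}.$$
The disjoint union thus has Lebesgue measure exactly $\tfrac{n-2}{2p}$ while sitting inside an interval of the same measure, so the complement is Lebesgue null, proving (1) in the sense of Definition~\ref{cover}. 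An identical count with $n_1$ ranging over $\{\tfrac{n}{2}-1, \ldots, n-3\}$ proves (2). The only mildly delicate point is that the asymmetric restriction on $n_1$ contributes exactly the factor $\tfrac{n}{2}-1$ needed to make the series close up to $\tfrac{n-2}{2p}$; given Lemma~\ref{disjoint}, everything else is routine.
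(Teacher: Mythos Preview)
Your proposal is correct and follows essentially the same strategy as the paper: invoke Lemma~\ref{disjoint} for disjointness and then reduce to a geometric-series measure computation. The only cosmetic difference is that the paper argues inductively that the complement $A_k$ has measure $(n-2)^{k+1}/2p^{k+1}$, whereas you sum the lengths of the intervals directly to $\tfrac{n-2}{2p}$; these are two sides of the same telescoping identity, and your version is arguably cleaner since it avoids the induction scaffolding.
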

\begin{proof}We prove the first assertion 
as the proof for the second assertion is very similar.
Let 
$$A_k = [0 \quad \tfrac{n-2}{2p})\setminus 
\bigcup_{\{(n_1, \ldots, n_l)\in {\tilde \sM}_0\mid l\leq k\}} {\tilde I}^{(p)}_{(n_1, \ldots, n_l)}.$$

It is enough to  prove  that 
the Lebesgue measure of the set $(A_k)$ is 
$$\mu(A_k) = (n-2)^{k+1}/2p^{k+1},\quad\mbox{for all}\quad k\geq 1.$$
We prove the statement by induction on $k$. For $k= 1$

$$A_1 =  [0 \quad \tfrac{n-2}{2p})\setminus 
\bigcup_{0\leq n_i <(n-2)/2} \left[\tfrac{n_i}{p}+\tfrac{n-2}{2p^2}, \quad 
\tfrac{n_i+1}{p}-\tfrac{n-2}{2p^2}\right).$$
Hence 
$$\mu(A_1) = \tfrac{n-2}{2p} - 
\tfrac{n-2}{2}\left[\tfrac{1}{p}-\tfrac{n-2}{p^2}\right] = 
\tfrac{(n-2)^2}{2p^2}.$$
We assume the statement for $k-1$, that is  $\mu(A_{k-1}) = (n-2)^{k}/2p^{k}$.
Now 
$$A_k = A_{k-1} \setminus \bigcup_{(n_{i_1}, \ldots, n_{i_k})
\in {\tilde \sM}_0}{\tilde I}^{(p)}_{(n_{i_1}, \ldots, n_{i_k})}.$$
Since, by Lemma~\ref{disjoint}, these semi open intervals are mutually 
disjoint we have 
$$\mu(A_k) = \mu(A_{k-1}) - \sum_{(n_{i_1}, \ldots, n_{i_k})
\in {\tilde \sM}_0}\ell ({\tilde I}^{(p)}_{(n_{i_1}, \ldots, n_{i_k})}).$$

But 
$$\sum_{(n_{i_1}, \ldots, n_{i_k})
\in {\tilde \sM}_0}\ell({\tilde I}^{(p)}_{(n_{i_1}, \ldots, n_{i_k})}) = \tfrac{n-2}{2}(n-2)^{k-1}
\left(\tfrac{1}{p^k}-\tfrac{n-2}{p^{k+1}}\right) = \tfrac{(n-2)^k}{2p^k}-
\tfrac{(n-2)^{k+1}}{2p^{k+1}},$$
which implies $\mu(A_k) = \tfrac{(n-2)^{k+1}}{2p^{k+1}}$. By induction this
 proves the result for all $k\geq 1$.
\end{proof}

\subsection{Decomposition of the difficult range when $n$  is odd}

\begin{defn}\label{nch}Let $n\geq 3$ be an odd integer and $p >n-2$ a prime.
Let 
$$m_0 = \frac{p}{2}-\frac{n-2}{2}\quad\mbox{and}\quad 
{\sA}_o = \{m_0, m_0+1, \ldots, m_0+n-3\}$$
and let $\sB = \{0, 1, \ldots, m_0-1\}$ and 
$\sD = \{m_0+n-2, \ldots, , p-1\}$ be two sets of consecutive integers.
We indexed the set ${\sA}_o$ by the set  
$\sM = \{0, 1, \ldots, n-3\}$ so that 
an element $j_i$ of ${\sA}_o$ is indexed by $n_i$ if $j_i = n_i+m_0$. Here 
$\sA_o$, $\sB$ and $\sD$ are mutually disjoint sets.

Henceforth in this subsection we 
denote the interval  $I^{\sB(p)}_{(n_1, \ldots, n_l)}$ by 
$I^{(p)}_{(n_1, \ldots, n_l)}$ and the interval $I^{\sD(p)}_{(n_1, 
\ldots, n_l)}$ by $J^{(p)}_{(n_1, \ldots, n_l)}$.
Following Definition~\ref{soint}, for 
 $(n_1, \ldots, n_l)\in \sM^l$, where $l\geq 1$,  we define two semi open intervals 
 
$$I^{(p)}_{(n_1, \ldots, n_l)} = \left[\mbox{$\sum_{i=1}^l\frac{j_i}{p^i},
\quad
\sum_{i=1}^l\frac{j_i}{p^i}+\frac{m_0}{p^{l+1}}$}\right)$$
and
$$J^{(p)}_{(n_1, \ldots, n_l)} = \left[\mbox{$\frac{m_0+n-2}{p^{l+1}}+
\sum_{i=1}^l\frac{j_i}{p^i},\quad
\sum_{i=1}^l\frac{j_i}{p^i}+\frac{p}{p^{l+1}}$}\right).$$

For the sake of abbreviation, provided there is no confusion,  
we will denote an element $(n_1, \ldots, n_l) \in \sM^l$ by 
${\underline n}$.
\end{defn}

\begin{lemma}\label{l1c} Let $n\geq 3$ be an odd integer and $p>n-2$ be an odd
prime. Then 
the set 
$$ \bigcup_{{\underline n}\in \cup_{l\geq 1}\sM^l}
 I^{(p)}_{{\underline n}}\cup 
\bigcup_{{\underline n}\in \cup_{l\geq 1}\sM^l} J^{(p)}_{{\underline n}}\quad\mbox{
almost cover the interval}\quad
[\tfrac{1}{2}-\tfrac{n-2}{2p}, \tfrac{1}{2}+\tfrac{n-2}{2p}).$$
\end{lemma}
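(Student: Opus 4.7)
The plan is to mirror the argument given for Lemma~\ref{emeasure} in the even case. First I would note that by Definition~\ref{nch} the sets $\sA_o$, $\sB$ and $\sD$ are pairwise disjoint subsets of $\{0,1,\ldots,p-1\}$, so by Lemma~\ref{disjoint} the whole collection
$$\{I^{(p)}_{\underline n},\ J^{(p)}_{\underline n}\mid \underline n\in\bigcup_{l\geq 1}\sM^l\}$$
consists of mutually disjoint semi-open intervals contained in $[0,1)$. An elementary check using $j_i=m_0+n_i$ for $n_i\in\sM$ shows that for each such $n_i$ the union $I^{(p)}_{(n_i)}\cup J^{(p)}_{(n_i)}$ sits inside $[j_i/p,(j_i+1)/p)$, and as $n_i$ ranges over $\sM=\{0,\ldots,n-3\}$ these $n-2$ unit-$1/p$ blocks together give exactly the difficult range $[m_0/p,(m_0+n-2)/p) = [\tfrac12-\tfrac{n-2}{2p},\tfrac12+\tfrac{n-2}{2p})$.

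Next, I would define
$$A_k = \Bigl[\tfrac12-\tfrac{n-2}{2p},\ \tfrac12+\tfrac{n-2}{2p}\Bigr)\setminus\bigcup_{\underline n\in \bigcup_{l\leq k}\sM^l}\bigl(I^{(p)}_{\underline n}\cup J^{(p)}_{\underline n}\bigr)$$
and show by induction on $k\geq 1$ that $\mu(A_k)=(n-2)^{k+1}/p^{k+1}$, from which the lemma follows immediately by letting $k\to\infty$. The base case $k=1$ is a direct length count: each $I^{(p)}_{(n_i)}$ and $J^{(p)}_{(n_i)}$ has length $m_0/p^2=(p-n+2)/(2p^2)$, and there are $2(n-2)$ of them, so they contribute total length $(n-2)(p-n+2)/p^2$, leaving
$$\mu(A_1)=\tfrac{n-2}{p}-\tfrac{(n-2)(p-n+2)}{p^2}=\tfrac{(n-2)^2}{p^2}.$$

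The inductive step exploits the self-similarity which is already visible at level one: the complement $A_1$ is the disjoint union, over $n_i\in\sM$, of the $n-2$ ``gap intervals''
$$G_{n_i} := \Bigl[\tfrac{j_i}{p}+\tfrac{m_0}{p^2},\ \tfrac{j_i}{p}+\tfrac{m_0+n-2}{p^2}\Bigr),$$
each of which is an affine image of the original difficult range scaled by $1/p$. Inside $G_{n_i}$, the level-$2$ intervals $I^{(p)}_{(n_i,n_2)}$ and $J^{(p)}_{(n_i,n_2)}$ for $n_2\in\sM$ are precisely the $1/p$-scaled translates of the level-$1$ intervals. Hence $A_k\cap G_{n_i}$ corresponds, under this affine rescaling, to the level-$(k-1)$ uncovered set for the difficult range. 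By induction, $\mu(A_k\cap G_{n_i})=\mu(A_{k-1})/p = (n-2)^k/p^{k+1}$, and summing over the $n-2$ choices of $n_i$ gives $\mu(A_k)=(n-2)^{k+1}/p^{k+1}$, as required.

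The argument is really just bookkeeping; the only part that requires a moment of care is to verify the self-similarity cleanly, i.e.\ that restricting attention to the gap $G_{n_i}$ and rescaling by $p$ (with the appropriate translation) sends the collection $\{I^{(p)}_{(n_i,n_{i_2},\ldots,n_{i_l})},J^{(p)}_{(n_i,n_{i_2},\ldots,n_{i_l})}\}$ onto the collection $\{I^{(p)}_{(n_{i_2},\ldots,n_{i_l})},J^{(p)}_{(n_{i_2},\ldots,n_{i_l})}\}$. This is immediate from the defining formulas in Definition~\ref{nch}, which makes the induction go through without any further effort; no new analytic input beyond that used in the even case is needed.
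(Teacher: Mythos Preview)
Your proposal is correct and follows essentially the same approach as the paper: define $A_k$ as the complement of the level-$\leq k$ intervals in the difficult range and show by induction that $\mu(A_k)=((n-2)/p)^{k+1}$, exactly as in Lemma~\ref{emeasure}. The paper's proof is terse and simply refers back to the argument of Lemma~\ref{emeasure}; your version spells out the inductive step via the self-similarity of the interval system (the affine map $x\mapsto j_{n_i}/p + x/p$ carrying the difficult range onto $G_{n_i}$), which is a clean way to package the same length computation the paper does by direct subtraction.
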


If $n=3$ then this covering misses the point $2.5$ as shown in Section~9. 

\begin{proof} 
It is obvious that 
$$\bigcup_{{\underline n}\in \sM^l}I^{(p)}_{\underline n}\cup J^{(p)}_{\underline n}
\subseteq  [\tfrac{1}{2}-\tfrac{n-2}{2p},\quad \tfrac{1}{2}+
\tfrac{n-2}{2p}).$$

Let us denote 
$$A_k = [\tfrac{1}{2}-\tfrac{n-2}{2p}, \quad \tfrac{1}{2}+\tfrac{n-2}{2p})
\setminus  \bigcup_{\{(n_1, \ldots, n_l)\in {\sM^l}\mid l\leq k\}}
 I^{(p)}_{\underline n}\cup J^{(p)}_{\underline n}$$ 
 It is enough to prove that, for each $k\geq 1$, 
 the Lebesgue measure of the set
$A_k =  ((n-2)/{p})^{k+1}$, which can be checked using the induction argument 
for $k$ as done in Lemma~\ref{emeasure}.
\end{proof}

\section{polynomial expression for the rank functions 
${\bf r}_i^{(p)}$ and $f_{R_{p, n+1}}$}

In the previous section,  for $n$ even and for $n$ odd each, 
 we have constructed an almost covering of the 
respective difficult range by semi open intervals such that
the intervals are disjoint and combinatorially indexed.

By construction, the elements belonging to the same subinterval 
have a $p$-adic expression of certain `type', we use this  
 in the following key Proposition to show that the rank functions 
$\eell_i^{(p)}$ when restricted to 
such  intervals, 
are polynomials.

\begin{notations}For $n\geq 3$ odd, let 
\begin{enumerate}
\item $\{{\bf l}_{0}(x), \ldots,{\bf l}_{n+1}(x)\}\subset  \Q[x]$
and $\{{\bf r}_{0}(x), \ldots,{\bf r}_{n+1}(x)\}\subset \Q[x]$ 
be as  in Definition~\ref{5.4}.
\item Let $\{{\HH}^{(n_i)}(t)\mid 0\leq n_i\leq n-3\}\subset M_{n+2}(\Q[t])$ be  
 as in Definition~\ref{polye}.
\item Let $I^{(p)}_{(n_1, \ldots, n_l)}$ and $J^{(p)}_{(n_1, \ldots, n_l)}$ be the 
semi open intervals as in Definition~\ref{nch}.
\item Let $$\phi^{(p)}:\bigcup_{\{{\underline n}\in 
\sM^l\mid l\geq 1\}}I^{(p)}_{\underline n}\cup J^{(p)}_{\underline n}\longto 
[0, \tfrac{1}{2}-\tfrac{n-2}{2p})\cup [\tfrac{1}{2}+\tfrac{n-2}{2p},~~~1),$$
such that for each ${\underline{n}} = (n_1, \ldots, n_l)$ the restriction maps  
 $$\phi^{(p)}:I^{(p)}_{\underline{n}}\longto [0, \tfrac{1}{2}-\tfrac{n-2}{2p})
\quad\mbox{and}\quad \phi^{(p)}:J^{(p)}_{\underline{n}}
\longto [\tfrac{1}{2}+\tfrac{n-2}{2p},~~~1),$$
are the surjective maps given by 
$x\to   p^l(x- \sum_i\tfrac{j_i}{p^i})$, where $j_i = n_i+\tfrac{p}{2}-\tfrac{n-2}{2}$.
\end{enumerate}
\end{notations}

\begin{propose}\label{pex}Let  $n\geq 3$ is an odd integer, where $p\geq 3n-4$.
 Then, for ${\underline{n}} = (n_1, \ldots, n_l)$
  the rank functions 
 $\eell^{(p)}_{i}:I^{(p)}_{\underline{n}} \longto \R$ are given  by the formula
\begin{multline}\label{pexe1}
\Bigl[\eell^{(p)}_0(x),\ldots, \eell^{(p)}_{n+1}(x)\Bigr]_{1\times n+2} =\\ 
\left[{\bf l}_{0}(\phi^{(p)}(x)), \ldots, 
{\bf l}_{n+1}(\phi^{(p)}(x))\right]_{1\times n+2}\cdot {\HH}^{(n_l)}(t)
\cdots {\HH}^{(n_1)}(t)\Bigr\rvert_{t=1/p}.\end{multline}
Similarly the rank functions  $\eell^{(p)}_{i}:J^{(p)}_{(n_1, \ldots, n_l)} \longto \R$ 
are given by the formula
\begin{multline}\label{pexe3}
\left[\eell^{(p)}_0(x),\ldots, \eell^{(p)}_{n+1}(x)\right]_{1\times n+2} =\\ 
\left[{\bf r}_{0}(\phi^{(p)}(x)), \ldots, 
{\bf r}_{n+1}(\phi^{(p)}(x))\right]_{1\times n+2}\cdot 
{\HH}^{(n_l)}(t)
\cdots {\HH}^{(n_1)}(t)\Bigr\rvert_{t=1/p}.\end{multline}

In particular we get a formula for $\mmu^{(p)}_{n_0}(x) = \eell^{(p)}_{n+1}(x)$.
\end{propose}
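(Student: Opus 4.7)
The plan is to prove (\ref{pexe1}) by induction on the length $l\ge 1$ of the tuple $\underline{n}=(n_1,\ldots,n_l)$, peeling off the leading digit $n_1$ at each step. The central observation is a self-similarity of the intervals: for $x\in I^{(p)}_{(n_1,\ldots,n_l)}$ the point $y:=px-j_1$ satisfies $y\in I^{(p)}_{(n_2,\ldots,n_l)}$ when $l\ge 2$, and $y\in [0,\tfrac{1}{2}-\tfrac{n-2}{2p})$ when $l=1$; in either case a direct calculation gives $\phi^{(p)}(y)=\phi^{(p)}(x)$. This lets the induction descend along the tuple, and the base case lands in the non-difficult range where Proposition~\ref{lp} supplies $\eell^{(p)}_i(y)={\bf l}_i(y)$.

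To connect the rank tuples at $x$ and $y$, I would fix $q=p^s$ for large $s$, set $a=\lfloor xq\rfloor$ and $b=\lfloor yp^{s-1}\rfloor$, and note from the base-$p$ expansions (as in the proof of Lemma~\ref{disjoint}) that $a=b+j_1p^{s-1}$. Consequently $\sO(a)=\sO(b)\otimes (F^{s-1})^{*}\sO(j_1)$, and the projection formula together with $F^s_*=F_*\circ F^{s-1}_*$ yields
$$F^s_*(\sO(a))\;=\;F_*\bigl(F^{s-1}_*(\sO(b))\otimes \sO(j_1)\bigr).$$
Substituting the Lemma~\ref{l1} decomposition of $F^{s-1}_*(\sO(b))$, tensoring each summand with $\sO(j_1)$, and applying $F_*$ componentwise---decomposing each $F_*(\sO(-i+j_1))$ and $F_*(\sS(-k+j_1))$ back into admissible twists via Theorem [A] and the matrix-factorization sequences (\ref{a1}), (\ref{a2})---produces a linear relation
$$l_j(a,p^s)\;=\;\sum_{k=0}^{n+1}l_k(b,p^{s-1})\,b^{(n_1)}_{k,j}(p)$$
whose coefficient matrix is exactly $\mathbb{B}^{(p)}_{n_1}$ of Notations~\ref{Bmat}. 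Dividing by $p^{ns}=p^n\cdot p^{n(s-1)}$ and letting $s\to\infty$ (using Lemma~\ref{cont} and the normalization $\mathbb{B}^{(p)}_{n_1}/p^n=\mathbb{H}^{(n_1)}(t)|_{t=1/p}$ from Definition~\ref{polye}) converts this into the one-step identity
$$\bigl[\eell^{(p)}_0(x),\ldots,\eell^{(p)}_{n+1}(x)\bigr]\;=\;\bigl[\eell^{(p)}_0(y),\ldots,\eell^{(p)}_{n+1}(y)\bigr]\cdot\mathbb{H}^{(n_1)}(t)\Bigr\rvert_{t=1/p}.$$

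With this identity the base case $l=1$ follows from Proposition~\ref{lp}: $y$ lies in the non-difficult range, hence $\eell^{(p)}_i(y)={\bf l}_i(y)={\bf l}_i(\phi^{(p)}(x))$, and the one-step identity becomes exactly (\ref{pexe1}). The induction step for $l\ge 2$ then substitutes the inductive hypothesis for $\eell^{(p)}(y)$ at the tuple $(n_2,\ldots,n_l)$ and uses $\phi^{(p)}(y)=\phi^{(p)}(x)$. The formula (\ref{pexe3}) for $J^{(p)}_{\underline{n}}$ follows by the same peeling argument, the only difference being that in its base case $y$ lies in $[\tfrac{1}{2}+\tfrac{n-2}{2p},1)$, so that Proposition~\ref{lp} supplies $\eell^{(p)}_i(y)={\bf r}_i(y)$ in place of ${\bf l}_i(y)$.

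The main technical hurdle is the identification of the coefficient matrix of the single Frobenius-push step with $\mathbb{B}^{(p)}_{n_1}$: this requires tracking, via Theorem [A], which $\sO(-t)$ and $\sS(-k)$ occur in $F_*(\sO(-i+j_1))$ and $F_*(\sS(-k+j_1))$, and computing the resulting multiplicities through the cohomological formulas (\ref{ee})--(\ref{sd}). The hypothesis $p\ge 3n-4$ is invoked precisely here, to guarantee that these pushes remain within the admissible twist range $\{0,-1,\ldots,-(n-1)\}$ for line bundles and $\{-n_0+1,-n_0\}$ for spinor bundles recorded in Notation~\ref{nl}. Once this appendix-level bookkeeping (Notations~\ref{Bmat} and Lemma~\ref{entries}) is in place, the remainder of the argument is a clean inductive application of the projection formula.
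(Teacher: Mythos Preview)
Your proposal is correct and follows essentially the same route as the paper. The only organizational difference is that you set it up as an induction on $l$, peeling off the leading digit $n_1$ (the highest-order $p$-adic digit of $\lfloor xq\rfloor$) at each step and taking the limit after every single Frobenius push, whereas the paper works directly at the finite level: it factors $F^s_*=F^l_*\circ F^{s-l}_*$, identifies the inner piece $F^{s-l}_*(\sO(A_{s-l}))$ with rank tuple $(l_0,\ldots,l_{n+1})$, then peels off the $l$ remaining Frobenius pushes one at a time starting from $j_l$, obtaining the full product $\B^{(p)}_{n_l}\cdots\B^{(p)}_{n_1}$ before passing to the limit in a single stroke. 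Both arguments hinge on the same projection-formula computation and the same identification of the single-step coefficient matrix with $\B^{(p)}_{n_1}$ (your ``main technical hurdle'' is exactly the content of Notations~\ref{Bmat} and the range check $0\le j_i-t<p$ that the paper also records), so the difference is purely a matter of bookkeeping.
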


\begin{proof}We fix  $x\in I^{(p)}_{(n_1, \ldots, n_l)}$, then  
$j_i = n_i+\tfrac{p}{2}-\tfrac{n-2}{2}$ and
$$I^{(p)}_{(n_1, \ldots, n_l)} = 
\Bigl[\sum_{i=1}^l\tfrac{j_i}{p^i},
\quad \sum_{i=1}^l\tfrac{j_i}{p^i}+\tfrac{m_0}{p^{l+1}}\Bigr).$$
Therefore for any given  
$q\geq p^{l+1}$ the $p$-adic expansion 
of $\lfloor xq\rfloor $ is given by 
$$\lfloor xq\rfloor = a_0+\cdots + a_{s-l-1}p^{s-l-1}+j_lp^{s-l}+\cdots +
j_1p^{s-1},$$
where 
  $y:= \phi^{(p)}(x)\in [0, \tfrac{1}{2}-\tfrac{n-2}{2p})$ lies in the
complement of the difficult range. Hence by  
Proposition~\ref{lp} 
$$\eell_i^{(p)}(y) =  
{\bf l}_{i}(y),\quad \mbox{for all}\quad 0\leq i\leq n+1.$$
Let us denote 
$$A_{s-l} = a_0+\cdots + a_{s-l-1}p^{s-l-1} = \lfloor yp^{s-l}\rfloor.$$ 
Then 
 $\sO(\lfloor xq\rfloor) = \sO(A_{s-l})\tensor 
F^{*s-l}\sO(j_l+\cdots + j_1p^{l-1})$.
Therefore, by projection formula
$$F_*^s(\sO(\lfloor xq\rfloor)) = 
F_*^l\Bigl(F_*^{s-l}(\sO(A_{s-l}))\tensor \sO(j_l+\cdots+j_1p^{l-1})\Bigr).$$

Let the rank tuple of $F_*^{{s-l}}(\sO(A_{s-l}))$ 
be denoted as 
$$(l_0, l_1, \ldots, l_{n+1}) =  \left(l_0^{s-l}(A_{s-l}), l_1^{s-l}(A_{s-l}),  
\ldots, l_{n+1}^{s-l}(A_{s-l})\right)$$
where
 $$F_*^{s-l}(\sO(A_{s-l}))  = \sO^{l_0} \oplus \cdots \oplus \sO(-n+1)^{l_{n-1}}\oplus
\sS(-n_0+1)^{l_n}\oplus \sS(-n_0)^{l_{n+1}}.$$
Then we can write
 
\begin{multline*}F_*^{s-l}(\sO(A_{s-l}))\tensor \sO(j_l+\cdots+j_1p^{l-1})\\
  = \bigl(\sO(j_l)^{l_0} \oplus \cdots \oplus \sO(j_l-n+1)^{l_{n-1}}\oplus
\sS(j_l-n_0+1)^{l_n}\oplus \sS(j_l-n_0)^{l_{n+1}}\bigr)
\tensor F^*\sO(j_{l-1}+\cdots + j_1p^{l-2}).\end{multline*}

Again, by projection formula 
\begin{multline*}
F_*^s(\sO(\lfloor xq\rfloor)) = F_*^{l-1}\Bigl(F_*\bigl(F_*^{s-l}(\sO(A_{s-l}))
\tensor \sO(j_l+\cdots+j_1p^{l-1})\bigr)\Bigr)\\
= F_*^{l-1}\Bigl(\bigl(F_*\sO(j_l)^{l_0}
\oplus\cdots 
\oplus F_*S(-n_0+j_l)^{l_{n+1}}\bigr)
\otimes \sO(j_{l-1}+\cdots +j_1p^{l-2})\Bigr).\end{multline*}

Note that $p\geq 3n-4$ implies that, for all  $0\leq t < n$ we have  
$0\leq j_i- t <p$. Hence by Lemma~\ref{l1}~(2)
the decomposition of each $F_*\sO(j_l-t)$ will only have bundles of type
$\sO$, $\sO(-1)$, $\ldots, \sO(1-n)$
and $\sS(-n_0+1)$ and $\sS(-n_0)$.
Therefore we can write

\begin{multline*}
F_*\sO(j_l)^{l_0}\oplus \cdots \oplus F_*\sO(j_l-n+1)^{l_{n-1}}
\oplus F_*S(-n_0+1+j_l)^{l_{n}}
\oplus F_*S(-n_0+j_l)^{l_{n+1}}\\\
 = \sO^{b_0}\oplus\cdots \oplus\sO(1-n)^{b_{n-1}}\oplus
\sS(-n_0+1)^{b_n}\oplus \sS(-n_0)^{b_{n+1}},\end{multline*}
where as a matrix

$[b_0, \ldots, b_{n+1}]_{1\times n+2} = [l_0, \ldots, l_{n+1}]_{1\times n+2}
\cdot {\B}^{(p)}_{n_l}$, and  where we recall 

$${\B}^{(p)}_{n_l} = \left[\begin{matrix}
\nu_{0}(j_l), \ldots, \nu_{-n+1}(j_l), \mu_{-n_0+1}(j_l), \mu_{-n_0}(j_l)\\
\vdots\\
\nu_{0}(j_l-n+1), \ldots,   \nu_{-n+1}(j_l-n+1), \mu_{-n_0+1}(j_l-n+1),
\mu_{-n_0}(j_l-n+1)\\
{\tilde \nu}_{0}(j_l-n_0+1),  \ldots, {\tilde \nu}_{-n+1}(j_l-n_0+1), 
{\tilde \mu}_{-n_0+1}(j_l-n_0+1),
{\tilde\mu}_{-n_0}(j_l-n_0+1)\\
{\tilde \nu}_{0}(j_l-n_0),  \ldots, {\tilde \nu}_{-n+1}(j_l-n_0), 
{\tilde \mu}_{-n_0+1}(j_l-n_0), {\tilde \mu}_{-n_0}(j_l-n_0)
\end{matrix}\right].$$

Iterating this we can write the rank tuple  for $F_*^s(\sO(\lfloor xq\rfloor))$
as 
\begin{multline}\label{ff1}
\left[\nu_0^s(\lfloor xq\rfloor), \ldots,\nu_{n-1}^s(\lfloor xq\rfloor), 
\mu_{-n_0+1}^s(\lfloor xq\rfloor), \mu_{-n_0}^s(\lfloor xq\rfloor)\right]_{1
\times n+2}\\
 = \left[l_0,  \ldots, l_{n-1}, l_n, l_{n+1}\right]_{1\times n+2} 
\cdot {\B}^{(p)}_{n_l}\cdots {\B}^{(p)}_{n_1}.\end{multline}

We note that each matrix ${\B}_{n_i}^{(p)}$ is independent of $s$, where $q=p^s$.
Now
\begin{multline*}
\lim_{s\to \infty}\frac{l_i}{p^{sn}} = \lim_{s\to \infty}
\frac{l_i^{s-l}(A_{s-l})}{p^{ln}(p^{s-l})^n} = \lim_{s\to \infty}
\frac{l_i^{s-l}(\lfloor yp^{s-l}\rfloor)}{p^{ln}(p^{s-l})^n} 
= \frac{{\bf r}_i^{(p)}(y)}{p^{ln}} = \frac{{\bf l}_i(y)}{p^{ln}},\end{multline*}
where the second last equality follows from Lemma~\ref{cont}.

Hence  multiplying both the sides of (\ref{ff1}) by $q^n = p^{sn}$ and taking the limit 
as $s\to \infty$ we get 
\begin{multline*}
\left[\nnu^{(p)}_0(x), \ldots, \nnu^{(p)}_{n-1}(x), 
\mmu_{n_0-1}(x), \mmu_{n_0}(x)\right]_{1\times n+2}\\
 = \left[{\bf l}_{0}(y), {\bf l}_{1}(y),  
\ldots, {\bf l}_{n+1}(y)\right]_{1\times n+2}\cdot(\tfrac{1}{p^n}\cdot {\B}^{(p)}_{n_l})
\cdots (\tfrac{1}{p^n}\cdot {\B}^{(p)}_{n_1}),\end{multline*}

This gives (\ref{pexe1}).

We can argue similarly for the function
$\eell_i^{(p)}:J^{(p)}_{\underline{n}}\longto \R$.
 where  now $y$ lies  in the interval
$[\tfrac{1}{2}+\tfrac{n-2}{2p}, 1)$ and hence ${\bf l}_{i}$ will be replaced
by ${\bf r}_{i}$.
\end{proof}

 The proof of the following proposition is  along the same 
lines as for Proposition~\ref{pex}. We recall the following.

\begin{notations} For  $n\geq 4$ an even integer, 
\begin{enumerate}
 \item let $\{{{\bf m}}_{0}(x), \ldots, {{\bf m}}_{n+2}(x) 
\in \Q[x]\}$  
as  in Definition~\ref{5.4}.
\item Let $\{{\tilde {\HH}}^{(n_i)}(t)\in M_{n+3}(\Q[t])\mid 0\leq n_i\leq n-3\}$
 as in Definition~\ref{polye}.
\item Let ${\tilde I}^{(p)}_{(n_1, \ldots, n_l)}$ be  the interval as in  
 Definition~\ref{deven}. 
\item Let $$\psi^{(p)}:\bigcup_{\{{\underline n}\in 
\sM^l\mid l\geq 1\}}{\tilde I}^{(p)}_{\underline n}\longto 
[\tfrac{n-2}{2p},~~~ 1-\tfrac{n-2}{2p}),$$
such that for each ${\underline{n}} = (n_1, \ldots, n_l)$ the restriction map  
 $$\psi^{(p)}:{\tilde I}^{(p)}_{\underline{n}}
\longto [\tfrac{n-2}{2p},~~~ 1-\tfrac{n-2}{2p}),\quad\mbox{given by}\quad 
y\to   p^l(x- \sum_i\tfrac{j_i}{p^i})$$
is  surjective.

\end{enumerate}
\end{notations}

\begin{propose}\label{epex}Let $n\geq 4$ be an even integer, where $p\geq (3n-4)/2$.
Then the functions  
$\eell^{(p)}_{i}:{\tilde I}^{(p)}_{(n_1, \ldots, n_l)} \longto \R$ 
are given  by the formula
\begin{multline*}
\left[\eell^{(p)}_0(x),\ldots, \eell^{(p)}_{n+2}(x)\right]_{1\times n+3} =\\ 
\left[{{\bf m}}_{0}\bigl(\psi^{(p)}(x)\bigr), \ldots, 
{{\bf m}}_{n+2}\bigl(\psi^{(p)}(x)\bigr)
\right]_{1\times n+3}\cdot 
{\tilde {\HH}}^{(n_l)}(t)
\cdots {\tilde {\HH}}^{(n_1)}(t)\Bigr\rvert_{t=1/p}.\end{multline*}

In particular we get  formulas for $\mmu^{(p)}_{n_0+1}(x) = \eell^{(p)}_{n+2}(x)$ 
and for $\mmu^{(p)}_{n_0-1}(x) = \eell^{(p)}_{n}(x)$.
\end{propose}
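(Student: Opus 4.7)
The plan is to follow the same template as the proof of Proposition~\ref{pex} for the odd case, with the small complication that in the even case there are three distinct spinor twists $\sS(-n_0+1)$, $\sS(-n_0)$, $\sS(-n_0-1)$ appearing in the decomposition of $F_*^s(\sO(a))$, which is why the ambient matrices $\tilde{\HH}^{(n_i)}(t)$ are of size $(n+3)\times(n+3)$ rather than $(n+2)\times(n+2)$.

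First I would fix $x\in \tilde{I}^{(p)}_{(n_1,\ldots,n_l)}$, so that for $q=p^s$ with $s\geq l+1$ the base-$p$ expansion of $\lfloor xq\rfloor$ has the form
\[
\lfloor xq\rfloor = A_{s-l} + j_l\, p^{s-l} + \cdots + j_1\, p^{s-1},
\qquad A_{s-l} = \lfloor y\, p^{s-l}\rfloor,
\]
where $y:=\psi^{(p)}(x)$. By Definition~\ref{deven}, $y\in [\tfrac{n-2}{2p},\,1-\tfrac{n-2}{2p})$, which is the complement of the difficult range; hence Proposition~\ref{lp}(2) gives $\eell^{(p)}_i(y)={\bf m}_i(y)$ for each $i$. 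This is the base case for the iteration.

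Next I would unfold $F_*^s(\sO(\lfloor xq\rfloor))$ inductively by writing $\lfloor xq\rfloor = A_{s-l} + q'\cdot(j_l+\cdots+j_1 p^{l-1})$ where $q'=p^{s-l}$, and applying the projection formula. At each step of the iteration one peels off a single Frobenius: starting from the rank tuple $(l_0,\ldots,l_{n+2})$ of $F_*^{s-l}(\sO(A_{s-l}))$, twisting by $\sO(j_l)$ and pushing forward by $F_*$ replaces each summand $\sO(-t)$ (resp.\ $\sS(-t)$) by $F_*\sO(j_l-t)$ (resp.\ $F_*\sS(j_l-t)$), whose decomposition is by definition encoded in the rows of $\C^{(p)}_{n_l}$. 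The crucial point at this step is that under the hypothesis $p\geq (3n-4)/2$, for every $t\in\{0,1,\ldots,n-1\}$ one has $0\leq j_l - t < p$ for the indexing values $j_l\in \sA_e$, and similarly $1\leq j_l - t < p$ for the spinor indices; consequently Lemma~\ref{l1} applied with $s=1$ guarantees that only bundles of type $\sO(0),\ldots,\sO(1-n)$ and $\sS(-n_0+1),\sS(-n_0),\sS(-n_0-1)$ appear in each $F_*$. Thus the new rank tuple is exactly the matrix product with $\C^{(p)}_{n_l}$.

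Iterating this $l$ times expresses the rank tuple of $F_*^s(\sO(\lfloor xq\rfloor))$ as
\[
[l_0,\ldots,l_{n+2}]\cdot \C^{(p)}_{n_l}\cdots \C^{(p)}_{n_1}.
\]
Dividing by $q^n=p^{sn}=p^{ln}\cdot(p^{s-l})^n$, passing to the limit $s\to\infty$, and using $\tfrac{1}{p^n}\C^{(p)}_{n_i}\big|_{t=1/p} = \tilde{\HH}^{(n_i)}(t)\big|_{t=1/p}$ from Definition~\ref{polye} together with $\lim_{s\to\infty} l_i^{s-l}(\lfloor yp^{s-l}\rfloor)/(p^{s-l})^n = \eell^{(p)}_i(y) = {\bf m}_i(y)$ from Lemma~\ref{cont}, yields exactly the claimed formula. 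The main obstacle is the bookkeeping check that $p\geq (3n-4)/2$ suffices for the $j_l - t$ range to stay inside $[0,p)\cup[1,p)$ for all the indices that arise (the factor of $2$ improvement over the odd case comes from $n_0 = (n-2)/2$ rather than $(n-1)/2$); once this is verified the matrix multiplication argument is identical to the odd case.
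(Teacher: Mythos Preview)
Your approach is correct and is exactly what the paper intends: it explicitly defers the proof of Proposition~\ref{epex} to that of Proposition~\ref{pex}, and your outline is a faithful adaptation of that argument to the even case with the enlarged $(n+3)\times(n+3)$ matrices $\C^{(p)}_{n_i}$.

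One small correction to your ``crucial point''. You claim that $p\geq (3n-4)/2$ forces $0\leq j_l-t<p$ for all $t\in\{0,\ldots,n-1\}$ and all $j_l\in\sA_e$. This is false for $j_l$ in the lower block $\{0,\ldots,\tfrac{n}{2}-2\}$ of $\sA_e$: there $j_l-t$ can reach $1-n$. The correct statement (see the proof of Lemma~\ref{eentries}) is that $p\geq (3n-4)/2$ ensures $j_l-t\geq 0$ for $j_l$ in the \emph{upper} block $\{p+1-\tfrac{n}{2},\ldots,p-1\}$, while for $j_l$ in the lower block one only has $j_l-t\geq 1-n$. Both ranges are covered by Lemma~\ref{l1} (which is stated for $1-n\leq a<q$) and by Lemma~\ref{e*c1}, so the decomposition of each $F_*\sO(j_l-t)$ and $F_*\sS(j_l-n_0-\delta)$ still lands in the standard set $\{\sO(0),\ldots,\sO(1-n),\sS(-n_0+1),\sS(-n_0),\sS(-n_0-1)\}$, and the matrix identity goes through unchanged. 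So your argument is fine; just replace the range claim by the correct one.
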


\vspace{5pt}

Now we are ready to
define the HK density function $f_{R_{p, n+1}}:[0, \infty)\longto \R$
almost everywhere, that is outside a set of measure $0$ in $\R$. 

\begin{notations}
We recall that $\{{\bf Z}_{i}(x)\}$ and $\{{\bf Y}_{i}(x)\}$ are polynomials as 
in Notations~\ref{n2} and  
 $n_0 = \lceil \tfrac{n}{2}\rceil-1$.
\begin{enumerate}\item
If $n$ is odd then, by Lemma~\ref{l1c}
the set 
$[\tfrac{1}{2}-\tfrac{(n-2)}{2p},~~\tfrac{1}{2}+\tfrac{(n-2)}{2p})$ is almost covered by
the set of disjoint intervals 
$\{I^{(p)}_{\underline n},  J^{(p)}_{\underline n}
\mid{\underline n}\in \cup_{l\geq 1}\sM^l\}$, which are  as in Definition~\ref{nch}.
\item 
If $n\geq 4$ is even then, by Lemma~\ref{emeasure}
the set 
$\left[1-\tfrac{n-2}{2p}, 1\right)$ is almost covered by the set 
$\{{\tilde I}^{(p)}_{\underline n}\mid {\underline n}\in {\tilde \sM_0}\},$
and the set 
$[1-\tfrac{(n-2)}{2p},~~~1)$ is almost covered by 
the set of intervals $\{{\tilde I}^{(p)}_{\underline n}\mid 
{\underline n}\in {\tilde \sM_1}\}$, 
 as in Definition~\ref{deven}. 
\end{enumerate}
\end{notations}

Following theorem shows that outside a set of measure $0$ the HK density function 
$f_{R_p, n+1}$ is a piecewise polynomial function though they will be 
infinitely many pieces.

\begin{thm}\label{expr} Let $n\geq 3$ be an odd integer and let $p\geq 3n-4$.
 Then 
the HK density function is  defined  outside a set of measure $0$
as follows:
$$f_{R_p, n+1}(x)  = 
\begin{cases} {\bf Z}_{i}(x) 
 \quad\mbox{if}\quad 
 i\leq x < i+1 \quad\mbox{and}\quad 0\leq i \leq  n_0\\\\
 {\bf  Z}_{n_0+1}(x), \quad\mbox{if}\quad  
(n_0+1) \leq x < (n_0+\frac{3}{2}) -\frac{n-2}{2p}\\\\
{\bf  Y}_{n_0+1}(x)+ {\mmu^{(p)}_{n_0}}(x-n_0-1),\quad
\mbox{if}\quad  \tfrac{1}{2}-\tfrac{n-2}{2p}\leq x -(n_0+1) < 
\tfrac{1}{2}+\tfrac{n-2}{2p}\\\\
{\bf Y}_{n_0+1}(x),\quad\mbox{if}\quad  
(n_0+1)+\frac{1}{2}+\frac{n-2}{2p} \leq x < (n_0+2)\\\\
{\bf Y}_{i}(x),\quad\mbox{if}\quad 
 i \leq x < i+1 \quad\mbox{and}\quad n_0+2\leq i <  n
\end{cases}$$

where
\begin{enumerate}
\item[(i)] for 
$x-n_0-1\in I^{(p)}_{(n_1, \ldots, n_l)}$
\begin{multline}\label{f1}
\mmu_{n_0}^{(p)}(x-n_0-1) = (n+1)^{th}~~\mbox{entry of the matrix}\quad\\\
\Bigl[{\bf l}_{0}\bigl(\phi^{(p)}(x-n_0-1)\bigr), \ldots, {\bf  l}_{n+1}
\bigl(\phi^{(p)}(x-n_0-1)\bigr)\Bigr]_{1\times n+2}\cdot 
{\HH}^{(n_l)}(t)\cdots {\HH}^{(n_1)}(t)\Bigr\rvert_{t=1/{p}}.\end{multline}

\item[(ii)] For  $x-n_0-1\in J^{(p)}_{(n_1, \ldots, n_l)}$
\begin{multline}\label{f2}
\mmu_{n_0}^{(p)}(x-n_0-1) = (n+1)^{th}~~\mbox{entry of the matrix}\quad\\\
\left[{\bf r}_{0}\bigl(\phi^{(p)}(x-n_0-1)\bigr), \ldots, 
{\bf r}_{n+1}\bigl(\phi^{(p)}(x-n_0-1)\bigr)\right]_{1\times n+2}\cdot
{\HH}^{(n_l)}(t)\cdots {\HH}^{(n_1)}(t)\Bigr\rvert_{t=1/{p}}.\end{multline}
\end{enumerate}
\end{thm}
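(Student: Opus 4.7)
The plan is to combine the piecewise description from Proposition~\ref{l4} (valid outside the difficult range) with the matrix-product formulas of Proposition~\ref{pex} for the rank functions on the combinatorially indexed intervals $I^{(p)}_{\underline n}$ and $J^{(p)}_{\underline n}$, exploiting the fact that these intervals almost cover the difficult range by Lemma~\ref{l1c}.

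On the four subregions $[i, i+1)$ with $0 \leq i \leq n_0$, $[n_0+1,\, n_0+\tfrac{3}{2}-\tfrac{n-2}{2p})$, $[n_0+\tfrac{3}{2}+\tfrac{n-2}{2p},\, n_0+2)$, and $[i, i+1)$ with $n_0+2 \leq i < n$, the claimed formulas $f_{R_{p,n+1}}(x) = {\bf Z}_i(x)$ or ${\bf Y}_i(x)$ are a direct restatement of the odd-$n$ case of Proposition~\ref{l4}(2). No further work is needed here, and these regions contribute the non-matrix pieces of the statement.

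The remaining task is the difficult range $\tfrac{1}{2}-\tfrac{n-2}{2p} \leq x-(n_0+1) < \tfrac{1}{2}+\tfrac{n-2}{2p}$, where Proposition~\ref{l4}(2) gives
\[
f_{R_{p,n+1}}(x) \;=\; {\bf Y}_{n_0+1}(x) + \mmu^{(p)}_{n_0}(x-n_0-1),
\]
so the problem reduces to computing $\mmu^{(p)}_{n_0}(x-n_0-1)$ almost everywhere on this range. By Lemma~\ref{l1c}, the mutually disjoint family $\{I^{(p)}_{\underline n}, J^{(p)}_{\underline n}\}$, as $\underline n$ ranges over $\bigcup_{l\geq 1}\sM^l$, almost covers this range. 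For $x-n_0-1 \in I^{(p)}_{\underline n}$, Proposition~\ref{pex} expresses the full rank vector $(\eell^{(p)}_0, \ldots, \eell^{(p)}_{n+1})$ at $x-n_0-1$ as
\[
\bigl[{\bf l}_0(\phi^{(p)}(x-n_0-1)), \ldots, {\bf l}_{n+1}(\phi^{(p)}(x-n_0-1))\bigr] \cdot {\HH}^{(n_l)}(t)\cdots {\HH}^{(n_1)}(t)\bigr\rvert_{t=1/p},
\]
and similarly for $x-n_0-1 \in J^{(p)}_{\underline n}$ with the row vector built from $\{{\bf r}_i\}$ instead. Under the relabeling of Notations~\ref{nl}, the function $\mmu^{(p)}_{n_0}$ coincides with the component $\eell^{(p)}_{n+1}$, so extracting the $(n+1)$-th entry of each matrix product yields formulas~(\ref{f1}) and~(\ref{f2}).

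The substantive obstacles have been dispatched in the preceding sections: Proposition~\ref{pex} identifies the one-step Frobenius decomposition with the matrices ${\HH}^{(n_i)}(t)$ (using $p \geq 3n-4$ to keep all intermediate spinor twists $\sS(j_l - t)$ within the single-range regime of Lemma~\ref{l1}), and Lemma~\ref{l1c} guarantees that the combinatorial almost-cover indexed by tuples in $\sM^l$ exhausts the difficult range up to a Lebesgue-null set. With these in hand, the proof of Theorem~\ref{expr} is essentially an assembly of the pieces, with continuity of $f_{R_{p,n+1}}$ (Theorem of Definition~\ref{dhkd}) ensuring that the almost-everywhere description determines $f_{R_{p,n+1}}$ on the (measure-zero) leftover as well.
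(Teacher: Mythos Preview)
Your proposal is correct and follows exactly the approach the paper intends: the theorem is an assembly of Proposition~\ref{l4}(2) for the pieces outside the difficult range, Lemma~\ref{l1c} for the almost-covering of the difficult range by the intervals $I^{(p)}_{\underline n}, J^{(p)}_{\underline n}$, and Proposition~\ref{pex} for the matrix-product formula on each such interval, with the identification $\mmu^{(p)}_{n_0}=\eell^{(p)}_{n+1}$ from Notations~\ref{nl}. The paper itself offers no separate proof for Theorem~\ref{expr}, treating it as the evident synthesis of these ingredients, which is precisely what you have written out.
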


\begin{thm}\label{eexpr}
If $n\geq 4$ is  an even number and $p\geq (3n-4)/2$ then the 
HK density function is defined outside measure $0$ (in $\R$) as follows:

$$f_{R_{p, n+1}}(x) =
 \begin{cases}  {\bf Z}_{i}(x), \quad\mbox{if}\quad 
 i\leq x < i+1 \quad\mbox{and}\quad 0\leq i \leq  n_0\\\\
{\bf  Z}_{n_0+1}(x),\quad\mbox{if}\quad  
(n_0+1) \leq x < (n_0+2) -\frac{n-2}{2p}\\\\
 {\bf  Z}_{n_0+1}(x)+
{\mmu}^{(p)}_{n_0-1}(x-n_0-1),
\quad\mbox{if\quad  $1-\frac{n-2}{2p}\leq x -(n_0+1)< 1$} \\\\
{\bf  Y}_{n_0+2}(x)+ 
{\mmu}^{(p)}_{n_0+1}(x-n_0-2),
\quad\mbox{if\quad  $0\leq x -(n_0+2) < \small{\frac{n-2}{2p}}$}\\\\
 {\bf Y}_{n_0+2}(x),\quad\mbox{if}\quad  
(n_0+2)+\frac{n-2}{2p} \leq x < (n_0+3)\\\\
  {\bf Y}_{i}(x),\quad\mbox{if}\quad 
 i\leq x < i+1 \quad\mbox{and}\quad n_0+3\leq i < n,
\end{cases}$$
 where 
\begin{enumerate}
\item[(1)] for 
$x-n_0-1\in {\tilde I}^{(p)}_{(n_1, \ldots, n_l)}$, where 
$(n_1, \ldots, n_l)\in {\tilde \sM_1}$ 
\begin{multline}\label{ef1}
\mmu_{n_0-1}^{(p)}(x-n_0-1) = n^{th}~~\mbox{entry of the matrix}\quad\\\
\left[{{\bf m}}_{0}\bigl(\psi^{(p)}(x-n_0-1)\bigr), \ldots, 
{\bf m}_{n+2}\bigl(\psi^{(p)}(x-n_0-1)\bigr)\right]_{1\times n+3}\cdot
{\tilde {\HH}}^{(n_l)}(t)\cdots 
{\tilde {\HH}}^{(n_1)}(t)\Bigr\rvert_{t=1/{p}}.\end{multline}

\item[(2)] For 
$x-n_0-2\in {\tilde I}^{(p)}_{(n_1, \ldots, n_l)}$,  where $(n_1, \ldots, n_l)\in 
{\tilde \sM_0}$
\begin{multline}\label{ef2}
\mmu_{n_0+1}^{(p)}(x-n_0-2) = (n+2)^{th}~~\mbox{entry of the matrix}\quad\\\
\left[{{\bf m}}_{0}\bigl(\psi^{(p)}(x-n_0-2)\bigr), 
\ldots, {{\bf m}}_{n+2}\bigl(\psi^{(p)}(x-n_0-2)\bigr)\right]_{1\times n+3}\cdot 
{\tilde {\HH}}^{(n_l)}(t)\cdots {\tilde {\HH}}^{(n_1)}(t)\Bigr\rvert_{t=1/{p}}.\end{multline}
\end{enumerate}
\end{thm}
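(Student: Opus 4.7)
The plan is to combine Proposition~\ref{l4} (which gives the coarse piecewise formula for $f_{R_{p,n+1}}$ in the even case) with Proposition~\ref{epex} (which provides the explicit matrix-product description of the individual rank functions on each of the semi-open intervals $\tilde I^{(p)}_{\underline n}$) and Lemma~\ref{emeasure} (which guarantees that these intervals almost cover the difficult range). The only substantive work is matching indices: identifying, on each interval, which entry of the rank tuple gives the function appearing in Proposition~\ref{l4}.

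First I would observe that the six polynomial pieces ${\bf Z}_0,\ldots,{\bf Z}_{n_0},{\bf Z}_{n_0+1},{\bf Y}_{n_0+2},\ldots,{\bf Y}_{n-1}$ listed in Theorem~\ref{eexpr} are copied verbatim from part~(1) of Proposition~\ref{l4}, as are the two correction terms: the term $\mmu_{n_0-1}^{(p)}(x-n_0-1)$ on the slice $x-(n_0+1)\in[1-\tfrac{n-2}{2p},1)$ and the term $\mmu_{n_0+1}^{(p)}(x-n_0-2)$ on the slice $x-(n_0+2)\in[0,\tfrac{n-2}{2p})$. Hence the entire outer piecewise structure of the statement is immediate; what remains is to express the two correction rank functions via the claimed matrix products (\ref{ef1}) and (\ref{ef2}) at all but a measure-zero set of points.

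Next I would invoke Lemma~\ref{emeasure}: the subfamily $\{\tilde I^{(p)}_{\underline n}\}_{\underline n\in\tilde\sM_1}$ almost covers $[1-\tfrac{n-2}{2p},1)$, and $\{\tilde I^{(p)}_{\underline n}\}_{\underline n\in\tilde\sM_0}$ almost covers $[0,\tfrac{n-2}{2p})$, with the intervals mutually disjoint by Lemma~\ref{disjoint}. It is therefore enough to verify the two formulas on each such interval. Under the relabeling in Notations~\ref{nl}, we have $\mmu_{n_0-1}^{(p)}=\eell_n^{(p)}$ and $\mmu_{n_0+1}^{(p)}=\eell_{n+2}^{(p)}$. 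Applying Proposition~\ref{epex} to the tuple $(\eell_0^{(p)},\ldots,\eell_{n+2}^{(p)})$ on $\tilde I^{(p)}_{\underline n}$ and extracting the $n$-th coordinate gives exactly (\ref{ef1}) when $\underline n\in\tilde\sM_1$, while extracting the $(n+2)$-th coordinate gives exactly (\ref{ef2}) when $\underline n\in\tilde\sM_0$. The measure-zero exceptional set is the union of the complement of $\bigcup_{\underline n}\tilde I^{(p)}_{\underline n}$ inside the difficult range (controlled by Lemma~\ref{emeasure}) together with the boundary points of the finitely many outer pieces.

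The main obstacle is purely bookkeeping: one must verify that the first index $n_1$ of $\underline n$ correctly selects which half of the difficult range an interval lies in (and hence which correction function is active), and that the coordinate extracted from the row vector in Proposition~\ref{epex} matches the relabeling in Notations~\ref{nl}. Once these conventions are nailed down the argument is a direct substitution with no further analytic input; in particular, no new convergence or continuity result beyond Lemma~\ref{cont} and Proposition~\ref{epex} is needed.
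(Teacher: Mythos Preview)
Your proposal is correct and matches the paper's approach exactly: the outer piecewise structure is lifted directly from Proposition~\ref{l4}~(1), the matrix-product formulas for the correction terms on each $\tilde I^{(p)}_{\underline n}$ come from Proposition~\ref{epex} by reading off the appropriate coordinates under the relabeling $\mmu_{n_0-1}^{(p)}=\eell_n^{(p)}$, $\mmu_{n_0+1}^{(p)}=\eell_{n+2}^{(p)}$ of Notations~\ref{nl}, and Lemma~\ref{emeasure} together with Lemma~\ref{disjoint} handles the measure-zero residual set. The paper in fact states Theorem~\ref{eexpr} without a separate proof, treating it as the even-$n$ analogue of Theorem~\ref{expr} and relying on precisely this assembly of prior results.
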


\section{Applications}
In the previous section we constructed an infinite set of semi open intervals 
such that they  cover the domain of 
definition of the  HK density function $f_{R_{p, n+1}}$ except on a 
set of measure $0$. 

Since $f_{R_{p, n+1}}$ is a continuous function,  
we can ignore the measure $0$ part as far as the integration is concerned.
Moreover $f_{R_{p, n+1}}$ restricted to such semi open 
 interval is a polynomial functions 
which are independent of $p$. 
Though the length of each interval depends on $p$, the set of indexing is 
independent of $p$.
These properties of the formulation of $f_{R_{p, n+1}}$ 
allow us to compare the HK multiplicity $e_{HK}(R_{p, n+1})$ as 
$p$ varies.
As  a consequence   
we settle  an old conjecture of Yoshida (Conjecture~(2) in the introduction)
in the following form 

\vspace{5pt}

\noindent{\bf Theorem}~(C)\quad{\em Let $n\geq 3$ be an integer then there 
exists $\epsilon > 0$ 
such that for $p\geq 1/\epsilon $,
$e_{HK}(R_{p_1, n+1})$ is a strictly decreasing function of $p$.

Moreover, following Notations~\ref{epsilon}, the $\epsilon = \epsilon_1$
or  $\epsilon = \epsilon_2$ depending on the parity of $n$.}

\vspace{5pt}

Here we state and prove  a basic result from  analysis in the form which 
suits our purpose.

\begin{lemma}\label{positive}If $H(t) = t^i(b_0+b_1t+\cdots + b_mt^m)\in \Q[t]$ 
is a polynomial such that 
$H(1/{p}) \geq 0$ for all $p\gg 0$ then either $H(t)$ is a zero polynomial or 
$tH(t)$ is 
a strictly increasing function on the interval $[0, \epsilon_H)$, where we define   
$$\epsilon_H = \mbox{min}~\{1, 
\frac{b_0}{2|b_1|+3|b_2|+\cdots +(m+1)|b_m|}\}.$$ 
 
In particular $tH(t)>0$ for $t\in (0, \epsilon_H)$.
\end{lemma}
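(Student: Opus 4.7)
The plan is to reduce the statement to a direct derivative computation, after first using the hypothesis $H(1/p)\ge 0$ for all $p\gg 0$ to pin down the sign of the leading coefficient $b_0$. Assume $H$ is not the zero polynomial. From the expansion
$$H(1/p) = p^{-i}\bigl(b_0 + b_1 p^{-1} + \cdots + b_m p^{-m}\bigr),$$
the hypothesis forces $b_0 \ge 0$ by letting $p\to\infty$. If $b_0=0$ then the number $\epsilon_H$ is $0$ by its very definition, the interval $[0,\epsilon_H)$ is empty and the assertion is vacuous; hence I may assume $b_0>0$.

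Next I would set $g(t) = tH(t) = \sum_{j=0}^{m} b_j\, t^{i+j+1}$. Differentiating termwise yields
$$g'(t) = t^i\Bigl[(i+1)b_0 + \sum_{j=1}^{m} (i+j+1)\, b_j\, t^j\Bigr],$$
so the task becomes to show the bracket is strictly positive for $t\in(0,\epsilon_H)$. For such $t$ we have $t<1$, hence $t^j\le t$ for $j\ge 1$, giving
$$(i+1)b_0 + \sum_{j=1}^{m} (i+j+1)\, b_j\, t^j \;\ge\; (i+1)b_0 - t\sum_{j=1}^{m} (i+j+1)\,|b_j|.$$
To match the $\epsilon_H$ appearing in the statement I would invoke the elementary inequality $i+j+1 \le (i+1)(j+1)$, valid for all $i,j\ge 0$, to pass to the further lower bound
$$(i+1)\Bigl[\,b_0 - t\sum_{j=1}^{m}(j+1)|b_j|\,\Bigr].$$

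For $t\in(0,\epsilon_H)$ the defining inequality $t<\dfrac{b_0}{2|b_1|+3|b_2|+\cdots+(m+1)|b_m|}$ makes this quantity strictly positive; combined with $t^i>0$ this gives $g'(t)>0$ on $(0,\epsilon_H)$. Continuity of $g$ at $0$ together with the mean value theorem yields strict monotonicity of $g=tH$ on $[0,\epsilon_H)$, and since $g(0)=0$ the ``in particular'' statement $tH(t)>0$ on $(0,\epsilon_H)$ is then immediate.

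There is no serious obstacle: the only point requiring attention is the passage from the natural coefficients $(i+j+1)$ produced by differentiation to the cleaner coefficients $(j+1)$ appearing in the stated $\epsilon_H$, which is exactly what $i+j+1\le(i+1)(j+1)$ is set up to handle.
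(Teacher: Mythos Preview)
Your proof is correct. The paper takes a slightly cleaner route: instead of differentiating $tH(t)$ directly, it first strips off the $t^i$ factor and works with $G(t) = t(b_0 + b_1t + \cdots + b_mt^m)$, whose derivative $G'(t) = b_0 + 2b_1t + \cdots + (m+1)b_mt^m$ immediately produces the coefficients $(j+1)$ appearing in $\epsilon_H$ without any further inequality; it then observes that $tH(t) = t^i \cdot G(t)$ is a product of two nonnegative increasing functions on $[0,\epsilon_H)$ (since $G(0)=0$), hence itself strictly increasing. Your approach differentiates $tH(t)$ in one shot, which yields coefficients $(i+j+1)$, and you then need the elementary bound $i+j+1 \le (i+1)(j+1)$ to reconcile with the stated $\epsilon_H$. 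Both arguments are valid; the paper's version avoids that extra inequality at the cost of a short product-of-increasing-functions remark. Your handling of the $b_0=0$ case (vacuous because $\epsilon_H=0$) is also more explicit than the paper's, which tacitly assumes the factorization $t^i(b_0+\cdots)$ is written with $b_0\neq 0$.
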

\begin{proof}Suppose $H(t)$ is not a zero polynomial. Further 
we can assume $m > 0$.
\vspace{5pt}

\noindent{\bf  Claim}.\quad  The number  $b_0$ is strictly positive.
\vspace{5pt}

\noindent{Proof of the claim}:\quad  Suppose $b_0 <0$.

Since $H(t)$ is  a nonzero polynomial it has only finitely many zeroes. Therefore 
there exists $\epsilon_1 >0$ such that $H(t)$ has no zero on 
the interval $(0, \epsilon_1)$.
Now by choosing  $p_0\geq 1/\epsilon_1$ we can ensure that 
 $H(t)$ has no zero in the set 
$\{1/{p}\mid p\geq p_0\}$. Since $-b_0 >0$, we can further choose $\epsilon_1$  
such that for all $t\in (0,~~\epsilon_1)$ we have 
$t\left(|b_{1}|+\cdots + |b_m|t^{m-1}\right) < -b_0$ which implies
$$ b_0+b_{1}t+\cdots + b_mt^{m} \leq b_0 + 
|b_{1}|t+\cdots + |b_m|t^{m} <0.$$

But then it contradicts the hypothesis that 
$H\bigl(1/p)\bigr) \geq 0$
for all $p \gg  0$. This proves the claim.
 
\vspace{5pt}
 
Let   $G(t) = t(b_{0}+\cdots + b_{m}t^{m})$, then 
$$\frac{d(G(t))}{dt} = 
b_0+2b_{1}t+\cdots + (m+1)b_mt^{m} > 0,\quad\mbox{for}\quad t\in [0, \epsilon_H),$$
 as
$$-2b_1t - 3b_2t^2-\cdots -  (m+1)b_mt^m
< t\bigl(2|b_1|+\cdots + (m+1)|b_m|\bigr) <  b_0.$$
Hence $G(t)$ and therefore $t(b_0+\cdots + b_mt^m)(t^i) = tH(t)$ is 
 a strictly increasing function on $[0, \epsilon_H)$.
In particular $tH(t) > 0$ for 
 $t\in (0, \epsilon_H)$.
\end{proof}

In  the rest of the section we use the notations as in 
 Definitions~\ref{polye} and 
\ref{5.4} and Proposition~\ref{lp}.

The following lemma proves that  the integral of $\eell_i^{(p)}$, restricted to the 
each semi open  interval, is determined by 
 a  polynomial in $\Q[t]$ evaluated at $t=1/{p}$.

\begin{lemma}\label{integrals}
 Let $n\geq 3$ be an odd integer and let $p\geq 3n-4$. 
Then for  $(n_1, \ldots, n_l)\in \sM^l$, where $l\geq 1$

\begin{multline}\label{pexe2}
\Bigl[\int_{I^{(p)}_{(n_1, \ldots, n_l)}}\eell^{(p)}_0(x)dx,\ldots, 
\int_{I^{(p)}_{(n_1, \ldots, n_l)}}\eell^{(p)}_{n+1}(x)\Bigr]_{1\times n+2} = \\
\left[F_{0}(t), \ldots, F_{n+1}(t)\right]_{1\times n+2}\cdot 
(t\cdot{\HH}^{(n_l)}(t))
\cdots (t\cdot{\HH}^{(n_1)}(t))\Bigr\rvert_{t={1}/{{p}}}\end{multline}

and
\begin{multline}\label{pexe4}
[\int_{J^{(p)}_{(n_1, \ldots, n_l)}}\eell^{(p)}_0(x)dx,\ldots, 
\int_{J^{(p)}_{(n_1, \ldots, n_l)}}\eell^{(p)}_{n+1}(x)]_{1\times n+2} = \\
\left[G_{0}(t), \ldots, G_{n+1}(t)\right]_{1\times n+2}\cdot 
\bigl(t\cdot{\HH}^{(n_l)}(t)\bigr)
\cdots \bigl(t\cdot{\HH}^{(n_1)}(t)\bigr)\Bigr\rvert_{t={1}/{{p}}}.\end{multline}

Let $n\geq 4$ be an even integer and let $p\geq (3n-4)/2$ then 
\begin{multline}\label{pexe5}
\Bigl[\int_{{\tilde I}^{(p)}_{(n_1, \ldots, n_l)}}\eell^{(p)}_0(x)dx,\ldots, 
\int_{I^{(p)}_{(n_1, \ldots, n_l)}}\eell^{(p)}_{n+2}(x)\Bigr]_{1\times n+3} = \\
\left[{\tilde F}_{0}(t), \ldots, {\tilde F}_{n+2}(t)\right]_{1\times n+3}\cdot 
\bigl(t\cdot{\tilde {\HH}}^{(n_l)}(t)\bigr)
\cdots \bigl(t\cdot{\tilde {\HH}}^{(n_1)}(t)\bigr)\Bigr\rvert_{t={1}/{{p}}}.
\end{multline}
\end{lemma}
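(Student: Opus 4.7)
The plan is to obtain all three identities as an elementary consequence of the pointwise matrix formulae already proven in Proposition~\ref{pex} (for $n$ odd) and Proposition~\ref{epex} (for $n$ even), combined with the change-of-variable formula in the integrals together with the defining relations \eqref{gf} and \eqref{gfe} for $F_i$, $G_i$ and $\tilde F_i$.

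First I would treat the odd case and the interval $I^{(p)}_{(n_1,\ldots,n_l)}$. By Proposition~\ref{pex}, for every $x$ in this interval we have the row-vector identity
\[
\left[\eell^{(p)}_0(x),\ldots,\eell^{(p)}_{n+1}(x)\right]
= \left[{\bf l}_{0}(\phi^{(p)}(x)),\ldots,{\bf l}_{n+1}(\phi^{(p)}(x))\right]\cdot M,
\]
where $M := {\HH}^{(n_l)}(t)\cdots {\HH}^{(n_1)}(t)\bigr\rvert_{t=1/p}$ is a constant matrix not depending on $x$. Since integration is linear, I can pull $M$ out of the integral and it is enough to compute each scalar integral $\int_{I^{(p)}_{(n_1,\ldots,n_l)}}{\bf l}_i(\phi^{(p)}(x))\,dx$. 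Using the explicit affine reparametrisation $y=\phi^{(p)}(x)=p^l\bigl(x-\sum_{i=1}^l j_i/p^i\bigr)$, so that $dx=p^{-l}\,dy$, and noting that $\phi^{(p)}$ maps $I^{(p)}_{(n_1,\ldots,n_l)}$ bijectively onto $[0,\tfrac{1}{2}-\tfrac{n-2}{2p})$, I obtain
\[
\int_{I^{(p)}_{(n_1,\ldots,n_l)}}{\bf l}_i(\phi^{(p)}(x))\,dx
= p^{-l}\int_0^{\tfrac{1}{2}-\tfrac{n-2}{2p}}{\bf l}_i(y)\,dy
= t^{l}\,F_i(t)\bigr\rvert_{t=1/p},
\]
where the last equality is just the definition of $F_i$ in \eqref{gf}. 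Assembling the factor $t^l$ with the $l$ matrix factors in $M$ gives
\[
\int_{I^{(p)}_{(n_1,\ldots,n_l)}}[\eell^{(p)}_0,\ldots,\eell^{(p)}_{n+1}]\,dx
= [F_0(t),\ldots,F_{n+1}(t)]\cdot (t\,{\HH}^{(n_l)}(t))\cdots (t\,{\HH}^{(n_1)}(t))\bigr\rvert_{t=1/p},
\]
which is \eqref{pexe2}.

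The identity \eqref{pexe4} is proved in exactly the same way: one replaces Proposition~\ref{pex}'s first formula by its second, which uses ${\bf r}_i$ in place of ${\bf l}_i$ and the interval $J^{(p)}_{(n_1,\ldots,n_l)}$ in place of $I^{(p)}_{(n_1,\ldots,n_l)}$. The map $\phi^{(p)}$ sends $J^{(p)}_{(n_1,\ldots,n_l)}$ bijectively onto $[\tfrac{1}{2}+\tfrac{n-2}{2p},1)$, which is exactly the range over which $G_i(t)$ is defined in \eqref{gf}. The change-of-variable factor $p^{-l}=t^l$ again combines with the product matrix to produce the factors $t\,{\HH}^{(n_i)}(t)$. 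For the even case \eqref{pexe5}, the argument is identical, but now Proposition~\ref{epex} provides the pointwise formula with polynomials ${\bf m}_i$ and matrices $\tilde{\HH}^{(n_i)}(t)$, the reparametrisation is $\psi^{(p)}(x)=p^l(x-\sum_i j_i/p^i)$ which maps ${\tilde I}^{(p)}_{(n_1,\ldots,n_l)}$ onto $[\tfrac{n-2}{2p},1-\tfrac{n-2}{2p})$, and the defining integral over this range recovers $\tilde F_i(t)$ as in \eqref{gfe}.

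There is no real obstacle: the whole content is the pointwise matrix description coupled with the bijective affine reparametrisations that were built into the definitions of $\phi^{(p)}$ and $\psi^{(p)}$. The only point worth stressing in the write-up is that the matrix $M$ is independent of $x$ (it depends only on $p$ and on the tuple $(n_1,\ldots,n_l)$), so the integration commutes with right multiplication by $M$ entry-by-entry; and that the power of $t$ arising from the Jacobian of the reparametrisation is exactly the one needed to distribute a factor $t$ into each of the $l$ matrix factors.
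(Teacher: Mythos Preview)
Your proposal is correct and follows essentially the same approach as the paper's proof: both use the pointwise matrix formulae from Proposition~\ref{pex} (resp.\ Proposition~\ref{epex}), perform the affine change of variable $y=\phi^{(p)}(x)$ (resp.\ $y=\psi^{(p)}(x)$) to produce the Jacobian factor $p^{-l}=t^l$, and then invoke the definitions of $F_i$, $G_i$, $\tilde F_i$ from \eqref{gf} and \eqref{gfe}. Your write-up is in fact more explicit than the paper's in stressing that the matrix $M$ is independent of $x$ and hence commutes with integration, which is the only point one might worry about.
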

\begin{proof}For $I^{(p)}_{(n_1, \ldots, n_l)}$ and 
$J^{(p)}_{(n_1, \ldots, n_l)}$ as in Definition~\ref{nch}
$$ \int_{I^{(p)}_{(n_1, \ldots, n_l)}}
{\bf l}_{i_0}\Bigl(p^l(x-\sum_i\frac{j_i}{p^i})\Bigr) dx = 
 \frac{1}{p^l}\int_0^{\tfrac{1}{2}-\tfrac{n-2}{2p}}{\bf l}_{i_0}(x)dx= 
t^{l}F_{i_0}(t)\Bigr\rvert_{t = 1/{p}}$$
and

$$\int_{J^{(p)}_{(n_1, \ldots, n_l)}}
{\bf r}_{i_0}\Bigl(p^l(x-\sum_i\frac{j_i}{p^i})\Bigr) dx =
 \frac{1}{p^l}\int_{\tfrac{1}{2}+\tfrac{n-2}{2p}}^{1}{\bf m}_{i_0}(x)dx = 
t^{l}\cdot G_{i_0}(t)\Bigr\rvert_{t=1/{p}}.$$
Therefore by (\ref{pexe1}) and (\ref{pexe3}) of Proposition~\ref{pex} the assertions follow.

For even $n$ the assertions follow by Proposition~\ref{epex}
\end{proof}

\begin{notations}\label{epsilon}
For a polynomial $H(t)\Q[t]$ we define $\epsilon_H$ as given in Lemma~\ref{positive}. 
Following Definitions~\ref{5.4} and Definition~\ref{polye}, let 
$$\sS_{gp} = \{H_{k_1, k_2}^{(n_i)}(t)\mid 0\leq n_i\leq n-3,~~0\leq k_1, k_2\leq n+1\}
\cup \{F_i(t), G_i(t)\mid 0\leq i\leq n+1\}$$ 
and let $\epsilon_1  = \min \{\epsilon_H\mid H\in \sS_{gp}\}$.

Similarly let 
$${\tilde \sS}_{gp} = \{{\tilde H}_{k_1, k_2}^{(n_i)}(t)\mid 
0\leq n_i\leq n-3,~~0\leq k_1, k_2\leq n+1\}
\cup \{{\tilde F}_0(t), \ldots, {\tilde F}_{n+2}(t)\}$$
and let $\epsilon_2 = \min \{\epsilon_H \mid H\in {\tilde \sS}_{gp}\}$.
\end{notations}

\begin{lemma}\label{stin} If $n\geq 3$ is odd number and   $p \geq 1/\epsilon_1$. Then 
\begin{enumerate}
\item[(o1)] for $(n_1, \ldots, n_l)\in \cup_{l\geq 1}\sM^l$ 
$$\int_{I^{(p)}_{(n_1, \ldots, n_l)}}\eell^{(p)}_{i}(x)dx\quad\mbox{and}\quad
 \int_{J^{(p)}_{(n_1, \ldots, n_l)}}\eell^{(p)}_{i}(x)dx\quad\mbox{
 are decreasing functions of}\quad p.$$ 
\item[(o2)] Moreover, for each  $n_1 \in \{0, \ldots, n-3\}$ 
  $$\int_{I^{(p)}_{(n_1)}}\eell^{(p)}_{n}(x)dx = 
\int_{I^{(p)}_{(n_1)}}\mmu^{(p)}_{n_0-1}(x)dx\quad\mbox{ 
is a strictly decreasing function of}~~p.$$ 
\end{enumerate}

 If $n\geq 4$ is an even integer and  $p \geq 1/\epsilon_2$. Then 
\begin{enumerate}
\item[(e1)] for $(n_1, \ldots, n_l)\in {\tilde \sM}_0\cup {\tilde \sM}_1$  
$$\int_{{\tilde I}^{(p)}_{(n_1, \ldots, n_l)}}\eell^{(p)}_{i}(x)dx~~\mbox{is 
a  decreasing function of}~~ p.$$ 
\item[(e2)]
Moreover, for  each  $n_1 \in \{\tfrac{n}{2}-1, \ldots, n-3\}$ 
  $$\int_{{\tilde I}^p_{(n_1)}}\mmu^{(p)}_{n_0-1}(x)dx\quad\mbox{ 
is a strictly decreasing function of}~~p.$$ 
\end{enumerate}
\end{lemma}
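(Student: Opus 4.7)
The strategy combines the polynomial/matrix formulas of Lemma~\ref{integrals} with the monotonicity dichotomy of Lemma~\ref{positive}. By Lemma~\ref{integrals}, each integral in the statement equals a specific coordinate of a row-vector--matrix product
\[
[F_0(t),\ldots,F_{n+1}(t)]\cdot\bigl(t\HH^{(n_l)}(t)\bigr)\cdots\bigl(t\HH^{(n_1)}(t)\bigr)\Bigr\rvert_{t=1/p}
\]
(with $F_i$ replaced by $G_i$ or $\tilde F_i$, and $\HH$ by $\tilde\HH$, in the other cases). Expanding gives a finite sum of monomials of the form $F_{i_0}(t)\cdot t^l\cdot\prod_{j=1}^l H^{(n_{l-j+1})}_{i_{j-1},i_j}(t)$, in which every polynomial factor belongs to the finite set $\sS_{gp}$ (or $\tilde\sS_{gp}$) and, by Proposition~\ref{lp} together with the nonnegativity of the integer entries of $\B^{(p)}_{n_i}$, $\C^{(p)}_{n_i}$, evaluates nonnegatively at $t=1/p$ for $p\geq 3n-4$ (respectively $p\geq (3n-4)/2$).

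For (o1) and (e1), rewrite each monomial as $t^{l-1}\cdot\bigl(t\cdot F_{i_0}(t)\prod_j H^{(n_{l-j+1})}_{.,.}(t)\bigr)$. The polynomial $F_{i_0}\prod_j H^{(n_{l-j+1})}_{.,.}$ is a product of elements of $\sS_{gp}$, hence nonnegative at $t=1/p$, so Lemma~\ref{positive} makes the parenthesized factor either identically zero or strictly increasing on an interval $[0,\epsilon)$, while $t^{l-1}$ is nonnegative and nondecreasing for $t\geq 0$. The uniform threshold $\epsilon_1$ (resp.\ $\epsilon_2$) of Notations~\ref{epsilon} is extracted as the infimum of $\epsilon_H$ over the finite set $\sS_{gp}$ (resp.\ $\tilde\sS_{gp}$); each monomial is then nondecreasing in $t\in(0,\epsilon_1)$ (resp.\ $(0,\epsilon_2)$), and summing over the finitely many multi-indices yields a nondecreasing function of $t=1/p$, equivalently a decreasing function of $p$.

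For the strict decrease in (o2) and (e2), specialize to $l=1$, so that the integral reduces to $\sum_{i_0} F_{i_0}(t)\cdot t\,H^{(n_1)}_{i_0,n}(t)\bigr\rvert_{t=1/p}$. It suffices to exhibit one $i_0$ with both $F_{i_0}$ and $H^{(n_1)}_{i_0,n}$ nonzero polynomials. The $n$-th column of $\HH^{(n_1)}$ (resp.\ $\tilde\HH^{(n_1)}$) records the limiting multiplicity of the spinor bundle $\sS(-n_0+1)$ (the one corresponding to $\mmu_{n_0-1}=\eell_n$) in the Frobenius pushforward built from the single index $n_1$; the prescribed ranges $n_1\in\{0,\ldots,n-3\}$ (resp.\ $n_1\in\{\tfrac{n}{2}-1,\ldots,n-3\}$) coincide, under the assignment $j_1=n_1+m_0$ (resp.\ the even-$n$ analogue from Definition~\ref{deven}), with exactly those $j_1$ for which Theorem~[A](2) forces this multiplicity to be nonzero. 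The selected term then has strictly positive constant coefficient and, by Lemma~\ref{positive}, is strictly increasing on $(0,\epsilon_1)$ (resp.\ $(0,\epsilon_2)$), dominating the otherwise nonnegative sum.

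The principal obstacle is justifying the uniformity of the monotonicity threshold across all multi-indices $\underline n\in\bigcup_{l\geq 1}\sM^l$: every monomial carries only $l$ factors of $t$ against $l+1$ polynomial factors, so a direct pairing that lets Lemma~\ref{positive} act on each factor separately is unavailable, and one is forced to apply it to product polynomials whose coefficients depend on $\underline n$. The needed uniform lower bound on the resulting $\epsilon$'s reduces, via the coefficient estimate in the definition of $\epsilon_H$, to the strict positivity of the constant term of every nonzero $H\in\sS_{gp}\cup\tilde\sS_{gp}$ (a fact established inside the proof of Lemma~\ref{positive}) together with the observation that the list of product polynomials actually entering monomials, although indexed by unbounded $l$, has coefficients controlled by the fixed finite collection of coefficients of $\sS_{gp}\cup\tilde\sS_{gp}$.
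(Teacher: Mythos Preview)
Your overall strategy---expand the matrix product from Lemma~\ref{integrals} into monomials and apply Lemma~\ref{positive}---is exactly the paper's approach, and you have unpacked it more explicitly than the paper's one-line ``follows from Lemma~\ref{positive} and Lemma~\ref{integrals}''. However, the part you flag as the ``principal obstacle'' is a genuine gap, and your proposed resolution does not close it.

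Your factorisation $t^{l-1}\cdot\bigl(t\cdot F_{i_0}\prod_j H^{(\cdot)}_{.,.}\bigr)$ forces you to apply Lemma~\ref{positive} to the \emph{product} polynomial $P=F_{i_0}\prod_j H^{(\cdot)}_{.,.}$, and you then assert that $\epsilon_P\geq\epsilon_1$ because the coefficients of $P$ are ``controlled'' by those of the finite set $\sS_{gp}$. This is false in general: take $H_1=H_2=1-2t$, so $\epsilon_{H_1}=\epsilon_{H_2}=1/4$, but $t\,H_1H_2=t(1-2t)^2$ has derivative $1-8t+12t^2$, which vanishes at $t=1/6<1/4$; iterating, $\epsilon_{(1-2t)^k}\to 0$. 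Bounded building-block coefficients do \emph{not} prevent $\epsilon_P$ from shrinking as $l$ grows, so your argument does not give monotonicity on a interval independent of $\underline n$. (The paper's terse proof does not address this point either; an argument that does work uses the logarithmic derivative and the dominant $l/t$ term coming from the $l$ factors $tH_j$, but this produces a threshold that is in general different from the $\epsilon_1$ of Notations~\ref{epsilon}.)

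For (o2)/(e2) your outline is correct but too vague compared with the paper. The paper does not invoke Theorem~[A] here; it names a \emph{specific} witness index---$i_0=n-1$ in the odd case, $i_0=0$ in the even case---and checks directly, via the explicit formulas of Lemma~\ref{*c1} (resp.\ Lemma~\ref{e*c1}), that $H^{(n_1)}_{n-1,n+1}(1/p)=\mu_{-n_0}(j_1-n+1)/p^n>0$ and that $F_{n-1}(1/p)=\int_0^{1/2-(n-2)/2p}\tfrac{2}{n!}(1-x)^n\,dx>0$ (and analogously $\tilde H^{(n_1)}_{0,n}$ and $\tilde F_0$ in the even case). Note also that the paper's proof of (o2) actually works with $\mmu^{(p)}_{n_0}=\eell^{(p)}_{n+1}$ and the $(n{+}1)$-st column, which is what feeds into Proposition~\ref{l4} and Theorem~\ref{pinfty}; you have followed the index $n$ as written in the lemma statement, so be aware of this discrepancy when matching your argument to the intended application.
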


\begin{proof} Assertion~(o1) and Assertion~(e1) follow from Lemma~\ref{positive} and
Lemma~\ref{integrals}.

\vspace{5pt}
\noindent{\underline{Assertion}}~(o2).\quad We recall that 
$\mmu_{n_0}^{(p)}(x) = \eell_{n+1}^{(p)}(x)$. Therefore, by Proposition~\ref{pex}, 
 for $p\geq 3n-4$, we have 
\begin{equation}\label{o2}\int_{I^{(p)}_{(n_1)}}\mmu^{(p)}_{n_0}(x)dx = 
\sum_{i=0}^{n+1}t\Bigl[{\HH}^{(n_1)}(t)\Bigr]_{i,n+1} F_{i}(t)\Bigr\rvert_{t=1/p}
= \sum_{i=0}^{n+1}tH^{(n_1)}_{i,n+1}(t) F_{i}(t)\Bigr\rvert_{t=1/p},\end{equation}
where  $n_1\in \{0, \ldots, n-3\}$,

  By Lemma~\ref{positive}, it is sufficient to prove 
$tH^{(n_1)}_{n-1,n+1}(t) F_{n-1}(t) \neq 0$, which 
follows as we have 
 $$H^{(n_1)}_{n-1, n+1}({1}/p) = {b_{n-1, n+1}^{(n_1)}}/{p^{n}}
= {\mu_{-n_0}(j_1-n+1)}/{p^{n}} > 0,$$
where the last inequality holds  as, by Lemma~\ref{*c1}, 
the condition $0\leq j_1-n+1 \leq m_0-2$ gives  
 $\mu_{-n_0}(j_1-n+1)/p^{n} > 0$.
Also
 $$ F_{n-1}(\tfrac{1}{p}) =  
\int_0^{\tfrac{1}{2}-\tfrac{n-2}{2p}}{\bf l}_{n-1}(x)dx =
\int_0^{\tfrac{1}{2}-\tfrac{n-2}{2p}}\frac{2}{n!}(1-x)^ndx > 0$$
as  $p > n-2$.
\vspace{5pt}

\noindent{\underline{Assertion}}~(e2).\quad
Now consider $n_1\in \{\tfrac{n}{2}-1, \ldots, n-3\}$ then 
 following the notations of Proposition~\ref{lp}
$${\tilde F}_0(1/p) = \int_{\tfrac{n-2}{2p}}^{1-\frac{n-2}{2p}}{\bf Z}_0(x)dx
= \frac{2}{(n+1)!}\left[\left(1-\tfrac{n-2}{2p}\right)^{n+1}- 
\left(\tfrac{n-2}{2p}\right)^{n+1}\right] >0.$$

Also, by Lemma~\ref{e*c1}, the condition $j_1 = p+\tfrac{n}{2}-2-n_1 
= {\tilde m_0}+n-3-n_1 \geq {\tilde m_0}$
gives  
 $${\tilde H}^{(n_1)}_{0,n}(1/p) = 
{c_{0n}^{(n_1)}}/{p^{n}} = 
\mu_{-n_0+1}(j_1)/p^{n} > 0.$$

Whereas, 
by Proposition~\ref{epex}, for $p\geq (3n-4)/2$ 
\begin{equation}\label{e2}\int_{{\tilde I}^{(p)}_{(n_1)}}\mmu^{(p)}_{n_0-1}(x)dx = 
\sum_{i=0}^{n+2}\left(t{\tilde H}_{i,n}^{(n_1)}(t)
{\tilde F}_{i}(t)\right)\mid_{t= 1/p}.\end{equation}

Hence, by the same logic as above for $n$ equal to odd case, this integral
is a strictly increasing function of $t$ for $t\in [0, \epsilon_2)$, where $\epsilon_2$
is as in the above notations.
\end{proof}

\begin{rmk}\label{strict}By Definition~\ref{polye}
 the polynomials 
$F_i(t)\bigr\rvert_{1/p}$, $H_{k_1,k_2}^{(n_i)}(t)\bigr\rvert_{1/p}$ are nonnegative for 
$p\geq 3n-4$ and 
${\tilde F}_i(t)\bigr\rvert_{1/p}$, ${\tilde H}_{k_1,k_2}^{(n_i)}(t)\bigr\rvert_{1/p}$ 
are nonnegative for $p\geq (3n-4)/2$.
Therefore (\ref{o2}) and (\ref{e2}) imply that 
\begin{enumerate}
\item if $n\geq 3$ odd and $p\geq 3n-4$ then 
$\int \mmu_{n_0}^{(p)}(x)dx >0$.

\item If $n\geq 4$ even and $p\geq (3n-4)/2$ then 
$\int \mmu_{n_0-1}^{(p)}(x)dx >0$.
\end{enumerate}

Therefore   there exists interval $I_1\subset [0, 1)$ 
such that  the rank function  
$\mmu_{n_0}^{(p)}$ are strictly positive on $I_1$ in case $n$ is odd. Similar
assertion holds for  $n$ even.
\end{rmk}

\begin{thm}\label{pinfty}If $n\geq 3$  then there exists $\epsilon >0$ such that
for $p\geq 1/\epsilon $, the HK multiplicity  
$e_{HK}(R_{p, n+1})$
of the ring $R_{p, n+1}$ is  a strictly decreasing function of $p$.

Moreover, following Notations~\ref{epsilon}, one can take  $\epsilon = \epsilon_1$
or  $\epsilon = \epsilon_2$ depending on the parity of $n$.
\end{thm}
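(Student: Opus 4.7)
The plan is to reduce the theorem to a termwise application of Lemma~\ref{stin} on the countable almost-cover of the difficult range constructed in Section~7. First, by Corollary~\ref{t2} combined with the identity $\int_0^\infty f^{\infty}_{R_{n+1}}(x)\,dx = 1+m_{n+1}$ established in the proof of Theorem~\ref{t1}, write
$$e_{HK}(R_{p,n+1}) \;=\; (1+m_{n+1}) \;+\; D(p),$$
where $D(p)$ denotes the integral over the (now $p$-dependent) difficult range of the correction $f_{R_{p,n+1}}-f^\infty_{R_{n+1}}$. This correction, by Corollary~\ref{t2}, is the rank function $\mmu^{(p)}_{n_0-1}(x-n_0-1)$ on the left half of the difficult range, and $\mmu^{(p)}_{n_0}(x-n_0-1)$ (odd $n$) or $\mmu^{(p)}_{n_0+1}(x-n_0-2)$ (even $n$) on the right half. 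Since $(1+m_{n+1})$ is independent of $p$, strict monotonicity of $e_{HK}(R_{p,n+1})$ reduces to strict monotonicity of $D(p)$.

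Next, apply Lemma~\ref{l1c} (for odd $n\geq 3$) or Lemma~\ref{emeasure} (for even $n\geq 4$) to express the difficult range, modulo a Lebesgue null set, as the disjoint countable union of the semi-open intervals $I^{(p)}_{\underline n}, J^{(p)}_{\underline n}$ (respectively $\tilde I^{(p)}_{\underline n}$) indexed by $\underline n\in \bigcup_{l\geq 1}\sM^l$, with $\sM=\{0,1,\ldots,n-3\}$ crucially \emph{independent} of $p$. Since the integrand is continuous, discarding the null set preserves the integral, so
$$D(p) \;=\; \sum_{\underline n\in \bigcup_{l\geq 1}\sM^l} D_{\underline n}(p),$$
where $D_{\underline n}(p)$ is the integral of the appropriate rank function $\mmu^{(p)}_j$ over the corresponding piece of the difficult range. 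The sum converges absolutely because $D_{\underline n}(p)\geq 0$ and $D(p)\leq \tfrac{2n-4}{p}<\infty$ by the bound in the proof of Theorem~\ref{t1}. Moreover, by Lemma~\ref{integrals}, each $D_{\underline n}(p)$ is the evaluation at $t=1/p$ of an explicit polynomial in $\Q[t]$ expressed as a product of the matrices $\HH^{(n_i)}(t)$ or $\tilde\HH^{(n_i)}(t)$ against the boundary polynomials $F_i(t), G_i(t), \tilde F_i(t)$ of Definition~\ref{5.4}.

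Lemma~\ref{stin}(o1) (resp.\ (e1)) now gives that every $D_{\underline n}(p)$ is a decreasing function of $p$ on the range $p\geq 1/\epsilon_1$ (resp.\ $p\geq 1/\epsilon_2$) of Notations~\ref{epsilon}, while Lemma~\ref{stin}(o2) (resp.\ (e2)) produces at least one length-one tuple $\underline n=(n_1)$ for which the corresponding $D_{\underline n}(p)$ is \emph{strictly} decreasing in $p$. For any two primes $p_1<p_2$ with both $\geq 1/\epsilon$, termwise subtraction then yields
$$D(p_1)-D(p_2) \;=\; \sum_{\underline n}\bigl(D_{\underline n}(p_1)-D_{\underline n}(p_2)\bigr) \;>\; 0,$$
completing the proof with the stated value $\epsilon=\epsilon_1$ or $\epsilon_2$ according to the parity of $n$. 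The only nontrivial obstacle is the bookkeeping needed to align the correction terms of Corollary~\ref{t2} (indexed via $n_0-1$, $n_0$, $n_0+1$) with the relabelled rank functions $\eell^{(p)}_{n}, \eell^{(p)}_{n+1}, \eell^{(p)}_{n+2}$ appearing in Lemma~\ref{stin}, and to verify that the strictly decreasing term guaranteed by (o2)/(e2) sits over a piece whose associated $\mmu^{(p)}_j$ is precisely the rank function contributing to $D(p)$ on the relevant half of the difficult range; once this identification is in place, the conclusion is the elementary fact that an absolutely convergent sum of nonnegative decreasing functions of $p$, with at least one term strictly decreasing, is itself strictly decreasing.
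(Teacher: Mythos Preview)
Your proposal is correct and takes essentially the same approach as the paper. The one simplification in the paper's version is that for odd $n$ it invokes Proposition~\ref{l4} directly to write the correction as the single function $\mmu^{(p)}_{n_0}$ over the entire difficult range (equation~(\ref{expehk})), rather than the two-piece form of Corollary~\ref{t2}; this makes the match with Lemma~\ref{stin}(o2) immediate and dissolves the alignment bookkeeping you correctly flag at the end.
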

\begin{proof} Suppose $n\geq 3$ is an odd number. 
Then
\begin{equation}\label{expehk}e_{HK}(R_{p, n+1}) = 
\int_0^{\infty}f_{R_p, n+1}(x)dx 
= \int_0^{\infty}f^{\infty}_{R_{n+1}}(x)dx +
 \int_{\frac{1}{2}-\frac{n-2}{2p}}^{\frac{1}{2}+\frac{(n-2)}{2p}}
\mmu^{(p)}_{n_0}(x)dx,\end{equation}
 where the first equality follows from Theorem~1.1 of [T1] and the second from  
Proposition~\ref{l4}. Also by Proposition~\ref{l4},   the function 
$\mmu^{(p)}_{n_0}$ is a  real valued continuous function on the 
interval $[0, 1)$. Hence  by Lemmas~\ref{disjoint} and~\ref{l1c}  

\begin{equation}\label{expehk1}\int^{\frac{1}{2}+\frac{(n-2)}{2p}}_{\frac{1}{2}-\frac{n-2}{2p}}\mmu^{(p)}_{n_0}(x)dx = 
\sum_{{\underline n}\in \cup_{l\geq 1}\sM^l}\int_{I^{(p)}_{\underline n}}
\mmu^{(p)}_{n_0}(x)dx + 
\sum_{{\underline n}\in \cup_{l\geq 1}\sM^l} \int_{J^{(p)}_{\underline n}}\mmu^{(p)}_{n_0}(x)dx.\end{equation}

Similarly, when $n\geq 4$ is even then   
\begin{multline}\label{expehk2}
e_{HK}(R_{p, n+1})  = \displaystyle{  \int_0^{\infty}
f^{\infty}_{R_{n+1}}(x)dx  + 
\int_{\frac{1}{2}-\frac{(n-2)}{2p}}^1\mmu^{(p)}_{n_0-1}(x)dx+
 \int_0^{\frac{(n-2)}{2p}}
\mmu^{(p)}_{n_0+1}(x)dx},\\
 =  \displaystyle{\int_0^{\infty}
f^{\infty}_{R_{n+1}}(x)dx  + \sum_{{\underline n}\in 
{\tilde \sM}_1}\int_{{\tilde I}^{(p)}_{\underline n}}
\mmu^{(p)}_{n_0-1}(x)dx+\sum_{{\underline n}\in {\tilde \sM}_0}\int_{{\tilde I}^{(p)}_{\underline n}}
\mmu^{(p)}_{n_0+1}(x)dx},\end{multline}

where the last equality follows  by  Lemmas~\ref{disjoint} and \ref{emeasure}.
Hence theorem follows by  Lemma~\ref{stin}.
\end{proof}

In fact for $p\gg 0$, the HK multiplicity $e_{HK}(R_{p, n+1})$ can be expressed 
as a quotient of two rational polynomials.
For this first we  recall the Cofactor Formula for the Inverse of a Matrix.

Let $R$ be  a commutative ring with $1$.
$$\mbox{Let}\quad {\B}\in M_{n+2}(R)\quad\mbox{and let}\quad {\B_c} = 
{[(-1)^{i+j}\det \B_{ji}]}_{i,j}
 \in M_{n+2}(R),$$ 
where ${\B}_{ij}\in M_{n+1}(R)$ 
is obtained from $\B$ by 
deleting $i^{th}$ row and $j^{th}$ column.
 Then ${\B}{\B_c} = {\B_c}{\B} = (\det {\B}){\I}_{n+2}$. 

In particular $\det {\B}$ is a unit in $R$ 
if and only if ${\B}$ is unit in $M_{n+2}(R)$ and in that case we have 
${\B}^{-1} = \frac{1}{\det {\B}}\cdot {\B_c}\in M_{n+2}(R)$.

\begin{notations}\label{nodd} 
Here  $F_i(t)$ and $G_i(t)$ are polynomials in $\Q[t]$ as in Proposition~\ref{lp}, and
 ${\HH}^{(n_i)}(t)\in M_{n+2}(\Q[t])$ 
as in Definition~\ref{polye}.
We define 
$${\B}(t) =  t\cdot {\HH}^{(0)}(t) + t\cdot{\HH}^{(1)}(t)+\cdots + 
t\cdot{\HH}^{(n-3)}(t),$$ 

Also for any matrix $\B$, we   will denote  
$\frac{1}{\det {\B}}\cdot {\B_c}$ by  $\frac{\B_c}{\det {\B}}$.
\end{notations}

\begin{thm}\label{ehkexpr} Let $n\geq 3$ be an odd integer then let $p\geq 5$ and  
$p> 2^{\lfloor n/2\rfloor}(n-2)$. Then
\begin{multline*}
e_{HK}(R_{p, n+1}) = (1+m_{n+1})
+ (n+2)^{th}~~\mbox{entry of the matrix}\\\
\bigl[F_0(t)+G_0(t), \ldots, 
F_{n+1}(t)+G_{n+1}(t)\bigr]_{1\times n+2}\cdot 
{\B}(t)\cdot \frac{[\I_{n+2}-\B(t)]_c}{\det ({\I}_{n+2}-{\B}(t))}\Bigr\rvert_{t=1/p},
\end{multline*}
where $[\I_{n+2}-{\B}(t)]_c$ is the cofactor matrix of ${\I}_{n+2}-{\B}(t)$.
\end{thm}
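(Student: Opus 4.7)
The plan is to recognize $e_{HK}(R_{p,n+1})-(1+m_{n+1})$ as a matrix-valued geometric series whose closed form is precisely the cofactor expression in the theorem. I begin with the identity established in the proof of Theorem~\ref{pinfty}, namely
\begin{equation*}
e_{HK}(R_{p,n+1}) = \int_0^\infty f^\infty_{R_{n+1}}(x)\,dx + \int_{\tfrac{1}{2}-\tfrac{n-2}{2p}}^{\tfrac{1}{2}+\tfrac{n-2}{2p}}\mmu^{(p)}_{n_0}(x)\,dx,
\end{equation*}
and observe that the first integral equals $1+m_{n+1}$ by the Gessel--Monsky limit (as already used in the proof of Theorem~\ref{t1}). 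Thus everything reduces to computing the difficult-range integral of $\mmu^{(p)}_{n_0}$ in closed form.

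Using Lemma~\ref{l1c}, the difficult range is almost covered by the disjoint family $\{I^{(p)}_{\underline n}, J^{(p)}_{\underline n}\}_{\underline n\in\bigcup_l\sM^l}$, and because $\mmu^{(p)}_{n_0}$ is bounded and continuous (Lemma~\ref{cont}), the integral is the absolutely convergent sum of integrals over all these subintervals. I would then apply Lemma~\ref{integrals}: reading off the last ($1$-indexed, position $n+2$) component of~(\ref{pexe2}) and~(\ref{pexe4}), the contribution from $I^{(p)}_{(n_1,\ldots,n_l)}\cup J^{(p)}_{(n_1,\ldots,n_l)}$ equals the $(n+2)$-nd entry of
\begin{equation*}
\bigl[F_0(t)+G_0(t),\ldots,F_{n+1}(t)+G_{n+1}(t)\bigr]\cdot(t\HH^{(n_l)}(t))\cdots(t\HH^{(n_1)}(t))\Bigr\rvert_{t=1/p}.
\end{equation*}
Summing over all $l\ge 1$ and $\underline n\in\sM^l$, the noncommutative binomial identity
\begin{equation*}
\sum_{(n_1,\ldots,n_l)\in\sM^l}(t\HH^{(n_l)}(t))\cdots(t\HH^{(n_1)}(t))=\B(t)^l,
\end{equation*}
which holds because $\B(t)=\sum_{i=0}^{n-3}t\HH^{(i)}(t)$, collapses the double sum into the formal series $[F+G]\cdot\sum_{l\ge1}\B(t)^l$. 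Identifying $\sum_{l\ge1}\B(t)^l=\B(t)(\I_{n+2}-\B(t))^{-1}$ and expanding the inverse via the cofactor formula yields the claimed expression.

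The main technical obstacle is verifying that $\det(\I_{n+2}-\B(1/p))\ne 0$, equivalently $\rho(\B(1/p))<1$, so that the geometric-series rearrangement is legitimate and the cofactor form is a genuine inverse. Entrywise convergence of $\sum_l\B(t)^l$ at $t=1/p$ is automatic from the finiteness of $\int\mmu^{(p)}_{n_0}$, but to promote this to matrix invertibility I expect to use the rank identity $\sum_t\nu^s_{-t}(a)+\lambda_0\sum_t\mu^s_{-t}(a)=p^{sn}$ (cf.\ the end of the proof of Lemma~\ref{cont}) applied to the rows of each $t\HH^{(n_i)}(1/p)$. This should give an operator-norm bound on $\B(1/p)$ essentially of the form $\lambda_0(n-2)/p$, and the hypothesis $p>2^{\lfloor n/2\rfloor}(n-2)=\lambda_0(n-2)$ then ensures $\rho(\B(1/p))\le\|\B(1/p)\|_\infty<1$. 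This gives invertibility of $\I_{n+2}-\B(1/p)$ and validates the Neumann-series identification, completing the argument.
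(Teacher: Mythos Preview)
Your proposal is correct and follows essentially the same route as the paper's proof: both start from the decomposition \eqref{expehk}--\eqref{expehk1}, apply Lemma~\ref{integrals} to collapse the sum over $\sM^l$ into $\B(t)^l$, and then prove invertibility of $\I_{n+2}-\B(1/p)$ by bounding the operator norm of $\B(1/p)$ using the rank identities for $F_*(\sO(m))$ and $F_*(\sS(m))$, which give row sums at most $2^{\lfloor n/2\rfloor}p^n$ and hence $\|\B(1/p)\|\le 2^{\lfloor n/2\rfloor}(n-2)/p<1$. The paper carries out the norm bound in the $\ell^1$ vector norm acting from the left (equivalently your $\|\cdot\|_\infty$ on matrices), exactly as you anticipate.
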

\begin{proof}By (\ref{expehk}) and (\ref{expehk1})
$$e_{HK}(R_{p, n+1}) 
= \int_0^{\infty}f^{\infty}_{R_{n+1}}(x)dx + 
\sum_{{\underline n}\in \cup_{l\geq 1}\sM^l}\int_{I^{(p)}_{\underline n}}
\mmu^{(p)}_{n_0}(x)dx + 
\sum_{{\underline n}\in \cup_{l\geq 1}\sM^l} 
\int_{J^{(p)}_{\underline n}}\mmu^{(p)}_{n_0}(x)dx.$$

 If $p\geq 3n-4$ then by Lemma~\ref{integrals}~(\ref{pexe2})

\begin{multline*}
\sum_{{\underline n}\in \sM^l}\int_{I^{(p)}_{\underline n}}
\mmu^{(p)}_{n_0}(x)dx = 
(n+2)^{th}~~\mbox{entry of}~~ \\\
\left[F_0(t), \ldots, 
F_{n+1}(t)\right]_{1\times n+2}\cdot 
\left(t\cdot{\HH}^{(0)}(t) + t\cdot{\HH}^{(1)}(t)+\cdots + 
t\cdot{\HH}^{(n-3)}(t)\right)^l
\Bigr\rvert_{t= 1/{p}}\end{multline*}
Therefore
$$\sum_{{\underline n}\in \cup_{l\geq 1}\sM^l}\int_{I^{(p)}_{\underline n}}
\mmu^{(p)}_{n_0}(x)dx = 
(n+2)^{th}~~\mbox{entry of}~~ \left[F_0(t), \ldots, 
F_{n+1}(t)\right]_{1\times n+2}\cdot \sum_{l\geq 1}{\B}(t)^l\Bigr\rvert_{t= 
1/{p}}.$$

Similarly,  by Lemma~\ref{integrals}~(\ref{pexe4})
$$\sum_{{\underline n}\in \cup_{l\geq 1}\sM^l}\int_{J^{(p)}_{\underline n}}
\mmu^{(p)}_{n_0}(x)dx = 
(n+2)^{th}~~\mbox{entry of}~~ \left[G_0(t), \ldots, 
G_{n+1}(t)\right]_{1\times n+2}\cdot \sum_{l\geq 1}{\B}(t)^l\Bigr\rvert_{t= 
1/{p}}.$$ 

Hence to prove the theorem it only remains to  prove the following 

\vspace{5pt}

\noindent{{\bf Claim}}.\quad $\det(\I_{n+2}-\B(1/p))\neq 0$ if 
$p> 2^{\lfloor n/2\rfloor}(n-2)$ and $p\geq 5$.
\vspace{5pt}

\noindent{Proof of the claim}:\quad We fix a prime $p$ as in the claim, and 
 consider the $l^1$ norm on $\Q^{n+2}$, that is 
for $v = (v_0, \ldots, v_{n+1}) \in \Q^{n+2}$ the $l^1$ norm of $v$ is given by  
$\|v\| = |v_0|+\cdots |v_{n+1}|$.

Note the-t if the claim does not hold true then there exists 
a nonzero vector $v\in \Q^{n+2}$ such that
$v\cdot (\I_{n+2}-\B(1/p)) = 0$, which implies  $\|v\| = \|v\cdot \B(1/p)\|$.

Now we show that  $l^1$ norm of the matrix $\B(1/p)$ is $< 1$, that is   
$\|v\cdot \B(1/p)\| <\|v\|$, for any nonzero vector $v$.
We recall
$$\B(1/p) = {1}/{p}\sum_{i=0}^{n-3}\HH^{(n_i)}(1/p) = {1}/{p}
\sum_{i=0}^{n-3}{\B^{(p)}_{n_i}}/{p^n}.$$
This gives 
 $$\|v\cdot \B(1/p)\| \leq \frac{1}{p}\sum_{i=0}^{n-3}\|v\cdot 
\frac{\B^{(p)}_{n_i}}{p^n}\| \leq\frac{1}{p}\sum_{i=0}^{n-3}
\sum_{j=0}^{n+1}|v_j|\frac{b_{j0}^{(n_i)}(p)+\cdots + b_{j,n+1}^{(n_i)}(p)}{p^n},$$ 
where $(b_{j0}^{(n_i)}(p), b_{j1}^{(n_i)}(p), \ldots, b_{j,n+1}^{(n_i)}(p))$ 
denotes the $j^{th}$ row of the  matrix $\B^{(p)}_{n_i}$ consisting of 
nonnegative integers.

On the other hand, for any integer $m$,   $\rank(F_*(\sO(m))) = p^n$ and 
$\rank(F_*(\sS(m))) = (2^{\lfloor n/2\rfloor})p^n$, which implies, following 
Notations~\ref{Bmat}, that

$$b_{j0}^{(n_i)}(p) + b_{j1}^{(n_i)}(p)+
\cdots + b_{j,n+1}^{(n_i)}(p)
\leq 2^{\lfloor n/2\rfloor} p^n~~~ \text{for every}~~~ 0\leq j\leq n+1.$$

Therefore 
$$\|v\cdot \B(1/p)\| \leq \frac{1}{p}\sum_{i=0}^{n-3}\sum_{j=0}^{n+1}|v_j|
2^{\lfloor n/2\rfloor} 
\leq \frac{2^{\lfloor n/2\rfloor}(n-2)}{p}\|v\| < \|v\|.$$
\end{proof}

\begin{notations}\label{neven}
Here  ${\tilde F}_i(t)$ are as in Proposition~\ref{lp} and 
where, for ${\tilde {\HH}}^{(n_i)}(t)$ as in Definition~\ref{polye}, we define
$$\begin{array}{lcl}
{\C}(t) & =  & t\cdot{\tilde {\HH}}^{(0)}(t) + {t}\cdot{\tilde {\HH}}^{(1)}(t)
+\cdots +
t\cdot{\tilde {\HH}}^{(n-3)}(t)\\ 
{\C}_1(t) & = &  t\cdot{\tilde {\HH}}^{(0)}(t) + 
{t}\cdot{\tilde {\HH}}^{(1)}(t)+\cdots +
t\cdot{\tilde {\HH}}^{(\frac{n}{2}-2)}(t). \end{array}$$
\end{notations}

\begin{thm}\label{eehkexpr}
Let $n\geq 4$ be an even integer  then for $p > 2^{\lfloor n/2\rfloor} (n-2)$
\begin{multline*}
e_{HK}(R_{p, n+1}) = (1+m_{n+1})\\ 
+ (n+1)^{th}~~\mbox{entry of}~~ 
\left[{\tilde F}_0(t), \ldots, 
{\tilde F}_{n+2}(t)\right]_{1\times n+3}\cdot 
(\C(t)-\C_1(t))\cdot\frac{[\I_{n+2}-\C(t)]_c}{\det ({\I}_{n+2}-{\C}(t))}\Bigr\rvert_{t=1/p}\\\
+ (n+3)^{th}~~\mbox{entry of}~~
\left[{\tilde F}_0(t), \ldots, 
{\tilde F}_{n+2}(t)\right]_{1\times n+3}\cdot 
{\C}_1(t)\cdot\frac{[\I_{n+2}-\C(t)]_c}{\det ({\I}_{n+2}-{\C}(t))}\Bigr\rvert_{t=1/p}.
\end{multline*}
\end{thm}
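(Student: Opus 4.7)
The plan is to mimic the proof of Theorem~\ref{ehkexpr}, but with two geometric series of matrices in place of one (one for each connected component of the difficult range $[0,(n-2)/2p)\cup[1-(n-2)/2p,1)$). As a starting point I would invoke equation~(\ref{expehk2}), which already provides the decomposition
$$e_{HK}(R_{p,n+1}) = \int_0^{\infty}f^{\infty}_{R_{n+1}}(x)\,dx + \sum_{\underline n\in{\tilde \sM}_1}\int_{{\tilde I}^{(p)}_{\underline n}}\mmu^{(p)}_{n_0-1}(x)\,dx + \sum_{\underline n\in{\tilde \sM}_0}\int_{{\tilde I}^{(p)}_{\underline n}}\mmu^{(p)}_{n_0+1}(x)\,dx.$$
The first integral equals $1+m_{n+1}$ by the Gessel--Monsky limit combined with dominated convergence, exactly as in the proof of Theorem~\ref{t1}. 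Thus the task reduces to matching the two residual sums with the two cofactor terms in the claimed formula.

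Using the relabeling $\mmu^{(p)}_{n_0-1}=\eell^{(p)}_n$ and $\mmu^{(p)}_{n_0+1}=\eell^{(p)}_{n+2}$ from Notations~\ref{nl}, I would apply Lemma~\ref{integrals}~(\ref{pexe5}) to each individual integral, which expresses $\int_{{\tilde I}^{(p)}_{\underline n}}\mmu^{(p)}_{n_0-1}$ and $\int_{{\tilde I}^{(p)}_{\underline n}}\mmu^{(p)}_{n_0+1}$ as the $(n+1)$-th and $(n+3)$-th entries (in one-based indexing) of
$$[\tilde F_0(t),\ldots,\tilde F_{n+2}(t)]\cdot(t\tilde{\HH}^{(n_l)}(t))\cdots(t\tilde{\HH}^{(n_1)}(t))\bigr\rvert_{t=1/p}.$$
Since the constraint ${\tilde \sM}_0$ (resp.\ ${\tilde \sM}_1$) restricts only the first coordinate $n_1\in\{0,\ldots,n/2-2\}$ (resp.\ $\{n/2-1,\ldots,n-3\}$), the partial sum of the corresponding factor $t\tilde{\HH}^{(n_1)}(t)$ is precisely $\C_1(t)$ (resp.\ $\C(t)-\C_1(t)$), while each of the remaining $n_2,\ldots,n_l$ ranges freely over $\sM$ and contributes a full factor $\C(t)$. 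Assembling these partial sums for each $l$ and summing over $l\ge 1$ produces the matrix geometric series $\sum_{l\ge 0}\C(t)^l$, which collapses to $(\I_{n+3}-\C(t))^{-1}=[\I_{n+3}-\C(t)]_c/\det(\I_{n+3}-\C(t))$; this delivers the claimed formula once the geometric series is validated.

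The main obstacle, and the last remaining step, is to prove that $\det(\I_{n+3}-\C(1/p))\neq 0$ and that $\sum_{l\ge 0}\C(1/p)^l$ converges term-by-term for $p>2^{\lfloor n/2\rfloor}(n-2)$. I would mirror the norm argument used in the proof of Theorem~\ref{ehkexpr}: equip $\Q^{n+3}$ with the $\ell^1$-norm and bound the induced operator norm of $\C(1/p)$. Each row of $\tilde\B^{(p)}_{n_i}$ records the multiplicities in a single decomposition of a Frobenius push-forward of $\sO$ or $\sS$; since $\rank F^s_*\sO(m)=p^{sn}$ and $\rank F^s_*\sS(m)=2^{\lfloor n/2\rfloor}p^{sn}$, every row sum of $\tilde\B^{(p)}_{n_i}/p^n$ is bounded by $2^{\lfloor n/2\rfloor}$. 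Summing the $(n-2)$ matrices and multiplying by $1/p$ gives
$$\|\C(1/p)\|_{\ell^1}\le \frac{(n-2)\,2^{\lfloor n/2\rfloor}}{p}<1$$
under the stated hypothesis, which forces invertibility of $\I_{n+3}-\C(1/p)$ and yields absolute convergence of the series. The same norm bound also justifies the term-by-term rearrangement of the double sum over $l$ and $\underline n$, completing the argument.
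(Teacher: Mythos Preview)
Your proposal is correct and follows essentially the same approach as the paper: you invoke (\ref{expehk2}), apply Lemma~\ref{integrals} to each integral over ${\tilde I}^{(p)}_{\underline n}$, collapse the sums over ${\tilde\sM}_0$ and ${\tilde\sM}_1$ into products involving $\C_1(t)$, $\C(t)-\C_1(t)$, and the geometric series $\sum_{l\ge 0}\C(t)^l$, and then establish invertibility of $\I-\C(1/p)$ via the same $\ell^1$ row-sum bound based on $\rank F_*\sO(m)=p^n$ and $\rank F_*\sS(m)=2^{\lfloor n/2\rfloor}p^n$. The only cosmetic discrepancies are notational (you write $\I_{n+3}$ and ${\tilde\B}^{(p)}_{n_i}$ where the paper writes $\I_{n+2}$ and $\C^{(p)}_{n_i}$; the matrices are indeed $(n+3)\times(n+3)$).
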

\begin{proof} By (\ref{expehk2})
$$e_{HK}(R_{p, n+1})  =  \int_0^{\infty}
f^{\infty}_{R_{n+1}}(x)dx +  \sum_{{\underline n}\in 
{\tilde \sM}_1}\int_{{\tilde I}^{(p)}_{\underline n}}
\mmu^{(p)}_{n_0-1}(x)dx+\sum_{{\underline n}\in {\tilde \sM}_0}
\int_{{\tilde I}^{(p)}_{\underline n}}
\mmu^{(p)}_{n_0+1}(x)dx,$$
where  by Definition~\ref{deven} we have 
 ${\tilde \sM}_0 = \{(n_1, \ldots, n_l)\in \sM^l\mid  n_1 \leq 
\tfrac{n}{2}-2\}_{l\in \N}$ and 
${\tilde \sM}_1 = \{(n_1, \ldots, n_l)\in \sM^l \mid  n_1 
\geq \tfrac{n}{2}-1\}_{l\in \N}$.
Therefore, by  Lemma~\ref{integrals}~(\ref{pexe4})
\begin{multline*}\sum_{{\underline n}\in 
{\tilde \sM}_1}\int_{{\tilde I}^{(p)}_{\underline n}}
\mmu^{(p)}_{n_0-1}(x)dx = 
 (n+1)^{th}~~\mbox{entry of}\\ 
\left[{\tilde F}_0(t), \ldots, 
{\tilde F}_{n+2}(t)\right]_{1\times n+3}\cdot 
\sum_{l\geq 0}{\C}(t)^l\cdot t\cdot\left({\tilde {\HH}}^{(\tfrac{n}{2}-1)}(t) + 
\cdots + {\tilde {\HH}}^{(n-3)}(t)\right)\Bigr\rvert_{t= 1/{p}}.\end{multline*}
and 
\begin{multline*}\sum_{{\underline n}\in 
{\tilde \sM}_0}\int_{{\tilde I}^{(p)}_{\underline n}}
\mmu^{(p)}_{n_0+1}(x)dx = 
 (n+3)^{th}~~\mbox{entry of}\\
 \left[{\tilde F}_0(t), \ldots, 
{\tilde F}_{n+2}(t)\right]_{1\times n+3}\cdot 
\sum_{l\geq 0}{\C}(t)^l\cdot t\cdot \left({\tilde {\HH}}^{(0)}(t) + 
{\tilde {\HH}}^{(1)}(t)+\cdots +
{\tilde {\HH}}^{(\frac{n}{2}-2)}(t)\right)\Bigr\rvert_{t= 1/{p}}.\end{multline*}

Now, again we can show that $\det(\I_{n+3}-\C(1/p))\neq 0$ as, following 
Notations~\ref{Bmat}, 
for any nonzero vector $v\in \Q^{n+3}$
 $$\|v\cdot \C\bigl(\frac{1}{p}\bigr)\| \leq \sum_{i=0}^{n-3}\|v\cdot 
\frac{\C^{(p)}_{n_i}}{p^{n+1}}\| \leq \sum_{i=0}^{n-3}
\sum_{j=0}^{n+2}|v_j|\frac{c_{j0}^{(n_i)}(p)+\cdots + c_{j,n+2}^{(n_i)}(p)}{p^{n+1}}
\leq \frac{2^{\lfloor {n}/{2}\rfloor}(n-2)}{p}\|v\|,$$ 
where $(c_{j0}^{(n_i)}(p), c_{j1}^{(n_i)}(p), \ldots, c_{j,n+2}^{(n_i)}(p))$ 
denotes the $j^{th}$ row of the  matrix $\C^{(p)}_{n_i}$ and therefore 
 $$c_{j0}^{(n_i)}(p)+ c_{j1}^{(n_i)}(p)+ \cdots + c_{j,n+2}^{(n_i)}(p)) \leq 
2^{\lfloor {n}/{2}\rfloor} p^n, \quad\mbox{for}\quad 0\leq j\leq n+2.$$
This proves the theorem.\end{proof}

\begin{rmk}\label{rodd}In the above proof we have also proved the following.
\begin{enumerate}
\item[] If  $n\geq 3$ is an odd integer and $p\geq 3n-4$ then 

\begin{multline*}e_{HK}(R_{p, n+1})  =  (1+m_{n+1})
+(n+2)^{th}~~\mbox{entry of}~~ \\\
  \left[F_0(t)+G_0(t), \ldots, F_{n+1}(t)+G_{n+1}(t)\right]_{1\times n+2}
\cdot \sum_{l\geq 1}{\B}(t)^l \Bigr\rvert_{t=1/p},\end{multline*}

\item[]
If $n\geq 4$ is an even integer and $p\geq (3n-4)/2$ then 

\begin{multline*}
e_{HK}(R_{p, n+1})  = 1+m_{n+1}+ (n+1)^{th}~~\mbox{entry of}~
\Bigl[{\tilde F}_0(t), \ldots, {\tilde F}_{n+2}(t)\Bigr]
\cdot {\C}_1(t)\cdot
\sum_{l\geq 0}{\C}(t)^l\Bigr\rvert_{t=1/p}\\\
+ (n+3)^{th}~~\mbox{entry of}~
\Bigl[{\tilde F}_0(t), \ldots, {\tilde F}_{n+2}(t)\Bigr]
\cdot ({\C}(t)-{\C}_1(t))\cdot
\sum_{l\geq 0}{\C}(t)^l\Bigr\rvert_{t=1/p}.\end{multline*}
\end{enumerate}
\end{rmk}

\section{The HK density function for $R_{p, 4}$}

\begin{notations}\label{n4}
Let $n =3$ and let $p\geq 5$ be a prime. Here $n_0=1$.

Following the notations as in Definition~\ref{nch}
we describe the almost cover  of the difficult range  
$[\tfrac{1}{2}-\tfrac{1}{2p},~~~\tfrac{1}{2})\cup 
[\tfrac{1}{2},~~~\tfrac{1}{2}+\tfrac{1}{2p})$: Here $\sM = \{0\}$
and  $m_0 = \frac{p-1}{2}$.
Let $I^{(p)}(l) = I^{(p)}_{(0, \ldots l~\mbox{times})}$ 
and $J^{(p)}(l) = J^{(p)}_{(0, \ldots l~\mbox{times})}$.
Then 
$$I^{(p)}(l) = 
[\sum_{i=1}^l\tfrac{m_0}{p^i}, \quad \sum_{i=1}^{l+1}
\tfrac{m_0}{p^i}) = \left[\tfrac{1}{2}-\tfrac{1}{2p^l},\quad
\tfrac{1}{2}-\tfrac{1}{2p^{l+1}}\right)$$
and 
$$J^{(p)}(l) = \left[\tfrac{m_0+1}{p^{l+1}}+\sum_{i=1}^l\tfrac{m_0}{p^i}, \quad 
\sum_{i=1}^{l}\tfrac{m_0}{p^i}+\tfrac{p}{p^{l+1}}\right) = 
\left[\tfrac{1}{2}+\tfrac{1}{2p^{l+1}},\quad
\tfrac{1}{2}+\tfrac{1}{2p^{l}}\right).$$

Hence it follows that  
$\{I^{(p)}(l), J^{(p)}(l)\}_{l\in \N}$ cover the difficult range 
everywhere  except at the point $\tfrac{1}{2}$.
Therefore
 $$[\tfrac{1}{2} - \tfrac{1}{2p},~~\tfrac{1}{2}+ \tfrac{1}{2p}) 
= \bigcup_{l\in \N}I^{(p)}(l) 
\cup \{\tfrac{1}{2}\}\cup\bigcup_{l\in \N}J^{(p)}(l)$$
\end{notations}

\vspace{5pt}

For $0\leq a <p$ we have 
$$F_*(\sO(a)) = \sO^{\nu^1_{0}(a)}\oplus \sO(-1)^{\nu^1_{-1}(a)}
\oplus\sO(-2)^{\nu^1_{-2}(a)}\oplus\sS^{\mu^1_{0}(a)}\oplus\sS(-1)^{\mu^1_{-1}(a)}$$
and 
$$F_*(\sS(a)) = \sO^{{\tilde \nu}^1_{0}(a)}\oplus \sO(-1)^{{\tilde \nu}^1_{-1}(a)}
\oplus\sO(-2)^{{\tilde \nu}^1_{-2}(a)}\oplus\sS^{{\tilde \mu}^1_{0}(a)}
\oplus\sS(-1)^{{\tilde \mu}^1_{-1}(a)}.$$

Following Notations~\ref{n2} and \ref{n2s} we have 
$$L_a= \frac{1}{6}(2a^3+9a^2+13a+6)\quad\mbox{and}\quad 
{\tilde L_a} = \frac{2}{3}(a^3+3a^2+2a).$$

By Lemma~\ref{*c1}, we get 
$$\begin{array}{lcl}
\mu_{-1}^1(a) & = & \frac{1}{4}\left[L_{a+2p}-L_1L_{a+p}+
(L_1^2-L_2)L_a -L_{p-a-3}\right]\neq  0, \quad\mbox{if}\quad 
a\leq m_0-2\\
 & = & 0\quad\mbox{otherwise}\\\
\mu_{0}^1(a) & = & -\frac{1}{4}\left[L_{a+2p}-L_1L_{a+p}+
(L_1^2-L_2)L_a - L_{p-a-3}\right]\neq 0, 
\quad\mbox{if}\quad 
m_0\leq a\\
 & = & 0\quad\mbox{otherwise}.\end{array}$$

$$\begin{array}{lcl}
{\tilde \mu}_{-1}^1(a) & = & \frac{1}{4}\left[{\tilde L}_{a+2p}-
L_1{\tilde L}_{a+p}+(L_1^2-L_2){\tilde L}_a -{\tilde L}_{p-a-3}\right]\neq 0, 
\quad\mbox{if}\quad 
a\leq m_0-1\\
 & = & 0\quad\mbox{otherwise}\\\
{\tilde \mu}_{0}^1(a) & = & \frac{-1}{4}\left[{\tilde L}_{a+2p}-
L_1{\tilde L}_{a+p}+(L_1^2-L_2){\tilde L}_a - {\tilde L}_{p-a-3}\right]\neq 0, 
\quad\mbox{if}\quad 
m_0 \leq a\\
 & = & 0\quad\mbox{otherwise}.\end{array}$$

In particular 

\begin{equation}\label{emu0}\mu_0 := \mu^1_0(m_0) 
= \frac{1}{4}\left[L_{\tfrac{p-5}{2}}- L_{\tfrac{5p-1}{2}}
+ L_1L_{\tfrac{3p-1}{2}} - (L_1^2-L_2)Y_{\tfrac{p-1}{2}} \right]\neq 0,\end{equation}

 \begin{equation}\label{emu1}\mu_{-1} := \mu^1_{-1}(m_0-2) = \frac{1}{4}
\left[L_{\tfrac{5p-5}{2}}
-L_1L_{\tfrac{3p-5}{2}} + (L_1^2-L_2)Y_{\tfrac{p-5}{2}} -L_{\tfrac{p-1}{2}}\right]
\neq 0 \end{equation}

\begin{equation}\label{emut0}{\overline \mu_0} := {\tilde \mu}^1_0(m_0) 
= \frac{1}{4}\left[{\tilde L}_{\tfrac{p-5}{2}}- {\tilde L}_{\tfrac{5p-1}{2}}
+ L_1{\tilde L}_{\tfrac{3p-1}{2}} - (L_1^2-L_2){\tilde L}_{\tfrac{p-1}{2}} \right]
\neq 0,\end{equation}

 \begin{equation}\label{emut1}{\overline  \mu_{-1}} := {\tilde \mu^1}_{-1}(m_0-1) = 
\frac{1}{4}\left[{\tilde L}_{\tfrac{5p-3}{2}}
-L_1{\tilde L}_{\tfrac{3p-3}{2}} + (L_1^2-L_2){\tilde L}_{\tfrac{p-3}{2}} 
-{\tilde L}_{\tfrac{p-1}{2}}\right]\neq 0.\end{equation}
Hence $\mu_0$, $\mu_{-1}$, ${\overline  \mu_{0}}$ and ${\overline  \mu_{-1}}$ 
are all positive integers.

\begin{thm}\label{t3}Let $k$ be a perfect field of characteristic $p\geq 5$ and let   
$$R_{p, 4} = \frac{k[x_0, x_1, x_2, x_3, x_4]}{(x_0^2+x_1^2+ x_2^2 + x_3^2 + x_4^2)}$$ 
and let  $\mu_0$, ${\bar \mu}_0$, $\mu_{-1}$ and ${\bar \mu}_{-1}$ be the  
 positive integers  given as above. Then 
$$f_{R_{p, 4}, {\bf m}}(x)  = \begin{cases}x^3/3 &  \quad\mbox{for}\quad  
0\leq x < 1\\\\
 x^3/3 -5/3(x-1)^3 & \quad\mbox{for}\quad  1\leq x < 2\\\\
 \frac{1}{3}x^3 -\frac{5}{3}(x-1)^3+\frac{11}{3}(x-2)^3 & \quad\mbox{for}\quad  
2\leq x < 2+\tfrac{1}{2}-\tfrac{1}{2p}\\\\
\frac{(3-x)^3}{3} & \quad\mbox{for}\quad 
2+\tfrac{1}{2}+\tfrac{1}{2p} \leq x <3\\\\
0 & \quad\mbox{for}\quad x\geq 3,
\end{cases}$$
and for $x\in 2+ \left[\tfrac{1}{2}-\tfrac{1}{2p},\quad
\tfrac{1}{2}+\tfrac{1}{2p}\right)$

\begin{multline*}
f_{R_{p, 4}, {\bf m}}(x) =  \tfrac{1}{3}x^3 -\tfrac{5}{3}(x-1)^3+
\tfrac{11}{3}(x-2)^3
+ \tfrac{4}{3} \sum_{i = 1}^{j} \left[x-2-\tfrac{1}{2}+\tfrac{1}{2p^i}\right]^3
(\mu_0({\overline {\mu}}_0)^{i-1})\\
 \mbox{if}~~~~ x\in 2+ \left[\tfrac{1}{2}-\tfrac{1}{2p^j},\quad
\tfrac{1}{2}-\tfrac{1}{2p^{j+1}}\right)~~~\mbox{ and}~~~j\geq 1\end{multline*}
and 
\begin{multline*}f_{R_{p, 4}, {\bf m}}(x)   =   
 \tfrac{(3-x)^3}{3}+ 
\tfrac{4}{3} \sum_{i = 1}^{j} \left[2+\tfrac{1}{2}+\tfrac{1}{2p^i}-x\right]^3
(\mu_{-1}{\overline {\mu_{-1}}}^{i-1}),\\
\mbox{if}\quad x\in 2+\left[\tfrac{1}{2}+\tfrac{1}{2p^{j+1}},\quad
\tfrac{1}{2}+\tfrac{1}{2p^{j}}\right)\quad\mbox{and}\quad  j\geq 1.\end{multline*}
\end{thm}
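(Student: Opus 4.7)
The proof combines Proposition~\ref{l4} (which gives $f_{R_{p,4}}$ as an explicit piecewise polynomial outside the difficult range) with Proposition~\ref{pex} (which describes $f$ inside the difficult range via matrix products), specializing to $n = 3$, where the index set $\sM = \{0\}$ is a singleton and thus the matrix products reduce to iterated powers of a single $5\times 5$ matrix.

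First I would compute the Hilbert polynomial $L_a = (2a+3)(a+2)(a+1)/6$, so $L_1 = 5$, $L_2 = 14$ and $L_1^2 - L_2 = 11$. Via Remark~\ref{r3} with $n=3$ and $n_0 = 1$, this yields
$${\bf Z}_0(x) = \tfrac{x^3}{3},\quad {\bf Z}_1(x) = \tfrac{x^3 - 5(x-1)^3}{3},\quad {\bf Z}_2(x) = \tfrac{x^3 - 5(x-1)^3 + 11(x-2)^3}{3},\quad {\bf Y}_2(x) = \tfrac{(3-x)^3}{3}.$$
Substituting into Proposition~\ref{l4}(2) immediately produces the claimed formulas on $[0,1)$, $[1,2)$, $[2, 2+\tfrac12-\tfrac{1}{2p})$, $[2+\tfrac12+\tfrac{1}{2p}, 3)$, and $[3,\infty)$. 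For the difficult range $2 + [\tfrac12 - \tfrac{1}{2p}, \tfrac12 + \tfrac{1}{2p})$, the two alternative expressions from Proposition~\ref{l4}(2) give $f(x) = {\bf Z}_2(x) + \mmu_0^{(p)}(x-2)$ on the left half and $f(x) = {\bf Y}_2(x) + \mmu_1^{(p)}(x-2)$ on the right half. By Lemma~\ref{l1c} and Notations~\ref{n4} (with $\sM = \{0\}$), this difficult range decomposes modulo the single point $\tfrac12$ into the nested intervals $I^{(p)}(l), J^{(p)}(l)$, each corresponding to the constant tuple $(0,\ldots,0)$ of length $l$, so Proposition~\ref{pex} expresses $\mmu_0^{(p)}$ on $I^{(p)}(l)$ and $\mmu_1^{(p)}$ on $J^{(p)}(l)$ (up to a factor $2\lambda_0 = 4$) as the appropriate entries of $[{\bf l}_0(\phi^{(p)}(y)),\ldots,{\bf l}_4(\phi^{(p)}(y))] \cdot (\B/p^3)^l$ and $[{\bf r}_0(\phi^{(p)}(y)),\ldots,{\bf r}_4(\phi^{(p)}(y))] \cdot (\B/p^3)^l$, where $\B$ records the bare Frobenius-decomposition multiplicities of $\sO(m_0 - j)$ and $\sS(m_0 - j)$ at $m_0 = (p-1)/2$.

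The explicit formulas (\ref{emu0})--(\ref{emut1}) derived from Lemma~\ref{l1} imply that the two spinor columns of $\B$ are extremely sparse: the only nonzero entries are $\B_{0,3} = \mu_0$, $\B_{3,3} = \bar\mu_0$, $\B_{2,4} = \mu_{-1}$, $\B_{4,4} = \bar\mu_{-1}$. This yields the linear recursions
$$(\B^l)_{k,3} = \mu_0\,(\B^{l-1})_{k,0} + \bar\mu_0\,(\B^{l-1})_{k,3},\qquad (\B^l)_{k,4} = \mu_{-1}\,(\B^{l-1})_{k,2} + \bar\mu_{-1}\,(\B^{l-1})_{k,4}.$$
Since ${\bf l}_3 \equiv 0$ and ${\bf r}_4 \equiv 0$ by Definition~\ref{5.4}, the Kronecker-delta boundary terms $\bar\mu_0^l \delta_{k,3}$ and $\bar\mu_{-1}^l \delta_{k,4}$ drop out when contracted against the base vector. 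Iterating the recursion and reinterpreting the inner sum $\sum_k {\bf l}_k(y_0)(\B^i)_{k,0}/p^{3i}$ as $\nnu_0^{(p)}(y_i)$ (via Proposition~\ref{pex} at level $i$) and then as $y_i^3/3$ (via Proposition~\ref{rc}(1)), where $y_i \in I^{(p)}(l-i)$ is the unique preimage with $\phi^{(p)\,\circ i}(y) = y_i$, an unfolding of the map $\phi^{(p)}$ gives $y_i^3/p^{3i} = (y - \tfrac12 + \tfrac{1}{2p^{l-i+1}})^3$. After reindexing and multiplying by $2\lambda_0 = 4$, this produces exactly $(4/3)\sum_{i=1}^l (y - \tfrac12 + \tfrac{1}{2p^i})^3 (\mu_0\bar\mu_0^{i-1})$; an identical argument on $J^{(p)}(l)$ using ${\bf r}_2(y_0) = (1-y_0)^3/3$ yields the analogous $(\mu_{-1},\bar\mu_{-1})$ expression with cubic factors $[2 + \tfrac12 + \tfrac{1}{2p^i} - x]^3$.

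The main technical obstacle is the bookkeeping of the iterated substitutions $y \mapsto y_i$ under repeated application of $\phi^{(p)}$, together with verification that the only matrix paths contributing to column 3 (resp.\ column 4) of $\B^l$ consist of exactly one line-bundle-to-spinor transition via $\mu_0$ (resp.\ $\mu_{-1}$) at some level $i$, followed by $l-i$ spinor self-loops via $\bar\mu_0$ (resp.\ $\bar\mu_{-1}$); all other paths vanish because of the sparsity of $\B$ combined with the identities ${\bf l}_3 \equiv 0$ and ${\bf r}_4 \equiv 0$. Once this path classification is in hand the summation telescopes cleanly into the finite sum stated in the theorem.
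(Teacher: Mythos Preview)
Your proposal is correct and follows the same route as the paper: compute the polynomials ${\bf Z}_0,{\bf Z}_1,{\bf Z}_2,{\bf Y}_2$ explicitly, invoke Proposition~\ref{l4} for the easy range, and on the difficult range iterate the single $5\times5$ matrix $\B_0^{(p)}$, using the sparsity of its two spinor columns (together with ${\bf l}_3\equiv 0$, ${\bf r}_4\equiv 0$) to collapse the matrix powers into the stated finite sums. The only difference is that the paper works at finite level $s$ via formula~(\ref{ff1})---so the intermediate row vectors are literally the rank tuples $[\nu_0^{s-i}(A_{s-i}),\ldots,\mu_{-1}^{s-i}(A_{s-i})]$---and passes to the limit only at the end, whereas you work directly with Proposition~\ref{pex} and reinterpret the partial products as $\nnu_0^{(p)}$ (resp.\ $\nnu_2^{(p)}$) evaluated at intermediate points; this is the same recursion viewed from the other side.
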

\begin{proof}
By Proposition~\ref{l4}, 
 for $n_0=1$ we get 
$$ f_{R_{p, 4}}(x)  = \begin{cases} {\bf Z}_{0}(x)   \quad\mbox{for}\quad 
 0\leq x < 1\\\\
{\bf  Z}_{1}(x) \quad\mbox{for}\quad  1\leq x < 2\\\\ 
 {\bf Z}_{2}(x) \quad\mbox{for}\quad  
2\leq x < 2+\tfrac{1}{2}-\tfrac{1}{2p}\\\\
 {\bf Z}_{2}(x) + 
\mmu^{(p)}_{0}(x-2)  \quad\mbox{for}\quad  
2+\tfrac{1}{2}-\tfrac{1}{2p}
\leq x < 2+\tfrac{1}{2} \\\\
 {\bf Y}_{2}(x) +\mmu^{(p)}_{1}(x-2)
\quad\mbox{for}\quad  
2+\tfrac{1}{2} \leq x <  2+\tfrac{1}{2}+\tfrac{1}{2p}\\\\
  {\bf Y}_{2}(x) 
\quad\mbox{for}\quad  
2+\tfrac{1}{2} + \tfrac{1}{2p}\leq x < 3\end{cases}$$
and $f_{R_{p,4}}(x) = 0$ otherwise.

Following Notations~\ref{n2} and Remark~\ref{r3} we get 
$${\bf Z}_0(x) = \frac{x^3}{3},\quad\quad 
{\bf Z}_{1}(x) = \frac{x^3}{3} -\frac{5}{3}(x-1)^3$$
$${\bf Z}_{2}(x) =  \frac{x^3}{3} -\frac{5}{3}(x-1)^3 +\frac{11}{3}(x-2)^3\quad
\mbox{and}\quad
{\bf Y}_{2}(x) = \frac{(3-x)^3}{3}.$$
So it remains to compute $\mmu^{(p)}_{0}(x)$ for $x\in I^{(p)}(j)$ and 
$\mmu^{(p)}_{1}(x)$ for $x\in J^{(p)}(j)$, for $j\geq 1$.

Here instead of using the formula as stated in Proposition~\ref{pex}, 
we used the formula~(\ref{ff1}) given in the proof of the proposition.

Now let  $x\in I^{(p)}(j)$ or $x\in J^{(p)}(j)$, then for $q\geq p^{j+1}$, we have the $p$-adic expansion
$$\lfloor xq\rfloor = a_0+a_1p+\cdots + a_{s-j-1}p^{s-j-1}+ m_0p^{s-j}+\cdots + 
m_0p^{s-1}.$$

Let   $A_{s-i} = a_0+a_1p+\cdots + a_{s-i-1}p^{s-i-1}$. Then 
$A_{s-j} = a_0+a_1p+\cdots + a_{s-j-1}p^{s-j-1}$, where 
$a_{s-j-1} < m_0$ if $x\in I^{(p)}(j)$ and 
$a_{s-j-1} > m_0$ if $x\in J^{(p)}(j)$

\vspace{5pt}

\noindent{\bf Claim}.\quad Let  $\mu_{0}$, $\mu_{-1}$, ${\overline \mu}_{0}$ and 
${\overline \mu}_{-1}$ be as given in the above notations.
Then 
\begin{enumerate}
\item  $x\in I^{(p)}(j)$ implies 
$$\mu^s_0(\lfloor xq\rfloor)  =   \nu_{0}^{s-1}(A_{s-1})(\mu_{0}) + 
\nu_{0}^{s-2}(A_{s-2})(\mu_0 {\overline {\mu_0}})
 +\cdots + \nu_{0}^{s-j}(A_{s-j})(\mu_0{\overline {\mu_0}}^{j-1}).$$
\item $x\in J^{(p)}(j)$ implies 
$$\mu_{-1}^s(\lfloor xq\rfloor)   = \nu_{-2}^{s-1}(A_{s-1})(\mu_{-1}) + 
\nu_{-2}^{s-2}(A_{s-2})(\mu_{-1} {\overline {\mu_{-1}}})
 +\cdots + 
\nu_{-2}^{s-j}(A_{s-j})(\mu_{-1}{\overline {\mu_{-1}}}^{j-1}).$$
\end{enumerate}
\vspace{5pt}

\noindent{\underline {Proof of the claim}}:\quad
By (\ref{ff1}) in the  proof of Proposition~\ref{pex} we have 

\begin{multline*}
{[\nu^s_{0}(a), \nu^s_{-1}(a), \nu^s_{-2}(a),
\mu^s_{0}(a),\mu^s_{-1}(a)]}_{1\times 5}\\
=  {[\nu^{s-j}_{0}(A_{s-j}), \cdots, \mu^{s-j}_{-1}(A_{s-j})]}_{1\times 5}\cdot 
[{\B}_0^{(p)}]^{j~\mbox{times}},\end{multline*}
where ${\B}_0^{(p)}$ is the matrix given by   
$${\B}_0^{(p)} = \left[\begin{matrix}
\nu_{0}(m_0) & \nu_{-1}(m_0)  & \nu_{-2}(m_0) & \mu_0 & 0\\\\ 
\nu_{0}(m_0-1) & \nu_{-1}(m_0-1) & \nu_{-2}(m_0-1) & 0 & 0\\\\
\nu_{0}(m_0-2) & \nu_{-1}(m_0-2)  & \nu_{-2}(m_0-2) & 0  & \mu_{-1} \\\\ 
{\tilde \nu_{0}}(m_0) & {\tilde \nu_{-1}}(m_0) & {\tilde \nu_{-2}}(m_0) &
{\overline \mu_{0}} & 0\\\\
{\tilde \nu_{0}}(m_0-1) & {\tilde \nu_{-1}}(m_0-1) & {\tilde \nu_{-2}}(m_0-1) & 
0  & {\overline {\mu_{-1}}}.\end{matrix}\right].$$

Moreover, by Lemma~\ref{l3}, if   $x\in I^{(p)}(j)$ then 
$\mu_0^{s-j}(A_{s-j}) = 0$ and 
if $x\in J^{(p)}(j)$ then  $\mu_{-1}^{s-j}(A_{s-j}) = 0$ 

Suppose $x\in I^{(p)}(j)$. Then
\begin{multline*}[\nu^s_{0}(a), \nu^s_{-1}(a), \nu^s_{-2}(a),
\mu^s_{0}(a),\mu^s_{-1}(a)]\\
 =  [\nu^{s-j+1}_{0}(A_{s-j+1}),-,-, 
\nu_{0}^{s-j}(A_{s-j})\mu_0, \mu^{s-j+1}_{-1}(A_{s-j+1})]\cdot 
[{\B}_0^{(p)}]^{j-1}\\
= [\nu^{s-j+2}_{0}(A_{s-j+2}), -, -,  \nu_{0}^{s-j+1}(A_{s-j+1})\mu_{0}+
\nu^{s-j}_{0}(A_{s-j})\mu_{0}{\overline{\mu_{0}}}, \mu^{s-j+2}_{-1}(A_{s-j+2})]
\cdot [{\B}_0^{(p)}]^{j-2}.\end{multline*}
Iterating this $j$ times we get the first identity given in the claim.
The second identity follows similarly.

We recall that 
$L_a= \frac{1}{6}(2a^3+9a^2+13a+6) = a^3/3+O(a^2)$.
Hence 
$$\lim_{s\to \infty} \frac{\nu_0^{s-i}(A_{s-i})}{p^{3s}} =
\frac{1}{3}\left[x-\tfrac{1}{2}+\tfrac{1}{2p^i}\right]^3$$
and

$$\lim_{s\to \infty} \frac{\nu_{-2}^{s-i}(A_{s-i})}{p^{3s}} =
\lim_{s\to \infty} \frac{L_{p^{s-i}-(a-m_0(p^{s-i}+\cdots + p^{s-1})-3}}{p^{3s}} = 
\frac{1}{3}\left[\tfrac{1}{2}+\tfrac{1}{2p^i}-x\right]^3.$$

Hence, for $x\in I^{(p)}(j)$, 
$$\mmu^{(p)}_0(x) = 
\frac{4}{3} \sum_{i = 1}^{j} \left[x-\tfrac{1}{2}+\tfrac{1}{2p^i}\right]^3
(\mu_0{\overline {\mu_0}}^{i-1})$$

If $x\in J^{(p)}(j)$ then 
$$\mmu^{(p)}_{1}(x) = \frac{4}{3} \sum_{i = 1}^{j} 
\left[\tfrac{1}{2}+\tfrac{1}{2p^i}-x\right]^3
(\mu_{-1}{\overline {\mu_{-1}}}^{i-1}).$$

This proves the theorem.
\end{proof}

\begin{rmk}\label{future}
In the case of projective curves and  projective toric varieties
and therefore their arbitrary Segre products (see Proposition~2.17 in [T1])
the set of non smooth points of the HK density function is a finite set.

However,  the above formulation in Theorem~\ref{t3}, 
gives that  $f_{R_{p, 4}}$ is  a 
$\sC^2(\R)$ function everywhere but
its third derivative does not exist at any point in the set
$\{2+\tfrac{1}{2}-\tfrac{1}{2p^j}, ~~2+\tfrac{1}{2}-\tfrac{1}{2p^j}\mid j\in \N\}$.
In particular  the set of non smooth points of $f_{R_{p, 4}}$ is infinite and  
has an accumulation point.

It would be an interesting direction to investigate the class of rings 
 with   $\sC^{n-1}(\R)$  HK density function, where $n+1$ is the 
Krull dimension of the ring.
\end{rmk}

 \section{Appendix}

Proposition~5.2 of [L] states that 
if $s=1$ then $F_*(\sO(a))$ and $F_*(\sS(a))$ both have  at most one type of 
spinor bundle, 
This also follows from  the following result of [A] regarding the decomposition of 
 Frobenius image of spinor bundles. Unlike the line bundle $\sO(a)$, where
we set up the theory for $F^s_*(\sO(a))$ for all $s\geq 1$, 
here we need  the information only for  the first Frobenius bundle 
$F_*(\sS(a))$.

Consider the decomposition 
$$F_*(\sS(a))
 = \oplus _{t\in \Z}\sO(t)^{{\tilde \nu}_t(a)} \oplus \oplus_{t\in \Z}
\sS(t)^{{\tilde \mu}_t(a)}.$$

By Theorem~[A], for $q = p$, 
\begin{enumerate}
\item $F_*(\sS(a))$ contains $\sO(t)$ if and only if $1\leq a-tp \leq n(p-1)$.
\item $F_*(\sS(a))$ contains $\sS(t)$ if and only if
$$\left(\tfrac{(n-2)(p-1)}{2}\right) \leq a-tp \leq
\left(\tfrac{(n-2)(p-1)}{2} + n-2 +p\right) -n + 1.$$
\end{enumerate}

\begin{notations}\label{n2s} Fix an integer $n\geq 3$. Let $n_0 = \lceil n/2\rceil -1$.
  For an integer $a$ such that 
$1-n\leq  a <p$ 

We define
 $${\tilde L_a} = H^0(Q, \sS(a)) = 2\lambda_0\left[ \binom{a+n}{n+1}-\binom{a+n-1}{n+1} \right]= 
 2\lambda_0\frac{(a+n-1)\cdots a}{n!}.$$

  For $0\leq i \leq n_0+1$, we define iteratively 
a set of integers 
${\tilde Z}_0(a)$, ${\tilde Z}_{-i}(a, p)$  as follows:
$$ {\tilde Z}_0(a) = {\tilde L}_{a},\quad  
\quad {\tilde Z}_{-1}(a, p)) = {\tilde Z}_0(a+p)-L_1{\tilde Z}_0(a)$$
 and in general 
$${\tilde Z}_{-i}(a, p) = {\tilde Z}_0(a+ip) - 
\left[L_1{\tilde Z}_{-i+1}(a, p)+L_2{\tilde Z}_{-i+2}(a, p)
+\cdots +L_i {\tilde Z}_0(a)\right].$$
 
Similarly,  for $n_0+1\leq i \leq n-1$, 
we define iteratively another set of integers   ${\tilde L}_{-i}(a, p)$
 as follows:

 $${\tilde Y}_{-n+1}(a, p)) = {\tilde L}_{p-a-n+1}
\quad\mbox{and for}\quad n_0+1\leq i < n-1 $$ 
 $${\tilde  Y}_{-i}(a, p) = {\tilde L}_{(n-i)p-a-n+1} -
\left[L_{n-i-1}{\tilde Y}_{-n+1}(a, p)+
\cdots + L_1{\tilde L}_{-i-1}(a, p)\right].$$ 
\end{notations}

\begin{rmk}\label{r3s}
By construction, it is easy to check that 
that there exists rational numbers  $\{{\tilde r}_{ij}, {\tilde s}_{ik}\}_{j, k}$
such that for every pair $(a, p)$, where $0\leq a <p$,  

${\tilde Z}_0(a) = {\tilde L}_{a}$ and 

 $${\tilde Z}_{-i}(a, p) = {\tilde r}_{i0}{\tilde L}_{a} + 
{\tilde r}_{i1}{\tilde L}_{a+p}+ 
\cdots  + {\tilde r}_{i(i-1)}{\tilde L}_{a+(i-1)p}+{\tilde L}_{a+ip}$$
and 
$${\tilde Y}_{-i}(a, p) = \sum_{j=1}^{n-i-1}{\tilde s}_{ij}{\tilde L}_{jp-a-n+1} 
+{\tilde L}_{(n-i)p-a-n+1}.$$

On the other hand, $L_m$ is  a polynomial in $m$ of degree $n$
which implies, for given $i$  there exist polynomials $P_i(X, Y), Q_i(X, Y)
\in \Q[X, Y]$, each of total degree $\leq n$, such that  
  $$Z_{-i}(a, p) = P_i(a, p)\quad\mbox{and}\quad Y_{-i}(a,p) = Q_i(a, p)\quad\mbox{for all}
\quad 0\leq a <p.$$
Similar assertions holds for each ${\tilde Z_{-i}}$ and ${\tilde Y_{-i}}$,
\end{rmk}

\begin{lemma}\label{*c1}
Let  $p > n-2 $, where $n\geq 3$ is an odd integer
 and $0\leq a < p$ is an integer. Then
$$F_*(\sO(a)) = \sO(-n+1)^{\nu_{-n+1}(a)}\oplus \cdots \oplus 
 \sO^{\nu_0(a)}\oplus 
\sS(-n_0+1)^{\mu_{-n_0+1}(a)}\oplus 
\sS(-n_0)^{\mu_{-n_0}(a)}.$$ 

Let $m_0 = \frac{p}{2}-\frac{n-2}{2}$. 
Then 
\begin{enumerate}
\item $\nu_{-i}(a) = \begin{cases} 
Z_{-i}(a, p),\quad\mbox{for}\quad 0\leq i \leq n_0-1\\\ 
 Y_{-i}(a, p),\quad\mbox{for}\quad n_0+1\leq i < n \end{cases} $ 
\item $\mu_{-i}(a) = 0$, if $i\nin \{n_0-1, n_0\}$. 
\item Moreover  if  $a\in [0, m_0-1]$ then 
\begin{enumerate}
\item $\nu_{-n_0}(a) = Z_{-n_0}(a, p)$
\item $\mu_{-n_0+1}(a) = 0$  
\item $\mu_{-n_0}(a) = \frac{1}{2\lambda_0}\left[Z_{-n_0-1}(a, p)-
Y_{-n_0-1}(a, p)\right]$
and 
$$\mu_{-n_0}(a) \neq 0 \iff a\leq m_0-2.$$ 
\end{enumerate}
\item Similarly,  for $a\in [m_0, p)$ we have 
\begin{enumerate}
\item $\nu_{-n_0}(a) =  Z_{-n_0}(a, p)-
Y_{-n_0-1}(a, p)+ Z_{-n_0-1}(a, p)$,
\item $\mu_{-n_0+1}(a) = \frac{1}{2\lambda_0}\left[Y_{-n_0-1}(a, p)-
Z_{-n_0-1}(a, p)\right] \neq 0$.
\item $\mu_{-n_0}(a) = 0$. 
\end{enumerate}
\end{enumerate}

Also we have 

\begin{enumerate}
\item ${\tilde \nu}_{-i}(a) = \begin{cases}
{\tilde Z_{-i}}(a, p)$, for $0\leq i \leq n_0-1\\\
{\tilde Y_{-i}}(a, p)$, for $n_0+1\leq i < n.\end{cases}$
\item Moreover if  $a\in [0, m_0-1]$ then  
\begin{enumerate}
\item ${\tilde \nu}_{-n_0}(a) = {\tilde Z_{-n_0}}(a, p)$
\item ${\tilde \mu}_{-n_0+1}(a) = 0$ and 
\item ${\tilde \mu}_{-n_0}(a) = \frac{1}{2\lambda_0}\left[{\tilde Z_{-n_0-1}}(a, p)-
{\tilde Y_{-n_0-1}}(a, p)\right] \neq 0$.
\end{enumerate}
\item For $a\in [m_0, p)$
\begin{enumerate}
\item ${\tilde \nu}_{-n_0}(a) =  {\tilde Z_{-n_0}}(a, p)-
{\tilde Y_{-n_0-1}}(a, p)+ {\tilde Z_{-n_0-1}}(a, p)$,
\item ${\tilde \mu}_{-n_0+1}(a) =  \frac{1}{2\lambda_0}
\left[{\tilde Y_{-n_0-1}}(a, p)-{\tilde Z_{-n_0-1}}(a, p)\right] \neq 0$  
\item ${\tilde \mu}_{-n_0}(a) = 0$. 
\end{enumerate}
\end{enumerate}
\end{lemma}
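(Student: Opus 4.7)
The plan is to deduce Lemma~\ref{*c1} by specializing Lemma~\ref{l3} to $s=1$, $q=p$, and combining it with the refinement recorded at the start of the Appendix (Proposition~5.2 of [L]): for $s=1$, both $F_*(\sO(a))$ and $F_*(\sS(a))$ carry at most one type of spinor bundle. This collapses the entire ``difficult range'' of Lemma~\ref{l3} to the single transition value $a = m_0$. Indeed, by Lemma~\ref{l1}(2), $\mu_{-n_0+1}(a)$ vanishes whenever $a < m_0$ and $\mu_{-n_0}(a)$ vanishes whenever $a \geq m_0 + n - 2$, while on the overlap $a \in [m_0,\, m_0+n-3]$ the ``at most one spinor type'' constraint forces $\mu_{-n_0}(a) = 0$, since the type $\sS(-n_0+1)$ is already admitted there.

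With the transition point pinned down, the formulas for $\nu_{-i}(a)$ with $0 \leq i \leq n_0-1$ and $n_0+1 \leq i < n$ are exactly those produced by Lemma~\ref{l3}, and they are now valid on the whole of $[0, p)$ because the locations of the spinor summands are unambiguous. For the transition row I would substitute the appropriate vanishing into the recursion of Lemma~\ref{l2}(1),(2): for $a \in [0, m_0-1]$ set $\mu_{-n_0+1}(a) = 0$ and solve for $\nu_{-n_0}(a)$ and $\mu_{-n_0}(a)$, giving assertion~(3); for $a \in [m_0, p)$ set $\mu_{-n_0}(a) = 0$ and solve for $\nu_{-n_0}(a)$ and $\mu_{-n_0+1}(a)$, giving assertion~(4). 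The parallel statements for $F_*(\sS(a))$ follow by running the same three-step argument against the analogues of Lemmas~\ref{l1}--\ref{l3} built from Theorem~[A](3),(4). The $\delta_{s,1}$ terms in Theorem~[A](4), equal to $1$ at $s=1$, shift the admissible range for $\tilde\mu_{-n_0}$ upward by one, accounting precisely for the condition $a \leq m_0 - 1$ in the $\tilde\mu_{-n_0}$ nonvanishing criterion in contrast with $a \leq m_0 - 2$ for $\mu_{-n_0}$.

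The main obstacle is the sharp nonvanishing claim, namely that $\mu_{-n_0}(a) \neq 0$ \emph{precisely} for $a \leq m_0-2$ (and the corresponding statements for $\mu_{-n_0+1}$, $\tilde\mu_{-n_0}$, $\tilde\mu_{-n_0+1}$). Rather than expanding the polynomial identity $Z_{-n_0-1}(a,p) - Y_{-n_0-1}(a,p) \neq 0$ via Remark~\ref{r3}, I would appeal to Theorem~[A](2) itself: at $q = p$ and $t = -n_0 = -(n-1)/2$, its two-sided inequality, after substituting the closed form of $n_0$ and simplifying $(n-2)(p-1)/2 - n_0 p$ on the left and $(n-2)(p-1)/2 + p - 2 - n_0 p$ on the right, reduces to $-m_0 - n + 2 \leq a \leq m_0 - 2$. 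Intersecting with $a \geq 0$ yields exactly the asserted range. The same computation with $t = -n_0+1$ gives $m_0 \leq a \leq m_0 + p - 2 \supseteq [m_0, p-1]$, and the $F_*(\sS(a))$ versions follow by redoing the arithmetic with the $\delta_{s,1} = 1$ correction from Theorem~[A](4). This converts all nonvanishing assertions into direct endpoint arithmetic, bypassing any explicit manipulation of the $Z$, $Y$, $\tilde Z$, $\tilde Y$ polynomials.
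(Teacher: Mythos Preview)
Your approach matches the paper's: both pin down the spinor supports via Theorem~[A] at $s=1$ and then read off the formulas from the recursions of Lemma~\ref{l2} (equivalently the case analysis in Lemma~\ref{l3}), with the spinor-bundle half handled by the obvious $\tilde L$-analogues. One caution: your first-paragraph inference that ``at most one spinor type'' together with ``$\sS(-n_0+1)$ is admitted'' forces $\mu_{-n_0}=0$ on the overlap $[m_0,m_0+n-3]$ is not valid as stated---``admitted'' (allowed by the range in Lemma~\ref{l1}) is weaker than ``present'' (nonzero), and only the latter triggers the uniqueness constraint---but your third paragraph's direct endpoint arithmetic from Theorem~[A](2) renders this moot, yielding the disjoint supports $[0,m_0-2]$ and $[m_0,p-1]$ outright; this is precisely how the paper argues, and it makes the separate appeal to Proposition~5.2 of [L] unnecessary.
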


\begin{proof}For $\Delta$ as in Notations~\ref{n1},  
$\Delta = \frac{n-2}{2p}+\frac{1}{2}$.
Now $F_*\sO(a)$ contains $S(-n_0)$ if and only if 
$$0\leq a/p +\Delta \leq (p-2)/p \iff a\in [0, m_0-2].$$
 The rest of the first set of assertions follows from Lemma~\ref{l3}.

Similarly $F_*\sS(a)$ contains $S(t)$ if and only if 
$$0\leq a/p +\Delta-t-n_0\leq (p-1)/p.$$
This implies $-1\leq -t-n_0 \leq 0$ and hence if $S(t)$ occurs in 
$F_*S(a)$  then $t= -n_0$ or $t=-n_0+1$.

Moreover ${\tilde \mu}_{-n_0}(a) \neq 0 \implies a\in [0, m_0-1]$
and ${\tilde \mu}_{-n_0+1}(a) \neq 0 \implies a\in [m_0, p-1]$

In Lemma~\ref{l2}, we have used repeatedly the equalities 
$$\begin{array}{lllll}
h^0(Q, F_*^s\sO(m)) & =  &  h^0(Q, \sO(m)) & = & L_m\\\
h^n(Q, F_*^s\sO(m)) & = & h^n(Q, \sO(m)) & = & h^0(Q, \sO(-m-n)).
\end{array}$$
Similarly  equalities hold for spinor bundles:
$$\begin{array}{lllll}
h^0(Q, F_*^s(\sS(m))) & = &   h^0(Q, \sS(m)) & = & {\tilde L}_m\\\
h^n(Q, F_*^s\sS(m)) & = & h^n(Q, \sS(m)) & = & h^0(Q, S(1-m-n)) = {\tilde L}_{1-m-n}.
\end{array}$$

Now similar  equalities as in Lemma~\ref{l2} and Lemma~\ref{l3} holds if we replace ${\nu}_i(a)$ by 
${\tilde \nu}_i(a)$ and ${\mu}_i(a)$ by 
${\tilde \mu}_i(a)$ in the statement.  Hence the second set of assertions follow.
\end{proof}

\begin{lemma}\label{e*c1}
Let  $p > n-2 $, where $n\geq 4$ is an even integer and $1-n\leq a < p$ is
 an integer. Then
$$F_*(\sO(a)) = \oplus_{t=0}^{n-1}\sO(-t)^{\nu_{-t}(a)}\oplus
\oplus 
\sS(-n_0+1)^{\mu_{-n_0+1}(a)}\oplus 
\sS(-n_0)^{\mu_{-n_0}(a)}\oplus 
\sS(-n_0-1)^{\mu_{-n_0-1}(a)}.$$ 

Let ${\tilde m}_0 = p-\frac{n-2}{2} = p+1-\frac{n}{2}$. 
Then 
\begin{enumerate}
\item[(A1)] $\nu_{-i}(a) = \begin{cases} 
Z_{-i}(a, p),\quad\mbox{for}\quad 0\leq i \leq n_0-1\\\ 
 Y_{-i}(a, p),\quad\mbox{for}\quad n_0+1\leq i < n \end{cases} $ 

\item[(A2)] Moreover  if  $a\in [1-n, -\tfrac{n}{2}]$ then 
\begin{enumerate}
\item $\nu_{-n_0}(a) = Z_{-n_0}(a, p)$
\item $\mu_{-n_0+1}(a) = 0$ 
\item $\mu_{-n_0}(a) = 0$ and
\item $\mu_{-n_0-1}(a) = \frac{1}{2\lambda_0}\left[Y_{-n_0-1}(a, p)-
Z_{-n_0-1}(a, p)\right]$

\end{enumerate}

\item[(A3)] if  $a\in [-\frac{n}{2}, {\tilde m}_0-1]$ then 
\begin{enumerate}
\item $\nu_{-n_0}(a) = Z_{-n_0}(a, p)$,

\item $\mu_{-n_0+1}(a) = 0$,
\item $\mu_{-n_0}(a) = \frac{1}{2\lambda_0}\left[Z_{-n_0-1}(a, p)-
Y_{-n_0-1}(a, p)\right]$,
\item $\mu_{-n_0-1}(a) = 0$.
\end{enumerate}

\item[(A4)] Similarly,  for $a\in [{\tilde m}_0, p)$ we have 
\begin{enumerate}
\item $\nu_{-n_0}(a) =  Z_{-n_0}(a, p)-Y_{-n_0-1}(a, p)+ Z_{-n_0-1}(a, p)$,
\item $\mu_{-n_0+1}(a) = \frac{1}{2\lambda_0}\left[Y_{-n_0-1}(a, p)-
Z_{-n_0-1}(a, p)\right] \neq 0$.
\item $\mu_{-n_0}(a) = 0$.
\item $\mu_{-n_0-1}(a) = 0$,
\end{enumerate}
\end{enumerate}

Also we have the decomposition
$$F_*(\sO(a)) = \oplus_{t=0}^{n-1}\sO(-t)^{{\tilde \nu}_{-t}(a)}\oplus
\oplus 
\sS(-n_0+1)^{{\tilde \mu}_{-n_0+1}(a)}\oplus 
\sS(-n_0)^{{\tilde \mu}_{-n_0}(a)}\oplus 
\sS(-n_0-1)^{{\tilde \mu}_{-n_0-1}(a)}.$$

\begin{enumerate}
\item[(B1)] ${\tilde \nu}_{-i}(a) = \begin{cases}
{\tilde Z_{-i}}(a, p)$, for $0\leq i \leq n_0-1\\\
{\tilde Y_{-i}}(a, p)$, for $n_0+1\leq i < n.\end{cases}$

 \item[(B2)]If  $a\in [-\tfrac{n}{2}, {\tilde m}_0-1]$ then  
\begin{enumerate}
\item ${\tilde \nu}_{-n_0}(a) = {\tilde Z_{-n_0}}(a, p)$
\item ${\tilde \mu}_{-n_0+1}(a) = 0$ and 
\item ${\tilde \mu}_{-n_0}(a) = \frac{1}{2\lambda_0}\left[{\tilde Z_{-n_0-1}}(a, p)-
{\tilde Y_{-n_0-1}}(a, p)\right] \neq 0$,
\item ${\tilde \mu}_{-n_0-1}(a) = 0$
\end{enumerate}
\item[(B3)] For $a\in [{\tilde m}_0, p)$
\begin{enumerate}
\item ${\tilde \nu}_{-n_0}(a) =  {\tilde Z_{-n_0}}(a, p)-
{\tilde Y_{-n_0-1}}(a, p)+ {\tilde Z_{-n_0-1}}(a, p)$,
\item ${\tilde \mu}_{-n_0+1}(a) =  \frac{1}{2\lambda_0}
\left[{\tilde Y_{-n_0-1}}(a, p)-{\tilde Z_{-n_0-1}}(a, p)\right] \neq 0$  
\item ${\tilde \mu}_{-n_0}(a) = 0$,
\item ${\tilde \mu}_{-n_0-1}(a) = 0$. 
\end{enumerate}
\end{enumerate}
\end{lemma}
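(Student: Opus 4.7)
\textbf{Proof proposal for Lemma~\ref{e*c1}.}\quad The plan is to mirror, with appropriate modifications for the even case, the argument used for the odd analogue Lemma~\ref{*c1}. The first step is to invoke Lemma~\ref{l1}~(1), which already forces the displayed form of the decomposition: only the line bundles $\sO(-t)$ for $0\leq t\leq n-1$ and the three spinor twists $\sS(-n_0+1),\sS(-n_0),\sS(-n_0-1)$ can appear in $F_*(\sO(a))$ or in $F_*(\sS(a))$ when $p>n-2$ and $n$ is even. The task thus reduces to (i) pinning down precisely which of the three spinor types occurs in each subrange of $a$, and (ii) computing the resulting integer multiplicities $\nu_{-i}(a),\mu_{-i}(a)$ (resp.\ ${\tilde\nu}_{-i}(a),{\tilde\mu}_{-i}(a)$) explicitly in terms of $Z_{-i}(a,p),Y_{-i}(a,p)$ (resp.\ ${\tilde Z}_{-i}(a,p),{\tilde Y}_{-i}(a,p)$).

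For step~(i), I apply Theorem~[A] at $s=1$, $q=p$, using $n_0=(n-2)/2$ and $\Delta=(n-2)/(2p)$. Substituting the three admissible values $t\in\{-n_0+1,-n_0,-n_0-1\}$ into the inequality
\[(n_0-\Delta)p \;\leq\; a-tp \;\leq\; (n_0-\Delta)p+\tfrac{(n-2)p}{p}+p-n\]
and solving for $a\in\Z$ yields the three disjoint integer ranges
\[\mu_{-n_0-1}(a)\neq 0 \iff a\in[1-n,\,-\tfrac{n}{2}-1],\quad
\mu_{-n_0}(a)\neq 0 \iff a\in[1-\tfrac{n}{2},\,\tilde m_0-2],\]
\[\mu_{-n_0+1}(a)\neq 0 \iff a\in[\tilde m_0,\,p-1],\]
exactly matching the three subcases (A2)--(A4) (with $\mu=0$ at the boundary points $a=-n/2$ and $a=\tilde m_0-1$, where the ranges abut). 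An identical computation for $F_*(\sS(a))$ (the Achinger condition is obtained by shifting by $1-\delta_{s,1}$ in the lower bound) produces the analogous partition governing ${\tilde\mu}_{-i}(a)$ in assertions (B2)--(B3). Note that here the extra $+1-\delta_{s,1}$ term collapses the $\mu_{-n_0-1}$ range to be empty for non-negative $a$ in the spinor case, explaining why (B2)--(B3) only list two subranges.

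For step~(ii), in each of the three subranges two of the three spinor multiplicities vanish, and I feed this vanishing into Lemma~\ref{l2}. Specifically, on the subrange (A3) where only $\mu_{-n_0}\neq 0$, the verbatim computation of Lemma~\ref{l3}~Case~(2) applies and gives the claimed formulas. On the subrange (A4) where only $\mu_{-n_0+1}\neq 0$, the dual argument (equations (\ref{n_0}), (\ref{n_0+1}) combined with the Serre-duality rewriting at the top degree) yields the stated expressions; this is identical in shape to Lemma~\ref{l3}~Case~(1) for odd $n$. The new subrange (A2), which covers the negative window $a\in[1-n,-n/2]$, requires extending Lemma~\ref{l2} by the convention $L_a=0$ for $1-n\leq a\leq -1$ (from the definition of $L_a$ in Notations~\ref{n2}); with this convention, the inductive chain defining $Z_{-i}$ and $Y_{-i}$ still holds, and applying $H^0$ and $H^n$ to the $\sO(-j)$-twisted decomposition gives the same system of linear relations, now with only the $\mu_{-n_0-1}$ term surviving, yielding $\mu_{-n_0-1}(a)=\tfrac{1}{2\lambda_0}(Y_{-n_0-1}-Z_{-n_0-1})$ as claimed.

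The spinor assertions (B1)--(B3) are obtained by repeating the cohomological bookkeeping of Lemma~\ref{l2} with $L_m$ replaced by ${\tilde L}_m=h^0(Q_n,\sS(m))$ and Serre duality $h^n(Q_n,\sS(m))=h^0(Q_n,\sS(1-m-n))={\tilde L}_{1-m-n}$, which produces exactly the polynomial sequences ${\tilde Z}_{-i}$ and ${\tilde Y}_{-i}$ of Notations~\ref{n2s}. The main technical obstacle will be the careful bookkeeping at the two boundary points $a=-n/2$ and $a=\tilde m_0-1$ (where two formulas must be shown to agree by forcing $Y_{-n_0-1}(a,p)=Z_{-n_0-1}(a,p)$ at those values); this can be checked directly by substituting the explicit recursive expressions from Remark~\ref{r3} and using the integer arithmetic of $L_m$ near these boundary integers.
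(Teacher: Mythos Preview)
Your proposal is correct and follows essentially the same route as the paper: apply Achinger's Theorem~[A] at $s=1$ to partition the range of $a$ according to which single spinor twist survives, extend the cohomological relations of Lemma~\ref{l2} to all $1-n\leq a<p$ (using $L_a=0$ for $1-n\leq a\leq -1$), and then solve for the one nonzero $\mu$ in each subrange; the spinor case is handled identically with $L_m$ replaced by $\tilde L_m$ and Serre duality $h^n(Q_n,\sS(m))=\tilde L_{1-m-n}$.

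Two small corrections to your write-up. First, for $s=1$ the term $1-\delta_{s,1}$ in the lower bound of Achinger's spinor condition vanishes; it is the $+\delta_{s,1}$ in the \emph{upper} bound that shifts each $\tilde\mu$-range by one relative to the $\mu$-range, and the reason (B2)--(B3) list only two subranges is simply that the lemma states $B$-assertions only for $a\geq -n/2$, not that the $\tilde\mu_{-n_0-1}$-range collapses. Second, the boundary consistency you flag as the ``main technical obstacle'' is automatic: Achinger's inequalities show directly that at $a=-n/2$ both $\mu_{-n_0-1}$ and $\mu_{-n_0}$ vanish (and similarly at $a=\tilde m_0-1$), so the two overlapping formulas agree there without any separate verification of $Y_{-n_0-1}=Z_{-n_0-1}$.
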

\begin{proof}Note that $\Delta = \tfrac{n-2}{2p}$. If $1-n\leq a <p$ then 
$-1< -\tfrac{n}{2p} \leq \tfrac{a}{p} + \Delta < 2$.
Now if 
$F_*(\sO(a))$ contains $\sO(t)$ then  
$$0\leq a -tp \leq n(p-1)\quad \mbox{implies}\quad t\in \{0, -1, \ldots, -n+1\}.$$

If 
$F_*(\sO(a))$ contains $\sS(t)$ then  
$$(n_0-\Delta)p\leq a-tp \leq (n_0-\Delta)p + p-2,$$
which implies 
$0\leq \tfrac{a}{p}+\Delta -t-n_0\leq 1-\tfrac{2}{p}$.
Hence $-2 < -t-n_0 < 2$, which gives $t\in \{-n_0+1, -n_0, -n_0-1\}$.

If 
$F_*(\sS(a))$ contains $\sO(t)$ then  
$$1\leq a-tp  \leq n(p-1)\quad \mbox{implies}\quad t\in \{0, -1, \ldots, -n+1\}.$$

Moreover
\begin{enumerate}
\item  ${\tilde \mu}_{-n_0-1}(a) \neq 0 \implies a\in [1-n,~~ -\tfrac{n}{2}]$,
\item  ${\tilde \mu}_{-n_0}(a) \neq 0 \implies a\in [-\tfrac{n}{2}+1,~~ {\tilde m}_0-1]$
and 
\item ${\tilde \mu}_{-n_0+1}(a) \neq 0 \implies a\in [{\tilde m}_0,~~ p-1]$.
\end{enumerate}

If $F_*(\sS(a))$ contains $\sS(t)$ then  
$$(n_0-\Delta)p\leq a-tp \leq (n_0-\Delta)p + p-1,$$
which implies 
$0\leq \tfrac{a}{p}+\Delta -t-n_0\leq 1-\tfrac{1}{p}$, 
which gives $t\in \{-n_0+1, -n_0, -n_0-1\}$.
Since  the above decomposition of $F_*(\sO(a))$ and 
$F_*(\sS(a))$ hold for all 
$-n+1 \leq a <p$, 
all the equalities stated  in Lemma~\ref{l2} hold true 
for every $-n+1\leq a <p$, where 
$$L_a = h^0(Q_n, \sO(a)) = (2a+n)\frac{(a+n-1)\cdots (a+1)}{n!},\quad\mbox{for}\quad
n+1\leq a.$$
In particular, by Lemma~\ref{l2}~(2) and (5) we have 
$$Y_{-n_0-1}(a,p)-2\lambda_0\mu_{-n_0-1}+2\lambda_0\mu_{-n_0} = Z_{-n_0-1}(a, p)+
2\lambda_0\mu_{-n_0+1}.$$
One can whack that 
if $a\leq -\tfrac{n}{2}$ then $\mu_{-n_0}(a) = 0$ and for $a\geq -
\tfrac{n}{2}$ we have $\mu_{-n_0-1}(a) = 0$. This observation and the rest of 
the argument made for $n$ odd case, will give all the assertion for 
$F_*(\sO(a))$.

The set of assertion for $F_*(\sS(a))$ is along the same lines so we leave the 
details to the reader.
\end{proof}

\begin{notations}\label{Bmat}Let $n_i\in \{0, \ldots, n-3\}$. We define a set of 
matrices ${\B}_{n_i}^{(p)}$ and ${\C}_{n_i}^{(p)}$ as follows.

Let  $$F_*(\sO(m)) = (\oplus_{t=0}^{n-1}\sO(-t)^{{\nu}_{-t}(m)})
\oplus\sS(-n_0+1)^{{\mu}_{-n_0+1}(m)}\oplus \sS(-n_0)^{{\mu}_{-n_0}(m)}
\oplus\sS(-n_0-1)^{{\mu}_{-n_0-1}(m)}$$  

$$F_*(\sS(m)) = (\oplus_{t=0}^{n-1}\sO(-t)^{{\tilde \nu}_{-t}(m)}
\oplus\sS(-n_0+1)^{{\tilde \mu}_{-n_0+1}(m)}\oplus 
\sS(-n_0)^{{\tilde \mu}_{-n_0}(m)}\oplus\sS(-n_0-1)^{{\tilde \mu}_{-n_0-1}(m)}.$$

\begin{enumerate}
\item  $n\geq 3$ is an odd integer,  we define  
 $(n+2)\times (n+2)$ matrix  
$${\B}^{(p)}_{n_i} = \left[\begin{matrix}
\nu_{0}(j_i), \ldots, \nu_{-n+1}(j_i), \mu_{-n_0+1}(j_i), \mu_{-n_0}(j_i)\\
\vdots\\
\nu_{0}(j_i-n+1), \ldots,   \nu_{-n+1}(j_i-n+1), \mu_{-n_0+1}(j_i-n+1),
\mu_{-n_0}(j_i-n+1)\\
{\tilde \nu}_{0}(j_i-n_0+1),  \ldots, {\tilde \nu}_{-n+1}(j_i-n_0+1), 
{\tilde \mu}_{-n_0+1}(j_i-n_0+1),
{\tilde\mu}_{-n_0}(j_i-n_0+1)\\
{\tilde \nu}_{0}(j_i-n_0),  \ldots, {\tilde \nu}_{-n+1}(j_i-n_0), 
{\tilde \mu}_{-n_0+1}(j_i-n_0), {\tilde \mu}_{-n_0}(j_i-n_0)
\end{matrix}\right].$$

Similarly

\item If $n\geq 4$ is an even integer then we define  
 $(n+3)\times (n+3)$ matrix  
$${\C}^{(p)}_{n_i} = \left[\begin{matrix}
\nu_{0}(j_i), \ldots , \mu_{-n_0+1}(j_i), \mu_{-n_0}(j_i),
\mu_{-n_0-1}(j_i)\\
\vdots\\
\nu_{0}(j_i-n+1), \ldots , \mu_{-n_0+1}(j_i-n+1),
\mu_{-n_0}(j_i-n+1),\mu_{-n_0-1}(j_i-n+1)\\
{\tilde \nu}_{0}(j_i-n_0+1),  \ldots,  
{\tilde \mu}_{-n_0+1}(j_i-n_0+1),
{\tilde\mu}_{-n_0}(j_i-n_0+1), {\tilde \mu}_{-n_0-1}(j_i-n_0+1)\\
{\tilde \nu}_{0}(j_i-n_0),  \ldots, 
{\tilde \mu}_{-n_0+1}(j_i-n_0), {\tilde \mu}_{-n_0}(j_i-n_0),{\tilde \mu}_{-n_0-1}(j_i-n_0)\\
{\tilde \nu}_{0}(j_i-n_0-1),  \ldots,  
{\tilde \mu}_{-n_0+1}(j_i-n_0-1),
{\tilde\mu}_{-n_0}(j_i-n_0-1), {\tilde \mu}_{-n_0-1}(j_i-n_0-1)
\end{matrix}\right].$$
\end{enumerate}

We denote
$${\B}_{n_i}^{(p)} = \left[b_{k_1, k_2}^{(n_i)}(p)\right]_{0\leq k_1, k_2\leq n+1}\quad\mbox{and}\quad
{\C}^{(p)}_{n_i} = \left[ c_{k_1, k_2}^{(n_i)}(p)\right]_{0\leq k_1, k_2\leq n+3}.$$ 
\end{notations}

\begin{lemma}\label{entries} For a given odd integer $n\geq 3$,
There exists a set of polynomials 
$$\{H_{k_1, k_2}^{(n_i)}(t)\in \Q[t]\mid 0\leq n_i\leq n-3\quad\mbox{and}\quad 
0\leq k_1, k_2 \leq n+1\}$$
 such that 
 $$\frac{b^{(n_i)}_{k_1, k_2}(p)}{p^n} =  
H_{k_1, k_2}^{(n_i)}\bigl(\frac{1}{p}\bigr) \geq 0,\quad\mbox{for all}
\quad p \geq  3n-4.$$
\end{lemma}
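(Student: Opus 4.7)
The plan is to read each entry $b^{(n_i)}_{k_1,k_2}(p)$ off from Lemma~\ref{*c1} as the specialization of a fixed polynomial in $(a,p)$ at $a = j_i - k$ (or $a = j_i - n_0 + 1 - k'$ for the rows coming from $F_*(\sS(\cdot))$), where the shift $k$ depends only on the row index $k_1$. Since $j_i = n_i + m_0 = n_i + (p-n+2)/2$ is affine in $p$, and since by Remark~\ref{r3s} each of $Z_{-t}(a,p)$, $Y_{-t}(a,p)$, ${\tilde Z}_{-t}(a,p)$, ${\tilde Y}_{-t}(a,p)$ is a rational polynomial in $(a,p)$ of total degree at most $n$, the resulting integer $b^{(n_i)}_{k_1,k_2}(p)$ becomes, after this substitution, a polynomial in $p$ of degree at most $n$ with rational coefficients. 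Dividing by $p^n$ then yields the desired polynomial $H^{(n_i)}_{k_1,k_2}(t) \in \Q[t]$ with $H^{(n_i)}_{k_1,k_2}(1/p) = b^{(n_i)}_{k_1,k_2}(p)/p^n$.

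Before applying Lemma~\ref{*c1}, I first check that its hypothesis $0 \leq a < p$ is satisfied at all arguments that appear. For $p \geq 3n-4$ one has $m_0 = (p-n+2)/2 \geq n-1$, so $j_i - k \in [m_0 - n + 1,\; m_0 + n - 3] \subset [0, p)$ for $0 \leq k \leq n-1$, and similarly $j_i - n_0 + 1 - k' \in [0, p)$ for $k' \in \{0,1\}$. A subtler point is that the formulas in Lemma~\ref{*c1} for the entries involving $\mu_{-n_0+1}$, $\mu_{-n_0}$, $\nu_{-n_0}$ bifurcate according to whether $j_i - k \geq m_0$ (case~(4) of the lemma) or $j_i - k < m_0$ (case~(3)). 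However the dichotomy $j_i - k \geq m_0 \iff n_i \geq k$ depends only on the combinatorial data $(n_i,k)$ and not on $p$; thus for each fixed triple $(n_i, k_1, k_2)$ a single polynomial formula is selected once and for all, and that one polynomial governs every $p \geq 3n-4$.

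Nonnegativity of $H^{(n_i)}_{k_1,k_2}(1/p)$ at these primes is then automatic: by its very construction (see Notations~\ref{Bmat}), each $b^{(n_i)}_{k_1,k_2}(p)$ is the multiplicity of a direct summand in the ACM decomposition of the Frobenius push-forward of either $\sO(j_i - k)$ or $\sS(j_i - n_0 + 1 - k')$ on $Q_n$, and hence is a nonnegative integer; division by $p^n > 0$ preserves the sign. The only real technical burden is bookkeeping: one must track, entry by entry, which of the cases (1)--(4) of Lemma~\ref{*c1} applies, plug in the relevant expression from Remark~\ref{r3s}, and collect the resulting polynomial in $p$. I expect this to be the main (and essentially the only) obstacle; no new ideas are required beyond a disciplined case analysis combined with the uniform polynomial description provided by Remark~\ref{r3s}.
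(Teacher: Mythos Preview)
Your proposal is correct and follows essentially the same approach as the paper: read off each entry via Lemma~\ref{*c1}, use Remark~\ref{r3s} to see the $Z_{-i}$, $Y_{-i}$, ${\tilde Z}_{-i}$, ${\tilde Y}_{-i}$ are polynomials in $(a,p)$, substitute the affine-in-$p$ value $j_i-k$, observe that the case split $j_i-k\geq m_0\iff n_i\geq k$ is $p$-independent, and divide by $p^n$. The paper's proof differs only in that it writes out the full case-by-case table explicitly, which is exactly the bookkeeping you anticipate.
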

\begin{proof}
Let $0\leq k_1<n$ 
then, by Lemma~\ref{*c1} 
$$\begin{array}{lcl}
b^{(n_i)}_{k_1, k_2}(p) & = & \begin{cases}
 Z_{-k_2}(j_i-k_1, p) & \mbox{if}\quad 0 \leq k_2 \leq n_0-1\\\  
Y_{-k_2}(j_i-k_1, p) & \mbox{if}\quad n_0+1\leq k_2 < n\end{cases}\\\

b^{(n_i)}_{k_1, n_0}(p) & = &\begin{cases}
   Z_{-n_0}(j_i-k_1, p)  &  \mbox{if}\quad n_i < k_1 \\\
 Z_{-n_0}(j_i-k_1, p)-Y_{-n_0-1}(j_i-k_1,p)+Z_{-n_0-1}(j_i-k_1,p) 
&  \mbox{if}\quad n_i \geq k_1 \end{cases}\\\

b^{(n_i)}_{k_1, n}(p) & = &\begin{cases}
  0  &  \mbox{if}\quad n_i < k_1 \\\
\frac{1}{2\lambda_0}\left[Y_{-n_0-1}(j_1-k_1,p)-Z_{-n_0-1}(j_1-k_1,p)\right] 
&  \mbox{if}\quad n_i \geq k_1 \end{cases}\\\

b^{(n_i)}_{k_1, n+1}(p) & = &\begin{cases}
\frac{1}{2\lambda_0}\left[Z_{-n_0-1}(j_1-k_1,p)-Y_{-n_0-1}(j_1-k_1,p)\right] 
&  \mbox{if}\quad n_i < k_1\\\
  0  &  \mbox{if}\quad n_i \geq  k_1  \end{cases}
\end{array}$$

$$\begin{array}{lcl}
b^{(n_i)}_{n, k_2}(p) & = & \begin{cases}
 {\tilde Z_{-k_2}}(j_i-n_0+1, p) & \mbox{if}\quad 0 \leq k_2 \leq  n_0-1\\\ 
 {\tilde Y_{-k_2}}(j_i-n_0+1, p) & \mbox{if}\quad n_0+1\leq k_2 < n\end{cases}\\\

b^{(n_i)}_{n, n_0}(p) & = &\begin{cases}
   {\tilde Z_{-n_0}}(j_i-n_0+1, p)  &  \mbox{if}\quad n_i < n_0-1 \\\
 {\tilde Z_{-n_0}}(j_i-n_0+1, p)-{\tilde Y_{-n_0-1}}(j_i-n_0+1,p)
&  \mbox{if}\quad n_i \geq n_0-1\\\
+ {\tilde Z_{-n_0-1}}(j_i-n_0+1,p) &{} 
 \end{cases}\\\

b^{(n_i)}_{n+1, k_2}(p) & = & \begin{cases}
 {\tilde Z_{-k_2}}(j_i-n_0, p) & \mbox{if}\quad 0 \leq k_2 \leq  n_0-1\\\ 
 {\tilde Y_{-k_2}}(j_i-n_0, p) & \mbox{if}\quad n_0+1\leq k_2 < n\end{cases}\\\

b^{(n_i)}_{n+1, n_0}(p) & = &\begin{cases}
   {\tilde Z_{-n_0}}(j_i-n_0, p)  &  \mbox{if}\quad n_i < n_0 \\\
 {\tilde Z_{-n_0}}(j_i-n_0, p)-{\tilde Y_{-n_0-1}}(j_i-n_0,p)+{\tilde Z_{-n_0-1}}
(j_i-n_0,p) 
&  \mbox{if}\quad n_i \geq n_0 \end{cases}\\\

\end{array}$$

$$\begin{array}{lcl}
b^{(n_i)}_{n, n}(p) & = &\begin{cases}
  0  &  \mbox{if}\quad n_i < n_0-1 \\\
\frac{1}{2\lambda_0}\left[{\tilde Y_{-n_0-1}}(j_1-n_0+1,p)-
{\tilde Z_{-n_0-1}}(j_1-n_0+1,p)\right] 
&  \mbox{if}\quad n_i \geq n_0-1 \end{cases}\\\

b^{(n_i)}_{n, n+1}(p) & = &\begin{cases}
\frac{1}{2\lambda_0}\left[{\tilde Z_{-n_0-1}}(j_1-n_0+1,p)-
Y_{-n_0-1}(j_1-n_0+1,p)\right] 
&  \mbox{if}\quad n_i < n_0-1\\\
  0  &  \mbox{if}\quad n_i \geq n_0-1  \end{cases}\\\

b^{(n_i)}_{n+1, n}(p) & = &\begin{cases}
  0  &  \mbox{if}\quad n_i < n_0 \\\
\frac{1}{2\lambda_0}\left[{\tilde Y_{-n_0-1}}(j_1-n_0,p)-
{\tilde Z_{-n_0-1}}(j_1-n_0,p)\right]
&  \mbox{if}\quad n_i \geq n_0 \geq 0\end{cases}\\\

b^{(n_i)}_{n+1, n+1}(p) & = &\begin{cases}
\frac{1}{2\lambda_0}\left[{\tilde Z_{-n_0-1}}(j_1-n_0,p)-Y_{-n_0-1}(j_1-n_0,p)\right]
&  \mbox{if}\quad n_i < n_0\\\
  0  &  \mbox{if}\quad n_i \geq n_0  \end{cases}
\end{array}$$

By Remark~\ref{r3s}, for given integer $i$, 
there exists a polynomial $P_i(X,Y) \in \Q[X, Y]$ of degree $\leq n$ such that 
$Z_{-i}(a,p) = P_i(a, p)$, for all $0\leq a<p$  and for $p>n-2$.
For $Y_{-i}$, ${\tilde Z_{-i}}$ and ${\tilde Y_{-i}}$ we have similar polynomial 
expressions.

In particular, given $k_1$, $k_2$ and $n_i$, 
there exist  polynomials of total degree $\leq n$ 
$P_{k_1, k_2}^{(n_i)}(X, Y) = \sum_{m_1, m_2}
\lambda_{m_1, m_2}X^{m_1}Y^{m_2}$, which are independent of $p$ and 
for $0\leq k_1 < n$

\begin{equation}
b^{(n_i)}_{k_1, k_2}(p)
  =   \sum_{m_1, m_2}\lambda_{m_1, m_2}^{(k_1, k_2, n_i)}\left(\frac{p}{2}+
{n_i-{k_1}+1-n/2}\right)^{m_1}
\left(p^{m_2}\right),\end{equation}

\begin{equation}
b^{(n_i)}_{n, k_2}(p)
  =   \sum_{m_1, m_2}\lambda_{m_1, m_2}^{(n, k_2, n_i)}\left(\frac{p}{2}+
n_i-n_0+2 -\frac{n}{2}\right)^{m_1}
\left(p^{m_2}\right),\end{equation}

\begin{equation}
b^{(n_i)}_{n+1, k_2}(p)
  =   \sum_{m_1, m_2}\lambda_{m_1, m_2}^{(n+1, k_2, n_i)}\left(\frac{p}{2}+
n_i-n_0+1-\frac{n}{2}\right)^{m_1}
\left(p^{m_2}\right),\end{equation}

Now choosing 
$$H_{k_1, k_2}^{(n_i)}(t) =  
\sum_{m_1, m_2}\lambda_{m_1, m_2}^{({k_1}, k_2, n_i)}
\left(\frac{1}{2}+
n_it-{\tilde k_1}t+t-\frac{nt}{2}\right)^{m_1}
t^{(n-m_1-m_2)},$$
where  ${\tilde k_1} = n_0-1$ if $k_1 =n$ and ${\tilde k_1} = n_0$ if $k_1 =n+1$ and 
${\tilde k_1} = k_1$ for $0\leq k_1 <n$,
will prove the lemma.
\end{proof}

\begin{notations}\label{eBmat} Let $n\geq 4$ be an even integer and $p \geq (3n-4)/2$. 
Let ${\tilde \sS}_e = \{0, 1, \ldots, \tfrac{n-2}{2}-1\}\bigcup 
\{p-\tfrac{n-2}{2}, \ldots, p-1\}$,
which is indexed by the set $\sM = \{0, 1, \ldots, n-3\}$ as in Definition~\ref{deven}.
Here $n_0 = \tfrac{n}{2}-1$. 
For given $n_i\in \sM$ we define a $(n+3)\times (n+3)$ matrix 
${\C}_{n_i}^{(p)} = [c_{k_1, k_2}^{(n_i)}(p)]_{0\leq k_1, k_2\leq n+2}$ as follows:

$$\begin{array}{lcl}
 c_{k_1, k_2}^{(n_i)}(p) & = & \nu_{-k_2}(j_i-k_1)\quad\mbox{if}\quad
0\leq k_1, k_2 \leq n-1\\\
& = & \mu_{-n_0-\delta_2}(j_i-k_1)\quad\mbox{if}\quad
0\leq k_1 \leq n-1,~~ k_2 = n+1+\delta_2,~~-1\leq \delta_2\leq 1\\\

& = & {\tilde \nu}_{-k_2}(j_i-n_0-\delta_1)\quad\mbox{if}\quad
0\leq k_2 \leq n-1~~k_1 = n+1+\delta_1,~~~-1\leq \delta_1 \leq 1\\\

& = & {\tilde \mu}_{-n_0-\delta_2}(j_i-n_0-\delta_1)\quad\mbox{if}\quad
k_i = n+1+\delta_i,\quad -1\leq \delta_1,\delta_2  \leq 1.
\end{array}$$
\end{notations}

\vspace{5pt}

\begin{lemma}\label{eentries}
There exists a set of polynomials 
$$\{{\tilde H}_{k_1, k_2}^{(n_i)}(t)\in \Q[t]\mid 0\leq n_i\leq n-3\quad\mbox{and}\quad 
0\leq k_1, k_2 \leq n+2\}$$
 such that 
 $$\frac{c^{(n_i)}_{k_1, k_2}(p)}{p^n} =  
{\tilde H}_{k_1, k_2}^{(n_i)}\bigl(\frac{1}{p}\bigr) \geq 0,\quad\mbox{for all}
\quad p \geq  ({3n-4})/{2}.$$
\end{lemma}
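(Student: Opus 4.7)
The strategy is to parallel the proof of Lemma \ref{entries}, with Lemma \ref{e*c1} playing the role of Lemma \ref{*c1}. The new feature for even $n$ is a third spinor-bundle type $\sS(-n_0-1)$, which contributes one extra row and column to $\C^{(p)}_{n_i}$ (hence $k_1, k_2$ now range over $\{0,\ldots,n+2\}$), together with an additional regime $a \in [1-n, -n/2]$ in Lemma \ref{e*c1}; otherwise the combinatorics is identical.

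First, I would fix $n_i \in \sM$ and $0 \leq k_1, k_2 \leq n+2$ and determine which of the regimes (A1)--(A4) (when $k_1 \le n-1$) or (B1)--(B3) (when $k_1 \in \{n, n+1, n+2\}$) of Lemma \ref{e*c1} governs $c^{(n_i)}_{k_1,k_2}(p)$. Recall from Definition \ref{deven} that $j_i = n_i$ if $n_i \leq n/2 - 2$, and $j_i = p + n_i - n + 2$ if $n_i \geq n/2 - 1$. The argument $a$ appearing in each entry is either $j_i - k_1$ or $j_i - n_0 - \delta_1$ with $\delta_1 \in \{-1,0,1\}$. In the first branch $a$ is a fixed bounded integer, in the second $a = p + c$ for some fixed integer $c$; in either case the position of $a$ relative to the regime breakpoints $-n/2$ and $\tilde{m}_0 = p + 1 - n/2$ depends only on $(n_i, k_1)$ once $p \geq (3n-4)/2$. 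This hypothesis is precisely what keeps every shifted argument inside $[1-n,\, p-1]$---the valid input range of Lemma \ref{e*c1}---and prevents any regime boundary from being crossed spuriously as $p$ varies.

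Next, within each regime Lemma \ref{e*c1} expresses $c^{(n_i)}_{k_1,k_2}(p)$ as a $\Q$-linear combination of the quantities $Z_{-j}(a,p)$, $Y_{-j}(a,p)$, $\tilde{Z}_{-j}(a,p)$, $\tilde{Y}_{-j}(a,p)$. By Remark \ref{r3s} and its obvious analog for $Y_{-j}$ and $\tilde{Y}_{-j}$, each such quantity equals $P(a,p)$ for some $P \in \Q[X,Y]$ of total degree $\leq n$ that is independent of $p$. Substituting $a = n_i - k_1$ (or $n_i - n_0 - \delta_1$) in the first branch, or $a = p + n_i - n + 2 - k_1$ (or $p + n_i - n + 2 - n_0 - \delta_1$) in the second, then produces a polynomial $R(p) \in \Q[p]$ of degree at most $n$. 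Writing $R(p) = \sum_{m=0}^n \lambda_m p^m$ and setting
\[
\tilde{H}^{(n_i)}_{k_1,k_2}(t) \;=\; \sum_{m=0}^n \lambda_m\, t^{n-m} \;\in\; \Q[t],
\]
one has $c^{(n_i)}_{k_1,k_2}(p)/p^n = \tilde{H}^{(n_i)}_{k_1,k_2}(1/p)$, as required.

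Non-negativity $\tilde{H}^{(n_i)}_{k_1,k_2}(1/p) \geq 0$ is automatic: $c^{(n_i)}_{k_1,k_2}(p)$ is by construction the multiplicity of some $\sO(t)$ or $\sS(t)$ summand in a Frobenius push-forward of an ACM bundle on $Q_n$, hence a non-negative integer. The main obstacle is essentially bookkeeping: there are many cases indexed by the row/column pair $(k_1, k_2)$ (line-bundle rows vs.\ spinor-bundle rows, with regime choice further depending on $n_i$ and the sign of $n_i - k_1 + n/2$ or $n_i - n + 1$), and within each case one must verify that the Lemma \ref{e*c1} formula gives exactly the right $\Q$-linear combination of $Z_{-j}, Y_{-j}, \tilde{Z}_{-j}, \tilde{Y}_{-j}$. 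Apart from this routine enumeration---modelled directly on the argument already given for Lemma \ref{entries}---the proof introduces no new ideas.
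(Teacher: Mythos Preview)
Your proposal is correct and follows essentially the same approach as the paper: split according to whether $j_i$ is a small fixed integer or of the form $p+c$, determine for each $(n_i,k_1)$ which regime of Lemma~\ref{e*c1} applies (using $p\ge(3n-4)/2$ to keep all shifted arguments in $[1-n,p-1]$ and to freeze the regime boundaries), and then invoke Remark~\ref{r3s} to extract the polynomial. Your write-up is in fact more explicit than the paper's about the final step of converting the degree-$\le n$ polynomial in $p$ into $\tilde H^{(n_i)}_{k_1,k_2}(t)$; the paper simply says ``we get polynomial expression'' and leaves the rest to the analogy with Lemma~\ref{entries}. One small point: your formula $j_i=p+n_i-n+2$ in the second branch does not match the paper's convention (which, from the proof of Lemma~\ref{stin}, is $j_i=p+\tfrac{n}{2}-2-n_i$); this does not affect the argument, since all that matters is that $j_i=p+c$ for a bounded constant $c$, but you may want to double-check the indexing in Definition~\ref{deven}.
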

\begin{proof}
\noindent{Case}~(1).\quad $n_i \in \{0, \ldots, \tfrac{n}{2}-2\}$. Note that in this case $j_i = n_i$.
\begin{enumerate}
\item Let  $0\leq k_1 \leq n-1$. Then  
$1-n\leq j_i-k_1 \leq \tfrac{n}{2}-2$.

If $k_1\in \{n_i+\tfrac{n}{2}, \ldots, n_i+n-1\}$ then
$1-n\leq j_i-k_1\leq -\tfrac{n}{2}$.
Hence applying Lemma~\ref{e*c1}~(A2), we get polynomial expression 
for $c_{k_1,k_2}^{(n_i)}(p)$,  for all $0\leq k_2\leq n+2$.

If $k_1\in \{0, \ldots, n_i+\tfrac{n}{2}-1\}$ then 
$ -\tfrac{n}{2} -1\leq j_i-k_1\leq \tfrac{n}{2}-2$.
Hence 
applying 
Lemma~\ref{e*c1}~(A3), we get polynomial expression for $c_{k_1,k_2}^{(n_i)}(p)$, 
for all $0\leq k_2\leq n+3$.

\item Let $k_1 = n+1+\delta_1$, where $-1\leq \delta_1\leq 1$. Then 
$-\tfrac{n}{2}\leq n_i-n_0-\delta_1 \leq 0$. Hence applying 
Lemma~\ref{e*c1}~(B2) we get polynomial expression for 
$c_{n+1+\delta_1,k_2}^{(n_i)}(p)$.
\end{enumerate}

\noindent{Case}~(2).\quad Let $n_i\in \{\tfrac{n}{2}-1, \ldots, n-3\}$.
Then $j_i\in \{p-\tfrac{n-2}{2}, \ldots, p-1\}$.
\begin{enumerate}
\item Let $0\leq k_1 \leq n-1$. Then  
$p-\tfrac{3n}{2}+2  \leq j_i-k_1 = p-1$, which implies 
$0\leq j_i-k_1 \leq p-1$ as $p\geq (3n-4)/2$.

If $k_1\in \{n-2-n_i, \ldots, n-1\}$ then
$0\leq j_i-k_1 \leq {\tilde m_0}-1$. Therefore 
applying Lemma~\ref{e*c1}~(A3), we get polynomial expression 
for $c_{k_1,k_2}^{(n_i)}(p)$, for all $0\leq k_2\leq n+2$.

If $k_1\in \{0, \ldots, n-3-n_i\}$  then ${\tilde m_0}\leq j_i-k_1 \leq p-1$ and hence
applying 
Lemma~\ref{e*c1}~(A4), we get polynomial expression for $c_{k_1,k_2}^{(n_i)}(p)$.
for all $0\leq k_2\leq n+2$.

\item Let $k_1 = n+1+\delta_1$, where $-1\leq \delta_1\leq 1$.

Then $p-n+1 \leq j_i-n_0-\delta_1 \leq {\tilde m_0}$.

(a)\quad If $n_i = \tfrac{n}{2}-1$ and $k_1 = n$ then 
$j_i-n_0-\delta_1 = {\tilde m_0}$.
In this case, by Lemma~\ref{e*c1}~(B3), 
we get polynomial expression for $c_{n,k_2}^{(n_i)}(p)$.

(b)\quad If either $n_i\in \{ \tfrac{n}{2}, \ldots, n-3\}$ or $k_1 \in \{n+1, n+2\}$
then 
$p-n+1 \leq j_i-n_0-\delta_1 \leq {\tilde m_0}-1$
and by Lemma~\ref{e*c1}~(B2), 
we get polynomial expression for $c_{k_1,k_2}^{(n_i)}(p)$, for all $0\leq k_2<n+2$.
\end{enumerate}\end{proof}

\end{document}